\documentclass[10pt,a4paper,reqno]{amsart}
\usepackage{graphicx} 

\title[Extended modules of Nakayama algebras]{A note on the $m$-extended module categories of Nakayama algebras}
\author{Endre Sørmo Rundsveen}
\address{Department of mathematical sciences, NTNU, Trondheim, Norway}
\email{endre.s.rundsveen@ntnu.no}

\usepackage[foot]{amsaddr}

\input{setup.sty}

\usepackage[
    backend=biber,
    style=alphabetic,
    sortlocale=en_US,
    url=false, 
    doi=true,
    eprint=true,
	maxbibnames=99,
	maxalphanames=4,
	minalphanames=3,
	defernumbers=true,
]{biblatex}
\addbibresource{references.bib}

\begin{document}

\begin{abstract}
    We study the extended module category $\mmod\Lambda$ of an algebra $\Lambda$, recently introduced by Gupta and Zhou. 
    The existence of a postprojective component of $\mmod\Lambda$ is shown for certain algebras $\Lambda$, and a knitting algorithm through cohomological dimension vectors is provided.
    
    In particular, the extended module category of Nakayama algebras with homogeneous relations are investigated, and it is classified when they are of finite type.
    The paper give fully calculated AR-quivers for several Nakayama algebras.
\end{abstract}

\maketitle
\tableofcontents

\section{Introduction}

\subsubsection*{Background and motivation}
In the last decade, $\tau$-tilting \cite{AIR14} has been adopted as a fruitful tool by representation theorists. 
It corresponds nicely with structures which are otherwise of great interest, namely functorially finite torsion classes, ($2$-term) silting complexes and cluster-tilting objects. 
In addition, it fills a gap which had been left vacant by tilting theory. That is, ensuring that an almost complete $\tau$-tilting module has exactly two complements.

A natural question to ponder in relation to $\tau$-tilting is if there are similar construction which correspond to longer silting objects. 
Gupta provided a partial answer in \cite{Gup24}, by showing that $(m+1)$-silting objects correspond to functorially finite positive torsion classes in the category of $m$-extended modules,
$\mmod\Lambda=\D^{[-(m-1),0]}(\mod\Lambda)$. 
In a following paper, Zhou \cite{Zho25} elaborated on this by showing that $\mmod\Lambda$ admits Auslander-Reiten triangles and that $\tau_{[m]}$-tilting pairs in $\mmod\Lambda$ correspond to $(m+1)$-silting pairs.
In a recent paper, Gupta and Zhou \cite{GZ25} studies $\mmod\Lambda$ further, and add left-finite semibricks and left-finite wide subcategories of $\mmod\Lambda$ to the correspondence, as well as $(m+1)$-term simple minded collections of $\D^b(\mod\Lambda)$.

It seems that $\mmod\Lambda$ might be the proper setting for a generalization of $\tau$-tilting. 
Moreover, it is close enough to the module category to be reasonably well-behaved, 
while still being a big enough part of the derived category to provide insights to its structure.

Another avenue for generalization of $\tau$-tilting can be found in the world of $m$-cluster tilting subcategories $\mathcal{M}$ of $\mod\Lambda$ \cite{Iya08}. 
Several related proposals have been made \cite{JJ20,McM21,ZZ23,MM23}, with the one of \cite{MM23} also being given through $(m+1)$-silting objects of $\homotopy^b(\proj\Lambda)$.
In a paper currently in preparation, the authors of \cite{AHJKPT25+} show that functorially finite $m$-torsion classes of $\mathcal{M}$ give rise to maximal $\tau_m$-rigid pairs in $\mathcal{M}$ with $|\Lambda|$ summands, as well as an injection into the set of $(m+1)$-term silting objects.
This inspired a paper by the author and Laertis Vaso \cite{RV25} in which $\tau_m$-rigid pairs with $|\Lambda|$ summands are investigated for Nakayama algebras.

The study in \cite{RV25} along with the competing generalization of $\tau$-tilting is the original motivation for looking at $\mmod\Lambda$ when $\Lambda$ is a Nakayama algebra. 
Another motivation for choosing Nakayama algebras in particular is that their module categories are reasonably well-behaved, and thus might also provide reasonably well-behaved extended module categories.
This paper thus seeks to give a better understanding of the extended module category of Nakayama algebras.

\subsubsection*{Our results}
We show how cohomological dimension vectors (see \Cref{def:cohomologicalDimension}) can replace the ordinary dimension vectors when knitting AR-quivers of the extended modules. 
We provide an explicit algorithm for computing the postprojective component of $\mmod\Lambda$ starting in a simple projective, when indecomposable projectives have indecomposable radicals, see \Cref{prop:AlgorithmForKnittingDimensionVectors}.

Let $\Lambda(n,l)$ be the algebra given by the quiver
\begin{equation}\label{eq:LinearNakayamaQuiver}\tag{$\bA_{n}$}
\begin{tikzpicture}[xscale=.9,tips=proper,baseline=(current  bounding  box.center)]
    \node (d) at (0,0) [nodeDots] {};
    \node (d-label) at (0,-.1) [below,scale=.7] {$n-1$};
    \node (d-1) at (2,0) [nodeDots] {};
    \node (d-1-label) at (2,-.1) [below,scale=.7] {$n-2$};
    \node (cdot1) at (3.8,0) [nodeCDots] {};
    \node (cdot2) at (4,0) [nodeCDots] {};
    \node (cdot3) at (4.2,0) [nodeCDots] {};
    \node (2) at (6,0) [nodeDots] {};
    \node (2-label) at (6,-.1) [below,scale=.7] {$2$};
    \node (1) at (8,0) [nodeDots] {};
    \node (1-label) at (8,-.1) [below,scale=.7] {$1$};
    \node (0) at (10,0) [nodeDots] {};
    \node (0-label) at (10,-.1) [below,scale=.7] {$0$};

    \draw[-latex] ([xshift=.2cm]d.center)--([xshift=-.2cm]d-1.center)node[midway,above,scale=.7]{$\alpha_{n-1}$};
    \draw[-latex] ([xshift=.2cm]d-1.center)--([xshift=-.2cm]cdot1.center)node[midway,above,scale=.7]{$\alpha_{n-2}$};
    \draw[-latex] ([xshift=.2cm]cdot3.center)--([xshift=-.2cm]2.center)node[midway,above,scale=.7]{$\alpha_3$};
    \draw[-latex] ([xshift=.2cm]2.center)--([xshift=-.2cm]1.center)node[midway,above,scale=.7]{$\alpha_2$};
    \draw[-latex] ([xshift=.2cm]1.center)--([xshift=-.2cm]0.center)node[midway,above,scale=.7]{$\alpha_1$};
\end{tikzpicture}
\end{equation}
with relations all paths of length $l$. 
This algebra has a simple projective, and we can therefore easily knit the shape of $\AR(\mmod\Lambda(n,l))$ with cohomological dimension vectors.
Having done this for a collection of examples, we postulate that \Cref{tab:FiniteNakayama} classifies when $\mmod\Lambda(n,l)$ is of finite type, i.e. has only finitely many indecomposables up to isomorphism. This is in fact our first main result.
\begin{theorem}[{\Cref{thm:WhenIsLinearNakayamaFinite}}]
    $\mmod\Lambda(n,l)$ is of finite type if and only if it is contained in \Cref{tab:FiniteNakayama}.
    \begin{table}[h]
    \centering
    \begin{tabularx}{.9\linewidth}{c?YYYYYYYYYY}
        \diagbox{$l$}{$n$}
         & $3$ & $4$ & $5$ & $6$ & $7$ & $8$ & $9$ & $10$ & $11$ &$\cdots$ \\ \thickhline
        $2$ & {\normalfont All} & {\normalfont All} & {\normalfont All} & {\normalfont All} & {\normalfont All} & {\normalfont All} & {\normalfont All} & {\normalfont All} & {\normalfont All} & $\cdots$ \\ 
        $3$ &  & {\normalfont All} & {\normalfont All} & {\normalfont All} & {\normalfont All} & {\normalfont All} & $\leq 4$ & $\leq 4$ & $\leq 4$ & $\cdots$ \\ 
        $4$ &  &  & {\normalfont All} & {\normalfont All} & {\normalfont All} & $\leq 2$ & $\leq2$ & $\leq 2$ & $\leq 2$ & $\cdots$ \\ 
        $5$ &  &  &  & {\normalfont All} & {\normalfont All} & {\normalfont All} & $\leq 2$ & $\leq 2 $ & $\leq 2$ & $\cdots$ \\ 
        $6$ &  &  &  &  & {\normalfont All} & {\normalfont All} & $=1$ & $=1$ & $=1$ & $\cdots$ \\ 
        $7$ &  &  &  &  &  & {\normalfont All} & $=1$ & $=1 $ & $=1$ & $\cdots$ \\ 
        $\vdots$ &  &  &  &  &  & &$\ddots$ &$\ddots$ &$\ddots$ &$\ddots$
    \end{tabularx}
    \caption{For which $m$ is $\mmod\Lambda(n,l)$ of finite type. }
    \label{tab:FiniteNakayama}
\end{table}
\end{theorem}
We also observe that when $\mmod\Lambda(n,l)$ is of finite type, the cohomological dimension vector of $X^\bullet\in\mmod\Lambda$ is non-zero only in small intervals of $[0,n-1]$, see \Cref{lemma:boundOnCohomology}.

In the last part of the paper, we move from the linear to the cyclic case. 
Let $\Delta(n,l)$ be the algebra given by
\begin{equation}\label{eq:CyclicNakayamaQuiver}\tag{$\Delta_{n}$}
\begin{tikzpicture}[scale=.8,baseline=(current  bounding  box.center)]
    \node (n-1) at (90:1cm) [nodeDots] {};
    \node (n-1-label) at (90:1.1cm) [scale=.7,above]{$n-1$};
    
    \node (n-2) at (30:1cm) [nodeDots] {};
    \node (n-2-label) at (30:1.1cm) [scale=.7,right] {$n-2$};

    \node (cdot1) at ($(-30:1)+(0,.08)$) [nodeCDots,scale=.7] {};
    \node (cdot2) at ($(-30:1)+(-.04,0)$) [nodeCDots,scale=.7] {};
    \node (cdot3) at ($(-30:1)+(-.08,-.08)$) [nodeCDots,scale=.7] {};

    \node (2) at (-90:1cm) [nodeDots] {};
    \node (2-label) at (-90:1.1cm) [scale=.7,below] {$2$};

    \node (1) at (-150:1cm) [nodeDots] {};
    \node (1-label) at (-150:1.1cm) [scale=.7,left] {$1$};

    \node (0) at (150:1cm) [nodeDots] {};
    \node (0-label) at (150:1.1cm) [scale=.7,left] {$0$};

    \draw[-latex] ([xshift=2.5pt,yshift=-1.5pt]n-1.center)--([xshift=-2.5pt,yshift=1pt]n-2.center)node[midway,anchor=south west,scale=.7]{$\delta_{n-1}$};
    \draw[-latex] ([yshift=-1.5pt]n-2.center)--([yshift=1.5pt]cdot1.center)node[midway,right,scale=.7]{$\delta_{n-2}$};
    \draw[-latex] ([xshift=-2.5pt,yshift=-1.5pt]cdot3.center)--([xshift=2.5pt,yshift=1pt]2.center)node[midway,anchor=north west,scale=.7]{$\delta_{3}$};
    \draw[-latex] ([xshift=-2.5pt,yshift=1.5pt]2.center)--([xshift=2.5pt,yshift=-1pt]1.center)node[midway,anchor=north east,scale=.7]{$\delta_{2}$};
    \draw[-latex] ([yshift=3pt]1.center)--([yshift=-2.5pt]0.center)node[midway,anchor=east,scale=.7]{$\delta_{1}$};
    \draw[-latex] ([xshift=2.5pt,yshift=1.5pt]0.center)--([xshift=-2.5pt,yshift=-1pt]n-1.center)node[midway,anchor=south east,scale=.7]{$\delta_{0}$};
\end{tikzpicture}
\end{equation}
with relations all paths of length $l$. 
This algebra does not have a simple projective, nor a simple injective, hence the AR-quivers are not as easily obtained.
We therefore utilize the covering theory of Riedtmann \cite{Rie80} and Bongartz--Gabriel \cite{BG81} to transfer our knowledge from the linear case.
This leads to our final result.
\begin{theorem}[{\Cref{theorem:finitenessOfCyclicNakayama}}]\label{introThm}
    $\mmod\Delta(n,l)$ is of finite type if and only if
    \begin{enumerate}
        \item $l=2$,
        \item $m\leq4$ and $l=3$,
        \item $m\leq 2$ and either $l=4$ or $l=5$, or
        \item $m=1$
    \end{enumerate}
\end{theorem}

\subsubsection*{Structure}
The paper consists of five sections. Section 2 is spent recalling relevant notions and theory about derived categories and quiver representations.
In Section 3 we introduce the extended module categories and assemble tools which help us construct its AR-quivers. 
Section 4 is devoted to prove the validity of \Cref{tab:FiniteNakayama}. 
At last, in section 5 we recall relevant theory about coverings before using it to prove \Cref{introThm}. We end Section 5 with a few examples.
Throughout, we showcase the form of several fully calculated AR-quivers.

\subsubsection*{Notation and conventions}
We fix a field $\K$.
All algebras are assumed to be non-semi simple, connected and basic $\K$-algebras.
We denote by $\mod\Lambda$ the category of left finitely generated modules over $\Lambda$. 
Given a complete set of orthogonal idempotents $\{e_0,\ldots,e_{n-1}\}$ of $\Lambda$, we denote by $S_i$, $P_i=\Lambda e_i$ and $I_i=D(e_i\Lambda)$ the simples, and the indecomposable projectives and injectives respectively. 
$D\coloneqq\Hom_\K(-,\K)$ is the standard duality. Morphisms are composed from right to left: for $f\colon A\to B$ and $g\colon B\to C$ we have $g\circ f\colon A\to C$. 
All subcategories are asummed to be full, additive and closed under isomorphisms, and all functors to be additive.

\section*{Acknowledgements}
The author is grateful for the kind and continued support of Steffen Oppermann throughout the work on this project.
They also thank Jacob Fjeld Grevstad for giving valuable input and being available for several discussions. 
Moreover, the author express gratitude to the QPA-team \cite{qpa} for having made a tool which significantly reduced the cost of preliminary exploration.

\section{Preliminaries}

\subsection{Derived categories}
This section only give a brief glimpse into the world of derived categories. We refer to the books \cite{Wei94} and \cite{Kra21} and lecture notes of Dragan Milicic \cite{milicic2014lectures} for more details and background.
Let $\cA$ be an additive category.
A complex in $\cA$ is a pair $X^\bullet=(X^i,d_{X^\bullet}^i)_{i\in\bZ}$, with $X^i\in\cA$ and morphisms $d_{X^\bullet}^i\colon X^{i}\to X^{i+1}$ such that $d_{X^\bullet}^{i}\circ d_{X^\bullet}^{i-1}=0$ for all $i\in\bZ$. 
We can also view $X^\bullet$ as a diagram
\[
\begin{tikzpicture}
    \node (dotsLeft1) at (0,0) [nodeCDots]{};
    \node[right=.1cm of dotsLeft1,nodeCDots] (dotsLeft2) {};
    \node[right=.1cm of dotsLeft2,nodeCDots] (dotsLeft3) {};
    \node[right=1cm of dotsLeft3] (A) {$X^{-1}$};
    \node[right=1cm of A] (B) {$X^0$};
    \node[right=1cm of B] (C) {$X^1$};
    \node[right=1cm of C,nodeCDots] (dotsRight1){};
    \node[right=.1cm of dotsRight1,nodeCDots] (dotsRight2){};
    \node[right=.1cm of dotsRight2,nodeCDots] (dotsRight3){};
    \draw[->] ([xshift=4pt]dotsLeft3.center)--(A)node[midway,above,scale=.7]{$d_{X^\bullet}^{-2}$};
    \draw[->] (A)--(B)node[midway,above,scale=.7]{$d_{X^\bullet}^{-1}$};
    \draw[->] (B)--(C)node[midway,above,scale=.7]{$d_{X^\bullet}^{0}$};
    \draw[->] (C)--([xshift=-4pt]dotsRight1.center)node[midway,above,scale=.7]{$d_{X^\bullet}^{1}$};
\end{tikzpicture}
\]
The category of complexes over $\cA$ is denoted by $\complexC(\cA)$.
There is an embedding of $\cA$ in $\complexC(\cA)$, where an object is a complex concentrated in the zeroth degree; we call this a \emph{stalk complex} of the object. 

Complexes $X^\bullet$ whose terms are zero for all indices $i\gg 0$ are called \emph{right bounded}, complexes whose terms are zero for all indices $i\ll 0$ are called \emph{left bounded}, and a complex which is both left and right bounded is simply called \emph{bounded}. 
The full subcategories of $\complexC(\cA)$ consisting of right bounded (left bounded, bounded) complexes are denoted by $\complexC^-(\cA)$ ($\complexC^+(\cA)$,$\complexC^b(\cA)$). 

The \emph{homotopy} category $\homotopy(\cA)$ is the triangulated category whose objects are complexes and morphisms are the equivalence classes of morphisms in $\complexC(\cA)$ under the homotopy relation. 
We denote by $\homotopy^-(\cA)$, $\homotopy^+(\cA)$ and $\homotopy^b(\cA)$ the induced full subcategories. When $\cA$ is abelian we denote by $\D^-(\cA)$, $\D^+(\cA)$ and $\D^b(\cA)$ the corresponding derived categories.

\begin{remark}
    For complexes which are non-zero only for small intervals in $\bZ$, we may find fit to think of them as finite sequences. In particular, when represented as a finite diagram, the right-most term will be assumed to be the zeroth term. That is, $X^\bullet =[A\to B\to C]$ is a complex such that $X^0=C$, $X^{-1}=B$, $X^{-2}=A$ and $X^i=0$ for $i>0$ and $i<-2$. Further, $Y^\bullet=\nshift{[A\to B]}{2}$, is a complex such that $Y^{-3}=A$, $Y^{-2}=B$ and zero elsewhere.
\end{remark}

\subsubsection{\texorpdfstring{$t$}{t}-structures and truncations} Two full subcategories of $\D^b(\cA)$ which will be playing a crucial part in the construction of the extended module categories are 
\[
\D^{\leq 0}(\cA)\coloneqq \{X^\bullet \in \D^b(\cA)\ |\ H^i(X^\bullet)=0\text{ for }i>0\}
\]
and
\[
\D^{\geq 0}(\cA)\coloneqq \{X^\bullet \in \D^b(\cA)\ |\ H^i(X^\bullet)=0\text{ for }i<0\}
\]
along with their shifts, $\D^{\leq p}(\cA)=\nshift{\D^{\leq 0}(\cA)}{-p}$, $\D^{\geq p}(\cA)=\nshift{\D^{\geq 0}(\cA)}{-p}$. The pair $(\D^{\leq 0}(\cA),\D^{\geq 0}(\cA))$ is the \emph{canonical $t$-structure} on $\D^b(\cA)$. 
For an introduction to $t$-structures in general, the reader can for example see \cite{KS94} or the original paper \cite{BBD82}. Most important for us is the existence of triangles
\begin{equation}\label{eq:softTruncation}
\begin{tikzpicture}[baseline]
    \node (A) at (0,0) [] {$\softTrunc^{\leq p}(X^\bullet)\vphantom{[1]}$};
    \node[right=1.5cm of A] (B) {$X^\bullet\vphantom{[1]^{\geq 1}}$};
    \node[right=1.5cm of B] (C) {$\softTrunc^{\geq p+1}(X^\bullet)\vphantom{[1]}$};
    \node[right=1.5cm of C] (Ashift){$\shift{\softTrunc^{\leq p}(X^\bullet)}$,};
    \draw[->] ([yshift=1pt]A.east)--([yshift=1pt]B.west);
    \draw[->] ([yshift=1pt]B.east)--([yshift=1pt]C.west);
    \draw[->] ([yshift=1pt]C.east)--([yshift=1pt]Ashift.west);
\end{tikzpicture}
\end{equation}
for each complex $X^\bullet\in\D^b(\cA)$, where $\softTrunc^{\leq p}$ ($\softTrunc^{\geq p}$) is the right (left) adjoint of the inclusion $\D^{\leq p}(\cA)\to\D^b(\cA)$ ($\D^{\geq p}(\cA)\to\D^b(\cA)$). We call $\softTrunc^{\leq p}$ and $\softTrunc^{\geq p}$ \emph{soft truncations}, and they are given by
\[
\begin{tikzpicture}
    \node (X) at (-1.3,0) [anchor=east] {$X^\bullet\colon$};
    \node (XdotsLeft1) at (-.2,0) [nodeCDots]{};
    \node (XdotsLeft2) at (0,0) [nodeCDots]{};
    \node (XdotsLeft3) at (.2,0) [nodeCDots]{};
    \node (Xp-1) at (2,0) [] {$X^{p-1}$};
    \node (Xp) at (4,0) [] {$X^p$};
    \node (Xp+1) at (6,0) [] {$X^{p+1}$};
    \node (XdotsRight1) at (7.8,0) [nodeCDots]{};
    \node (XdotsRight2) at (8,0) [nodeCDots]{};
    \node (XdotsRight3) at (8.2,0) [nodeCDots]{};

    \draw[->] ([xshift=4pt]XdotsLeft3.center)--(Xp-1)node[midway,above,scale=.7]{$d_{X^\bullet}^{p-2}$};
    \draw[->] (Xp-1)--(Xp)node[midway,above,scale=.7]{$d_{X^\bullet}^{p-1}$};
    \draw[->] (Xp)--(Xp+1)node[midway,above,scale=.7]{$d_{X^\bullet}^{p}$};
    \draw[->] (Xp+1)--([xshift=-4pt]XdotsRight1.center)node[midway,above,scale=.7]{$d_{X^\bullet}^{p+1}$};

    \node (X) at (-1.3,-1) [anchor=east] {$\softTrunc^{\leq p}X^\bullet\colon$};
    \node (XdotsLeft1) at (-.2,-1) [nodeCDots]{};
    \node (XdotsLeft2) at (0,-1) [nodeCDots]{};
    \node (XdotsLeft3) at (.2,-1) [nodeCDots]{};
    \node (Xp-1) at (2,-1) [] {$X^{p-1}$};
    \node (Xp) at (4,-1) [] {$\ker d_{X^\bullet}^p$};
    \node (Xp+1) at (6,-1) [] {$0$};
    \node (XdotsRight1) at (7.8,-1) [nodeCDots]{};
    \node (XdotsRight2) at (8,-1) [nodeCDots]{};
    \node (XdotsRight3) at (8.2,-1) [nodeCDots]{};

    \draw[->] ([xshift=4pt]XdotsLeft3.center)--(Xp-1)node[midway,above,scale=.7]{$d_{X^\bullet}^{p-2}$};
    \draw[->] (Xp-1)--(Xp)node[midway,above,scale=.7]{};
    \draw[->] (Xp)--(Xp+1)node[midway,above,scale=.7]{};
    \draw[->] (Xp+1)--([xshift=-4pt]XdotsRight1.center)node[midway,above,scale=.7]{};

    \node (X) at (-1.3,-2) [anchor=east] {$\softTrunc^{\geq p}X^\bullet\colon$};
    \node (XdotsLeft1) at (-.2,-2) [nodeCDots]{};
    \node (XdotsLeft2) at (0,-2) [nodeCDots]{};
    \node (XdotsLeft3) at (.2,-2) [nodeCDots]{};
    \node (Xp-1) at (2,-2) [] {$0$};
    \node (Xp) at (4,-2) [] {$\coker d_{X^\bullet}^{p-1}$};
    \node (Xp+1) at (6,-2) [] {$X^{p+1}$};
    \node (XdotsRight1) at (7.8,-2) [nodeCDots]{};
    \node (XdotsRight2) at (8,-2) [nodeCDots]{};
    \node (XdotsRight3) at (8.2,-2) [nodeCDots]{};

    \draw[->] ([xshift=4pt]XdotsLeft3.center)--(Xp-1)node[midway,above,scale=.7]{};
    \draw[->] (Xp-1)--(Xp)node[midway,above,scale=.7]{};
    \draw[->] (Xp)--(Xp+1)node[midway,above,scale=.7]{};
    \draw[->] (Xp+1)--([xshift=-4pt]XdotsRight1.center)node[midway,above,scale=.7]{$d_{X^\bullet}^{p+1}$};

\end{tikzpicture}
\]
We also note that $\softTrunc^{\geq p}\circ\softTrunc^{\leq q}\cong \softTrunc^{\leq q}\circ \softTrunc^{\geq p}$ and $H^n(X^\bullet)\cong \softTrunc^{\geq n}\softTrunc^{\leq n}(X^\bullet)$.

We will also be using \emph{brutal truncations}, which are maps $\brutalTrunc_{\leq p}\colon \D^b(\cA)\to D^{\leq p}(\cA)$ and $\brutalTrunc_{\geq p}\colon \D^b(\cA)\to D^{\geq p}(\cA)$ given by
\[
\begin{tikzpicture}
    \node (X) at (-1.3,-1.05) [anchor=east] {$\brutalTrunc_{\leq p}X^\bullet\colon$};
    \node (XdotsLeft1) at ($(-.2,-1)-(0,1pt)$) [nodeCDots]{};
    \node (XdotsLeft2) at ($(0,-1)-(0,1pt)$) [nodeCDots]{};
    \node (XdotsLeft3) at ($(.2,-1)-(0,1pt)$) [nodeCDots]{};
    \node (Xp-1) at (2,-1) [] {$X^{p-1}\vphantom{^{p+1}}$};
    \node (Xp) at (4,-1) [] {$X^p\vphantom{^{p+1}}$};
    \node (Xp+1) at (6,-1) [] {$0\vphantom{^{p+1}}$};
    \node (XdotsRight1) at ($(7.8,-1)-(0,1pt)$) [nodeCDots]{};
    \node (XdotsRight2) at ($(8,-1)-(0,1pt)$) [nodeCDots]{};
    \node (XdotsRight3) at ($(8.2,-1)-(0,1pt)$) [nodeCDots]{};

    \draw[->] ([xshift=4pt]XdotsLeft3.east)--([yshift=-1pt]Xp-1.west)node[midway,above,scale=.7]{$d_{X^\bullet}^{p-2}$};
    \draw[->] ([yshift=-1pt]Xp-1.east)--([yshift=-1pt]Xp.west)node[midway,above,scale=.7]{$d_{X^\bullet}^{p-1}$};
    \draw[->] ([yshift=-1pt]Xp.east)--([yshift=-1pt]Xp+1.west);
    \draw[->] ([yshift=-1pt]Xp+1.east)--([xshift=-4pt]XdotsRight1.west);

    \node (X) at (-1.3,-2.05) [anchor=east] {$\brutalTrunc_{\geq p}X^\bullet\colon$};
    \node (XdotsLeft1) at ($(-.2,-2)-(0,1pt)$) [nodeCDots]{};
    \node (XdotsLeft2) at ($(0,-2)-(0,1pt)$) [nodeCDots]{};
    \node (XdotsLeft3) at ($(.2,-2)-(0,1pt)$) [nodeCDots]{};
    \node (Xp-1) at (2,-2) [] {$0\vphantom{^{p+1}}$};
    \node (Xp) at (4,-2) [] {$X^p\vphantom{^{p+1}}$};
    \node (Xp+1) at (6,-2) [] {$X^{p+1}\vphantom{^{p+1}}$};
    \node (XdotsRight1) at ($(7.8,-2)-(0,1pt)$) [nodeCDots]{};
    \node (XdotsRight2) at ($(8,-2)-(0,1pt)$) [nodeCDots]{};
    \node (XdotsRight3) at ($(8.2,-2)-(0,1pt)$) [nodeCDots]{};

    \draw[->] ([xshift=4pt]XdotsLeft3.east)--([yshift=-1pt]Xp-1.west)node[midway,above,scale=.7]{};
    \draw[->] ([yshift=-1pt]Xp-1.east)--([yshift=-1pt]Xp.west)node[midway,above,scale=.7]{};
    \draw[->] ([yshift=-1pt]Xp.east)--([yshift=-1pt]Xp+1.west)node[midway,above,scale=.7]{$d_{X^\bullet}^{p}$};
    \draw[->] ([yshift=-1pt]Xp+1.east)--([xshift=-4pt]XdotsRight1.west)node[midway,above,scale=.7]{$d_{X^\bullet}^{p+1}$};

\end{tikzpicture}
\]
Note that the brutal truncations are not functors. However, they do give rise to a triangle for each complex $X\in D^b(\cA)$, namely
\[
\begin{tikzpicture}[baseline]
    \node (A) at (0,0) [] {$\brutalTrunc_{\geq p+1}(X^\bullet)\vphantom{[1]}$};
    \node[right=1.5cm of A] (B) {$X^\bullet\vphantom{[1]^\bullet_{\geq 1}}$};
    \node[right=1.5cm of B] (C) {$\brutalTrunc_{\leq p}(X^\bullet)\vphantom{[1]}$};
    \node[right=1.5cm of C](Ashift){$\shift{\brutalTrunc_{\geq p+1}(X^\bullet)}$,};
    \draw[->] ([yshift=1pt]A.east)--([yshift=1pt]B.west);
    \draw[->] ([yshift=1pt]B.east)--([yshift=1pt]C.west);
    \draw[->] ([yshift=1pt]C.east)--([yshift=1pt]Ashift.west);
\end{tikzpicture}
\]

\subsubsection{The Nakayama functor}
Let $\Lambda$ be a finite-dimensional $\K$-algebra and $D= \Hom_\K(-,\K)$ the standard $\K$-linear duality. 
The Nakayama functors 
\[\nu\coloneqq D\Hom_\Lambda(-,A)\colon \mod\Lambda\to\mod\Lambda\text{ and }\nu^-\coloneqq \Hom_\Lambda(DA,-)\colon \mod\Lambda\to \mod\Lambda
\]
restrict to quasi-inverses $\nu\colon \proj\Lambda\leftrightarrow \inj\Lambda \colon \nu^-$. They also induce an equivalence
\[
    \nu\colon \homotopy^b(\proj\Lambda)\to \homotopy^b(\inj\Lambda)
\]
with quasi-inverse
\[
    \nu^-\colon \homotopy^b(\inj\Lambda)\to \homotopy^b(\proj\Lambda)
\]

\subsection{Quivers} In this subsection we introduce notation and elementary results on quivers and representations; for a more comprehensive account on this subject we refer to the textbooks \cite{ARS}, \cite{ASS06} and \cite{Rin84}.

A \emph{quiver} $Q=(Q_0,Q_1,s,t)$ is a directed graph, consisting of a set $Q_0$ of \emph{vertices}, a set $Q_1$ of \emph{arrows} and maps $s,t\colon Q_1\to Q_0$. 
Given an arrow $\alpha\in Q_0$, $s(\alpha)=v$ is the \emph{source} and $t(\alpha)=w$ is the \emph{target}. If both $Q_0$ and $Q_1$ are finite, we call $Q$ a \emph{finite} quiver. 
We call $Q$ \emph{locally finite} if both the set $Q_1(v,-)$ of arrows $\alpha$ with $s(\alpha)=v$ and the set $Q_1(-,v)$ of arrows $\beta$ with $t(\beta)=v$ are finite for all vertices $v\in Q_0$. 

A \emph{path} in $Q$ is either a \emph{trivial} path $e_v$ at a vertex $v$ or a concatenation of arrows $p=\alpha_l\alpha_{l-1}\cdots \alpha_1$ and $s(\alpha_i)=t(\alpha_{i-1})$ for $1<i\leq l$. The maps $s,t$ extends to paths by letting $s(p)=s(\alpha_1)$ and $t(p)=t(\alpha_l)$
We denote by $Q(v,w)$ the set of all paths from $v$ to $w$. 

The \emph{path algebra} $\K Q$ of $Q$ is the $\K$-algebra whose elements are $\K$-linear sums of paths in $Q$. The multiplication is given by concatenation if the paths are compatible and zero otherwise, i.e. $p_2\cdot p_1=p_2p_1$ if $s(p_2)=t(p_1)$.
Note that if $Q$ is finite then $\K Q$ has an identity given by the sum of all trivial paths. 

A $\K$-linear combination $\sum c_i p_i$ of paths $p_i$, with $c_i\in \K$ is a \emph{relation} on $\K Q$ if the paths have length at least two and all share the same source and target. Given a set of relations $\rho$ of a quiver $Q$, we call the pair $(Q,\rho)$ a \emph{bound quiver}. 
The path algebra of a bound quiver is given by the quotient $\K Q/\langle \rho\rangle$. If $Q$ is finite and there is a $k>0$ such that $\langle \rho\rangle$ contain all paths of length $k$, then $\K Q/\langle \rho\rangle$ is a finite dimensional algebra.

To a bound quiver $(Q,\rho)$ we can also associate an additive $\K$-category, where the indecomposable objects are the vertices, morphisms are the paths modulo relations and composition is given as in the path algebra. 
A $(Q,\rho)$-representation is then a $\K$-linear functor from this category to the category of $\K$-vector spaces. 
Representations $M$ such that $\sum_{v\in Q_0}\dim_\K M(v)<\infty$ are called finite-dimensional, and the category of all finite-dimensional $(Q,\rho)$-representations is denoted by $\rep(Q,\rho)$. 
This is an abelian Krull-Schmidt category, and we have an equivalence $\mod\K Q/\langle \rho\rangle\simeq \rep(Q,\rho)$, which we will be using as an identification whenever $Q$ is finite.

A \emph{morphism of quivers} $f\colon (Q_0,Q_1,s,t)\to (Q'_0,Q'_1,s',t')$ is a pair of maps $f_0\colon Q_0\to Q'_0$ and $f_1\colon Q_1\to Q_1'$ such that $s'(f_1(\alpha))=f_0(s(\alpha))$ and $t'(f_1(\alpha))=f_0(t(\alpha))$. 
Further, a \emph{morphism of bounded quivers} $f\colon (Q,\rho)\to (Q',\rho')$ is a quiver morphism such that $\overline{f}(\langle \rho\rangle)\subseteq \langle \rho'\rangle$, where $\overline{f}$ is the induced linear map $\overline{f}\colon \K Q\to \K Q'$.

\section{\texorpdfstring{$m$}{m}-extended module categories}
Let $\Lambda$ be a finite dimensional algebra over a field $\K$. The extended module category $\mmod\Lambda$ is defined to be the full extension-closed subcategory 
\[
    \begin{split}
        \mmod\Lambda&\coloneqq \D^{\leq 0}(\mod\Lambda)\cap \D^{\geq -(m-1)}(\mod\Lambda)\\
        &=\{X^\bullet \in \D^b(\mod\Lambda) \ | \ H^i (X^\bullet)=0\ \text{ for }i\notin[-(m-1),0]\,\}.
    \end{split}
\]
For objects $X^\bullet$ and $Y^\bullet$ in $\mmod\Lambda$, denote the Hom-space by $\Hom_{m\mhyphen\Lambda}(X^\bullet,Y^\bullet)\coloneqq\Hom_{\D^b(\mod\Lambda)}(X^\bullet,Y^\bullet)$.

Observe that since $\mmod\Lambda$ is a full subcategory of $\D^b(\mod\Lambda)$, it is $\K$-linear, $\Hom$-finite and Krull-Schmidt. 
Note also that as an extension-closed subcategory of a triangulated category, $\mmod\Lambda$ carries an \emph{extriangulated} structure \cite{NP19}, however, seeing as we work with this explicit category we will not recall the big machinery of extriangulations. 

Let us look at how representatives of objects in $\mmod\Lambda$ may look. We can first note that by using soft truncations we can always choose a representative $X^\bullet$, such that $X^i=0$ for $i\notin[-(m-1),0]$. 
Furthermore, we may also observe that $\softTrunc^{\geq -(m-1)}\projres X^\bullet\cong X^\bullet\cong \softTrunc^{\leq 0}\injres X^\bullet$, where $\projres X^\bullet$ ($\injres X^\bullet$) is a minimal projective (injective) resolution of $X^\bullet$. 
Hence, we can choose representatives that have projective terms in all but the $-(m-1)$th-term or injective terms in all but the zeroth-term.

Both the notion of projectives and injectives can be carried over to the extended setting. Let projective objects be defined as those objects $P^\bullet\in \mmod\Lambda$ for which $\Hom_{m\mhyphen\Lambda}(P^\bullet,\shift{-})$ vanish on $\mmod\Lambda$. 
Likewise, injective objects $I^\bullet\in\mmod\Lambda$ are defined through $\Hom_{m\mhyphen\Lambda}(-,\shift{I^\bullet})$ vanishing on $\mmod\Lambda$. 
The following description of projective and injective objects can be found in \cite[Proposition 2.14]{Zho25} (see also \cite[Corollary 1.4]{AST08}).

\begin{lemma}
    $X^\bullet$ is projective in $\mmod\Lambda$ if and only if $X^\bullet\cong P$ for $P$ projective in $\mod\Lambda$. Dually, $Y^\bullet$ is injective in $\mmod\Lambda$ if and only if $Y^\bullet\cong \nshift{I}{m-1}$ for $I$ injective in $\mod\Lambda$.
\end{lemma}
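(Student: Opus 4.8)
The plan is to rephrase projectivity and injectivity in $\mmod\Lambda$ as $\Hom$-vanishing conditions in $\D^b(\mod\Lambda)$ and then, for the nontrivial direction, to peel off the outermost term of a projective (resp.\ injective) resolution. The ``if'' part is immediate from the elementary fact that for a projective $\Lambda$-module $P$ and any $Z^\bullet\in\D^b(\mod\Lambda)$ there is a natural isomorphism $\Hom_{\D^b(\mod\Lambda)}(P,Z^\bullet)\cong\Hom_\Lambda(P,H^0(Z^\bullet))$, since $P$ is its own projective resolution and $\Hom_\Lambda(P,-)$ is exact. Taking $Z^\bullet=\shift{Y^\bullet}$ with $Y^\bullet\in\mmod\Lambda$ gives $\Hom_{\D^b(\mod\Lambda)}(P,\shift{Y^\bullet})\cong\Hom_\Lambda(P,H^1(Y^\bullet))=0$, so $P$ is projective in $\mmod\Lambda$; dually, using $\Hom_{\D^b(\mod\Lambda)}(Z^\bullet,I)\cong\Hom_\Lambda(H^0(Z^\bullet),I)$ for $I$ injective together with $H^{-m}(Y^\bullet)=0$, the complex $\nshift{I}{m-1}$ is injective in $\mmod\Lambda$.

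For the converse, let $X^\bullet$ be projective in $\mmod\Lambda$. Since $X^\bullet\in\D^{\le 0}(\mod\Lambda)$, its projective resolution $\projres X^\bullet$ lies in non-positive degrees with zeroth term $P^0$, and $\projres X^\bullet\cong X^\bullet$. Feeding $\projres X^\bullet$ into the brutal truncation triangle at $p=-1$, identifying $\projres X^\bullet$ with $X^\bullet$ and rotating, one obtains a triangle
\[
K^\bullet\longrightarrow P^0\longrightarrow X^\bullet\xrightarrow{g}\shift{K^\bullet},\qquad K^\bullet\coloneqq\brutalTrunc_{\le -1}(\projres X^\bullet)[-1],
\]
where $P^0$ now denotes the corresponding stalk complex. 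Using the long exact cohomology sequence of this triangle together with $H^i(P^0)=0$ for $i\ne 0$, one checks that $K^\bullet$ again lies in $\mmod\Lambda$. Since $X^\bullet$ is projective in $\mmod\Lambda$ and $K^\bullet\in\mmod\Lambda$, the connecting morphism $g\in\Hom_{\D^b(\mod\Lambda)}(X^\bullet,\shift{K^\bullet})$ vanishes, so the triangle splits and $X^\bullet$ is a direct summand of the stalk complex $P^0$ in $\D^b(\mod\Lambda)$. Passing to cohomology gives $H^i(X^\bullet)=0$ for $i\ne 0$, whence $X^\bullet\cong H^0(X^\bullet)$ (a complex with cohomology concentrated in a single degree is that stalk, cf.\ $H^n(X^\bullet)\cong\softTrunc^{\ge n}\softTrunc^{\le n}(X^\bullet)$), and $H^0(X^\bullet)$, being a direct summand of the projective module $P^0$, is projective.

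The injective statement I would handle by the dual argument: for $Y^\bullet$ injective in $\mmod\Lambda$, use $Y^\bullet\in\D^{\ge-(m-1)}(\mod\Lambda)$ to pick an injective resolution $\injres Y^\bullet$ concentrated in degrees $\ge-(m-1)$, apply the brutal truncation triangle at $p=-(m-1)$ to split off the stalk complex $\brutalTrunc_{\le-(m-1)}(\injres Y^\bullet)$ --- the bottom injective term, placed in degree $-(m-1)$, i.e.\ of the form $\nshift{I}{m-1}$ --- check that the complementary term $\shift{\brutalTrunc_{\ge-(m-2)}(\injres Y^\bullet)}$ again lies in $\mmod\Lambda$, and conclude as before. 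I expect the only genuinely non-formal point to be exactly this verification that the complementary complex stays inside $\mmod\Lambda$: it is where the cohomological range $[-(m-1),0]$ defining $\mmod\Lambda$ is used, and where an off-by-one in the truncation would leave cohomology in degree $-m$; everything else is manipulation of triangles and long exact cohomology sequences.
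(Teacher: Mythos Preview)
Your argument is correct. The paper itself does not prove this lemma; it simply cites \cite[Proposition~2.14]{Zho25} (and \cite[Corollary~1.4]{AST08}) for the statement. Your self-contained proof via the brutal truncation triangle of a projective (resp.\ injective) resolution, together with the verification that the complementary complex $K^\bullet$ stays in $\mmod\Lambda$, is a clean and direct route: the key observation you flag---that the cohomology of $K^\bullet$ is shifted by one relative to that of $X^\bullet$, so the bound $H^{-m}(X^\bullet)=0$ becomes $H^{-(m-1)}(K^\bullet)=0$---is exactly what makes the splitting go through, and your check that $H^1(K^\bullet)=0$ (via surjectivity of $P^0\twoheadrightarrow H^0(X^\bullet)$) closes the other end.
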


\subsection{AR-triangles}
In \cite{Zho25} the existence of Auslander-Reiten triangles in the extended module category was proven through the method of finding AR-triangles in subcategories of triangulated categories from \cite{Jor09}, together with the existence of AR-triangles ending in compact objects of $\homotopy(\inj\Lambda)$, proven in \cite{KL06}. 
We will shortly recall the existence result, but let us first spend a bit of time recalling the definition of AR-triangles and some of their properties.

\begin{definition}[{\cite[Definition 1.3]{Jor09}}]\label{def:ARtriangles}
    A triangle
    \[
    \begin{tikzpicture}[anchor=base, baseline]
        \node (A) at (0,0) [] {$A^\bullet\vphantom{[1]}$};
        \node[right=1.5cm of A] (B) {$B^\bullet\vphantom{[1]}$};
        \node[right=1.5cm of B] (C)  {$C^\bullet\vphantom{[1]}$};
        \node[right=1.5cm of C](Ashift) {$\shift{A^\bullet}$,};
        \draw[->] ([yshift=1pt]A.east)--([yshift=1pt]B.west)node[midway,above,scale=.8]{$\alpha$};
        \draw[->] ([yshift=1pt]B.east)--([yshift=1pt]C.west)node[midway,above,scale=.8]{$\beta$};
        \draw[->] ([yshift=1pt]C.east)--([yshift=1pt]Ashift.west)node[midway,above,scale=.8]{$\delta$};
    \end{tikzpicture}
    \]
    in $\D^b(\mod\Lambda)$ with $A^\bullet,B^\bullet$ and $C^\bullet$ in $\mmod\Lambda$, is called an \emph{Auslander-Reiten triangle} (AR-triangle) in $\mmod\Lambda$ if
    \begin{enumerate}[align=left]
        \item[\hypertarget{ARcondition1}{(AR1)}] the morphism $\delta\colon C^\bullet \to \shift{A^\bullet}$ is non-zero,
        \item[\hypertarget{ARcondition2}{(AR2)}] if $\alpha'\colon A^\bullet\to X^\bullet$ is not a section, then it factors through $\alpha$
        \[
        \begin{tikzpicture}
            \node (A) at (0,0) [] {$A^\bullet$};
            \node (B) at (1.5,0) [] {$B^\bullet$};
            \node (X) at (0,-1) [] {$X^\bullet$};
            \draw[->] (A)--(B)node[midway,above,scale=.8]{$\alpha$};
            \draw[->] (A)--(X)node[midway,left,scale=.8]{$\alpha'$};
            \draw[->,dashed] (B)--(X)node[midway,anchor=north west,scale=.8]{$\exists$};
        \end{tikzpicture}
        \]
        \item[\hypertarget{ARcondition3}{(AR3)}] if $\beta'\colon Y^\bullet\to C^\bullet$ is not a retraction, then it factors through $\beta$ 
        \[
        \begin{tikzpicture}
            \node (B) at (-.5,0) [] {$B^\bullet$};
            \node (C) at (1,0) [] {$C^\bullet$};
            \node (Y) at (1,1) [] {$Y^\bullet$};
            \draw[->] (B)--(C)node[midway,above,scale=.8]{$\beta$};
            \draw[->] (Y)--(C)node[midway,right,scale=.8]{$\beta'$};
            \draw[->,dashed] (Y)--(B)node[midway,anchor=south east,scale=.8]{$\exists$};
        \end{tikzpicture}
        \]
    \end{enumerate}
\end{definition}
Since $\mmod\Lambda$ is an extension closed subcategory of $\D^b(\mod\Lambda)$ we can replace condition (AR2) with the following condition.
\begin{enumerate}[align=left]
    \item[\hypertarget{ARcondition2marked}{(AR2')}]  $A^\bullet$ and $C^\bullet$ are indecomposable.\label{def:ARtriangles2tilde}
\end{enumerate} 
The fact that (AR2') follows from (AR1), (AR2) and (AR3) is proven in \cite[Lemma 2.4]{Jor09}, 
and the fact that (AR2) follows from (AR1), (AR2') and (AR3) can be proven as for the derived category in \cite[Lemma 4.2]{Hap88}\footnote{only noting that by exactness of $\mmod\Lambda$, the morphism $t_1\colon W\to Y'$ is in $\mmod\Lambda$}. 
It also follows from \cite[Theorem 2.9]{INP24}, in which we find another useful description, namely that
\[
    \begin{tikzpicture}[baseline]
        \node (A) at (0,0) [] {$A^\bullet\vphantom{[1]}$};
        \node[right=1.5cm of A] (B) {$B^\bullet\vphantom{[1]}$};
        \node[right=1.5cm of B] (C) {$C^\bullet\vphantom{[1]}$};
        \node[right=1.5cm of C](Ashift) {$\shift{A^\bullet},$};
        \draw[->] ([yshift=1pt]A.east)--([yshift=1pt]B.west)node[midway,above,scale=.8]{$\alpha$};
        \draw[->] ([yshift=1pt]B.east)--([yshift=1pt]C.west)node[midway,above,scale=.8]{$\beta$};
        \draw[->] ([yshift=1pt]C.east)--([yshift=1pt]Ashift.west)node[midway,above,scale=.8]{$\delta$};
    \end{tikzpicture}
    \]
is an AR-triangle if and only if $\alpha$ is a \emph{source morphism}, if and only if $\beta$ is a \emph{sink morphism}. 
A morphism $\alpha$ is a source morphism if it satisfies \hyperlink{ARcondition2}{(AR2)} and is \emph{left minimal}, meaning if $h\colon B^\bullet\to B^\bullet$ is an endomorphism such that $h\circ\alpha=\alpha$, then $h$ is an automorphism. 
\emph{Right minimal} morphisms and sink morphisms are defined dually.

Basic facts about AR-theory of Krull-Schmidt categories can be found in \cite{Liu10} and \cite{Rin84}. 
Let us give some highlights before stating the existence theorem from \cite{Zho25}. We denote the Jacobson radical of a Krull-Schmidt category $\cA$ by $\rad\cA$ (see e.g. \cite{Bau83} for background). Recall that for indecomposables $X,Y\in\cA$, $\rad(X,Y)$ are the non-isomorphisms $X\to Y$.
A morphism $f\colon X\to Y$ in $\cA$ is \emph{irreducible} if it is neither a section nor a retraction, and if $f=g\circ h$, then either $g$ is a section or $h$ is a retraction. 
Equivalently, $f\colon X\to Y$ is irreducible if it lies in $\rad(X,Y)\setminus\rad^2(X,Y)$ \cite[Proposition 2.4]{Bau83}. 

Irreducible morphisms are intrinsically related to AR-triangles as the following result tells us.
\begin{lemma}[{\cite[Thm. 1.4]{Liu10}}]\label{thm:irreducibleInARsequence}
    Let 
    \begin{equation}\label{eq:arTriangleAndIrreducibles}
    \begin{tikzpicture}[baseline]
        \node (A) at (0,0) [] {$A^\bullet\vphantom{[1]}$};
        \node[right=1.5cm of A] (B)  {$B^\bullet\vphantom{[1]}$};
        \node[right=1.5cm of B] (C) {$C^\bullet\vphantom{[1]}$};
        \node[right=1.5cm of C](Ashift) {$\shift{A^\bullet}$,};
        \draw[->] ([yshift=1pt]A.east)--([yshift=1pt]B.west)node[midway,above,scale=.8]{$\alpha$};
        \draw[->] ([yshift=1pt]B.east)--([yshift=1pt]C.west)node[midway,above,scale=.8]{$\beta$};
        \draw[->] ([yshift=1pt]C.east)--([yshift=1pt]Ashift.west)node[midway,above,scale=.8]{$\delta$};
    \end{tikzpicture}
    \end{equation}
    be an AR-triangle in $\mmod\Lambda$, then
    \begin{enumerate}
        \item Any AR-triangle ending in $C^\bullet$ or starting in $A^\bullet$ is isomorphic to \eqref{eq:arTriangleAndIrreducibles},
        \item Each irreducible morphism $\alpha_1\colon A^\bullet\to B_1^\bullet$ or $\beta_1\colon B_1^\bullet\to C^\bullet$ fits into an AR-triangle
        \[
    \begin{tikzpicture}[baseline]
        \node[anchor=east] (A) at (-1,0) [] {$A_{\vphantom{1}}^\bullet$};
        \node[right=1.5cm of A] (B) {$B_1^\bullet\oplus B_2^\bullet$};
        \node[right=1.5cm of B] (C) {$C_{\vphantom{1}}^\bullet$};
        \node[right=1.5cm of C](Ashift) {$\shift{A_{\vphantom{1}}^\bullet}.$};
        \draw[->] ([yshift=1pt]A.east)--([yshift=1pt]B.west)node[midway,above,scale=.8]{$
            \begin{psmallmatrix}
                \alpha_1 & \alpha_2
            \end{psmallmatrix}
        $};
        \draw[->] ([yshift=1pt]B.east)--([yshift=1pt]C.west)node[midway,above,scale=.9]{$
            \begin{psmallmatrix}
                \beta_1 \\ \beta_2
            \end{psmallmatrix}
        $};
        \draw[->] ([yshift=1pt]C.east)--([yshift=1pt]Ashift.west)node[midway,above,scale=.9]{};
    \end{tikzpicture}
    \]
    \end{enumerate}
\end{lemma}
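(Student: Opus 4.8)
The plan is to derive both parts from the characterization recalled above --- that a triangle in $\mmod\Lambda$ is an AR-triangle exactly when $\alpha$ is a source morphism, equivalently when $\beta$ is a sink morphism --- together with the fact that $\mmod\Lambda$ is Krull-Schmidt, hence idempotent complete.

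\emph{Part (1).} Let $A'^\bullet\xrightarrow{\alpha'}B'^\bullet\xrightarrow{\beta'}C^\bullet\xrightarrow{\delta'}\shift{A'^\bullet}$ be a second AR-triangle ending in $C^\bullet$. Then $\beta$ and $\beta'$ are sink morphisms into $C^\bullet$, and neither is a retraction: a retraction would split its triangle, forcing the connecting morphism to vanish and contradicting \hyperlink{ARcondition1}{(AR1)}. Applying \hyperlink{ARcondition3}{(AR3)} of each triangle to the non-retraction supplied by the other gives $u\colon B'^\bullet\to B^\bullet$ and $v\colon B^\bullet\to B'^\bullet$ with $\beta u=\beta'$ and $\beta' v=\beta$. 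Hence $\beta\circ(uv)=\beta$ and $\beta'\circ(vu)=\beta'$, so by right minimality of sink morphisms $uv$ and $vu$ are automorphisms; thus $u$ is simultaneously a split epimorphism and a split monomorphism, i.e.\ an isomorphism. Since $\beta u=\id_{C^\bullet}\circ\beta'$, completing this commuting square to a morphism of triangles in $\D^b(\mod\Lambda)$ produces $w\colon A'^\bullet\to A^\bullet$ with $(w,u,\id_{C^\bullet})$ an isomorphism of triangles (two of its components being isomorphisms), so the two AR-triangles are isomorphic. The statement for AR-triangles starting in $A^\bullet$ follows dually, with source morphisms and left minimality in place of sink morphisms and right minimality.

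\emph{Part (2).} Suppose $\alpha_1\colon A^\bullet\to B_1^\bullet$ is irreducible. It is not a section, so by \hyperlink{ARcondition2}{(AR2)} it factors as $\alpha_1=g\alpha$ for some $g\colon B^\bullet\to B_1^\bullet$. The morphism $\alpha$ is not a section (else the AR-triangle would split and $\delta$ would vanish), so irreducibility of $\alpha_1$ forces $g$ to be a retraction. Choosing a section of $g$ and using that idempotents split in the Krull-Schmidt category $\mmod\Lambda$, I get a decomposition $B^\bullet\cong B_1^\bullet\oplus B_2^\bullet$ under which $g$ becomes the projection onto $B_1^\bullet$; then the first component of $\alpha$ in this decomposition equals $g\alpha=\alpha_1$. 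Transporting the AR-triangle along this isomorphism yields precisely a triangle of the displayed shape --- with $\alpha_2$ the second component of $\alpha$, and $\beta_1,\beta_2$ the components of $\beta$ --- which is again an AR-triangle, being isomorphic to one. If instead $\beta_1\colon B_1^\bullet\to C^\bullet$ is the given irreducible morphism, the dual argument applies: $\beta_1$ is not a retraction, hence factors through the sink morphism $\beta$, the factor $B_1^\bullet\to B^\bullet$ is a section (since $\beta$ is not a retraction), and splitting it exhibits $B^\bullet\cong B_1^\bullet\oplus B_2^\bullet$ with $\beta_1$ the first component of $\beta$; the same transport argument finishes the proof.

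I expect the only genuinely delicate point to be in part (2): one must choose the splitting so that the relevant component of the source (resp.\ sink) morphism is \emph{literally} the given $\alpha_1$ (resp.\ $\beta_1$) and not merely isomorphic to it. This is exactly what taking $g$ (resp.\ its dual) to be the canonical projection (resp.\ inclusion) of the summand $B_1^\bullet$ accomplishes, via the identity $g\alpha=\alpha_1$. Everything else is formal, relying only on the source/sink-morphism description of AR-triangles and on the ambient triangulated structure of $\D^b(\mod\Lambda)$.
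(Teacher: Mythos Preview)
Your proof is correct and follows the standard line of argument. Note, however, that the paper does not supply its own proof of this lemma: it is stated with a citation to \cite[Thm.~1.4]{Liu10} and left unproven, so there is no in-paper argument to compare against. What you have written is essentially the classical proof one finds in the cited reference (uniqueness via minimality of sink/source morphisms, and the splitting-off argument for irreducibles via the factorization property and Krull--Schmidt).
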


In order to state the existence result we are now only left with defining some constructions.
First, we will for objects $X^\bullet\in\mmod\Lambda$ denote by $\projres_m X^\bullet$ the brutal truncation $\brutalTrunc_{\geq -m}\projres X^\bullet$ of a minimal projective resolution of $X^\bullet$, and likewise denote by $\injres_m X^\bullet$ the brutal truncation $\brutalTrunc_{\leq 1}\injres X^\bullet$ of a minimal projective resolution of $X^\bullet$.
We call these the projective/injective presentations of $X^\bullet$.
\begin{definition}
    For any $Z^\bullet\in \mmod\Lambda$, we define
    \[
    \tau_{[m]}(Z^\bullet)=\sigma^{\leq 0}(\nu\nshift{\projres_m(Z^\bullet)}{-1})\quad \text{and}\quad \tau_{[m]}^-(Z^\bullet)=\sigma^{\geq -(m-1)}(\nu^-\shift{\injres_m(Z^\bullet)}).
    \]
\end{definition}
By restricting to $1\nmod\Lambda=\mod\Lambda$, we obtain the usual AR-translates. 
We can, maybe not surprisingly, also see that $\tau_{[m]}$ behaves much like its classical counterpart.
\begin{proposition}[{\cite[Proposition 3.11]{Zho25}}]
    Let $Z^\bullet$ be an indecomposable object in $\mmod\Lambda$.
    \begin{enumerate}
        \item $\tau_{[m]}Z^\bullet$ has no non-zero injective summands in $\mmod\Lambda$.
        \item $\tau_{[m]}^-Z^\bullet$ has no non-zero projective summands in $\mmod\Lambda$.
        \item $Z^\bullet$ is projective in $\mmod\Lambda$ if and only if $\tau_{[m]}Z^\bullet=0$.
        \item $Z^\bullet$ is injective in $\mmod\Lambda$ if and only if $\tau_{[m]}^-Z^\bullet=0$.
        \item If $Z^\bullet$ is not projective in $\mmod\Lambda$, then $\tau_{[m]}Z^\bullet$ is indecomposable and 
        \[
            \tau_{[m]}^-\tau_{[m]}Z^\bullet\cong Z^\bullet.
        \]
        \item If $Z^\bullet$ is not injective in $\mmod\Lambda$, then $\tau_{[m]}^-Z^\bullet$ is indecomposable and 
        \[
            \tau_{[m]}\tau_{[m]}^-Z^\bullet\cong Z^\bullet.
        \]
    \end{enumerate}
\end{proposition}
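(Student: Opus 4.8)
The plan is to establish each of the six statements by unwinding the definition of $\tau_{[m]}$ and $\tau_{[m]}^-$ in terms of the Nakayama functor $\nu$, brutal truncations, and soft truncations, and then transporting known facts about the classical AR-translate $\tau$ through the homotopy-category equivalence $\nu\colon \homotopy^b(\proj\Lambda)\to\homotopy^b(\inj\Lambda)$. Since $\tau_{[m]}^-$ is defined dually to $\tau_{[m]}$ (swapping $\proj$ with $\inj$, $\nu$ with $\nu^-$, brutal truncation from below with brutal truncation from above, and $\sigma^{\leq 0}$ with $\sigma^{\geq -(m-1)}$), I would prove (1), (3), (5) in detail and obtain (2), (4), (6) by the evident dualization, remarking only that $D$ interchanges $\mod\Lambda$ and $\mod\Lambda^{\mathrm{op}}$ compatibly with all the relevant truncations.

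For (1) and (3): start from $\projres_m(Z^\bullet)=\brutalTrunc_{\geq -m}\projres(Z^\bullet)$, a complex of projectives living in degrees $-m,\dots,0$, whose only possibly non-exact spots are in degrees $-(m-1),\dots,0$ (since $\projres Z^\bullet$ is a resolution and the brutal truncation only disturbs the leftmost differential). Applying $\nu$ termwise takes projectives to injectives; shifting by $[-1]$ moves the complex into degrees $-(m-1),\dots,1$; then $\sigma^{\leq 0}$ chops off the degree-$1$ term by replacing the degree-$0$ term with a kernel. The resulting complex is quasi-isomorphic to something with injective terms, and the key observation is that its cohomology in degree $0$ is precisely the part measuring whether $\nu$ of the map $P_0\to P_{-1}$ (the end of the projective presentation) is injective — this is exactly the classical computation showing $\tau Z$ has no injective summands, done one homological degree at a time. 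For (3), $Z^\bullet$ projective in $\mmod\Lambda$ means $Z^\bullet\cong P$ for $P\in\proj\Lambda$ by the Lemma quoted above; then $\projres_m(P)$ is just the stalk $P$ in degree $0$, so $\nu(P)[-1]$ sits in degree $1$ and $\sigma^{\leq 0}$ kills it, giving $\tau_{[m]}P=0$. Conversely, if $\tau_{[m]}Z^\bullet=0$ one runs the computation backwards: vanishing of $\sigma^{\leq 0}(\nu\projres_m(Z^\bullet)[-1])$ forces the projective presentation of $Z^\bullet$ to be concentrated in degree $0$ up to the relevant truncation, hence $Z^\bullet$ is a stalk projective module.

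For (5) — the genuinely substantive part — the strategy is to use the AR-triangle existence theorem of \cite{Zho25} (recalled just above) together with Lemma~\ref{thm:irreducibleInARsequence}. If $Z^\bullet$ is not projective in $\mmod\Lambda$, there is an AR-triangle ending in $Z^\bullet$, and by general AR-theory in Krull--Schmidt (extri)categories its left-hand term is indecomposable; one then identifies that left-hand term with $\tau_{[m]}Z^\bullet$ by matching the defining formula to the construction of the AR-triangle via $\nu$ and truncations (this is where one invokes that $\nu$ induces an equivalence $\homotopy^b(\proj\Lambda)\simeq\homotopy^b(\inj\Lambda)$, so that the "brutally truncated, $\nu$-twisted, soft-truncated" object inherits indecomposability from the indecomposable minimal projective presentation). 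The isomorphism $\tau_{[m]}^-\tau_{[m]}Z^\bullet\cong Z^\bullet$ then follows because $\tau_{[m]}^-$ applied to the left term of an AR-triangle recovers the right term: formally, one checks that $\sigma^{\geq -(m-1)}\bigl(\nu^-[1]\,\brutalTrunc_{\leq 1}\,\injres\,\sigma^{\leq 0}(\nu[-1]\,\brutalTrunc_{\geq -m}\,\projres\, Z^\bullet)\bigr)\cong Z^\bullet$ by using $\nu^-\nu\cong\id$ on $\homotopy^b(\proj\Lambda)$, the interplay $\sigma^{\geq p}\sigma^{\leq q}\cong\sigma^{\leq q}\sigma^{\geq p}$, and that for an object with no injective summands the composite of brutal-then-soft truncations around $\injres$ returns the object up to the degrees in $[-(m-1),0]$.

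The main obstacle I expect is the bookkeeping in step (5): precisely matching the formula-defined $\tau_{[m]}$ with the abstractly-existing AR-translate, and verifying that the brutal truncations (which are \emph{not} functorial, as the excerpt stresses) nonetheless behave well enough on \emph{minimal} projective/injective presentations of indecomposables that the round-trip $\tau_{[m]}^-\tau_{[m]}$ is canonically the identity rather than merely isomorphic after further truncation. The cleanest route is probably to avoid re-deriving this from scratch and instead cite the AR-triangle machinery, reducing everything to: (a) the left term of an AR-triangle is indecomposable (standard), and (b) the two formulas compute the two ends of the same triangle (a direct, if fiddly, diagram chase with $\nu$, $\nu^-$ and the truncation functors).
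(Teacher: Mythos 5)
First, note that the paper does not prove this proposition at all: it is quoted verbatim as \cite[Proposition 3.11]{Zho25} and used as a black box, so there is no in-paper argument to measure your proposal against. Judged on its own terms, your plan for (1)--(4) (direct computation with $\projres_m$, $\nu$, the shift and the soft truncation, then dualize) is the natural one and matches what the cited source must do; the computation $\tau_{[m]}P=0$ for a stalk projective and the converse are essentially right, though your claim that $\brutalTrunc_{\geq -m}\projres Z^\bullet$ is exact outside $[-(m-1),0]$ is not quite accurate --- the brutal truncation creates new cohomology in degree $-m$ (the kernel of the leftmost surviving differential), which only lands back in the admissible window after the shift $[-1]$.

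The genuine gap is in (5)--(6). Your primary route is to invoke the AR-triangle existence theorem (\Cref{theorem:ZhouExistenceAR}, i.e.\ \cite[Thm.~3.12]{Zho25}) and read off indecomposability of the left-hand term. But in \cite{Zho25} that theorem is stated \emph{after} Proposition 3.11 and its proof needs precisely the content of items (1), (5): one must already know that $\tau_{[m]}Z^\bullet$ is indecomposable and without injective summands in order to recognize the constructed triangle as an AR-triangle ending in $Z^\bullet$ with the claimed left term. Deriving 3.11(5) from 3.12 is therefore circular unless you supply an independent proof of the existence theorem. Your fallback --- the direct verification that $\sigma^{\geq -(m-1)}\bigl(\nu^-[1]\,\injres_m\,\sigma^{\leq 0}(\nu[-1]\,\projres_m Z^\bullet)\bigr)\cong Z^\bullet$ via $\nu^-\nu\cong\id$ on $\homotopy^b(\proj\Lambda)$ --- is the correct strategy, but the one step you defer (that a minimal injective presentation of $\tau_{[m]}Z^\bullet$ can be identified with $\nu$ of the minimal brutally truncated projective presentation of $Z^\bullet$, which uses minimality and part (1) in an essential way) is exactly where the work lies, and it is not carried out. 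As written, the proposal is a plausible outline rather than a proof, and its headline route for the substantive items rests on a logically later result.
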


Most importantly, they are directly related to AR-triangles.
\begin{theorem}[{\cite[Thm. 3.12]{Zho25}}]\label{theorem:ZhouExistenceAR}
    Let $Z^\bullet$ be an indecomposable object in $\mmod\Lambda$.
    \begin{enumerate}
        \item If $Z^\bullet$ is not projective in $\mmod\Lambda$, there is an AR-triangle in $\mmod\Lambda$
        \[
        \begin{tikzpicture}
            \node[anchor=east] (A) at (0,0) {$\tau_{[m]}Z^\bullet\vphantom{[1]_{[m]}}$};
            \node[right = 1.5cm of A] (B) {$Y^\bullet\vphantom{[1]_{[m]}}$};
            \node[right= 1.5cm of B] (C) {$Z^\bullet\vphantom{[1]_{[m]}}$};
            \node[right=1.5cm of C](Ashift) {$\shift{\tau_{[m]}Z^\bullet}.$};
            \draw[->] ([yshift=1pt]A.east)--([yshift=1pt]B.west);
            \draw[->] ([yshift=1pt]B.east)--([yshift=1pt]C.west);
            \draw[->] ([yshift=1pt]C.east)--([yshift=1pt]Ashift.west);
        \end{tikzpicture}
        \]
        \item If $Z^\bullet$ is not injective in $\mmod\Lambda$, there is an AR-triangle in $\mmod\Lambda$
        \[
        \begin{tikzpicture}
            \node[anchor=east] (A) at (0,0) {$Z^\bullet\vphantom{[1]_{[m]}}$};
            \node[right=1.5cm of A] (B) {$Y^\bullet\vphantom{[1]_{[m]}}$};
            \node[right=1.5cm of B] (C) {$\tau_{[m]}^-Z^\bullet\vphantom{[1]_{[m]}}$};
            \node[right=1.5cm of C](Ashift) {$\shift{Z^\bullet}\vphantom{[1]_{[m]}}.$};
            \draw[->] ([yshift=1pt]A.east)--([yshift=1pt]B.west);
            \draw[->] ([yshift=1pt]B.east)--([yshift=1pt]C.west);
            \draw[->] ([yshift=1pt]C.east)--([yshift=1pt]Ashift.west);
        \end{tikzpicture}
        \]
    \end{enumerate}
\end{theorem}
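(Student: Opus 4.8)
The plan is to prove (1) and to obtain (2) from it formally. For the latter: if $Z^\bullet$ is not injective in $\mmod\Lambda$, then by the preceding proposition $W^\bullet\coloneqq\tau_{[m]}^-Z^\bullet$ is a nonzero indecomposable object with no projective summands (hence non-projective) and with $\tau_{[m]}W^\bullet\cong Z^\bullet$; applying (1) to $W^\bullet$ and relabelling, its AR-triangle becomes $Z^\bullet\to Y^\bullet\to\tau_{[m]}^-Z^\bullet\to\shift{Z^\bullet}$, which is the triangle required in (2). So I concentrate on (1). It suffices to produce \emph{some} triangle
\[
\tau_{[m]}Z^\bullet\longrightarrow Y^\bullet\longrightarrow Z^\bullet\xrightarrow{\,\delta\,}\shift{\tau_{[m]}Z^\bullet}
\]
in $\Db(\mod\Lambda)$ with $Y^\bullet\in\mmod\Lambda$ satisfying \hyperlink{ARcondition1}{(AR1)}, \hyperlink{ARcondition2marked}{(AR2$'$)} and \hyperlink{ARcondition3}{(AR3)}; then \hyperlink{ARcondition2}{(AR2)} holds automatically by the discussion following \Cref{def:ARtriangles}, and $Y^\bullet\to Z^\bullet$ is a sink morphism in the sense of \cite[Theorem~2.9]{INP24}.

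The triangle will be built by restricting to $\mmod\Lambda$ an Auslander--Reiten triangle that already exists in a larger triangulated category, which is the strategy of \cite{Jor09}. Via injective resolutions, identify $\Db(\mod\Lambda)$ with the compact objects of the compactly generated homotopy category $\homotopy(\inj\Lambda)$. Then an indecomposable non-projective $Z^\bullet\in\mmod\Lambda$ is an indecomposable compact object, so \cite{KL06} provides an AR-triangle
\[
\overline{\tau}Z^\bullet\longrightarrow\overline{Y}^\bullet\longrightarrow Z^\bullet\xrightarrow{\,\overline{\delta}\,}\shift{\overline{\tau}Z^\bullet}
\]
in $\homotopy(\inj\Lambda)$, whose left-hand term \cite{KL06} identifies as $\overline{\tau}Z^\bullet\cong\nu\nshift{\projres Z^\bullet}{-1}$, with $\projres Z^\bullet$ a minimal projective resolution and $\nu$ the Nakayama functor. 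In general $\overline{\tau}Z^\bullet$ has cohomology outside the window $[-(m-1),0]$ and hence does not lie in $\mmod\Lambda$; the point of the definition of $\tau_{[m]}$ is that this extra cohomology is invisible to $\mmod\Lambda$. Replacing $\projres Z^\bullet$ by its brutal truncation $\projres_m Z^\bullet$ and then applying the soft truncation $\softTrunc^{\leq 0}$ turns $\overline{\tau}Z^\bullet$ into $\tau_{[m]}Z^\bullet\in\mmod\Lambda$, which is indecomposable by the preceding proposition. Feeding the brutal- and soft-truncation triangles for $\overline{\tau}Z^\bullet$ into an octahedron on the ambient AR-triangle then produces a triangle $\tau_{[m]}Z^\bullet\to Y^\bullet\to Z^\bullet\xrightarrow{\,\delta\,}\shift{\tau_{[m]}Z^\bullet}$ with $Y^\bullet\in\mmod\Lambda$, and I would verify the three axioms: \hyperlink{ARcondition2marked}{(AR2$'$)} is clear since $\tau_{[m]}Z^\bullet$ and $Z^\bullet$ are indecomposable; \hyperlink{ARcondition1}{(AR1)} holds because $\delta$ is the image of $\overline{\delta}\neq0$ and a vanishing $\delta$ would split the triangle, contradicting $\tau_{[m]}Z^\bullet\neq0$ (i.e.\ $Z^\bullet$ non-projective); and \hyperlink{ARcondition3}{(AR3)} holds because a non-retraction $\beta'\colon V^\bullet\to Z^\bullet$ with $V^\bullet\in\mmod\Lambda$ factors through $\overline{Y}^\bullet\to Z^\bullet$ by the ambient axiom, and this factorisation descends along the comparison map $Y^\bullet\to\overline{Y}^\bullet$, which behaves like a right $\mmod\Lambda$-approximation over $Z^\bullet$.

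The technical heart, and the step I expect to be the main obstacle, is justifying ``invisible to $\mmod\Lambda$'' and ``behaves like a right $\mmod\Lambda$-approximation'' above. Concretely, one must check that for every $W^\bullet\in\mmod\Lambda$ the truncations induce isomorphisms $\Hom_{m\mhyphen\Lambda}(W^\bullet,\tau_{[m]}Z^\bullet)\cong\Hom_{\homotopy(\inj\Lambda)}(W^\bullet,\overline{\tau}Z^\bullet)$ and $\Hom_{m\mhyphen\Lambda}(\tau_{[m]}Z^\bullet,W^\bullet)\cong\Hom_{\homotopy(\inj\Lambda)}(\overline{\tau}Z^\bullet,W^\bullet)$, and that $\Hom_{m\mhyphen\Lambda}(W^\bullet,Y^\bullet)$ maps isomorphically onto the subspace of $\Hom_{\homotopy(\inj\Lambda)}(W^\bullet,\overline{Y}^\bullet)$ consisting of morphisms lying over $Z^\bullet$. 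These identifications rely on the amplitude bound defining $\mmod\Lambda$, the minimality of $\projres Z^\bullet$, and the interplay of brutal and soft truncations, including $\softTrunc^{\geq p}\circ\softTrunc^{\leq q}\cong\softTrunc^{\leq q}\circ\softTrunc^{\geq p}$; altogether they amount to checking that the AR-triangle-restriction machinery of \cite{Jor09} applies to the extension-closed embedding $\mmod\Lambda\hookrightarrow\homotopy(\inj\Lambda)$. Granting them, \hyperlink{ARcondition1}{(AR1)} and \hyperlink{ARcondition3}{(AR3)} follow as sketched, \hyperlink{ARcondition2}{(AR2)} is automatic, and the resulting AR-triangle ending in $Z^\bullet$ has indecomposable left-hand term; by uniqueness of AR-triangles it is isomorphic to the one in the statement, with translate $\tau_{[m]}Z^\bullet$. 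Part (2) then follows via the reduction above.
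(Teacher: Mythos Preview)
Your approach is correct and coincides with what the paper describes: the theorem is cited from \cite{Zho25}, and the paper explicitly notes that the proof there proceeds ``through the method of finding AR-triangles in subcategories of triangulated categories from \cite{Jor09}, together with the existence of AR-triangles ending in compact objects of $\homotopy(\inj\Lambda)$, proven in \cite{KL06}.'' Your sketch---take the ambient AR-triangle in $\homotopy(\inj\Lambda)$ supplied by \cite{KL06}, identify its left-hand term with $\nu\nshift{\projres Z^\bullet}{-1}$, and then restrict to the extension-closed subcategory $\mmod\Lambda$ via the machinery of \cite{Jor09}, with the truncation manipulations producing $\tau_{[m]}Z^\bullet$---is exactly this strategy, and your reduction of (2) to (1) via the preceding proposition is the natural way to handle the dual statement.
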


\subsection{AR-quivers}
The information of AR-triangles can be encoded in an AR-quiver. We give a brief treatment of the required notions here, and refer to \cite{Liu10} for the details.
\begin{definition}
    A \emph{translation} of a quiver $Q$ is a bijection $\tau\colon Q_0\setminus Q_0^p\to Q_0\setminus Q_0^i$ for some subsets $Q_0^p,Q_0^i$ of $Q_0$, such that for $x\in Q_0\setminus Q_0^p$ we have $Q_1(\tau x,-)=Q_1(- ,x)$.
    We call a tuple $(Q,\tau)$ a \emph{translation quiver} if $Q$ is a quiver with translation $\tau$. 
\end{definition}

As previously noted, the set of irreducible morphisms between two objects $X^\bullet$ and $Y^\bullet$ in $\mmod\Lambda$ is given by $\rad(X^\bullet,Y^\bullet)\setminus\rad^2(X^\bullet,Y^\bullet)$. 
As the irreducible morphisms are the building blocks for our AR-triangles, we want to consider 
\[
\irr(X^\bullet,Y^\bullet)\coloneqq\rad(X^\bullet,Y^\bullet)/\rad^2(X^\bullet,Y^\bullet).
\] 
$\irr(X^\bullet,Y^\bullet)$ is a $D_{X^\bullet}\mhyphen D_{Y^\bullet}$-bimodule where $D_{Z^\bullet}=\End(Z^\bullet)/\rad(Z^\bullet,Z^\bullet)$.

\begin{defprop}[{\cite[Prop. 2.1]{Liu10}}]
    The \emph{Auslander-Reiten quiver} $\AR(\mmod\Lambda)$ of $\mmod\Lambda$ is the translation quiver $(\AR(\mmod\Lambda),\tau_{[m]})$ given by
    \begin{itemize}
        \item The vertices are the indecomposable objects of $\mmod\Lambda$, 
        \item The arrows are given by existence of irreducible morphisms, i.e.\\ $\AR(\mmod\Lambda)_1$ has an arrow $X^\bullet\to Y^\bullet$ if $\irr(X^\bullet,Y^\bullet)\neq 0$
        \item The translate, called the \emph{Auslander-Reiten translation}, is given by $\tau_{[m]} Z^\bullet=X^\bullet$ if and only if there is an AR-triangle $X^\bullet\to Y^\bullet\to Z^\bullet\to \shift{X^\bullet}$.
    \end{itemize}
\end{defprop}

\begin{remark}
    The general situation calls for a valuation of the arrows in the AR-quiver, however all our examples will have the trivial valuation, so we omit this complication.
\end{remark}

In general there are indecomposable objects for which the translation is not defined. 
In $\mod\Lambda$ we know that these  objects are in fact the indecomposable projectives, and as shown in \cite[Thm. 3.12 and Prop. 3.11]{Zho25} this is also the case for the extended module category $\mmod\Lambda$. 
However, the irreducible morphisms into projectives are still well-behaved.

\begin{lemma}\label{lemma:radPsink}
    \begin{enumerate}
        \item Let $P$ be an indecomposable projective module in $\mmod\Lambda$, then $$\iota\colon\rad P\to P$$ is a sink morphism in $\mmod\Lambda$.
        \item Let $\nshift{I}{m-1}$ be an indecomposable injective module in $\mmod\Lambda$, then $$\nshift{\pi}{m-1}\colon\nshift{I}{m-1}\to \nshift{(I/\soc(I))}{m-1}$$ is a source morphism in $\mmod\Lambda$.
    \end{enumerate}
\end{lemma}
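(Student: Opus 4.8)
The plan is to prove part (1) and then note that part (2) follows by the dual argument (using the duality $D$ interchanging $\proj\Lambda$ and $\inj\Lambda$, projectives and injectives in $\mmod\Lambda$, and reversing arrows; cf.\ \Cref{lemma:radPsink}). For part (1), I must show that $\iota\colon \rad P\to P$ is a \emph{sink morphism}, i.e.\ that it satisfies \hyperlink{ARcondition3}{(AR3)} and is right minimal. The key structural fact I would lean on is the description of projectives in $\mmod\Lambda$ (the Lemma after \Cref{lemma}, i.e.\ \cite[Proposition 2.14]{Zho25}): $P$ is an indecomposable projective of $\mmod\Lambda$ precisely when $P$ is an indecomposable projective module, placed in degree $0$; in particular $\Hom_{m\mhyphen\Lambda}(Y^\bullet,\shift{-})$ considerations and the vanishing $\Hom_{m\mhyphen\Lambda}(\shift{P},-)$-type statements are available.

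First I would check right minimality: if $h\colon \rad P\to\rad P$ satisfies $\iota\circ h=\iota$, then since $\iota$ is a monomorphism in $\D^b(\mod\Lambda)$ when viewed appropriately — more precisely, $\iota$ is injective on the level of modules and $\rad P, P$ are stalk complexes — we get $h=\id$, so $h$ is trivially an automorphism. (One should phrase this carefully: $\rad P \hookrightarrow P$ is a monomorphism in $\mod\Lambda$, and $\Hom_{m\mhyphen\Lambda}$ between stalk complexes in degree $0$ is just $\Hom_\Lambda$, so the cancellation is immediate.) The substantive part is \hyperlink{ARcondition3}{(AR3)}: given $\beta'\colon Y^\bullet\to P$ in $\mmod\Lambda$ that is not a retraction, I must factor it through $\iota$. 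The idea is that a map $Y^\bullet\to P$ that is not a retraction cannot hit the top of $P$: composing with $P\to\Top P = P/\rad P$ and using that $P/\rad P$ is semisimple, if the composite $Y^\bullet\to \Top P$ were nonzero it would (by projectivity of the relevant summand of $P$ as an object of $\mmod\Lambda$, hence by the lifting property along the epimorphism from $P$ — here I use that $P\to \Top P$ is a genuine surjection of modules and that any map from $Y^\bullet$ into the stalk $\Top P$ factors through cohomology $H^0(Y^\bullet)\to \Top P$) split off a copy of that summand, making $\beta'$ a retraction onto it, contradiction. Hence $Y^\bullet\to P \to \Top P$ is zero, so $\beta'$ factors through $\ker(P\to\Top P)=\rad P$, i.e.\ through $\iota$.

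The main obstacle I anticipate is making the "cannot hit the top" argument rigorous in the \emph{derived/extended} setting rather than the module setting: a morphism $\beta'\colon Y^\bullet\to P$ in $\D^b(\mod\Lambda)$ need not come from a chain map in an obvious way, so I would first replace $Y^\bullet$ by its injective presentation $\injres_m Y^\bullet$ (a complex with injective terms in all but the zeroth term, as discussed after \Cref{lemma}) or simply use that $P$ is a stalk complex in degree $0$ and $Y^\bullet\in\D^{\leq 0}$, whence $\Hom_{m\mhyphen\Lambda}(Y^\bullet,P)\cong\Hom_\Lambda(H^0(Y^\bullet),P)$ via the truncation $\softTrunc^{\geq 0}Y^\bullet = H^0(Y^\bullet)$ and adjunction $(\softTrunc^{\leq 0},\iota)$/$(\iota,\softTrunc^{\geq 0})$ for the $t$-structure. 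Once $\beta'$ is identified with a module map $\bar\beta'\colon H^0(Y^\bullet)\to P$, the argument is the classical one: $\bar\beta'$ is not a split epimorphism of modules (else $\beta'$ would be a retraction, since $P$ is a summand of $H^0(Y^\bullet)$ then and the section lifts to $\mmod\Lambda$), a non-split map into a projective indecomposable has image inside $\rad P$ (by Nakayama's lemma applied to $\Top$), and it lifts to $\rad P$ since $\rad P\hookrightarrow P$ is the inclusion of a submodule containing that image. Pulling this back up through the isomorphism $\Hom_{m\mhyphen\Lambda}(Y^\bullet,-)\cong\Hom_\Lambda(H^0(Y^\bullet),-)$ applied to $\iota$ gives the desired factorization $Y^\bullet\to\rad P\xrightarrow{\iota}P$. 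The right minimality together with \hyperlink{ARcondition3}{(AR3)} is exactly the definition of a sink morphism, completing (1); part (2) is dual.
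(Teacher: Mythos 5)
Your proof is correct and rests on the same essential idea as the paper's: a morphism into the stalk complex $P$ that is not a retraction must, at the module level, miss the top of $P$ and hence factor through $\rad P$; the minimality argument is identical. The difference lies in how the reduction to the module level is carried out. The paper passes to $\homotopy^-(\proj\Lambda)$, represents $Y^\bullet$ by a right-bounded complex of projectives concentrated in degrees $\leq 0$, and argues with the degree-zero component $f^0\colon X^0\to P$ of the chain map (showing that if $f$ does not factor through $\projres(\rad P)$ then $f^0$, and hence $f$, is a retraction). You instead invoke the $t$-structure adjunction to get $\Hom_{m\mhyphen\Lambda}(Y^\bullet,P)\cong\Hom_\Lambda(H^0(Y^\bullet),P)$, naturally in the second (module) variable, which turns \hyperlink{ARcondition3}{(AR3)} verbatim into the classical statement in $\mod\Lambda$. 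This is arguably the cleaner packaging; the one step you should make explicit (and do gesture at) is that a section $s\colon P\to H^0(Y^\bullet)$ of $\bar\beta'$ lifts along the truncation map $Y^\bullet\to H^0(Y^\bullet)$, which holds because the obstruction lives in $\Hom_{\D^b(\mod\Lambda)}(P,\shift{\softTrunc^{\leq -1}Y^\bullet})\cong\Hom_\Lambda(P,H^{-1}(Y^\bullet)[1]^{\,0})=0$ for $P$ projective. With that spelled out, both parts of your argument go through.
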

\begin{proof}
    We only argue the projective case. In order for the morphism to be a sink morphism, we need 
    \begin{enumerate}
        \item[(a)] \textit{(minimality)} that any $h\in \End_{\mmod\Lambda}( \rad P)$, such that $\iota\circ h=\iota$, is an automorphism,
        \item[(b)] \textit{(almost split)} if $g\colon X^\bullet \to P$ is not a retraction, then $g=\iota\circ a$ for some $a\colon X^\bullet\to \rad P$.
    \end{enumerate} 
    
    (a) Let $M,N\in \mod\Lambda$, then we have 
    \[
    \Hom_{m\mhyphen\Lambda}(M,N)=\Hom_{\Db(\mod\Lambda)}(M,N)=\Hom_{\mod\Lambda}(M,N).
    \] 
    Thus, any morphism $h\in \End_{m\mhyphen\Lambda}(\rad)=\End_{\mod\Lambda}(\rad P)$ is an automorphism, since $\iota\colon \rad P \to P$ is a monomorphism in $\mod\Lambda$. 

    For (b), let us utilize the equivalence $\D^-(\mod\Lambda)\to\homotopy^-(\proj\Lambda)$. Let $X^\bullet\in \homotopy^-(\proj\Lambda)$ be in the image of $\mmod\Lambda$ under this equivalence, that is $X^i=0$ for $i>0$ and $\Homology^i(X^\bullet)= 0$ for $i\notin[-(m-1),0]$.
    Assume we have a morphism $f\colon X^\bullet \to P$ in $\homotopy^-(\proj\Lambda)$ which does not factor through $\projres(\rad P)$. Then we can see that $f^0\colon X^0\to P$ is a retraction in $\mod\Lambda$. 
    That is, we have $g\colon P\to X^0$ such that $f^0\circ g=\mathrm{id}_P$. Obviously, we then can find $\overline{g}\colon P \to X^\bullet$ in $\homotopy^-(\proj\Lambda)$, such that $f\circ \overline{g}=\mathrm{id}_P$, and we are done.
    \[
    \begin{tikzpicture}
        \node (X) at (0,0) [] {$X^\bullet$};
        \node (P) at (0,-1) [] {$P$};
        \draw[->] (X)--(P)node[midway,left]{$f$};
        \draw[dashed,->] (P) to[out=60,in=-60]node[midway,right]{$\overline{g}$} (X);
        \node (Xm-1) at (2,0) [] {$X^{-(m-1)}$};
        \node (Xdots2) at (4,0) [] {$\cdots$};
        \node (X1) at (6,0) [] {$X^{-1}$};
        \node (X0) at (8,0) [] {$X^0$};
        \draw[->] (Xm-1)edge(Xdots2) (Xdots2)edge(X1) (X1)edge(X0);
        
        \node (Pm-1) at (2,-1) [] {$0$};
        \node (Pdots2) at (4,-1) [] {$\cdots$};
        \node (P1) at (6,-1) [] {$0$};
        \node (P0) at (8,-1) [] {$P$};
        \draw[->] (Pm-1)edge(Pdots2) (Pdots2)edge(P1) (P1)edge(P0);

        \draw[->] (X0)--(P0)node[midway,left]{$f^0$};
        \draw[dashed,->] (P0) to[out=60,in=-60]node[midway,right]{$g$} (X0);
        \draw[->] (X1)--(P1)node[midway,left]{$f^1$};
        \draw[dashed,->] (P1) to[out=60,in=-60]node[midway,right]{$\overline{g}^1$} (X1);
        \draw[->] (Xm-1)--(Pm-1)node[midway,left]{$f^{m-1}$};
        \draw[dashed,->] (Pm-1) to[out=60,in=-60]node[midway,right]{$\overline{g}^{m-1}$} (Xm-1);
        
    \end{tikzpicture}
    \]
    
\end{proof}
It follows that $\AR(\mmod\Lambda)$ is a locally finite quiver.
Before moving on, let us name two components of the AR-quivers which, if they exists, are especially well-behaved.
\begin{definition}
    Let $\Lambda$ be a finite dimensional $\K$-algebra.
    \begin{enumerate}
        \item An indecomposable object $X^\bullet$ in $\mmod\Lambda$ is \emph{postprojective} if there exists some $l\geq 0$ such that $\tau_{[m]}^lX^\bullet$ is projective. 
    Further, an acyclic connected component $\cP$ of $\AR(\mmod\Lambda)$ is called a \emph{postprojective component} if every object in $\cP$ is postprojective
        \item Dually, an indecomposable object $Y$ in $\mmod\Lambda$ is \emph{preinjective}, if $Y=\tau^lI$ for $l\geq 0$ and $I$ injective. 
    A \emph{preinjective} component $\cI$ is an acyclic connected component of $\AR(\mmod\Lambda)$ whose objects are all preinjective.
    \end{enumerate}
\end{definition}

\subsection{Knitting AR-quivers}
In order to work with the extended module category it would help if we could obtain the AR-quiver in an algorithmic way. The following result along with \Cref{lemma:radPsink} which reflects the module-case are the first steps towards that goal. 

\begin{lemma}
    \begin{enumerate}
        \item Let $P$ be a simple projective in $\mod\Lambda$, and let $f\colon P\to X^\bullet$ be an irreducible morphism in $\mmod\Lambda$ with $X^\bullet$ indecomposable. Then $X^\bullet$ is projective.
        \item Let $I$ be a simple injective in $\mod\Lambda$, and let $g\colon Y^\bullet\to \nshift{I}{m-1}$ be an irreducible morphism in $\mmod\Lambda$ with $Y^\bullet$ indecomposable. Then $Y^\bullet$ is injective.
    \end{enumerate}
\end{lemma}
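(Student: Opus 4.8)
The two statements are dual, so I would prove only the projective case and then indicate that the injective case follows by applying the Nakayama functor $\nu$ together with the duality $D$ (using the equivalence $\homotopy^b(\proj\Lambda)\to\homotopy^b(\inj\Lambda)$ recalled in the preliminaries). For the projective case, the plan is to argue by contradiction: suppose $f\colon P\to X^\bullet$ is irreducible with $X^\bullet$ indecomposable but \emph{not} projective. Since $P$ is a simple projective module and $f$ is irreducible, $f$ is in particular not a section, so by \Cref{theorem:ZhouExistenceAR}(2) applied to $P$ (note $P$, being simple projective, is not injective unless $\Lambda$ is semisimple, in which case there is nothing to prove) there is an AR-triangle $P\to Y^\bullet\to \tau_{[m]}^-P\to \shift{P}$, and by \Cref{thm:irreducibleInARsequence}(2) the irreducible map $f$ appears as a component $P\to X^\bullet$ of the source map $P\to Y^\bullet$; in particular $X^\bullet$ is a summand of $Y^\bullet$.

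**Key steps.** First I would pin down what the middle term $Y^\bullet$ of the AR-triangle starting at a simple projective $P$ looks like. The source morphism out of $P$ in $\mmod\Lambda$ should be closely related to the source morphism out of $P$ in the ordinary module category $\mod\Lambda$, which is the inclusion $P\hookrightarrow E$ of $P$ into the middle term of its ordinary AR-sequence $0\to P\to E\to \tau^{-}P\to 0$ (using $P$ simple projective, so $\Hom(P,-)$ restricted appropriately is controlled). The point is that since $P$ is simple, any radical map $P\to Z^\bullet$ with $Z^\bullet\in\mmod\Lambda$ has image a semisimple quotient-sub situation that forces $Z^\bullet$ to "receive $P$ in degree $0$" in a constrained way; concretely, $\Hom_{m\text{-}\Lambda}(P,Z^\bullet)=\Hom_{\D^b}(P,Z^\bullet)=\Hom_{\mod\Lambda}(P,H^0(Z^\bullet))$ since $P$ is projective and concentrated in degree $0$, and a radical such map factors through $\rad H^0(Z^\bullet)$. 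I would then show the source map $P\to Y^\bullet$ has $Y^\bullet$ a \emph{module} (concentrated in degree $0$): indeed one computes $\tau_{[m]}^-P=\sigma^{\geq-(m-1)}(\nu^-\shift{\injres_m P})$, and for $P$ simple projective the minimal injective copresentation $\injres_m P$ and the resulting complex should collapse so that $\tau_{[m]}^-P$ is the ordinary $\tau^- P$ (a module in degree $0$), whence the whole triangle is the ordinary AR-sequence viewed in $\D^b$, and $Y^\bullet=E\in\mod\Lambda$.

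**Conclusion of the argument.** Once $Y^\bullet$ is an honest module $E$, every indecomposable summand $X^\bullet$ of $E$ is a module. But then an irreducible map $P\to X^\bullet$ between modules in $\mmod\Lambda$ is the same as an irreducible map in $\mod\Lambda$ (again by $\Hom_{m\text{-}\Lambda}(M,N)=\Hom_{\mod\Lambda}(M,N)$ for modules, as used in the proof of \Cref{lemma:radPsink}), and in $\mod\Lambda$ an irreducible map out of a simple projective either lands in a projective or is the ordinary AR-inclusion with indecomposable non-projective target $X$ — so the claim does \emph{not} hold in $\mod\Lambda$ in general, which tells me I have mis-stated the needed input. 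I should instead observe directly: if $P$ is simple projective then $\rad P=0$, so by \Cref{lemma:radPsink}(1) the sink map into $P$ is the zero map $0\to P$; hence $P$ has no irreducible morphisms \emph{into} it, and an irreducible morphism $P\to X^\bullet$ with $X^\bullet$ non-projective would, via \Cref{thm:irreducibleInARsequence}(2) applied at $X^\bullet$, force an AR-triangle ending at $X^\bullet$ whose middle term has $P$ as a summand, hence an irreducible map $P\to (\text{middle})$ and then the structure of that triangle must be compatible — the genuinely new content is that when $P$ is \emph{simple}, the only way $P$ can be a source is into projectives, because any non-split extension or connecting map would need a nonzero $\Hom_{m\text{-}\Lambda}(X^\bullet,\shift P)=\Ext$-type group, and one checks this vanishes unless $X^\bullet$ is projective (equivalently $\tau_{[m]}X^\bullet$ has the right form).

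**Main obstacle.** The hard part will be the explicit computation of $\tau_{[m]}^- P$ (equivalently, the middle term of the AR-triangle starting at a simple projective $P$) and showing it stays inside $\mod\Lambda\subseteq\mmod\Lambda$; this requires carefully unwinding the minimal injective copresentation $\injres_m P$, applying $\nu^-$, shifting, and soft-truncating, and checking that no higher cohomology is created. Equivalently — and this may be the cleaner route — the obstacle is to prove the homological vanishing $\Hom_{m\text{-}\Lambda}(X^\bullet,\shift{P})=0$ for every non-projective indecomposable $X^\bullet$, which by the definition of AR-triangles is exactly what prevents $P$ from being the left end of an AR-triangle whose right end is non-projective; I expect this to reduce to the fact that $P$ simple projective has $\nu\projres_m(X^\bullet)$ with no cohomology in degree $0$ unless $X^\bullet$ is projective, but making this precise is the crux.
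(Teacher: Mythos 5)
Your proposal circles the right ingredients but never assembles them into a proof, and along the way it asserts two things that are false. The paper's argument is short: assume $X^\bullet$ is not projective, take the AR-triangle $\tau_{[m]}X^\bullet\to B^\bullet\to X^\bullet\to\shift{\tau_{[m]}X^\bullet}$ ending at $X^\bullet$ (\Cref{theorem:ZhouExistenceAR}); since $f\colon P\to X^\bullet$ is irreducible, \Cref{thm:irreducibleInARsequence}(2) makes $P$ a direct summand of $B^\bullet$, so the source morphism out of $\tau_{[m]}X^\bullet$ has an irreducible component $\tau_{[m]}X^\bullet\to P$; by \Cref{lemma:radPsink} this non-retraction factors through the sink map $\rad P=0\to P$, hence is zero --- contradiction. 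You do reach the key observation that $\rad P=0$ forbids irreducible morphisms \emph{into} $P$, but you then misapply it: a summand $P$ of the middle term produces an irreducible map $\tau_{[m]}X^\bullet\to P$ \emph{into} $P$, not ``an irreducible map $P\to(\text{middle})$'' as you write, and you never draw the contradiction from this. Instead you redirect the ``crux'' to a vanishing $\Hom_{m\mhyphen\Lambda}(X^\bullet,\shift{P})=0$ for non-projective $X^\bullet$, which is both irrelevant to the configuration at hand (that Hom-group governs whether $P$ is the \emph{left} end of an AR-triangle ending at $X^\bullet$, not whether $P$ sits in a middle term) and false in general: the AR-triangle starting at $P$ has non-zero connecting morphism $\tau_{[m]}^-P\to\shift{P}$ with $\tau_{[m]}^-P$ non-projective, so such a vanishing would force $P$ to be injective in $\mmod\Lambda$.

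Your first route is also unsalvageable as stated: the claim that $\tau_{[m]}^-P$ collapses to the classical $\tau^-P$ concentrated in degree $0$, so that the middle term of the AR-triangle starting at $P$ is an honest module, is contradicted by the paper's own computations (e.g.\ $\tau_{[2]}^-(S_i)=[P_i\to P_{i+1}]$ is a genuine two-term complex); in general $\tau_{[m]}^-P=[\nu^-I^0\to\nu^-I^1]$ has $H^{-1}\cong\nu^-P$, which need not vanish. You even notice mid-argument that this route leads to a statement that fails already in $\mod\Lambda$, but the repair you then sketch is the garbled version above. The fix is simply to run the argument at the AR-triangle ending at $X^\bullet$ rather than the one starting at $P$, and to use the zero sink map into $P$ to kill the resulting irreducible morphism $\tau_{[m]}X^\bullet\to P$.
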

\begin{proof}
    We only prove the first claim as the second one is similar. Assume to the contrary that $X^\bullet$ is not projective, then we can find an AR-sequence
    \[
    \begin{tikzpicture}[baseline]
        \node[anchor=east] (A) at (-1,0) {$\tau_{[m]}X^\bullet\vphantom{[1]_{[m]}}$};
        \node[right = 1.5cm of A] (B) {$P^\bullet\oplus Z^\bullet\vphantom{[1]_{[m]}}$};
        \node[right=1.5cm of B] (C)  {$X^\bullet\vphantom{[1]_{[m]}}$};
        \node[right=1.5cm of C](Ashift)  {$\shift{\tau_{[m]}X^\bullet}\vphantom{[1]_{[m]}}.$};
        \draw[->] ([yshift=1pt]A.east)--([yshift=1pt]B.west)node[midway,above,scale=.9]{$
            \begin{psmallmatrix}
                \alpha_1 & \alpha_2
            \end{psmallmatrix}
        $};
        \draw[->] ([yshift=1pt]B.east)--([yshift=1pt]C.west)node[midway,above,scale=.9]{$
            \begin{psmallmatrix}
                f \\ \beta_2
            \end{psmallmatrix}
        $};
        \draw[->] ([yshift=1pt]C.east)--([yshift=1pt]Ashift.west)node[midway,above,scale=.9]{};
    \end{tikzpicture}
    \]
    Hence, there is some irreducible morphism $\alpha\colon \tau_{[m]}X^\bullet\to P$, which by \Cref{lemma:radPsink} factors through $\rad P=0$, however this is a contradiction, and we are done.
\end{proof}

Now, we can see that under certain constraints\footnote{The criteria here is sufficient, but not necessary. 
For our use case it is however not too strict.} we may algorithmically \emph{knit} together a postprojective component.
The algorithm is essentially the same as for the classical case; we refer to the proof of \cite[Lemma IX.4.5]{ASS06} or \cite[Sections 7.2 and 7.3]{Bar15} for this. 
In \Cref{prop:AlgorithmForKnittingDimensionVectors} we provide the algorithm when complexes are replaced with an invariant resembling the dimension vector of modules.
\begin{lemma}\label{lemma:postprojective2}
    Let $\Lambda$ be a connected  algebra where the radical of indecomposable projectives is either zero or indecomposable. If $\Lambda$ has a simple projective $Q$, then there exists a postprojective component $\cP$ of $\AR({\mmod\Lambda})$ containing $Q$.
\end{lemma}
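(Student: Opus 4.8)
The plan is to reproduce the classical knitting construction for postprojective components, as in \cite[IX.4.5]{ASS06} and \cite[\S\S7.2--7.3]{Bar15}, feeding it the structural facts already in hand: \Cref{theorem:ZhouExistenceAR} for the AR-triangles, \Cref{lemma:radPsink} for the sink morphisms into projectives, and the preceding lemma on irreducible morphisms out of simple projectives. The hypothesis that each $\rad P$ is zero or indecomposable is exactly what makes the ``insert a projective'' step of the algorithm unambiguous, and connectedness of $\Lambda$ will be used in checking acyclicity of the component produced.

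\textbf{Local structure.} First I would pin down $\AR(\mmod\Lambda)$ near the relevant vertices. By \Cref{lemma:radPsink}(1), $\rad P\to P$ is a sink morphism for each indecomposable projective $P$; since $\rad P$ is indecomposable or zero, this says that each indecomposable projective has \emph{at most one} arrow into it in $\AR(\mmod\Lambda)$, namely $\rad P\to P$, of multiplicity one in the trivial valuation. In particular $\rad Q=0$ forces no arrow to end at $Q$, and by the lemma on simple projectives every irreducible morphism out of $Q$ has indecomposable projective target, which by the previous sentence must be a $P$ with $\rad P\cong Q$. Finally $\tau_{[m]}Q=0$ by \cite[Prop.~3.11]{Zho25}, so $Q$ occupies a source position, just as a simple projective module does in $\AR(\mod\Lambda)$.

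\textbf{Knitting.} Next, build an increasing chain of finite, connected subquivers-with-translation $\cP_0\subseteq\cP_1\subseteq\cdots$ of $\AR(\mmod\Lambda)$ with $\cP_0=\{Q\}$, processing one frontier vertex at a time exactly as in the cited references. When a frontier vertex $X^\bullet$ has all its predecessors recorded and is not injective, adjoin $\tau_{[m]}^{-1}X^\bullet$ and the arrows $Y^\bullet\to\tau_{[m]}^{-1}X^\bullet$ read off the AR-triangle $X^\bullet\to Y^\bullet\to\tau_{[m]}^{-1}X^\bullet\to\shift{X^\bullet}$ of \Cref{theorem:ZhouExistenceAR}(2), with the new vertex computed from the mesh relation on cohomological dimension vectors (cf.\ \Cref{prop:AlgorithmForKnittingDimensionVectors}); and whenever a recorded object is isomorphic to some $\rad P$, adjoin the projective $P$ together with its unique arrow $\rad P\to P$. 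This is the classical algorithm verbatim, so I would cite it rather than rewrite the bookkeeping; the point is that \Cref{lemma:radPsink} and the lemma on simple projectives guarantee that the local moves — in particular the forced projective insertions — make sense in $\mmod\Lambda$, so the process never stalls.

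\textbf{Conclusion and the hard part.} Set $\cP=\bigcup_n\cP_n$. It is connected and contains $Q$; every vertex arises from $Q$ or from an indecomposable projective by finitely many applications of $\tau_{[m]}^{-1}$, hence (using $\tau_{[m]}\tau_{[m]}^{-1}\cong\id$ on non-injectives, \cite[Prop.~3.11]{Zho25}) some $\tau_{[m]}^{\,l}$ of it is projective, so it is postprojective; and since every successor or predecessor of a recorded vertex is itself recorded, $\cP$ is closed under irreducible morphisms, i.e.\ a full connected component. What is left is to verify that $\cP$ is \emph{acyclic}: this is where connectedness of $\Lambda$ is used, ruling out oriented cycles as in the classical argument (a cycle would force a $\tau_{[m]}$-periodic object or an infinite strictly descending chain of predecessors, impossible once the chain is traced back to the source $Q$). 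I expect this acyclicity check, together with confirming that projective insertions always occur ``on time'' relative to their radicals, to be the only delicate point; there is no hard computation involved, the content is purely in organizing the induction.
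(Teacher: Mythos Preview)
Your proposal is correct and takes essentially the same approach as the paper: both reduce to the classical knitting construction of \cite[Lemma IX.4.5]{ASS06}, with the key observation that the hypothesis on indecomposable radicals lets one insert projectives without passing to a subalgebra, since the unique predecessor $\rad P$ of a projective $P$ is already recorded. The paper's proof is in fact just this citation plus that one remark, so your write-up is more detailed than necessary; in particular, your flagging of acyclicity as the delicate point and the role of connectedness there is slightly off (acyclicity follows directly from every vertex being postprojective, not from connectedness of $\Lambda$), but this is an expository emphasis rather than a gap.
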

\begin{proof}
    The proof is mutatis mutandis the same as the construction part of \cite[Lemma IX.4.5]{ASS06}. 
    The main difference is that by assuming indecomposable radicals we can easily handle the projectives without considering a subalgebra of $\Lambda$. 
    That is, if $M^\bullet$ lie in $\mathcal{P}_n$ and an immediate successor $P$ of $M^\bullet$ is projective, then $\rad P=M^\bullet$ and thus $\cP_{n+1}$ is obviously closed under predecessors of $P$. 
\end{proof}

We also have a dual construction for preinjective components.

\begin{lemma}\label{lemma:preinjective1}
    Let $\Lambda$ be a connected algebra such that $I/\soc I$ is either zero or indecomposable for each indecomposable injective $I$. If $\Lambda$ has a simple injective $E$, then there exists a preinjective component $\cI$ of $\AR({\mmod\Lambda})$ which contain $\nshift{E}{m-1}$.\qed
\end{lemma}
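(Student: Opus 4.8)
The plan is to run the construction proving \Cref{lemma:postprojective2} in its dual form; recall that that construction is itself the dual of the knitting algorithm in \cite[Lemma IX.4.5]{ASS06}. The dual inputs are already available: by \cite[Proposition 2.14]{Zho25} the shifted module $\nshift{E}{m-1}$ is an injective object of $\mmod\Lambda$, by \Cref{lemma:radPsink}(2) the map $\nshift{\pi}{m-1}\colon\nshift{I}{m-1}\to\nshift{(I/\soc I)}{m-1}$ is a source morphism for every indecomposable injective $I\in\mod\Lambda$, and by \Cref{theorem:ZhouExistenceAR}(1) every non-projective indecomposable $Z^\bullet\in\mmod\Lambda$ sits at the right end of an AR-triangle $\tau_{[m]}Z^\bullet\to Y^\bullet\to Z^\bullet\to\shift{\tau_{[m]}Z^\bullet}$. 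These are exactly dual to the facts about projectives and the sink morphism $\rad P\to P$ used in the postprojective case, and \Cref{thm:irreducibleInARsequence} supplies the compatibility between AR-triangles and irreducible maps needed to read off the new arrows at each step.

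Concretely, I would build an increasing chain of finite full subquivers $\cI_0\subseteq\cI_1\subseteq\cdots$ of $\AR(\mmod\Lambda)$ with $\cI_0=\{\nshift{E}{m-1}\}$ and set $\cI=\bigcup_n\cI_n$. Passing from $\cI_n$ to $\cI_{n+1}$, for each boundary vertex $M^\bullet$ of $\cI_n$ one adjoins the object $\tau_{[m]}M^\bullet$ together with the indecomposable summands of the middle term of its AR-triangle, recording the irreducible maps among new and old vertices as forced by \Cref{thm:irreducibleInARsequence}. One checks by induction, exactly as in \cite[Lemma IX.4.5]{ASS06} and using \Cref{lemma:radPsink} together with the local finiteness of $\AR(\mmod\Lambda)$, that each $\cI_n$ is full and that no irreducible morphism whose target is already in $\cI_n$ is omitted, so that $\cI$ is a connected component. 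Every vertex so produced equals $\tau_{[m]}^{l}(J)$ for some injective $J$ and some $l\geq 0$, hence is preinjective, and the grading $\tau_{[m]}^{l}(J)\mapsto l$ rules out oriented cycles, so $\cI$ is acyclic; thus $\cI$ is a preinjective component containing $\nshift{E}{m-1}$.

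The only place where the generic classical argument must be modified --- and where the hypothesis on $\Lambda$ is used --- is the treatment of injective vertices, dual to the treatment of projective vertices in \Cref{lemma:postprojective2}. Suppose an injective object $\nshift{I}{m-1}$ appears in $\cI$ as an immediate predecessor of some $M^\bullet$. By \Cref{lemma:radPsink}(2) the unique source morphism out of $\nshift{I}{m-1}$ is $\nshift{\pi}{m-1}\colon\nshift{I}{m-1}\to\nshift{(I/\soc I)}{m-1}$, so every immediate successor of $\nshift{I}{m-1}$ is an indecomposable summand of $\nshift{(I/\soc I)}{m-1}$. Since $I/\soc I$ is zero or indecomposable by hypothesis, $\nshift{I}{m-1}$ has at most one immediate successor, necessarily the already-constructed $M^\bullet$; hence no extra vertices need to be introduced on account of $\nshift{I}{m-1}$, and there is no need to pass to a factor algebra as in the general version of \cite[Lemma IX.4.5]{ASS06}. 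I expect this bookkeeping around the injective vertices to be the only step that is not a mechanical transcription of the cited classical construction.
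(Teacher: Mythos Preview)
The proposal is correct and takes essentially the same approach as the paper: the paper marks this lemma with an immediate \qed, treating it as the straightforward dual of \Cref{lemma:postprojective2}, and you have simply spelled out that dualization in detail, including the key observation that the hypothesis on $I/\soc I$ plays the role dual to the indecomposable-radical hypothesis in handling the injective vertices without passing to a quotient algebra.
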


Let us illustrate this on a simple example.

\begin{example}\label{ex:2modCommutativeSquare}
    Consider the algebra $\Lambda$ given by the commutative square,
    \[
    \begin{tikzpicture}[tips=proper,scale=.8,anchor=base, baseline]
        \node (1) at (1,0) [nodeDots] {};
        \node(1-label) at (.9,0) [anchor=east,scale=.8]{$4$};
        \node (2) at (2,1) [nodeDots] {};
        \node (2-label) at (2,1.1)[above,scale=.8]{$3$};
        \node (3) at (2,-1) [nodeDots] {};
        \node (3-label) at (2,-1.1)[below,scale=.8]{$2$};
        \node (4) at (3,0) [nodeDots] {};
        \node (4-label) at (3.1,0) [anchor=west,scale=.8]{$1$};
        \draw[-{Latex}] ([xshift=3pt,yshift=3pt]1.center)edge([xshift=-3pt,yshift=-3pt]2.center) 
        ([xshift=3pt,yshift=-3pt]2.center)edge([xshift=-3pt,yshift=3pt]4.center) 
        ([xshift=3pt,yshift=-3pt]1.center)edge([xshift=-3pt,yshift=3pt]3.center) 
        ([xshift=3pt,yshift=3pt]3.center)edge([xshift=-3pt,yshift=-3pt]4.center);
        \draw[dotted,thick] ([xshift=4pt]1.center)--([xshift=-4pt]4.center);
    \end{tikzpicture}
    \]
    This algebra satisfies the hypothesis of both preceding lemmas, hence we know that $\mmod\Lambda$ has both a postprojective and preinjective component. 
    We can also knit these components; the postprojective forwards from $P_1$, or the preinjective backwards from $\nshift{I_4}{m-1}$. 
    Here we have done this for $2\nmod\Lambda$\footnote{Where the indecomposables are represented through their composition series.}, for which the postprojective and preinjective component coincide:
    \[
    \includegraphics{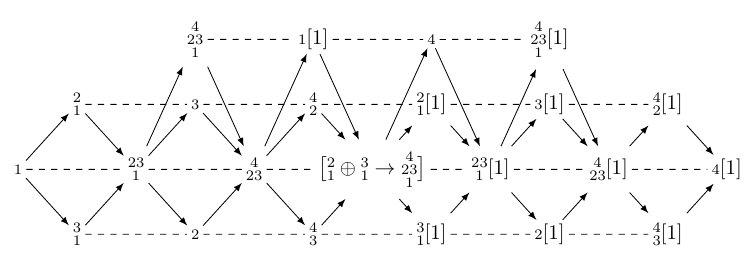}
    \]
\end{example}

\subsubsection{Knitting dimensions}
As working with complexes can be quite complex, we would like to simplify. 
In $\mod\Lambda$ one can simplify the process through only working with dimension vectors. 
It turns out that we can do something similar in the extended module category, using \emph{cohomological dimension vectors}.
\begin{definition}\label{def:cohomologicalDimension}
    Let $X^\bullet\in \D^b(\mod\Lambda)$ be a complex. The \emph{cohomological dimension vector} of $X^\bullet$ is the sequence
    \[
        \DimVec(\boldX)\coloneqq (\dimVec(H^iX^\bullet))_{i\in\bZ}
    \] 
    where $\dimVec(H^i(X^\bullet))$ is the dimension vector of the $i$-th cohomology of $\boldX$
\end{definition}
Note that for complexes $X^\bullet$ in $\mmod\Lambda$, we may think of $\DimVec(X^\bullet)$ as $m\times n$-matrices, where $|\Lambda|=n$, since their cohomology is non-zero only if $i\in [-(m-1),0]$.
We will moreover be writing these matrices such that the first row give the zeroth cohomology, the second row the $-1$th cohomology and so forth. 

Before we show how this notion can simplify the knitting procedure, let us note that it encodes a bit of information about the AR-quiver as a whole. 
Let $\{e_0,e_1,\ldots,e_{n-1}\}$ be a complete set of primitive orthogonal idempotents of $\Lambda$, and consider $X^\bullet\in \D^b(\mod\Lambda)$. Then 
\[
\begin{split}
    (\DimVec(X^\bullet)_i)_v&=\dimVec(H^{i}(X^\bullet))_v\\
    &=\Hom_\Lambda(P_v,H^{i}(X^\bullet))\\
    &\cong H^{i}\Hom_{\Lambda}(P_v,X^\bullet)\\
    &\cong\Hom_{\D^b(\mod\Lambda)}(\nshift{P_v}{-i},X^\bullet)
\end{split}
\]
where $P_v$ is the indecomposable projective at vertex $v$. Likewise,
\[
\begin{split}
    (\DimVec(X^\bullet)_i)_x&\cong \Hom_\Lambda(H^i(X^\bullet),I_v)\\
    &\cong H^i\Hom_\Lambda(X^\bullet,I_v)\\
    &\cong \Hom_{\D^b(\mod\Lambda)}(X^\bullet,\nshift{I_v}{-i})\\
\end{split}
\]
Now, we may observe that 
for $0\leq k<m-1$ we have an AR-triangle
\[
\begin{tikzpicture}[baseline]
        \node (A) at (0,0) [] {$\nshift{I_x}{k}\vphantom{[1]_x}$};
        \node[right=1.5cm of A] (B) {$Y^\bullet\vphantom{[1]_x}$};
        \node[right=1.5cm of B] (C){$\nshift{P_x}{k+1}\vphantom{[1]_x}$};
        \node[right=1.5cm of C](Ashift){$\nshift{I_x}{k+1}\vphantom{[1]_x}$,};
        \draw[->] ([yshift=1pt]A.east)--([yshift=1pt]B.west);
        \draw[->] ([yshift=1pt]B.east)--([yshift=1pt]C.west);
        \draw[->] ([yshift=1pt]C.east)--([yshift=1pt]Ashift.west);
    \end{tikzpicture}
\]
in $\mmod\Lambda$. 

\begin{lemma}\label{lemma:pathFromProjToInj}
    Let $X^\bullet$ be an indecomposable object in $\mmod\Lambda$. If $(\DimVec(X^\bullet)_{-i})_v\neq 0$ for some $i\in[0,-(m-1)]$, then there is a sequence of non-zero morphisms from $P_v$ to $X^\bullet$ and a sequence of non-zero morphisms from $X^\bullet$ to $\nshift{I_v}{m-1}$. 
\end{lemma}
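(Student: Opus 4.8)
The plan is to exploit the cohomological interpretation of $\DimVec$ established just before the statement, together with the $\tau_{[m]}$-orbit structure relating $P_v$ and $I_v$ through the AR-triangles exhibited in the paragraph preceding the lemma. Suppose $(\DimVec(X^\bullet)_{-i})_v \neq 0$ for some $i \in [0,m-1]$. By the first chain of isomorphisms, this means $\Hom_{\D^b(\mod\Lambda)}(\nshift{P_v}{i}, X^\bullet) \neq 0$, i.e.\ there is a non-zero morphism $\nshift{P_v}{i} \to X^\bullet$ in $\mmod\Lambda$ (note $\nshift{P_v}{i}$ is concentrated in cohomological degree $-i \in [-(m-1),0]$, so it genuinely lies in $\mmod\Lambda$). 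Dually, by the second chain, $\Hom_{\D^b(\mod\Lambda)}(X^\bullet, \nshift{I_v}{i}) \neq 0$, so there is a non-zero morphism $X^\bullet \to \nshift{I_v}{i}$ with $\nshift{I_v}{i} \in \mmod\Lambda$.

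First I would produce the sequence of non-zero morphisms from $P_v$ to $X^\bullet$. If $i = 0$ we already have a non-zero morphism $P_v \to X^\bullet$ and we are done with that half. If $i > 0$, I would use the AR-triangles $\nshift{I_v}{k} \to Y_k^\bullet \to \nshift{P_v}{k+1} \to \nshift{I_v}{k+1}$ (for $0 \le k < m-1$) displayed in the excerpt: the irreducible component $Y_k^\bullet \to \nshift{P_v}{k+1}$ and the irreducible component $\nshift{I_v}{k} \to Y_k^\bullet$ give, by composition, a non-zero morphism chain $\nshift{I_v}{k} \to \nshift{P_v}{k+1}$ — more precisely a path of irreducibles in $\AR(\mmod\Lambda)$. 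Concatenating over $k = 0, 1, \dots, i-1$, and using that the module maps $P_v \twoheadrightarrow \nshift{I_v}{0}$-type connections hold at the bottom (i.e.\ there is a non-zero map $P_v \to I_v$ in $\mod\Lambda$ since $\soc P_v$ has a simple in common — actually one only needs the identity-type argument that $P_v$ maps non-trivially to some injective envelope), I obtain a path of non-zero morphisms $P_v \to \nshift{I_v}{0} \to \nshift{P_v}{1} \to \nshift{I_v}{1} \to \cdots \to \nshift{P_v}{i}$, and finally compose with the non-zero map $\nshift{P_v}{i} \to X^\bullet$. The composite need not itself be non-zero, but the claim only asks for a \emph{sequence} of non-zero morphisms, so exhibiting each arrow as non-zero suffices. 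The argument for the sequence $X^\bullet \to \nshift{I_v}{m-1}$ is dual: from $X^\bullet \to \nshift{I_v}{i}$ walk up the same ladder of AR-triangles from $\nshift{I_v}{i}$ to $\nshift{I_v}{m-1}$.

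The main obstacle I anticipate is the careful bookkeeping at the two ``ends'' of the ladder — namely verifying that there genuinely is a non-zero morphism $P_v \to \nshift{I_v}{0}$ in $\mmod\Lambda$ when $i=0$ is not the relevant degree, and symmetrically that $\nshift{I_v}{i}$ connects up to $\nshift{I_v}{m-1}$. For the former, $\Hom_\Lambda(P_v, I_v) \cong \Hom_\Lambda(P_v, DA \cdot e_v)$ contains $DA$ evaluated appropriately and is non-zero because $e_v I_v \ne 0$ (the vertex-$v$ component of the injective envelope of $S_v$ is one-dimensional); concretely $\dimVec(I_v)_v = 1$, so $\Hom_\Lambda(P_v,I_v) \ne 0$. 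For walking $\nshift{I_v}{i}$ up to $\nshift{I_v}{m-1}$ one iterates the same AR-triangles read in the other direction, since each $\nshift{I_v}{k+1}$ receives the connecting map from $\nshift{P_v}{k+1}$ and hence sits one step up the AR-quiver from $\nshift{I_v}{k}$. A secondary subtlety is making sure every intermediate object $Y_k^\bullet$ (and the chosen indecomposable summand through which the irreducible map factors) still lies in $\mmod\Lambda$ — but this is immediate since $\mmod\Lambda$ is extension-closed and the AR-triangles in question are triangles in $\mmod\Lambda$ by construction.
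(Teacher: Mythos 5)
Your argument is correct and is exactly the one the paper intends: the lemma is stated without proof immediately after the identifications $(\DimVec(X^\bullet)_{-i})_v\cong\Hom_{\D^b(\mod\Lambda)}(\nshift{P_v}{i},X^\bullet)\cong\Hom_{\D^b(\mod\Lambda)}(X^\bullet,\nshift{I_v}{i})$ and the ladder of AR-triangles $\nshift{I_v}{k}\to Y_k^\bullet\to\nshift{P_v}{k+1}\to\nshift{I_v}{k+1}$, which is precisely what you combine. Your explicit attention to the endpoints (non-vanishing of $\Hom_\Lambda(P_v,I_v)$ via $\soc I_v=S_v$, and the non-zero connecting maps $\delta$) is a welcome addition rather than a deviation.
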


Let us look at exactly how $\DimVec$ can help us knit.
For a general triangle $X^\bullet\to Y^\bullet\to Z^\bullet\to \shift{X^\bullet}$, we can only guarantee the inequality $\DimVec(Y^\bullet)\leq \DimVec(X^\bullet)+\DimVec(Z^\bullet)$. 
However, if the triangle is an AR-triangle in $\mmod\Lambda$, we can see that the cohomological dimension vector of one part of the triangle follows from the other two.

\begin{lemma}\label{lemma:knittingDimensions}
    Let 
    \[
    \begin{tikzpicture}[baseline]
        \node (A) at (0,0) [] {$A^\bullet\vphantom{[1]}$};
        \node[right=1.5cm of A] (B) {$B^\bullet\vphantom{[1]}$};
        \node[right=1.5cm of B] (C) {$C^\bullet\vphantom{[1]}$};
        \node[right=1.5cm of C](Ashift) {$\shift{A^\bullet}$,};
        \draw[->] ([yshift=1pt]A.east)--([yshift=1pt]B.west)node[midway,above,scale=.8]{$\alpha$};
        \draw[->] ([yshift=1pt]B.east)--([yshift=1pt]C.west)node[midway,above,scale=.8]{$\beta$};
        \draw[->] ([yshift=1pt]C.east)--([yshift=1pt]Ashift.west)node[midway,above,scale=.8]{$\delta$};
    \end{tikzpicture}
    \]
    be an AR-triangle in $\mmod\Lambda$. The following is equivalent
    \begin{enumerate}
        \item $C^\bullet$ is not a summand of $\nshift{\Lambda}{i}$ for any $i\in[1,m-1]$,
        \item $A^\bullet$ is not a summand of $\nshift{D\Lambda}{i+1}$ for any $i\in[0,m-2]$,
        \item $\DimVec(C^\bullet)_{i}=\DimVec(B^\bullet)_{i}-\DimVec(A^\bullet)_{i}$ for all $i\in[-(m-1),0]$.
    \end{enumerate}
   On the other hand, if $A^\bullet=\nshift{I}{k}$ for an indecomposable injective $I$ and $k\in [0,m-1]$, then
    \begin{equation*}
        \DimVec(C^\bullet)_{-(k+1)}=\DimVec(A^\bullet)_{-k}+\DimVec(B^\bullet)_{-(k+1)}-\DimVec(B^\bullet)_{-k}.
    \end{equation*}
    and $\DimVec(C^\bullet)_i=0$ for $i\neq -(k+1)$.
\end{lemma}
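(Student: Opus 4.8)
The plan is to extract everything from the long exact cohomology sequence of the AR-triangle together with the characterization of projectives and injectives in $\mmod\Lambda$ from the first Lemma of the section. Write out the long exact sequence
\[
\cdots\to H^{i-1}(C^\bullet)\xrightarrow{\ \delta^{i-1}\ } H^i(A^\bullet)\xrightarrow{H^i(\alpha)} H^i(B^\bullet)\xrightarrow{H^i(\beta)} H^i(C^\bullet)\xrightarrow{\ \delta^i\ } H^{i+1}(A^\bullet)\to\cdots
\]
Since all three terms live in $\mmod\Lambda$, the cohomologies vanish outside $[-(m-1),0]$. The equality in (3) is exactly the statement that each connecting map $\delta^i\colon H^i(C^\bullet)\to H^{i+1}(A^\bullet)$ vanishes, because then the sequence breaks into short exact sequences $0\to H^i(A^\bullet)\to H^i(B^\bullet)\to H^i(C^\bullet)\to 0$ and dimension vectors are additive on short exact sequences of modules. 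So the whole lemma reduces to controlling the connecting morphism.

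First I would handle the implications (1)$\Leftrightarrow$(2). A shift $\nshift{\Lambda}{i}$ with $i\in[1,m-1]$ has its only nonzero cohomology in degree $-i\in[-(m-1),-1]$, and dually $\nshift{D\Lambda}{i+1}$ with $i\in[0,m-2]$ has cohomology only in degree $-(i+1)\in[-(m-1),-1]$. By the first Lemma of Section 2, the objects $\nshift{\Lambda}{i}$ for $i\in[1,m-1]$ are, up to the shift by one, precisely the nonzero injective objects of $\mmod\Lambda$: indeed $\nshift{P}{i}=\nshift{(\nshift{P}{i-(m-1)})}{m-1}$ — no, more directly, $C^\bullet\cong\nshift{\Lambda}{i}$ means $C^\bullet$ is a shift of a projective module, and running the dual of Proposition \ref{theorem:ZhouExistenceAR} one sees these are exactly the objects $C^\bullet$ for which the AR-triangle ending at $C^\bullet$ has $A^\bullet$ of the form $\nshift{D\Lambda}{i+1}$; conversely $\tau_{[m]}$ of such a $C^\bullet$ is computed directly from the definition $\tau_{[m]}(Z^\bullet)=\sigma^{\leq 0}(\nu\nshift{\projres_m(Z^\bullet)}{-1})$, and for $C^\bullet=\nshift{\Lambda}{i}$ a direct computation of $\projres_m$ (which is just the brutal truncation of the identity resolution) gives $\tau_{[m]}C^\bullet=\nshift{D\Lambda}{i+1}$. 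This makes (1)$\Leftrightarrow$(2) a matter of unwinding the definition of $\tau_{[m]}$ on shifted projectives.

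Next, for (1)$\Rightarrow$(3): I would argue that if $C^\bullet$ is not a shifted projective module, then $\delta=0$. The cleanest route: the morphism $\delta\colon C^\bullet\to\shift{A^\bullet}$ lies in $\Hom_{\D^b}(C^\bullet,\shift{A^\bullet})$, and I want to see this group is zero — or at least that $\delta$ induces zero on cohomology. If $A^\bullet$ has no nonzero injective summand (true by Proposition part (1), since $A^\bullet=\tau_{[m]}C^\bullet$) and $C^\bullet$ has no nonzero shifted-projective summand, the shifted object $\shift{A^\bullet}$ has cohomology concentrated in $[-m,-1]$ while $C^\bullet$ has cohomology in $[-(m-1),0]$; the only possible nonzero components of $H^\ast(\delta)$ are $\delta^i\colon H^i(C^\bullet)\to H^{i+1}(A^\bullet)$ for $i\in[-(m-1),-1]$. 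Here is where I expect the real work: one must show these all vanish. The argument should mirror the classical fact that $\tau$-translates give short exact sequences (not just triangles) unless a projective/injective is involved — concretely, using that a nonzero $\delta^i$ would, via the factorization $P_v\to C^\bullet$ and the adjunction computations displayed just before the lemma, force an irreducible or radical map into a shifted projective summand of the middle term, contradicting indecomposability assumptions, OR directly: $\operatorname{Hom}_{\D^b}(C^\bullet,\shift{A^\bullet})=\operatorname{Ext}^1_{\mmod\Lambda}(C^\bullet,A^\bullet)$ and one analyzes when this extension lands outside the subcategory. I expect to instead prove the contrapositive of (3)$\Rightarrow$(1): if some $\delta^i\neq0$ then comparing total dimensions in the long exact sequence forces $H^\ast(C^\bullet)$ and $H^\ast(A^\bullet)$ to be "misaligned" by exactly one degree, and then an Euler-characteristic / minimality argument on the projective presentation $\projres_m$ pins $C^\bullet$ down to a shifted projective. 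This equivalence direction (3)$\Rightarrow$(1) via such bookkeeping is likely the smoothest.

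Finally, for the last displayed formula: when $A^\bullet=\nshift{I}{k}$ is a shifted injective module, its cohomology is $\dimVec(I)$ in degree $-k$ and zero elsewhere, so $\shift{A^\bullet}$ has cohomology $\dimVec(I)$ in degree $-(k+1)$. The long exact sequence then gives exact
\[
0\to H^{-(k+1)}(A^\bullet)\to H^{-(k+1)}(B^\bullet)\to H^{-(k+1)}(C^\bullet)\to H^{-k}(A^\bullet)\xrightarrow{H^{-k}(\alpha)} H^{-k}(B^\bullet)\to H^{-k}(C^\bullet)\to 0
\]
(using $H^{-(k+1)}(A^\bullet)=0$ and that $A^\bullet$ has cohomology only in degree $-k$), and in all other degrees $H^i(\alpha)$ is an isomorphism so $H^i(C^\bullet)=0$; here I claim $H^{-k}(\alpha)$ is injective, because $\alpha$ is a source morphism and $A^\bullet=\nshift{I}{k}$ injective forces — via Lemma \ref{lemma:radPsink}(2) applied after further truncation, or just via the explicit AR-triangle $\nshift{I_x}{k}\to Y^\bullet\to\nshift{P_x}{k+1}\to\nshift{I_x}{k+1}$ displayed before the lemma and its decomposition into irreducibles — that the relevant connecting map $H^{-k}(C^\bullet)\to H^{-k}(A^\bullet)$ is zero, equivalently $H^{-k}(\alpha)$ injective. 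Wait: more carefully, when $A^\bullet$ is a shifted injective the middle connecting map $H^{-(k+1)}(C^\bullet)\to H^{-k}(A^\bullet)$ need NOT vanish — that is precisely the point of the formula — so instead I take the alternating sum of dimensions across the six-term exact sequence above, which yields
\[
\dimVec H^{-(k+1)}(C^\bullet)=\dimVec H^{-(k+1)}(B^\bullet)-\dimVec H^{-k}(B^\bullet)+\dimVec H^{-k}(A^\bullet),
\]
i.e. exactly $\DimVec(C^\bullet)_{-(k+1)}=\DimVec(B^\bullet)_{-(k+1)}-\DimVec(B^\bullet)_{-k}+\DimVec(A^\bullet)_{-k}$, provided I also show $H^{-k}(C^\bullet)=0$; but $H^{-k}(C^\bullet)=0$ holds because $C^\bullet=\tau_{[m]}^-A^\bullet$ and a shift-by-one of a cohomologically-concentrated-in-degree-$(-k)$ object is concentrated in degree $-(k+1)$, so combined with $C^\bullet\in\mmod\Lambda$ everything above degree $-(k+1)$ must vanish except possibly degree $0$, and a separate check (using $\tau_{[m]}^-I$ has no projective summands, Proposition part (2)) rules out degree $0$ when $k\geq1$; for $k=0$ the case $A^\bullet=I$ is the classical module case where $C^\bullet=\tau^-I$ lives in $\mod\Lambda$ anyway. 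The main obstacle throughout is the vanishing/nonvanishing analysis of the connecting morphism $\delta^\ast$, and I would structure the whole proof around that single point, deriving (3) and the last formula as its two faces.
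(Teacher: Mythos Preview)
Your overall framework is right—everything does run through the long exact cohomology sequence, and the additivity in (3) is exactly the statement that the sequence breaks into short exact pieces. But you are missing the one observation that makes the equivalence (1)$\Leftrightarrow$(3) fall out in two lines, and without it your proposal wanders (Euler-characteristic bookkeeping, minimality of $\projres_m$, contrapositives) without ever landing on an actual argument.

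The point is this: the identification $H^{-i}(-)\cong\Hom_{\Db(\mod\Lambda)}(\nshift{\Lambda}{i},-)$ displayed just before the lemma turns the surjectivity of $H^{-i}(\beta)$ into the surjectivity of
\[
\beta\circ-\colon\Hom_{\Db(\mod\Lambda)}(\nshift{\Lambda}{i},B^\bullet)\longrightarrow\Hom_{\Db(\mod\Lambda)}(\nshift{\Lambda}{i},C^\bullet).
\]
Now this is precisely property (AR3): if $C^\bullet$ is not a summand of $\nshift{\Lambda}{i}$, every morphism $\nshift{\Lambda}{i}\to C^\bullet$ is a non-retraction and hence factors through $\beta$; conversely, if $C^\bullet$ is such a summand, factorizing the projection would make $\beta$ a retraction. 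That is the whole argument for (1)$\Leftrightarrow$(3). The dual identification $H^{-i}(-)\cong D\Hom_{\Db(\mod\Lambda)}(-,\nshift{D\Lambda}{i})$ together with (AR2) gives (2)$\Leftrightarrow$(3), and (1)$\Leftrightarrow$(2) is then free—no direct computation of $\tau_{[m]}$ on shifted projectives is needed. You were circling the relevant Hom spaces but never tied them to the sink/source property of $\beta$ and $\alpha$.

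For the second part you also make life harder than necessary. You cite the explicit AR-triangle $\nshift{I}{k}\to Y^\bullet\to\nshift{\nu^- I}{k+1}\to\nshift{I}{k+1}$ displayed before the lemma, but then fail to use it: it tells you outright that $C^\bullet=\nshift{\nu^- I}{k+1}$ is a stalk complex concentrated in degree $-(k+1)$, so $\DimVec(C^\bullet)_i=0$ for $i\neq-(k+1)$ and in particular $H^{-k}(C^\bullet)=0$. Your case analysis on $k$ and the appeal to ``no projective summands'' are unnecessary. Once $H^{-k}(C^\bullet)=0$ and $H^{-(k+1)}(A^\bullet)=0$ are known, the displayed formula drops out of the four-term piece of the long exact sequence you already wrote down.
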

\begin{proof}
    We first look at the equivalent statements. Consider the long exact sequence in $\Db(\mod\Lambda)$ induced by cohomology
    \[
        \cdots \to H^{i-1}(C^\bullet)\to H^{i}(A^\bullet)\to H^{i}(B^\bullet)\to H^{i}(C^\bullet)\to H^{i+1}(A^\bullet)\to\cdots
    \]
    Using $H^{-i}(-)\cong\Hom_{\Db(\mod\Lambda)}(\nshift{\Lambda}{i},-)$, we see that $H^{-i}(B^\bullet)\to\Homology^{-i}(C^\bullet)$ is epimorphic if and only if 
    $$
    \beta\circ -\colon\Hom_{\Db(\mod\Lambda)}(\nshift{\Lambda}{i},B^\bullet)\to\Hom_{\Db(\mod\Lambda)}(\nshift{\Lambda}{i},C^\bullet)
    $$
    is epimorphic. If $C^\bullet$ is not a summand of $\nshift{\Lambda}{i}$, then every morphism $\nshift{\Lambda}{i}\to C^\bullet$ factors through $\beta$ by definition of AR-triangles.
    If $C^\bullet$ is a summand of $\nshift{\Lambda}{i}$ and $\beta\circ -$ is epi, then $\pi=\beta\circ\pi'$ for some $\pi\colon \nshift{\Lambda}{i}\to B^\bullet$, making $\beta$ a retraction, which is a contradiction.
    The arguments for $D\Lambda[i+1]$ are similar, using $H^{-i}(-)\cong D\Hom(-,\nshift{D\Lambda}{i}).$

    If $A^\bullet=\nshift{I}{k}$, then for some $Y^\bullet\in\mmod\Lambda$ the AR-triangle is given as
    \[
    \begin{tikzpicture}[anchor=base, baseline]
        \node (A) at (0,0) [] {$\nshift{I}{k}\vphantom{[1]^\bullet}$};
        \node (B) at (2,0) [] {$Y^\bullet\vphantom{[1]^\bullet}$};
        \node (C) at (4.5,0) [] {$\nshift{\nu^-I}{k+1}\vphantom{[1]^\bullet}$};
        \node(Ashift) at (6.7,0) [anchor=base west] {$\nshift{I}{k+1}\vphantom{[1]^\bullet}$,};
        \draw[->] ([yshift=1pt]A.east)--([yshift=1pt]B.west);
        \draw[->] ([yshift=1pt]B.east)--([yshift=1pt]C.west);
        \draw[->] ([yshift=1pt]C.east)--([yshift=1pt]Ashift.west);
    \end{tikzpicture}
    \]
    and the claim follows from the long exact sequence of cohomology
\[
\begin{tikzpicture}[xscale=.9,every node/.style={scale=.9}]
    \node (leftDots1) at (-2.1,0) [nodeCDots]{};
    \node (leftDots2) at (-2,0) [nodeCDots]{};
    \node (leftDots3) at (-1.9,0) [nodeCDots]{};
    \node (A) at (0,0) [] {$H^{-(k+1)}(\nshift{I}{k})$};
    \node (B) at (3,0) [] {$H^{-(k+1)}(Y^\bullet)\vphantom{[1]}$};
    \node (C) at (6.6,0) [] {$H^{-(k+1)}(\nshift{\nu^-I}{k+1})\vphantom{[1]}$};
    \node (D) at (0,-1) [] {$H^{-k}(\nshift{I}{k})$};
    \node (E) at (3,-1) [] {$H^{-k}(Y^\bullet)\vphantom{[1]}$};
    \node (F) at (6.6,-1) [] {$H^{-k}(\nshift{\nu^-I}{k+1})\vphantom{[1]}$};
    \node (rightDots1) at (8.9,-1) [nodeCDots]{};
    \node (rightDots2) at (9,-1) [nodeCDots]{};
    \node (rightDots3) at (9.1,-1) [nodeCDots]{};
    \draw[->] ([xshift=.2cm]leftDots3.center)--(A);
    \draw[->] (A)--(B);
    \draw[->] (B)--(C);
    \draw[->,rounded corners=2pt] (C.east) --(9,0)--(9,-.5)--(-2,-.5)--(-2,-1)--   (D.west);
    \draw[->] (D)--(E);
    \draw[->] (E)--(F);
    \draw[->] (F)--([xshift=-.2cm]rightDots1.center);
    \draw[->,red] (A.south west)--(A.north east)node[right]{$0$};
    \draw[->,red] (F.south west)--(F.north east)node[right]{$0$};
\end{tikzpicture}
\]
\end{proof}

\begin{example}
    Let us revisit \Cref{ex:2modCommutativeSquare}. 
    The reader may verify that the following is the AR-quiver of $2\nmod\Lambda$, where the objects are represented through their cohomological dimension vectors.
    \[
    \includegraphics{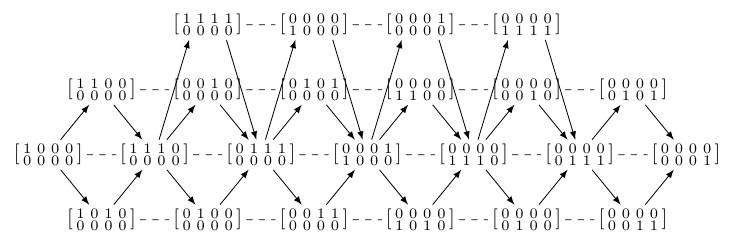}
    \]
\end{example}

In the classical case, modules in the postprojective component are uniquely given through their dimension vector. 
Hence, we are naturally wondering whether the same can be said in the extended module categories as well. 
We provide the following partial answer, and urge those interested to explore further.
\begin{lemma}
    Let $\nshift{M}{k}$ be a postprojective object in $\mmod\Lambda$, then $\DimVec(X^\bullet)=\DimVec(\nshift{M}{k})$ implies $X^\bullet\cong \nshift{M}{k}$.
\end{lemma}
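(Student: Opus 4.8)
The plan is to mimic the classical proof that indecomposable modules in a postprojective component are determined by their dimension vectors (see \cite{ASS}), transported to $\mmod\Lambda$: the dimension vector is replaced by $\DimVec$, and the test functors $\Hom_\Lambda(P_v,-)$ by the functors $\Hom_{\Db(\mod\Lambda)}(\nshift{P_v}{j},-)$, whose dimensions assemble into $\DimVec$ by the computation preceding \Cref{lemma:pathFromProjToInj}; classical Auslander--Reiten sequences are replaced by the Auslander--Reiten triangles of \Cref{theorem:ZhouExistenceAR}, and the first knitting step into projectives by the sink morphisms of \Cref{lemma:radPsink}. Here ``postprojective'' is read as ``belonging to a postprojective component $\cP$'', so that $\nshift{M}{k}$ has only finitely many predecessors in $\cP$; assuming, as one must, that $X^\bullet$ is indecomposable, the argument is an induction on the number $p(\nshift{M}{k})$ of these predecessors.

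If $\nshift{M}{k}$ is a source of $\cP$, then no irreducible morphism ends in it; by \Cref{theorem:ZhouExistenceAR} it is therefore a projective object, and by \Cref{lemma:radPsink} its radical vanishes, so $\nshift{M}{k}$ is a simple projective module. Then $\DimVec(X^\bullet)=\DimVec(\nshift{M}{k})$ is supported in a single slot, which immediately forces $X^\bullet$ to be that simple module. More generally, whenever $\DimVec(\nshift{M}{k})$ is concentrated in a single cohomological degree --- in particular whenever $\nshift{M}{k}$ is a projective (or injective) object --- the hypothesis forces $X^\bullet$ to be a stalk complex on an indecomposable module with the same dimension vector as $M$, and one concludes by the classical uniqueness statement (which applies verbatim when $\cP$ restricts to a postprojective component of $\mod\Lambda$, and otherwise by running the same induction inside $\cP$ via \Cref{lemma:radPsink}).

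For the inductive step with $\nshift{M}{k}$ not projective, take the Auslander--Reiten triangle $\tau_{[m]}(\nshift{M}{k})\to Y^\bullet\to\nshift{M}{k}\to\shift{\tau_{[m]}(\nshift{M}{k})}$ of \Cref{theorem:ZhouExistenceAR}. Its left term and every indecomposable summand of $Y^\bullet$ --- the immediate predecessors of $\nshift{M}{k}$ --- have strictly fewer predecessors in $\cP$, hence are determined by their $\DimVec$ by induction; and \Cref{lemma:knittingDimensions}, in its first form when $\tau_{[m]}(\nshift{M}{k})$ is not a shifted injective module and in its second form when it is, recovers $\DimVec(\tau_{[m]}(\nshift{M}{k}))$ from $\DimVec(\nshift{M}{k})$ and $\DimVec(Y^\bullet)$. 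One then shows that $X^\bullet$ is not projective and that its Auslander--Reiten triangle carries the same data, i.e.\ $\DimVec(\tau_{[m]}X^\bullet)=\DimVec(\tau_{[m]}(\nshift{M}{k}))$; the induction hypothesis gives $\tau_{[m]}X^\bullet\cong\tau_{[m]}(\nshift{M}{k})$, and applying $\tau_{[m]}^-$ (\Cref{theorem:ZhouExistenceAR} and the properties of $\tau_{[m]}$) yields $X^\bullet\cong\tau_{[m]}^-\tau_{[m]}X^\bullet\cong\tau_{[m]}^-\tau_{[m]}(\nshift{M}{k})\cong\nshift{M}{k}$.

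I expect the main obstacle to be exactly this last point: propagating the equality $\DimVec(X^\bullet)=\DimVec(\nshift{M}{k})$ to the Auslander--Reiten translates, equivalently showing that $X^\bullet$ has the same immediate predecessors in the Auslander--Reiten quiver as $\nshift{M}{k}$. As in the module case one handles this by strengthening the inductive statement to include $\dim\Hom_{\Db(\mod\Lambda)}(Z^\bullet,X^\bullet)=\dim\Hom_{\Db(\mod\Lambda)}(Z^\bullet,\nshift{M}{k})$ for every predecessor $Z^\bullet$ of $\nshift{M}{k}$ in $\cP$: this is the starting identity $\DimVec(X^\bullet)=\DimVec(\nshift{M}{k})$ when $Z^\bullet$ is a (possibly shifted) projective object, and it is knitted along $\cP$ using the long exact $\Hom$-sequences of the Auslander--Reiten triangles together with the source/sink characterisation of irreducible maps. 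The one genuinely new complication compared with $\mod\Lambda$ is the appearance in $\cP$ of shifted objects $\nshift{P_x}{j}$, $\nshift{I_x}{j}$ with $j\neq 0$ --- precisely the situation the two clauses of \Cref{lemma:knittingDimensions} are designed to control.
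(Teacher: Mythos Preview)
You have misread the hypothesis. The object in the lemma is written $\nshift{M}{k}$, i.e.\ a module $M$ placed in a single cohomological degree; it is \emph{always} a stalk complex, not merely when it happens to be projective or injective. Consequently $\DimVec(\nshift{M}{k})$ is concentrated in degree $-k$ by definition, and your ``special case'' is the only case. The paper's proof is exactly the short argument you sketch in that parenthetical: since $\DimVec(X^\bullet)=\DimVec(\nshift{M}{k})$ is concentrated in degree $-k$, soft truncation forces $X^\bullet\cong\nshift{N}{k}$ for a module $N$ with $\dimVec N=\dimVec M$; one then checks that $M$ is postprojective in $\mod\Lambda$ (an infinite chain of nonzero maps from a projective to $M$ in $\mod\Lambda$ would shift to one in $\mmod\Lambda$, contradicting postprojectivity of $\nshift{M}{k}$), and the classical uniqueness result for postprojective modules finishes. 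Indeed the paper explicitly flags the lemma as only a \emph{partial} answer to whether $\DimVec$ determines postprojective objects in general.

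Your induction is thus aimed at a strictly stronger statement than the one being proved, and even for that target it is incomplete in the way you yourself identify: knowing $\DimVec(X^\bullet)=\DimVec(\nshift{M}{k})$ does not by itself pin down the middle term of the AR-triangle ending in $X^\bullet$, so you cannot directly apply \Cref{lemma:knittingDimensions} to conclude $\DimVec(\tau_{[m]}X^\bullet)=\DimVec(\tau_{[m]}\nshift{M}{k})$. The proposed strengthening to all $\dim\Hom(Z^\bullet,-)$ for predecessors $Z^\bullet$ is the right instinct classically, but carrying it through here would require controlling the extra boundary maps in the long exact $\Hom$-sequences coming from the shifted projectives and injectives in $\cP$, and you have not done this. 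Finally, note that the paper's definition of ``postprojective'' is that some $\tau_{[m]}^l$ lands on a projective, not membership in a postprojective component; your reinterpretation adds a hypothesis.
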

\begin{proof}
    First, observe that since $\DimVec(X^\bullet)$ is concentrated in a single degree, we can use soft truncations to show that $X^\bullet$ is isomorphic to a stalk complex.
    If $\nshift{M}{k}$ is postprojective in $\mmod\Lambda$ it necessarily has to be postprojective in $\mod\Lambda$. Otherwise we could find an infinite sequence of non-zero morphisms from a projective $P$ to $M$ in $\mod\Lambda$, which then give an infinite sequence of non-zero morphisms from $\nshift{P}{k}$ to $\nshift{M}{k}$. 
    The result now follows from uniqueness of dimension vectors of postprojectives in $\mod\Lambda$, see e.g. \cite[Theorem]{Hap82}.
\end{proof}

Through combining what we have done so far, we can now extrapolate the following knitting algorithm for cohomological dimension vectors.
Note that the restrictions are strict and one could find a more general algorithm, but they suffice for our study.

\begin{proposition}[Knitting algorithm]\label{prop:AlgorithmForKnittingDimensionVectors}
    Let $\Lambda$ be a connected not semi-simple $\K$-algebra such that radicals of indecomposable projectives are indecomposable. 
    For each $0\leq j\leq m-1$, let
    \[
    \mathcal{Q}^{\inj,\, j}\coloneqq \{\DimVec(\nshift{I}{j})\,|\, I\in \inj\Lambda \text{ indecomposable }\}.
    \]
    We inductively define quivers ${}_i\mathcal{Q}$.
    \begin{enumerate}
        \item[(0)] Let ${}_{-1}\mathcal{Q}=\varnothing$ and let ${}_0\mathcal{Q}$ be the quiver given by vertices $\DimVec(P)$ for each simple projective $P$ and no arrows.
        \item[(1)] Construct ${}_{1}\mathcal{Q}$ from ${}_0\mathcal{Q}$ by adding vertices $\DimVec(P)$ and arrows $[M\to \DimVec(P)]$ for each $M\in {}_0\mathcal{Q}$ such that $M=\DimVec(\rad P)$, for an indecomposable projective $P$.
        \item[(k)] Assume ${}_1\mathcal{Q}$ is constructed for $i<k$. We construct ${}_k\mathcal{Q}$ from ${}_{k-1}\mathcal{Q}$ as follows.\\
            \begin{enumerate}
                \item For each vertex $M\in {}_{k-2}\mathcal{Q}\setminus{}_{k-3}\mathcal{Q}$ such that $M\notin \mathcal{Q}^{\inj,\, m-1}$ let
            \[
                M^+=\{\, N\in {}_{k-1}\mathcal{Q}\, |\, \text{there is an arrow }[M\to N]\in {}_{k-1}\mathcal{Q}\}
            \] 
            \begin{enumerate}
                \item if $M\in \mathcal{Q}^{\inj,\, j}$ for some $0\leq j<m-1$, define $\tau^-M$ by
                    \[
                        (\tau^- M)_{-j}=M_{-j}+\sum_{N\in M^+}\left(N_{-(j+1)}-N_{-j}\right)
                    \]
                    and $(\tau^- M)_{-i}=0$ for $i\neq j$.
                \item if $M\notin \mathcal{Q}^{\inj,\, j}$ for any $0\leq j< m-1$, define $\tau^-M$ By
                    \[
                        \tau^- M=-M+\sum_{N\in M^+}N
                    \]
            \end{enumerate}
            Add the vertex $\tau^-M$ to ${}_{k-1}\mathcal{Q}$ and for each $N\in M^+$ add an arrow $[N\to \tau^- M]$.
                \item For each vertex $M\in {}_{k-1}\mathcal{Q}\setminus {}_{k-2}\mathcal{Q}$ such that $M=\DimVec(\rad P)$ for an indecomposable projective $P$, add the vertex $\DimVec(P)$ and an arrow $[M\to \DimVec(P)]$.
            \end{enumerate}
    \end{enumerate}
    Then, ${}_{\infty}\mathcal{Q}=\bigcup_{i=0}^\infty{}_{i}\mathcal{Q}$ is a postprojective component of $\mmod\Lambda$. 
\end{proposition}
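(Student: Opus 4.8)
The plan is to show that the inductively-built quiver ${}_{\infty}\mathcal{Q}$ agrees, as a translation quiver decorated with cohomological dimension vectors, with the postprojective component $\cP$ whose existence is guaranteed by \Cref{lemma:postprojective2} (note the hypotheses of that lemma are satisfied here: $\Lambda$ is connected with indecomposable radicals of indecomposable projectives, and being not semi-simple and connected it possesses a simple projective). So the first step is to fix such a component $\cP$, together with the bijection $\DimVec$ from the objects it knows about to vertices of ${}_{\infty}\mathcal{Q}$, and to prove by induction on $i$ that ${}_i\mathcal{Q}$ is exactly the full subquiver of $\DimVec(\cP)$ spanned by the objects reachable from the simple projectives in at most $i$ ``knitting steps''. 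The base cases $(0)$ and $(1)$ are immediate: the simple projectives have $\tau_{[m]}$ undefined and form the mouth of the component, and by \Cref{lemma:radPsink}(1) the only irreducible morphisms into an indecomposable projective $P$ come from $\rad P$, which by hypothesis is indecomposable (or zero), so step $(1)$ correctly records these projectives and their unique incoming arrows.

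For the inductive step, suppose ${}_{k-1}\mathcal{Q}$ has been identified with the appropriate finite piece of $\DimVec(\cP)$. The engine is \Cref{theorem:ZhouExistenceAR} together with \Cref{thm:irreducibleInARsequence}: given an object $M^\bullet$ in the already-built portion that is not injective and whose AR-translate-inverse $\tau_{[m]}^- M^\bullet$ has not yet appeared, there is an AR-triangle $M^\bullet \to Y^\bullet \to \tau_{[m]}^- M^\bullet \to \shift{M^\bullet}$, and $Y^\bullet = \bigoplus_{N^\bullet \in (M^\bullet)^+} N^\bullet$ (counted with multiplicity given by the dimension of $\irr$, which in our examples is $1$, per the remark following the definition of the AR-quiver) once all of $(M^\bullet)^+$ has been constructed — which, by the usual knitting bookkeeping of \cite[Lemma IX.4.5]{ASS06}, is exactly when $M^\bullet$ lies in ${}_{k-2}\mathcal{Q}\setminus{}_{k-3}\mathcal{Q}$. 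Applying $\DimVec$ to this triangle and using \Cref{lemma:knittingDimensions} gives precisely the stated recursion: in case (a.i), when $M^\bullet = \nshift{I}{j}$ for an indecomposable injective $I$ and $0 \le j < m-1$, the second half of \Cref{lemma:knittingDimensions} yields $(\tau^- M)_{-j} = M_{-j} + \sum_{N \in M^+}(N_{-(j+1)} - N_{-j})$ with all other graded pieces zero; in case (a.ii), when $M^\bullet$ is not of the form $\nshift{I}{j}$ for $j < m-1$ (equivalently, dually, $C^\bullet = \tau^-_{[m]}M^\bullet$ is not a shifted projective, so condition (3) of \Cref{lemma:knittingDimensions} applies), we get the additive relation $\DimVec(\tau^-_{[m]}M^\bullet) = -\DimVec(M^\bullet) + \sum_{N^\bullet \in (M^\bullet)^+}\DimVec(N^\bullet)$. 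The case $M^\bullet \in \mathcal{Q}^{\inj,\,m-1}$, i.e.\ $M^\bullet$ an injective object of $\mmod\Lambda$ (by the \hyperlink{def:ARtriangles}{Lemma} describing injectives), is correctly excluded since then $\tau_{[m]}^- M^\bullet = 0$. Step (b) then adjoins any new indecomposable projectives: by \Cref{lemma:radPsink}(1) the map $\rad P \to P$ is a sink morphism, so $P$ is knitted in as soon as $\DimVec(\rad P)$ appears, and the component remains closed under predecessors because the unique arrow into $P$ is from $\rad P$ (this is the point where indecomposability of radicals is used, exactly as in \Cref{lemma:postprojective2}).

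What remains is to argue that the process genuinely produces \emph{all} of $\cP$ and nothing spurious. No vertex is created twice and the dimension vectors determine the objects up to isomorphism — this is exactly the preceding Lemma asserting that postprojective objects in $\mmod\Lambda$ are determined by their cohomological dimension vector — so ${}_{\infty}\mathcal{Q}$ injects into $\DimVec(\cP)$; surjectivity follows because every object of a postprojective component is, by definition, $\tau_{[m]}^{\ell}$ of a projective for some $\ell \ge 0$, and hence is eventually reached by the knitting, using that $\AR(\mmod\Lambda)$ is locally finite (established right after \Cref{lemma:radPsink}) so each $(M^\bullet)^+$ is finite and each stage ${}_k\mathcal{Q}$ adds only finitely many vertices. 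Finally, the translation structure transports: the arrows $[N^\bullet \to \tau_{[m]}^- M^\bullet]$ added in step (a) are precisely the images under $\DimVec$ of the irreducible morphisms in the AR-triangle, so $(\ {}_{\infty}\mathcal{Q},\ \tau^-)$ is isomorphic to $(\cP, \tau_{[m]})$ as a translation quiver, which is the assertion.

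The main obstacle I anticipate is the bookkeeping subtlety already flagged by the footnote to the proposition: one must be certain that, at the moment step (a) processes a vertex $M^\bullet$, every one of its successors $N^\bullet$ has already been constructed with the \emph{correct} dimension vector — i.e.\ that the ``wavefront'' of the knitting is synchronized with the combinatorial layers ${}_{k-2}\mathcal{Q}\setminus{}_{k-3}\mathcal{Q}$. In the classical setting this is the delicate part of \cite[Lemma IX.4.5]{ASS06}; here the extra wrinkle is that shifted injectives behave like ``relative injectives'' for the layer they sit in (their $\tau_{[m]}^-$ is a shifted projective one degree up, per the AR-triangle $\nshift{I_x}{k} \to Y^\bullet \to \nshift{P_x}{k+1} \to \nshift{I_x}{k+1}$ displayed before \Cref{lemma:pathFromProjToInj}), and one must check these special triangles are consistent with the recursion in case (a.i) — but that is precisely the content of the second half of \Cref{lemma:knittingDimensions}, so no genuinely new argument is needed beyond careful indexing.
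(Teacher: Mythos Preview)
The paper does not give an explicit proof of this proposition; it is presented as an ``extrapolation'' of the preceding results, and your proposal correctly assembles precisely those ingredients (\Cref{lemma:postprojective2}, \Cref{lemma:radPsink}, \Cref{theorem:ZhouExistenceAR}, \Cref{thm:irreducibleInARsequence}, \Cref{lemma:knittingDimensions}, and the classical knitting bookkeeping of \cite[Lemma IX.4.5]{ASS06}). In that sense your approach \emph{is} the paper's approach, just written out.

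Two minor corrections are in order. First, your claim that ``being not semi-simple and connected it possesses a simple projective'' is false --- cyclic Nakayama algebras are counterexamples --- so the existence of a simple projective is an implicit extra hypothesis (without it the algorithm produces the empty quiver and the statement is vacuous). Second, the uniqueness lemma you invoke for injectivity of $\DimVec$ on $\cP$ applies only to postprojective \emph{stalk complexes} $\nshift{M}{k}$, not to arbitrary postprojective objects; the paper explicitly calls this a ``partial answer'' and leaves the general question open. So your argument that ``no vertex is created twice and the dimension vectors determine the objects'' is not fully justified in the stated generality. This gap is equally present in the paper, which in practice deploys the algorithm only for linear Nakayama algebras where the output is checked directly; but you should not claim it as settled.
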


\begin{example}
    Let $\Lambda$ now be the path algebra over the quiver
    \[
    \begin{tikzpicture}[anchor=base, baseline]
        \node (A) at (0,0) [nodeDots] {};
        \node (A-label) at (0,-.1) [below,scale=.8]{$3$};
        \node (B) at (1.2,0) [nodeDots] {};
        \node (B-label) at (1.2,-.1) [below,scale=.8]{$2$};
        \node (C) at (2.4,0) [nodeDots] {};
        \node (C-label) at (2.4,-.1) [below,scale=.8]{$1$};
        \node (D) at (3.6,0) [nodeDots] {};
        \node (D-label) at (3.6,-.1) [below,scale=.8]{$0$};
        \draw[-latex] ([xshift=3pt]A.center)--([xshift=-3pt]B.center)node[midway,above,scale=.7]{$\alpha$};
        \draw[-latex] ([xshift=3pt]B.center)--([xshift=-3pt]C.center)node[midway,above,scale=.7]{$\beta$};
        \draw[-latex] ([xshift=3pt]C.center)--([xshift=-3pt]D.center)node[midway,above,scale=.7]{$\delta$};

        \draw[dashed] (1.9,.1) to[out=30,in=150] (2.9,.1);
    \end{tikzpicture}
    \]
    with relation $(\beta\delta)$. 
    We can use the algorithm above to find $\AR(2\nmod\Lambda)$ given through $\DimVec$ as follows.
    \[
    \includegraphics[width=.9\linewidth]{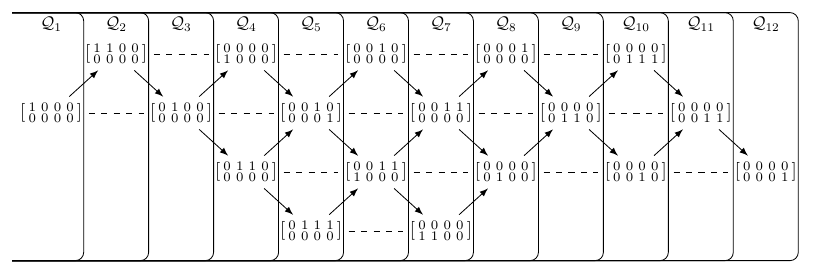}
    \]
\end{example}

We have implemented the algorithm in Python to calculate postprojective components for extended module categories of linear Nakayama algebras with homogeneous relations, $\Lambda(n,l)$. 
The results of these calculations inspired \Cref{tab:FiniteNakayama}.

\begin{remark}
    $\DimVec$ encodes information about the AR-quiver. Subsequently, we can also infer knowledge about $\DimVec$ from the AR-quiver. As a prelude to the upcoming section on linear Nakayama algebras, we showcase this on the AR-quiver of $3\nmod\Lambda(9,3)$.
    Recall that $\Lambda(9,3)$ is the path algebra of the bound quiver:
\[
\begin{tikzpicture}[xscale=.6,tips=proper,baseline=(current  bounding  box.center)]
    \node (8) at (0,0) [nodeDots] {};
    \node (8-label) at (0,-.1) [below,scale=.7] {$8$};
    \node (7) at (2,0) [nodeDots] {};
    \node (7-label) at (2,-.1) [below,scale=.7] {$7$};
    \node (6) at (4,0) [nodeDots] {};
    \node (6-label) at (4,-.1) [below,scale=.7] {$6$};
    \node (5) at (6,0) [nodeDots] {};
    \node (5-label) at (6,-.1) [below,scale=.7] {$5$};
    \node (4) at (8,0) [nodeDots] {};
    \node (4-label) at (8,-.1) [below,scale=.7] {$4$};
    \node (3) at (10,0) [nodeDots] {};
    \node (3-label) at (10,-.1) [ below,scale=.7]{$3$};
    \node (2) at (12,0) [nodeDots] {};
    \node (2-label) at (12,-.1) [below,scale=.7] {$2$};
    \node (1) at (14,0) [nodeDots] {};
    \node (1-label) at (14,-.1) [below,scale=.7] {$1$};
    \node (0) at (16,0) [nodeDots] {};
    \node (0-label) at (16,-.1) [below,scale=.7] {$0$};

    \draw[-latex] ([xshift=.2cm]8.center)--([xshift=-.2cm]7.center);
    \draw[-latex] ([xshift=.2cm]7.center)--([xshift=-.2cm]6.center);
    \draw[-latex] ([xshift=.2cm]6.center)--([xshift=-.2cm]5.center);
    \draw[-latex] ([xshift=.2cm]5.center)--([xshift=-.2cm]4.center);
    \draw[-latex] ([xshift=.2cm]4.center)--([xshift=-.2cm]3.center);
    \draw[-latex] ([xshift=.2cm]3.center)--([xshift=-.2cm]2.center);
    \draw[-latex] ([xshift=.2cm]2.center)--([xshift=-.2cm]1.center);
    \draw[-latex] ([xshift=.2cm]1.center)--([xshift=-.2cm]0.center);

    \draw[dashed] (1,.1) to[out=20,in=160] (5,.1);
    \draw[dashed] (3,.1) to[out=20,in=160] (7,.1);
    \draw[dashed] (5,.1) to[out=20,in=160] (9,.1);
    \draw[dashed] (7,.1) to[out=20,in=160] (11,.1);
    \draw[dashed] (9,.1) to[out=20,in=160] (13,.1);
    \draw[dashed] (11,.1) to[out=20,in=160] (15,.1);
\end{tikzpicture}
\]
where the relations are given by all paths of length $3$. The AR-quiver of $3\nmod\Lambda(9,3)$ is given by
\[
    \includegraphics[width=\linewidth]{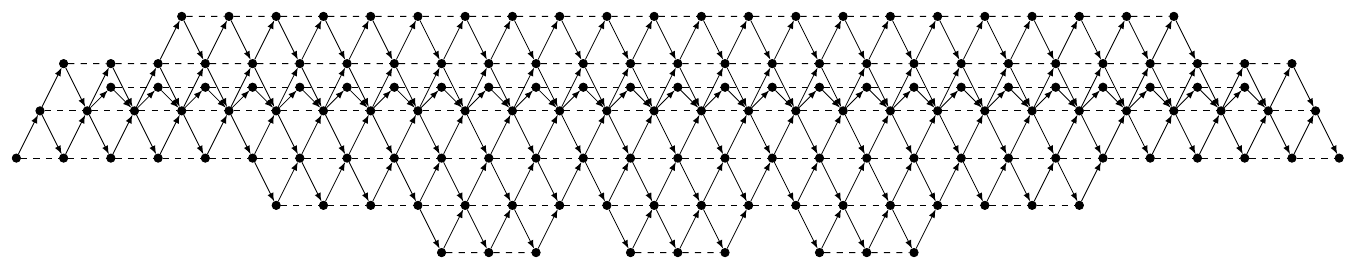}
\]
Using \Cref{lemma:pathFromProjToInj} we can find sections of the AR-quiver whose objects $X^\bullet$ have $(\DimVec(X^\bullet)_i)_6= 0$ for all $i\in[-2,0]$. 
\[
    \includegraphics[width=\linewidth]{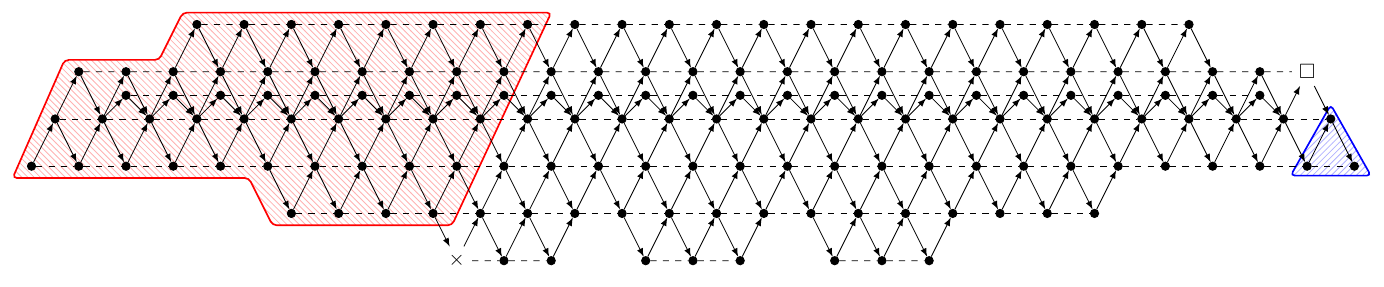}
\]
First, observe that the part encased in red do not admit a path from $P_6$ (Marked with a cross) into it, hence if $X^\bullet$ lie here then $(\DimVec(X^\bullet)_i)_6= 0$ for all $i\in [-2,0]$. Second, the part encased in blue do not admit a path into $\nshift{I_6}{2}$ (marked with a square), thus any $Y^\bullet$ in there has $(\DimVec(Y^\bullet)_i)_6= 0$ for all $i\in [-2,0]$.

\end{remark}

\subsection{A criterion for finiteness}
In order to show the validity of \Cref{tab:FiniteNakayama}, we need some way of determining when the extended module categories are of finite type. A very fundamental property of the extended module categories is that 
\[
\mod\Lambda\subseteq 2\nmod\Lambda\subseteq3\nmod\Lambda\subseteq \cdots \subseteq \mmod\Lambda\subseteq\cdots
\]
for any algebra $\Lambda$. Thus, if we already know that $\mmod\Lambda$ is of finite type, we also know that $m'\nmod\Lambda$ is of finite type for $m'\leq m$. Likewise, if $n\nmod\Lambda$ is of infinite type, then $n'\nmod\Lambda$ is of infinite type for all $n'\geq n$.

In addition, mirroring the classical case, the AR-quiver can be used to conclude on the finiteness of $\mmod\Lambda$.
\begin{lemma}\label{lemma:finiteComponentIsWholeAR}
    Let $\Lambda$ be a connected $\K$-algebra which is not semi-simple. Assume $C$ is a finite and connected component of $\AR(\mmod\Lambda)$, then $C=\AR(\mmod\Lambda)$.
\end{lemma}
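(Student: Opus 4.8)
The plan is to mimic the classical argument for module categories (cf.\ \cite[Theorem~VII.2.4]{ASS06} or the connectedness part of Auslander's theorem): show that a finite connected component $C$ of $\AR(\mmod\Lambda)$ is closed under irreducible morphisms in \emph{both} directions, so that it is a union of connected components of the (connected) category $\mmod\Lambda$, and then invoke connectedness of $\Lambda$ to conclude $C$ is everything. Concretely, I would first argue that every indecomposable $X^\bullet$ appearing in $C$ has, via \Cref{theorem:ZhouExistenceAR}, an AR-triangle $\tau_{[m]}X^\bullet\to Y^\bullet\to X^\bullet\to\shift{\tau_{[m]}X^\bullet}$ whenever $X^\bullet$ is not projective and an AR-triangle $X^\bullet\to Y'^\bullet\to\tau_{[m]}^-X^\bullet\to\shift{X^\bullet}$ whenever $X^\bullet$ is not injective; by \Cref{thm:irreducibleInARsequence} all summands of $Y^\bullet$ and $Y'^\bullet$ lie in $C$ (they are connected to $X^\bullet$ by an arrow), and $\tau_{[m]}X^\bullet$, $\tau_{[m]}^-X^\bullet$ lie in $C$ as well. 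Hence $C$ is closed under $\tau_{[m]}^{\pm}$ and under taking sources/targets of irreducible morphisms, with the two exceptions of arrows \emph{into} indecomposable projectives and arrows \emph{out of} indecomposable injectives.

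Those two exceptional cases are handled by \Cref{lemma:radPsink}: if $P$ is an indecomposable projective with $X^\bullet$ an immediate predecessor, then $\iota\colon\rad P\to P$ is a sink morphism, so $X^\bullet$ is a summand of $\rad P$; conversely if $X^\bullet\in C$ and $\rad P$ has $X^\bullet$ as a summand, then $P$ is reachable from $X^\bullet$ by the arrow $\rad P\to P$, so $P\in C$. Dually for injectives using the source morphism $\nshift{I}{m-1}\to\nshift{(I/\soc I)}{m-1}$. The upshot is: if an indecomposable $Z^\bullet$ has an irreducible morphism to or from some object of $C$, then $Z^\bullet\in C$. Since $\AR(\mmod\Lambda)$ is locally finite (remarked after \Cref{lemma:radPsink}), and $C$ is finite, the full subquiver of $\AR(\mmod\Lambda)$ on the objects \emph{not} in $C$ has no arrows connecting it to $C$; thus $C$ is a union of connected components of $\AR(\mmod\Lambda)$ — but $C$ is assumed connected, so it is a single connected component.

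It remains to see that $\AR(\mmod\Lambda)$ is connected, for which it suffices to show $\mmod\Lambda$ is a connected category, i.e.\ cannot be written as a direct sum of two nonzero additive subcategories with no morphisms between them. Here I would use that $\mod\Lambda\subseteq\mmod\Lambda$ is a full subcategory (via stalk complexes in degree $0$) and that $\Lambda$ connected means $\mod\Lambda$ is connected; moreover every indecomposable $X^\bullet\in\mmod\Lambda$ is linked to $\mod\Lambda$: by \Cref{lemma:pathFromProjToInj}, choosing $v$ with $(\DimVec(X^\bullet)_{-i})_v\neq 0$ gives a nonzero morphism $P_v\to X^\bullet$, so $X^\bullet$ lies in the connected component of the module $P_v$. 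Hence any block decomposition of $\mmod\Lambda$ would restrict to a block decomposition of $\mod\Lambda$, forcing one block to be zero; so $\mmod\Lambda$, and therefore $\AR(\mmod\Lambda)$, is connected. Combining with the previous paragraph, $C=\AR(\mmod\Lambda)$.

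The main obstacle I anticipate is the bookkeeping around the projective/injective "boundary" of the component: AR-triangles do not end in projectives nor start in injectives, so closure under arrows is genuinely one-sided there, and one must be careful to cite \Cref{lemma:radPsink} in exactly the right direction to recover the missing neighbours (predecessors of projectives, successors of injectives) and to ensure $\rad P$, $P$, $\nshift{I}{m-1}$, $\nshift{I/\soc I}{m-1}$ all get pulled into $C$. The semisimple hypothesis is excluded precisely so that $\rad P$ and $I/\soc I$ are not forced to vanish in a degenerate way and the component genuinely has arrows; with $\Lambda$ connected and not semisimple this causes no trouble.
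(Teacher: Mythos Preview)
Your argument has a genuine gap at the step where you pass from ``$C$ is closed under irreducible morphisms in both directions'' to ``$C$ is a union of connected components (blocks) of the category $\mmod\Lambda$''. First, the closure statement is tautological: by definition a connected component of the AR-quiver is closed under arrows, i.e.\ under irreducible morphisms; the work you do with \Cref{lemma:radPsink} to handle predecessors of projectives and successors of injectives is not needed for this. More importantly, closure under \emph{irreducible} morphisms does not imply closure under \emph{all} non-zero morphisms. A connected additive category can have a highly disconnected AR-quiver: for any representation-infinite hereditary algebra the preprojective component is closed under irreducible morphisms, yet admits non-zero maps to every other component. So your inference ``$\mmod\Lambda$ is connected as a category, therefore $\AR(\mmod\Lambda)$ is connected'' is simply false in general.

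What is missing is precisely where the \emph{finiteness} of $C$ must enter: one needs a Harada--Sai type argument showing that, because the (cohomological) lengths of objects in $C$ are bounded, any non-zero radical morphism between an object of $C$ and another indecomposable factors as a finite composition of irreducible morphisms, and hence produces a path in the AR-quiver linking the two. This is exactly the content of \cite[Lemma~2.2.6]{Rin84}, which the paper invokes directly: that lemma reduces the problem to checking that every indecomposable $X^\bullet$ admits a chain of non-zero morphisms from a projective, and this is supplied by \Cref{lemma:pathFromProjToInj}. Your use of \Cref{lemma:pathFromProjToInj} to connect every object to a projective is the right ingredient, but it only yields the conclusion once paired with the Harada--Sai machinery you have omitted.
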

\begin{proof}
    By \cite[Lemma 2.2.6]{Rin84} it is enough to show that for any $X^\bullet\in\mmod\Lambda$ we can find a sequence of non-zero morphisms from a projective to $X^\bullet$. Which, as long as $X^\bullet$ is non-zero, follows from \Cref{lemma:pathFromProjToInj}. 
    
\end{proof}

\begin{lemma}\label{lemma:connectedComponentBothPostProjAndPreInj}
    Assume $\Lambda$ is a connected $\K$-algebra which is not semi-simple and $C$ is a connected component of $\AR(\mmod\Lambda)$ which is both postprojective and preinjective, then $C$ is finite and $C=\AR(\mmod\Lambda)$. 
\end{lemma}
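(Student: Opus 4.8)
The plan is to reduce the whole statement to the finiteness of $C$. Once $C$ is known to be a finite connected component, \Cref{lemma:finiteComponentIsWholeAR} applies --- $\Lambda$ is connected and not semi-simple --- and yields $C=\AR(\mmod\Lambda)$. So everything comes down to bounding the number of vertices of $C$, and for that I would split $C$ into its $\tau_{[m]}$-orbits and bound separately how many orbits there are and how large each one is.

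First, the number of orbits. Since every object of $C$ is postprojective, any indecomposable $X^\bullet\in C$ has $\tau_{[m]}^{l}X^\bullet$ projective for some minimal $l\geq 0$, and this projective lies in $C$ and in the $\tau_{[m]}$-orbit of $X^\bullet$. As $\tau_{[m]}$ is undefined on projectives in the AR-quiver, a projective can only occur as the leftmost vertex of its orbit, so each orbit contains at most one indecomposable projective, and by the previous sentence at least one; hence exactly one. Assigning to an orbit this projective therefore embeds the set of $\tau_{[m]}$-orbits of $C$ into the set of indecomposable projective $\Lambda$-modules, which is finite because $\Lambda$ is finite-dimensional. Second, the size of a single orbit $\mathcal{O}$, say with projective vertex $P$: then $\mathcal{O}=\{\tau_{[m]}^{-k}P : k\geq 0,\ \tau_{[m]}^{-k}P \text{ defined}\}$, and since $P$ is also preinjective, $P\cong\tau_{[m]}^{s}I$ for some indecomposable injective $I\in C$ and some $s\geq 0$; consequently $\tau_{[m]}^{-s}P\cong I$ and $\tau_{[m]}^{-(s+1)}P$ is undefined (equivalently $\tau_{[m]}^{-}I=0$). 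Hence $\mathcal{O}$ has at most $s+1$ vertices, and $C$, being a finite union of finite orbits, is finite.

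The delicate point --- and the only place I expect to need genuine care --- is the translation-quiver bookkeeping implicit in the last paragraph: one must check that ``$X^\bullet$ postprojective'' and ``$X^\bullet$ preinjective'', defined only through the existence of an exponent $l$ with $\tau_{[m]}^{l}X^\bullet$ projective, respectively $X^\bullet\cong\tau_{[m]}^{l}I$, really do force the $\tau_{[m]}$-orbit of $X^\bullet$ to terminate at a projective on one side and at an injective on the other, with every intermediate translate defined and non-projective (respectively non-injective), so that $\tau_{[m]}^{-}\tau_{[m]}\cong\id$ and $\tau_{[m]}\tau_{[m]}^{-}\cong\id$ apply and give $\tau_{[m]}^{-s}(\tau_{[m]}^{s}I)\cong I$. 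This follows from \cite[Proposition 3.11]{Zho25} (no non-zero injective summands in $\tau_{[m]}Z^\bullet$, no non-zero projective summands in $\tau_{[m]}^{-}Z^\bullet$, and the invertibility of $\tau_{[m]}$ away from projectives and injectives) together with the minimality of the chosen exponents; once this is in place, the counting above finishes the proof.
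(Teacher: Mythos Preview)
Your proposal is correct and follows essentially the same approach as the paper: bound the number of $\tau_{[m]}$-orbits by the number of indecomposable projectives, bound the length of each orbit using that its projective endpoint is also preinjective, and then invoke \Cref{lemma:finiteComponentIsWholeAR}. The paper's proof is more terse and leaves the translation-quiver bookkeeping you flag implicit, but the argument is the same.
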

\begin{proof}
    Every object $X^\bullet \in C$ lies in the $\tau_{[m]}$-orbit of both a projective and an injective, and since each injective in $C$ is postprojective and each projective in $C$ is preinjective there are only finitely many objects in each orbit. 
    Combining this with the fact that there are only finitely many projectives (injectives), we see that $C$ is finite. Hence, we are done by \Cref{lemma:finiteComponentIsWholeAR}.
\end{proof}

Finally, we have the following result which will let us reduce questions of whether an algebra has en infinite extended module category, to the same question for a smaller algebra.

\begin{lemma}\label{lemma:restrictionAdjointExtended}
    Let $\res_e\colon \mod\Lambda\to \mod\Gamma$ be the restriction functor given by an idempotent $e\in\Lambda$ and $\Gamma=e\Lambda e$. Then $\res_e$ and it's left adjoint $L_e\coloneqq \Gamma e\otimes_\Gamma -\colon \mod\Gamma\to\mod\Lambda$, induce an adjoint pair $(\tilde{\bL} L_e,\res_e)$ of the $m$-extended module categories,
    \[
    \begin{tikzcd}
        \mmod\Lambda \arrow[bend right,swap]{r}{\res_e} & \mmod\Gamma \arrow[bend right,swap]{l}{\tilde{\bL} L_e}
    \end{tikzcd}
    \]
    where $\tilde{\bL} L_e\coloneqq \sigma_\Lambda^{\geq -(m-1)}\circ\bL L_e=\sigma_\Lambda^{\geq -(m-1)}\circ L_e\circ \mathbf{p}(-)$. Moreover, $\tilde{\bL} L_e$ is fully faithful.
\end{lemma}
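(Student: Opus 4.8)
The plan is to build the adjunction on the extended module categories by restricting the derived-level adjunction of the classical restriction/induction pair and then correcting with a soft truncation. Recall first that the ordinary pair $(L_e,\res_e)$ is adjoint on module categories, hence $(\bL L_e,\res_e)$ is adjoint on the bounded derived categories, where $\bL L_e$ is computed by applying $L_e$ termwise to a projective resolution, i.e.\ $\bL L_e = L_e\circ\projres(-)$ up to homotopy. The key observation is that $\res_e$ restricts to a functor $\mmod\Lambda\to\mmod\Gamma$: since $\res_e$ is exact and $\Gamma$-linear, $H^i(\res_e X^\bullet)=\res_e H^i(X^\bullet)$, so cohomology concentrated in $[-(m-1),0]$ is preserved. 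Thus $\res_e$ needs no modification. On the other side, $\bL L_e$ need not land in $\mmod\Lambda$ — it may create cohomology in degrees below $-(m-1)$ — so we compose with $\sigma_\Lambda^{\geq -(m-1)}$ to land back in $\mmod\Lambda$, giving $\tilde{\bL} L_e = \sigma_\Lambda^{\geq -(m-1)}\circ \bL L_e$ as in the statement.

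First I would establish the adjunction isomorphism. For $N^\bullet\in\mmod\Gamma$ and $X^\bullet\in\mmod\Lambda$ we want
\[
\Hom_{m\mhyphen\Lambda}(\tilde{\bL} L_e\, N^\bullet, X^\bullet)\cong \Hom_{m\mhyphen\Gamma}(N^\bullet,\res_e X^\bullet).
\]
Starting from the derived adjunction $\Hom_{\D^b(\mod\Lambda)}(\bL L_e\, N^\bullet, X^\bullet)\cong \Hom_{\D^b(\mod\Gamma)}(N^\bullet,\res_e X^\bullet)$, the right-hand side is already $\Hom_{m\mhyphen\Gamma}(N^\bullet,\res_e X^\bullet)$ since both objects lie in $\mmod\Gamma$ and the Hom-spaces in $\mmod\Gamma$ are by definition the derived Hom-spaces. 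For the left-hand side I would use the defining property of the soft truncation $\sigma^{\geq -(m-1)}$ as the left adjoint of the inclusion $\D^{\geq -(m-1)}(\mod\Lambda)\hookrightarrow \D^b(\mod\Lambda)$: since $X^\bullet\in\mmod\Lambda\subseteq \D^{\geq -(m-1)}(\mod\Lambda)$, we get
\[
\Hom_{\D^b(\mod\Lambda)}(\bL L_e\, N^\bullet, X^\bullet)\cong \Hom_{\D^{\geq -(m-1)}}(\sigma^{\geq -(m-1)}\bL L_e\, N^\bullet, X^\bullet) = \Hom_{m\mhyphen\Lambda}(\tilde{\bL} L_e\, N^\bullet, X^\bullet),
\]
where I must check that $\tilde{\bL} L_e\, N^\bullet$ genuinely lies in $\mmod\Lambda$, i.e.\ that $H^i=0$ also for $i>0$; this holds because $L_e$ is right-exact so $\bL L_e\, N^\bullet\in\D^{\leq 0}$ whenever $N^\bullet\in\D^{\leq 0}$, and soft truncation from below preserves the upper bound on cohomology. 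Naturality of both isomorphisms is inherited from naturality of the derived adjunction and of the truncation adjunction, so composing gives a natural isomorphism, establishing $(\tilde{\bL} L_e,\res_e)$.

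For full faithfulness of $\tilde{\bL} L_e$, the cleanest route is to show the unit $N^\bullet\to \res_e\tilde{\bL} L_e\, N^\bullet$ is an isomorphism for all $N^\bullet\in\mmod\Gamma$; full faithfulness of a left adjoint is equivalent to the unit being invertible. For this I would reduce to the underived statement: the classical unit $N\to\res_e L_e N = \res_e(\Lambda e\otimes_\Gamma N) = e\Lambda e\otimes_\Gamma N \cong N$ is an isomorphism because $e\Lambda e = \Gamma$ as a $\Gamma$-$\Gamma$-bimodule, so $L_e$ is fully faithful on modules, and moreover $\res_e$ kills the higher derived terms appropriately — more precisely, for a projective $\Gamma$-module $P$, $\res_e L_e P$ is $\Gamma e\otimes$(projective), and applying $\res_e$ recovers $P$. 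Since $\bL L_e\, N^\bullet = L_e\,\projres N^\bullet$ and $\res_e$ is exact, $\res_e\bL L_e\, N^\bullet = \res_e L_e\,\projres N^\bullet \cong \projres N^\bullet \cong N^\bullet$ in $\D^b(\mod\Gamma)$. Finally I must check that applying $\res_e$ commutes with the truncation $\sigma_\Lambda^{\geq -(m-1)}$ up to the needed isomorphism — the point being that $\bL L_e\, N^\bullet$ and $\tilde{\bL} L_e\, N^\bullet$ differ only in cohomological degrees $<-(m-1)$, and since $\res_e\bL L_e\, N^\bullet\cong N^\bullet$ already has no cohomology there, the truncation is invisible after restriction. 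Hence the unit is an isomorphism and $\tilde{\bL} L_e$ is fully faithful.

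\textbf{Main obstacle.} The delicate point is the interaction between $\res_e$ and the soft truncation $\sigma_\Lambda^{\geq -(m-1)}$: one needs that $\res_e$ does not see the difference between $\bL L_e\,N^\bullet$ and its truncation, which amounts to the vanishing $H^i(\res_e \bL L_e\, N^\bullet)=0$ for $i<-(m-1)$, i.e.\ $\res_e$ of the "lost" cohomology is zero. This follows from $\res_e\bL L_e\cong \id$ on $\D^b(\mod\Gamma)$ as above, but care is needed to phrase it so that the two computations (truncate-then-restrict versus restrict) are compared via a genuine natural transformation rather than just an abstract isomorphism of objects; I would handle this by applying $\res_e$ to the truncation triangle $\sigma_\Lambda^{\geq -(m-1)}\bL L_e N^\bullet \to \bL L_e N^\bullet \to \sigma_\Lambda^{\leq -m}\bL L_e N^\bullet\to[1]$ and observing the third term restricts to an acyclic complex.
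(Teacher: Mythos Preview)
Your argument is correct and matches the paper's approach: both establish the adjunction by composing the derived adjunction $(\bL L_e,\res_e)$ with the truncation adjunction $\sigma^{\geq -(m-1)}\dashv\text{inclusion}$, and both deduce full faithfulness from $\res_e\circ\tilde{\bL}L_e\cong\id$. For your ``main obstacle'' the paper takes a slightly more direct route, commuting $\res_e$ past $\sigma^{\geq -(m-1)}$ using exactness of $\res_e$ rather than analysing the truncation triangle (which, incidentally, you have written with the maps in the wrong order: the canonical triangle is $\sigma^{\leq -m}X^\bullet\to X^\bullet\to\sigma^{\geq -(m-1)}X^\bullet\to[1]$).
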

\begin{proof}
    Note that the restriction functor $\mathrm{res_e}$ is exact and the left adjoint $L_e$ is fully faithful and preserves projectives \cite[Theorem I.6.8]{ASS06}. 
    Also note that by \cite[Lemma 15.6]{Kel96} (see also \cite[Proposition 4.3.16]{Kra21}), an adjoint pair 
    \[
    L\colon \mod\Gamma \rightleftarrows \mod\Lambda \colon R,
    \]
    give rise to an adjoint pair 
    \[
    \bL L\colon \Db(\mod\Gamma)\leftrightarrows \Db(\mod\Lambda)\colon \bR R,
    \]
    where $\bL L$ is the left derived functor of $L$ and $\bR R$ is the right derived functor of $R$.

    Recall that the truncation functor $\sigma_{\Lambda}^{\geq -(m-1)}\colon \Db(\mod\Lambda)\to \cD_\Lambda^{\geq -(m-1)}$ is left adjoint to the inclusion. Hence, we can see that
    \[
    \begin{split}
        \Hom_{m\mhyphen\Lambda}(\tilde{\bL}L_e(Y^\bullet),X^\bullet)&\cong \Hom_{\Db(\mod\Lambda)}(\bL L_e(Y^\bullet),X^\bullet)\\
        &\cong \Hom_{\Db(\mod\Gamma)}(Y^\bullet,\res_e(X^\bullet))\\
        &=\Hom_{m\mhyphen\Gamma}(Y^\bullet,\res_e(X^\bullet))
    \end{split}
    \]
    for $X^\bullet\in\mmod\Lambda$ and $Y^\bullet\in\mmod\Gamma$. We used here that since $\res_e$ is exact, we have $\res_e=\bR \res_e$.
    
    Since $L_e$ is fully faithful, $\res_e\circ L_e(-)\cong \mathrm{id}_{-}$. Thus 
    \[
    \begin{split}
        \res_e\circ\tilde{\bL}(-)&=\res_e\circ \sigma^{\geq -(m-1)}\circ L_e\circ \projres(-)\\
        &\cong \sigma^{\geq -(m-1)}\circ \res_e\circ L_e\circ \projres(-)\\
        &\cong \sigma^{\geq -(m-1)}\circ \mathrm{id}_{\projres(-)}\\
        &=\sigma^{\geq-(m-1)}\circ \projres(-)\\
        &\cong (-)
    \end{split}
    \]
    where the first isomorphism follows from $\res_e$ being exact. It follows that $\tilde{\bL} L_e$ is fully faithful,
    \[
    \begin{split}
        \Hom_{m\mhyphen\Gamma}(Y^\bullet,\overline{Y}^\bullet)&\cong \Hom_{m\mhyphen\Gamma}(Y^\bullet,\res_e\circ\tilde{\bL} L_e(\overline{Y}^\bullet)\\
        &\cong \Hom_{m\mhyphen\Lambda}(\tilde{\bL} L_e(Y^\bullet),\tilde{\bL} L_e(\overline{Y}^\bullet)).
    \end{split}
    \]
\end{proof}

\begin{corollary}\label{Corol:lemma:restrictionAdjointExtended}
    Let $\Lambda$ be an algebra and $e$ an idempotent of $\Lambda$. If $\mmod\Lambda$ is of finite type, then $\mmod e\Lambda e$ is of finite type.
\end{corollary}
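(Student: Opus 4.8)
The plan is to exploit the fully faithful functor $\tilde{\bL}L_e\colon\mmod\Gamma\to\mmod\Lambda$, with $\Gamma=e\Lambda e$, produced in \Cref{lemma:restrictionAdjointExtended}, and to observe that a fully faithful $\K$-linear functor between Krull--Schmidt categories induces an injection on the sets of isomorphism classes of indecomposable objects. Given that injection, finiteness of $\mmod\Lambda$ immediately forces finiteness of $\mmod\Gamma$.

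First I would recall that both $\mmod\Gamma$ and $\mmod\Lambda$ are $\K$-linear, $\Hom$-finite and Krull--Schmidt, as noted right after the definition of the extended module categories. Next, for an indecomposable $Y^\bullet\in\mmod\Gamma$, full faithfulness of $\tilde{\bL}L_e$ yields a ring isomorphism $\End_{m\mhyphen\Gamma}(Y^\bullet)\cong\End_{m\mhyphen\Lambda}(\tilde{\bL}L_e Y^\bullet)$. Since $Y^\bullet$ indecomposable means the left-hand ring is local (in particular nonzero), the right-hand ring is local as well, so $\tilde{\bL}L_e Y^\bullet$ is a nonzero indecomposable object of $\mmod\Lambda$. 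Moreover, a fully faithful functor reflects isomorphisms, so $\tilde{\bL}L_e Y^\bullet\cong\tilde{\bL}L_e \overline{Y}^\bullet$ implies $Y^\bullet\cong\overline{Y}^\bullet$. Hence $Y^\bullet\mapsto\tilde{\bL}L_e Y^\bullet$ descends to a well-defined injective map from the isoclasses of indecomposables of $\mmod\Gamma$ into those of $\mmod\Lambda$.

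To conclude: if $\mmod\Lambda$ is of finite type, the codomain of this injection is a finite set, so the set of isoclasses of indecomposables of $\mmod\Gamma=\mmod e\Lambda e$ is finite, which is the claim. The only point that needs a little care --- and the closest thing to an obstacle --- is the implication ``fully faithful plus Krull--Schmidt $\Rightarrow$ injective on indecomposables''; it rests on the standard fact that an object of a Krull--Schmidt category is indecomposable precisely when its endomorphism ring is local, combined with full faithfulness transporting that endomorphism ring isomorphically. Everything else is formal, so the proof is short.
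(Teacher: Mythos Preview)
Your proposal is correct and is precisely the standard deduction the paper has in mind: the corollary is stated immediately after \Cref{lemma:restrictionAdjointExtended} with no separate proof, so the intended argument is exactly the one you give --- use that $\tilde{\bL}L_e$ is fully faithful to obtain an injection on isoclasses of indecomposables between the Krull--Schmidt categories. There is nothing to add; your handling of the one subtle point (indecomposables map to nonzero indecomposables via transport of local endomorphism rings) is clean and complete.
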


\section{Linear Nakayama algebras}
A natural question to ask is when $\mmod\Lambda$ is of finite type. 
We chose to explore this in the relatively nicely behaved world of linear Nakayama algebras with homogeneous relations. 
That is, the algebras $\Lambda(n,l)$, given by the quiver 
\[
\begin{tikzpicture}[xscale=.9,tips=proper,baseline=(current  bounding  box.center)]
    \node (d) at (0,0) [nodeDots] {};
    \node (d-label) at (0,-.1) [below,scale=.7] {$n-1$};
    \node (d-1) at (2,0) [nodeDots] {};
    \node (d-1-label) at (2,-.1) [below,scale=.7] {$n-2$};
    \node (cdot1) at (3.8,0) [nodeCDots] {};
    \node (cdot2) at (4,0) [nodeCDots] {};
    \node (cdot3) at (4.2,0) [nodeCDots] {};
    \node (2) at (6,0) [nodeDots] {};
    \node (2-label) at (6,-.1) [below,scale=.7] {$2$};
    \node (1) at (8,0) [nodeDots] {};
    \node (1-label) at (8,-.1) [below,scale=.7] {$1$};
    \node (0) at (10,0) [nodeDots] {};
    \node (0-label) at (10,-.1) [below,scale=.7] {$0$};

    \draw[-latex] ([xshift=.2cm]d.center)--([xshift=-.2cm]d-1.center)node[midway,above,scale=.7]{$\alpha_{n-1}$};
    \draw[-latex] ([xshift=.2cm]d-1.center)--([xshift=-.2cm]cdot1.center)node[midway,above,scale=.7]{$\alpha_{n-2}$};
    \draw[-latex] ([xshift=.2cm]cdot3.center)--([xshift=-.2cm]2.center)node[midway,above,scale=.7]{$\alpha_3$};
    \draw[-latex] ([xshift=.2cm]2.center)--([xshift=-.2cm]1.center)node[midway,above,scale=.7]{$\alpha_2$};
    \draw[-latex] ([xshift=.2cm]1.center)--([xshift=-.2cm]0.center)node[midway,above,scale=.7]{$\alpha_1$};
\end{tikzpicture}
\]
and relations given by paths of length $l$. 

\subsubsection*{Overview of the section} In this section we are proving the following theorem.
\begin{theorem}\label{thm:WhenIsLinearNakayamaFinite}
    $\mmod\Lambda(n,l)$ is of finite type if and only if it is contained in \Cref{tab:FiniteNakayama}.
\end{theorem}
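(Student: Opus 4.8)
The plan is to reduce the statement to a question about the termination of the knitting algorithm and then to treat finiteness (the entries of \Cref{tab:FiniteNakayama}) and infiniteness (everything else) separately. For the reduction: for $n\geq 2$ the algebra $\Lambda(n,l)$ is connected and not semisimple, the radical of each indecomposable projective is uniserial (hence zero or indecomposable), $P_0$ is simple projective and $I_{n-1}$ is simple injective, so by \Cref{lemma:postprojective2} and \Cref{lemma:preinjective1} there is a postprojective component $\cP$ of $\AR(\mmod\Lambda(n,l))$ containing $P_0$ and a preinjective component containing $\nshift{I_{n-1}}{m-1}$. If $\mmod\Lambda(n,l)$ is of finite type then $\cP$ is finite, and conversely if $\cP$ is finite then $\cP=\AR(\mmod\Lambda(n,l))$ by \Cref{lemma:finiteComponentIsWholeAR}. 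Since every object of $\cP$ lies in the $\tau_{[m]}$-orbit of a projective, $\cP$ is finite if and only if each such orbit terminates at an injective, that is, if and only if $\cP$ is also preinjective; by \Cref{prop:AlgorithmForKnittingDimensionVectors} this happens precisely when the knitting algorithm, started from the $\DimVec$ of the simple projectives, eventually produces $\DimVec(\nshift{I}{m-1})$ for every indecomposable injective $I$. So the theorem is equivalent to: the knitting algorithm for $\Lambda(n,l)$ halts if and only if $(n,l,m)$ is listed in \Cref{tab:FiniteNakayama}.

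For the finiteness direction, for each triple in \Cref{tab:FiniteNakayama} I would run the algorithm of \Cref{prop:AlgorithmForKnittingDimensionVectors} and display the resulting finite component; the representative cases can be presented as completed AR-quivers computed by the algorithm, and any infinite sub-family in the table (for instance the hereditary case $l\geq n$, where $\mmod\Lambda(n,l)$ consists precisely of the shifts $\nshift{M}{k}$, $0\leq k\leq m-1$, of the indecomposable $\Lambda(n,l)$-modules) is handled by a uniform description of $\cP$ together with the observation that it closes up on all the injectives.

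For the infiniteness direction the two structural inputs are monotonicity and restriction. From $\mod\Lambda\subseteq 2\nmod\Lambda\subseteq\cdots$ we get that if $m_0\nmod\Lambda(n,l)$ is of infinite type then so is $m\nmod\Lambda(n,l)$ for all $m\geq m_0$; and for a consecutive-vertex idempotent $e$ of $\Lambda(n,l)$ one has $e\Lambda(n,l)e\cong\Lambda(n',l')$ with $n'\leq n$ and $l'=\min(l,n')$, so by \Cref{Corol:lemma:restrictionAdjointExtended} infiniteness of $m\nmod\Lambda(n',l')$ forces infiniteness of $m\nmod\Lambda(n,l)$. It therefore suffices to isolate the finitely many triples $(n,l,m)$ that are minimal among those outside \Cref{tab:FiniteNakayama}, to prove infiniteness for each of them, and to verify by a finite check that every triple in the complement of the table dominates a minimal one. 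For a minimal triple I would argue infiniteness directly, either by exhibiting an explicit infinite family of pairwise non-isomorphic indecomposable string-type complexes in $\mmod\Lambda(n,l)$, or, equivalently, by showing that the $\DimVec$-recursion of \Cref{prop:AlgorithmForKnittingDimensionVectors} enters a region in which no further projective or injective occurs and along which the cohomological dimension vectors strictly increase, so that $\cP$ is infinite (indeed contains a full subquiver of the form $\bZ\ALinOrInf$).

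The main obstacle is this infiniteness half: one has to pin down the minimal infinite triples and then make ``the knitting algorithm never halts'' into a rigorous assertion rather than an empirical one, which requires either constructing the infinite families by hand---verifying indecomposability and pairwise non-isomorphism of complexes over a Nakayama algebra, which is manageable but fiddly---or proving a growth or periodicity lemma for the linear recurrence governing $\DimVec$ near a $\tau_{[m]}$-stable part of $\cP$.
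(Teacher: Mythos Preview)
Your overall architecture matches the paper: split into finite and infinite cases, use \Cref{lemma:postprojective2}/\Cref{lemma:preinjective1} together with \Cref{lemma:finiteComponentIsWholeAR} and \Cref{lemma:connectedComponentBothPostProjAndPreInj} to reduce finiteness to the postprojective and preinjective components coinciding, and use monotonicity in $m$ together with \Cref{Corol:lemma:restrictionAdjointExtended} to reduce the infinite side to a finite list of minimal triples. That part is fine.

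Where you diverge is in how the two halves are actually executed, and in both places the paper finds a shortcut you do not mention. For the finite direction, the table contains several infinite families in $n$ (and in $m$ for $l=2$ and $l=n-1$), so ``run the knitting algorithm and display the component'' is not a proof. The paper disposes of most of the table at once by quoting \cite{HS10}: $\Lambda(n,l)$ is piecewise hereditary of Dynkin type for $n\leq 7$, for $n=8$ with $l\neq 4$, and for $l\in\{2,n-1\}$, and piecewise hereditary of Dynkin type forces $\mmod$ to be of finite type for every $m$. This leaves only three genuinely new infinite families, $4\nmod\Lambda(n,3)$, $2\nmod\Lambda(n,4)$ and $2\nmod\Lambda(n,5)$, each of which is handled by an explicit closed-form computation of a $\tau_{[m]}$-orbit connecting $P_0$ to $\nshift{I_{n-1}}{m-1}$ (the pattern stabilises after a few steps and one checks an inductive identity like $\tau_{[2]}^{-3}S_i=M_{i,i+2}$). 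Your ``uniform description'' is exactly this, but you should be aware that it is the substantive content of the finite half.

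For the infinite direction, your reduction to minimal triples is the paper's, but your two proposed endgames (hand-built infinite families of string complexes, or a growth lemma for the $\DimVec$-recursion) are harder than necessary. The paper instead exhibits, for each of the five minimal triples, a single $\tau_{[m]}$-periodic indecomposable (e.g.\ $\tau_{[3]}^{2}\,\shift{M_{4,5}}\cong\shift{M_{4,5}}$ in $3\nmod\Lambda(8,4)$). A periodic object cannot lie in a postprojective component, so the postprojective component is not all of $\AR(\mmod\Lambda)$; by \Cref{lemma:finiteComponentIsWholeAR} it is therefore infinite. This is a two-line check per base case and avoids entirely the ``fiddly'' verification you anticipate.
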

The proof is split into two subsections, in the first we show that when $\mmod(\Lambda(n,l))$ is in the table, then it is in fact of finite type, see \Cref{Prop:WhenNakayamaFinite}.
In the second section we show that outside of these cases, $\mmod\Lambda(n,l)$ is of infinite type, see \Cref{prop:linNakHomOfInfType}.

\subsubsection*{Description of modules} The indecomposable modules in $\mod\Lambda(n,l)$ can be described through their support, which is always an interval $[a,b]\subseteq[0,n-1]$ of length $\leq l$.
We denote by $M_{a,b}$, the indecomposable with support $[a,b]$.
An indecomposable $M_{a,b}$ is given by $M_i=\K$ for $x\in [a,b]$ and $0$ elsewhere;
The structure maps are the identity where possible and zero otherwise.
\[
\begin{tikzpicture}[scale=.7,every node/.style={scale=.9}]
    \node (M) at (-1,0)[] {$M_{a,b}\colon$};
    \node (cdotLeft1) at (-.2,0)[nodeCDots]{};
    \node (cdotLeft2) at (0,0)[nodeCDots]{};
    \node (cdotLeft3) at (.2,0)[nodeCDots]{};
    \node (b+1) at (2,0) [] {$0$};
    \node at (2,-.2)[below,scale=.7] {$b+1\vphantom{b+1}$};
    \node (b) at (4,0) [] {$\K$};
    \node at (4,-.2)[below,scale=.7] {$b\vphantom{b+1}$};
    \node (b-1) at (6,0) [] {$\K$};
    \node at (6,-.2)[below,scale=.7] {$b-1\vphantom{b+1}$};
    \node (cdotMiddle1) at (7.8,0) [nodeCDots]{};
    \node (cdotMiddle2) at (8,0) [nodeCDots]{};
    \node (cdotMiddle3) at (8.2,0) [nodeCDots]{};
    \node (a+1) at (10,0)[] {$\K$};
    \node at (10,-.2)[below,scale=.7] {$a+1\vphantom{b+1}$};
    \node (a) at (12,0) [] {$\K$};
    \node at (12,-.2)[below,scale=.7] {$a\vphantom{b+1}$};
    \node (a-1) at (14,0) [] {$0$};
    \node at (14,-.2)[below,scale=.7] {$a-1\vphantom{b+1}$};
    \node (cdotRight1) at (15.8,0)[nodeCDots]{};
    \node (cdotRight2) at (16,0) [nodeCDots]{};
    \node (cdotRight3) at (16.2,0) [nodeCDots]{};

    \draw[->] ([xshift=.3cm]cdotLeft3.center)--([xshift=-.3cm]b+1.center)node[midway,above,scale=.8]{$0$};
    \draw[->] ([xshift=.3cm]b+1.center)--([xshift=-.3cm]b.center)node[midway,above,scale=.8]{$0$};
    \draw[->] ([xshift=.3cm]b.center)--([xshift=-.3cm]b-1.center)node[midway,above,scale=.8]{$1$};
    \draw[->] ([xshift=.3cm]b-1.center)--([xshift=-.3cm]cdotMiddle1.center)node[midway,above,scale=.8]{$1$};
    \draw[->] ([xshift=.3cm]cdotMiddle3.center)--([xshift=-.3cm]a+1.center)node[midway,above,scale=.8]{$1$};
    \draw[->] ([xshift=.3cm]a+1.center)--([xshift=-.3cm]a.center)node[midway,above,scale=.8]{$1$};
    \draw[->] ([xshift=.3cm]a.center)--([xshift=-.3cm]a-1.center)node[midway,above,scale=.8]{$0$};
    \draw[->] ([xshift=.3cm]a-1.center)--([xshift=-.3cm]cdotRight1.center)node[midway,above,scale=.8]{$0$};
\end{tikzpicture}
\]
The projectives are either those with support starting in $0$ or with length exactly $l$, and injectives are those with support ending in $n-1$ or length exactly $l$. 
For the simples, projectives and injectives in $i\in[0,n-1]$ we therefore have
\[
    S_i=M_{i,i},\quad
    P_i=\begin{cases}
        M_{0,i}&\text{ if }i\leq l-1\\
        M_{i-l+1,i}&\text{ otherwise}
    \end{cases},\quad
    I_i=\begin{cases}
        M_{i,n-1}&\text{ if }i\geq n-l\\
        M_{i,i+l-1}&\text{ otherwise.}
    \end{cases}
\]
Note that $P_i=I_{i-l+1}$ for $l-1\leq i\leq n-1$. We may also note that $\rad M_{a,b}=M_{a,b-1}$ and $\soc M_{a,b}= S_a$. 

The projective and injective resolutions of $M_{a,b}$ are easily calculated:
\[
    \begin{tikzpicture}[xscale=1,every node/.style={scale=.9}]
        \node at (-8.9,1) [anchor=east] {$\projres M_{a,b}\colon$};
        \node (cdotRight1) at (1.9,1)[nodeCDots]{};
        \node (cdotRight2) at (2,1)[nodeCDots]{};
        \node (cdotRight3) at (2.1,1)[nodeCDots]{};
        \node (P1) at (.7,1) [] {$0$};
        \node (M) at (0,0) [] {$M_{a,\,b}$};
        \node (P0) at (-1,1) [] {$P_b$};
        \node (OM) at (-2,0) [] {$M_{b-l+1,\,a-1}$};
        \node (P-1) at (-3,1) [] {$P_{a-1}$};
        \node (O2M) at (-4,0) [] {$M_{a-l,\,b-l}$};
        \node (P-2) at (-5,1)  [] {$P_{b-l}$};
        \node (O3M) at (-6,0) [] {$M_{b-2l+1,\,a-l-1}$};
        \node (P-3) at (-7,1) [] {$P_{a-l-1}$};
        \node (cdotLeft1) at (-7.9,0) [nodeCDots]{};
        \node (cdotLeft2) at (-8,0) [nodeCDots]{};
        \node (cdotLeft3) at (-8.1,0) [nodeCDots]{};
        \node (cdot2Left1) at (-8.4,1) [nodeCDots]{};
        \node (cdot2Left2) at (-8.5,1) [nodeCDots]{};
        \node (cdot2Left3) at (-8.6,1) [nodeCDots]{};
        \draw[->] ([xshift=.2cm]cdot2Left1.center)--(P-3);
        \draw[->] (P-3)--(P-2);
        \draw[->] (P-2)--(P-1);
        \draw[->] (P-1)--(P0);
        \draw[->] (P0)--(P1);
        \draw[->] (P1)--([xshift=-.2cm]cdotRight1.center);
        \draw[->>] (P0)--(M);
        \draw[>->] (OM)--(P0);
        \draw[->>] (P-1)--(OM);
        \draw[>->] (O2M)--(P-1);
        \draw[->>] (P-2)--(O2M);
        \draw[>->] (O3M)--(P-2);
        \draw[->>] (P-3)--(O3M);
        \draw[>->] ([xshift=.25cm,yshift=.25cm]cdotLeft2.center)--(P-3);
    \end{tikzpicture}
\]
\[
\begin{tikzpicture}[xscale=1,every node/.style={scale=.9}]
    \node at (-2.4,1)[anchor=east] {$\injres M_{a,b}\colon$};
    \node (cdotLeft1) at (-2.1,1) [nodeCDots]{};
    \node (cdotLeft2) at (-2,1) [nodeCDots] {};
    \node (cdotLeft3) at (-1.9,1) [nodeCDots] {};
    \node (I-1) at (-.7,1) [] {$0$};
    \node (M) at (0,0) [] {$M_{a,\,b}$};
    \node (I0) at (1,1) [] {$I_{1}$};
    \node (UM) at (2,0) [] {$M_{b+1,\,a+l-1}$};
    \node (I1) at (3,1) [] {$I_{b+1}$};
    \node (U2M) at (4,0) [] {$M_{a+l,\,b+l}$};
    \node (I2) at (5,1) [] {$I_{a+l}$};
    \node (U3M) at (6,0) [] {$M_{b+l+1,\,a+2l-1}$};
    \node (I3) at (7,1) [] {$I_{b+l+1}$};
    \node (cdotRight1) at (7.9,0)[nodeCDots]{};
    \node (cdotRight2) at (8,0) [nodeCDots]{};
    \node (cdotRight3) at (8.1,0) [nodeCDots]{};
    \node (cdot2Right1) at (8.4,1) [nodeCDots]{};
    \node (cdot2Right2) at (8.5,1) [nodeCDots]{};
    \node (cdot2Right3) at (8.6,1) [nodeCDots]{};

    \draw[>->](M)--(I0);
    \draw[>->](UM)--(I1);
    \draw[>->](U2M)--(I2);
    \draw[>->](U3M)--(I3);
    \draw[->>] (I0)--(UM);
    \draw[->>] (I1)--(U2M);
    \draw[->>] (I2)--(U3M);
    \draw[->>] (I3)--([xshift=-.3cm,yshift=.3cm]cdotRight2.center);
    \draw[->] ([xshift=.2cm]cdotLeft3.center)--(I-1);
    \draw[->] (I-1)--(I0);
    \draw[->] (I0)--(I1);
    \draw[->] (I1)--(I2);
    \draw[->] (I2)--(I3);
    \draw[->] (I3)--([xshift=-.2cm]cdot2Right1.center);

\end{tikzpicture}
\]
Finally, we may see that $\nu P_i=I_i=P_{i+l-1}$ if $0\leq i\leq n-l$ and $\nu^- I_i=P_i=I_{i-l+1}$ for $l-1\leq i\leq n-1$.

\subsection{The finite cases} We start by showing when $\mmod\Lambda(n,l)$ is of finite type.
The work can be significantly reduced by two initial and simple observations:
\begin{enumerate}
    \item 
    By \cite[Table 1]{HS10}, we know that
    $\Lambda(n,l)$ for $2\leq l\leq n-1$ is piecewise hereditary of Dynkin type if and only if
    \begin{itemize}
        \item $n\leq 7$,
        \item $n=8$ and $l\neq 4$, or
        \item $l=2$ or $l=n-1$.
    \end{itemize}
    For a piecewise hereditary algebra $\Lambda$ of Dynkin type, we have that $\mmod\Lambda$ is of finite type for all $m<\infty$ \cite{Vos01}.
    \item $1\nmod\Lambda=\mod\Lambda$, thus if $\Lambda$ is representation finite then $1\nmod$ is obviously of finite type.
\end{enumerate}
Hence, the parts of \Cref{tab:FiniteNakayama} marked "All" and "$m=1$" are shown.
We are left with showing:
\begin{proposition}\label{Prop:WhenNakayamaFinite}
    Let $2\leq l\leq n-1$, then $\AR(\mmod\Lambda(n,l))$ is finite if
    \begin{enumerate}
        \item $m\leq 4$ and $l=3$,
        \item $m=2$, and either $l=4$ or $l=5$.
    \end{enumerate}
\end{proposition}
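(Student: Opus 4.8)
The plan is to produce, for each relevant parameter triple $(m,n,l)$, a single connected component of $\AR(\mmod\Lambda(n,l))$ that is simultaneously postprojective and preinjective; by \Cref{lemma:connectedComponentBothPostProjAndPreInj} such a component is automatically finite and equal to the whole AR-quiver, giving the assertion. Before constructing it I would reduce the number of cases. Since $\mod\Lambda\subseteq 2\nmod\Lambda\subseteq\cdots$, finiteness of $\mmod\Lambda(n,l)$ forces finiteness of $m'\nmod\Lambda(n,l)$ for all $m'\le m$, so it suffices to treat $(m,l)\in\{(4,3),(2,4),(2,5)\}$. Moreover, for the small values of $n$ for which $\Lambda(n,l)$ is piecewise hereditary of Dynkin type, namely $n\le 8$ when $l=3$, $n\le 7$ when $l=4$ and $n\le 8$ when $l=5$ by \cite[Table 1]{HS10}, the category $\mmod\Lambda(n,l)$ is already of finite type for every finite $m$ by \cite{Vos01}; so I may assume $n$ is large (so that $n\ge 9$ when $l\in\{3,5\}$ and $n\ge 8$ when $l=4$).

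I would then build the postprojective component. The algebra $\Lambda(n,l)$ is connected and not semisimple, it has the simple projective $S_0=P_0$, and the radical of each indecomposable projective $P_i=M_{a,i}$ is the interval module $M_{a,i-1}$, hence indecomposable; so \Cref{lemma:postprojective2} provides a postprojective component $\cP$ of $\AR(\mmod\Lambda(n,l))$ containing $P_0$, and the knitting construction underlying that lemma, together with the description of the projective objects of $\mmod\Lambda$, places every indecomposable projective module $P_i$ in $\cP$. Hence $\cP$ is exactly the union of the forward $\tau_{[m]}^-$-orbits $P_i,\ \tau_{[m]}^- P_i,\ (\tau_{[m]}^-)^2 P_i,\dots$ of the $P_i$, and the whole statement reduces to the claim that \emph{each such orbit is finite, terminating at an injective object $\nshift{I_j}{m-1}$.} Granting the claim, each orbit is finite so $\cP$ is finite; and running $\tau_{[m]}$ backwards along the orbit shows every object of $\cP$ has the form $\tau_{[m]}^k(\nshift{I_j}{m-1})$, so $\cP$ is preinjective as well, and \Cref{lemma:connectedComponentBothPostProjAndPreInj} completes the proof.

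To establish the claim I would run the knitting algorithm of \Cref{prop:AlgorithmForKnittingDimensionVectors} on cohomological dimension vectors, starting from $\DimVec(P_0)$ and feeding in the injective data $\DimVec(\nshift{I_i}{j})$ computed from the explicit injective resolutions of the interval modules together with the identities $\nu^- I_i=P_i=I_{i-l+1}$ recorded above. The point to prove, and the step that actually uses the hypotheses on $m$ and $l$, is that in each of the three ranges every dimension vector produced by the algorithm is, in each cohomological degree, supported on an interval of vertices of width at most $l$ with uniformly bounded entries. There are then only finitely many available dimension vectors for fixed $n$, and a finite case analysis of how the knitting rule $\tau^- M=-M+\sum_{N\in M^+}N$ (with its injective correction) transforms these finitely many shapes shows that the orbit of every projective must close up at an injective after a number of steps bounded independently of $n$.

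The main obstacle is precisely this last bookkeeping: controlling, uniformly in $n$, the shapes of the cohomological dimension vectors that appear under knitting, and checking that the projective orbits terminate. This is where the hypotheses are sharp, since for wider cohomology windows the same computation produces dimension vectors whose support grows without bound, which is the mechanism behind the infinite-type cases of \Cref{prop:linNakHomOfInfType}. In practice the analysis is finite but intricate, and is guided by the explicit AR-quivers produced by the implementation mentioned above (for instance the already-displayed quiver of $3\nmod\Lambda(9,3)$, which exhibits the stable pattern).
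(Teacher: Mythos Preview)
Your high-level strategy matches the paper's: reduce to $(m,l)\in\{(4,3),(2,4),(2,5)\}$ with $n$ large, then show the postprojective component $\cP$ containing $P_0$ coincides with the preinjective component containing $\nshift{I_{n-1}}{m-1}$, and invoke \Cref{lemma:connectedComponentBothPostProjAndPreInj}. The reductions via piecewise hereditary Dynkin type and via the inclusion chain are also the same.

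The difference, and the genuine gap, is in how you propose to show the two components coincide. You assert that knitting forward from $P_0$ picks up every projective $P_i$, and then that each $\tau_{[m]}^-$-orbit of a projective terminates at an injective because the dimension vectors stay supported on intervals of width at most $l$ with bounded entries. Neither step is established. The first does not follow from \Cref{lemma:postprojective2}: a projective $P_i$ enters the knitting only once $\rad P_i$ has already appeared, and nothing forces this a priori. The second is only a heuristic: your width bound $l$ is wrong (the actual bound, obtained in \Cref{lemma:boundOnCohomology} only \emph{after} finiteness is known, is $m(l-1)+1$, which for $(m,l)=(4,3)$ gives $9$), and even with a correct bound you would still need the finite case analysis you allude to but do not carry out, together with a uniqueness-of-dimension-vector statement that the paper proves only for stalk complexes.

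The paper takes a more economical route: rather than controlling all orbits, it follows a \emph{single} explicit $\tau_{[m]}^-$-orbit (starting at $P_0$, or at a nearby projective such as $P_7$) and computes it term by term, exhibiting a recurring pattern such as $S_i\xrsquigarrow{\tau_{[2]}^{-3}}M_{i,i+2}\xrsquigarrow{\tau_{[2]}^{-3}}S_{i+2}$ in the $(2,4)$ case, until the orbit reaches an object $\nshift{I_j}{m-1}$. That one path already connects $\cP$ to the preinjective component, so \Cref{lemma:connectedComponentBothPostProjAndPreInj} applies directly; the fact that all remaining projectives lie in $\cP$ then falls out of the conclusion rather than being needed as input.
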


It is sufficient to show finiteness of $4\nmod\Lambda(n,3)$ with $n\geq 9$, $2\nmod(n,4)$ with $n\geq 8$ and $2\nmod\Lambda(n,5)$ with $n\geq 9$. 
For each of these three cases we will show that there is a connected component in the AR-quiver which is both postprojective and preinjective, then by \Cref{lemma:connectedComponentBothPostProjAndPreInj} we are done.
The manner in which we show this is through constructing a finite path from the simple projective to the simple injective, which we know lie in a postprojective and preinjective component respectively (\Cref{lemma:postprojective2,lemma:preinjective1}).

\subsubsection{Case: \texorpdfstring{$2\nmod\Lambda(n,4)$}{2-mod Lambda(n,4)}}
A preliminary study of a handful calculated examples offers a strategy. 
We can observe from the AR-quivers of $2\nmod\Lambda(n,4)$ for $n=8,9,10,11$ (see \Cref{fig:ARquiver2modLambdan4}) and some preliminary calculations in QPA \cite{qpa}, that our effort should be spent on the $\tau_{[2]}$-orbit of the simple projective $P_0$. We also observe that depending on the parity of $n$, we may expect to either obtain the simple injective $\shift{I_{n-1}}$ in this orbit or the injective $\shift{I_{n-4}}$.

Note that $P_k=I_{k-3}$ for $3\leq k\leq n-1$, and $\tau_{[m]}^-(I_i[j])=\nshift{P_i}{j-1}$ if $j<m-1$. 

\begin{lemma}
    Let $n\geq 6$, then we have
    \[
    \tau^{-k}_{[2]}P_0=\begin{cases}
        \shift{I_{n-1}}&\text{, for }k=3(n-3)+2 \text{ if }n\text{ is odd,}\\
        \shift{I_{n-6}}&\text{, for }k=3(n-4)+1 \text{ if }n\text{ is even.}
    \end{cases}
    \]
    Moreover, we have $\rad P_{2+2(i+2)}=\tau^{-6i}P_0$ for $1\leq i\leq (n-5)/5$.
\end{lemma}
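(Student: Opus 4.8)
The plan is to compute the $\tau_{[2]}^-$-orbit of $P_0$ explicitly by repeatedly applying the formula $\tau_{[2]}^-(Z^\bullet) = \sigma^{\geq -1}(\nu^-[1]\injres_2(Z^\bullet))$, tracking everything through cohomological dimension vectors via the knitting algorithm of \Cref{prop:AlgorithmForKnittingDimensionVectors}. First I would set up the base of the orbit: starting from $P_0 = S_0 = M_{0,0}$, I would compute $\tau_{[2]}^-P_0$, then $\tau_{[2]}^{-2}P_0$, and so on, using the explicit projective/injective resolutions of the $M_{a,b}$ recorded above (recall $\injres M_{a,b}$ has terms $I_1, I_{b+1}, I_{a+l}, I_{b+l+1},\ldots$ with $l = 4$ here). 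The key structural observation, to be verified by induction, is that the orbit proceeds in blocks of length $3$: after applying $\tau_{[2]}^-$ three times one moves from (a shift of) a module supported near position $j$ to one supported near position $j+2$ (or $j+3$, depending on the step within the block), which is why the index $n-3$ (resp.\ $n-4$) appears multiplied by $3$ and the residual offsets $+2$ (resp.\ $+1$) record where inside the final block the orbit terminates at an injective.

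The main inductive step is to establish a clean closed form for $\tau_{[2]}^{-k}P_0$ as a function of $k \bmod 3$ and $\lfloor k/3 \rfloor$, distinguishing the two residue classes of $n$ only at the very end when the support hits the boundary $n-1$. Concretely I would prove by induction on $i$ that $\tau_{[2]}^{-3i}P_0$, $\tau_{[2]}^{-3i-1}P_0$, $\tau_{[2]}^{-3i-2}P_0$ are given by explicit (shifts of) interval modules $M_{a,b}$ with $a,b$ affine in $i$; the inductive step is a finite computation of one $\tau_{[2]}^-$ using the injective resolution formula, and then applying $\sigma^{\geq -1}$ of $\nu^-[1]$ of its brutal truncation $\brutalTrunc_{\leq 1}$. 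Since $\nu^- I_i = P_i = I_{i-l+1}$ for $l-1 \le i \le n-1$, the Nakayama functor simply shifts indices by $l-1 = 3$, which is exactly the mechanism producing the period-$3$, step-$+2$ (or $+3$) behaviour. When $n$ is odd, the support eventually reaches $[a,n-1]$ and the resulting complex becomes $\shift{I_{n-1}}$ precisely at $k = 3(n-3)+2$; when $n$ is even, the arithmetic lands on $\shift{I_{n-6}}$ at $k = 3(n-4)+1$ instead — these two endpoint computations should be done separately as base/terminal cases of the induction.

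For the final sentence, $\rad P_{2+2(i+2)} = \tau^{-6i}P_0$, I would identify $\tau_{[2]}^{-6i}P_0$ from the closed form established above (it is $\tau_{[2]}^{-3\cdot(2i)}P_0$, so it lands in the residue-$0$ family) and check it is the stalk module $M_{0, 2+2(i+2)-1} = M_{0,\,2i+5}$, which is exactly $\rad P_{2+2(i+2)} = \rad M_{0,\,2i+6}$ since the radical of an interval module drops the right endpoint by one; the range $1 \le i \le (n-5)/5$ is just the constraint that $P_{2+2(i+2)}$ still exists and has the asserted projective cover behaviour within $\Lambda(n,4)$ — wait, one should double-check that bound against $n$, and I would derive it from requiring $2+2(i+2) \le n-1$ together with the block structure, adjusting the statement if the exponents in the excerpt carry typos (the mismatch between "$\tau^{-6i}P_0$" and the $3$-periodicity suggests checking carefully whether it should read $\tau_{[2]}^{-6i}$).

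The hard part will be getting the bookkeeping of shifts exactly right: the objects in the orbit are genuinely complexes in $2\nmod\Lambda$, not modules, so each $M_{a,b}$ versus $\shift{M_{a,b}}$ distinction matters, and the interplay between $\sigma^{\geq -1}$, the brutal truncation in $\injres_2$, and $\nu^-[1]$ must be tracked step by step rather than guessed — an off-by-one in any shift propagates through the whole induction. A secondary subtlety is confirming that each $\tau_{[2]}^{-k}P_0$ for $k$ in the stated range is genuinely non-injective (so that $\tau_{[2]}^-$ applies and gives an indecomposable by \cite[Proposition 3.11]{Zho25}), which amounts to checking the support never prematurely reaches an injective interval; this is where the explicit description of injectives as intervals ending at $n-1$ or of length $l$ is used.
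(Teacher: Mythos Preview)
Your overall strategy for the main clause --- compute the $\tau_{[2]}^-$-orbit of $P_0$ step by step, identify a period-$3$ pattern, and handle the terminal cases separately for $n$ odd/even --- is exactly what the paper does. The paper makes the period-$3$ structure precise via the two claims $\tau_{[2]}^{-3}S_i = M_{i,i+2}$ (for $3\le i\le n-3$) and $\tau_{[2]}^{-3}M_{i,i+2} = S_{i+2}$ (for $3\le i\le n-4$), together with the base computation $\tau_{[2]}^{-3}P_0 = S_3$; your plan would arrive at the same once executed.

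However, your treatment of the ``moreover'' clause contains a genuine error. You write $P_{2+2(i+2)} = M_{0,\,2i+6}$ and hence $\rad P_{2+2(i+2)} = M_{0,\,2i+5}$, but this is wrong for $\Lambda(n,4)$: the description $P_j = M_{0,j}$ holds only for $j\le l-1 = 3$, whereas here $2i+6\ge 8$, so $P_{2i+6} = M_{2i+3,\,2i+6}$ and $\rad P_{2i+6} = M_{2i+3,\,2i+5}$. Consequently your proposed identification $\tau_{[2]}^{-6i}P_0 = M_{0,\,2i+5}$ cannot be right either; indeed the paper's sequence gives $\tau_{[2]}^{-6i}P_0 = M_{2i+1,\,2i+3}$ for $i\ge 1$. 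You should recompute this part with the correct projective $P_j = M_{j-l+1,\,j}$ for $j\ge l-1$.
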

\begin{proof}
    The first step in the proof is to show that $\tau_{[2]}^{-3}P_0=S_{3}$, which can be easily verified. 
    We then claim that
    \begin{enumerate}
        \item for $3\leq i \leq n-3$, we have $\tau_{[2]}^{-3}S_i=M_{i,i+2}$, and
        \item for $3\leq i\leq n-4$, we have $\tau_{[2]}^{-3}M_{i,i+2}=S_{i+2}$.
    \end{enumerate}
    The procedure to show the claims are the same, so we only verify the first.
    Observe that 
    \[
    \shift{\mathbf{i}_2(S_{i})}=[I_i\to I_{i+1}]
    \] 
    and thus 
    \[
    \tau_{[2]}^-(S_i)=\softTrunc^{\geq -1}(\nu^-\shift{\mathrm{i}_2(S_i)})=[P_i\to P_{i+1}].
    \] 
    Then for the next application of $\tau_{[2]}$, we have 
    \[
    \nu^-\shift{\mathbf{i}_2([P_i \to P_{i+1}])}=\shift{[P_{i-3}\to P_{i-2}]},
    \]
    which after the truncation $\softTrunc^{\geq -1}$ is equal to $\shift{S_{i-2}}$. Hence,
    \[
    \tau_{[2]}^{-2}(S_i)=\tau_{[2]}^-([P_i\to P_{i+1}])=\shift{S_{i-2}}
    \] 
    Taking an injective resolution of $\shift{S_{i-2}}$ yields 
    \[
    \shift{\mathbf{i}_2(\shift{S_{i-2}})}=[I_{i-2}\to I_{i-1}\to I_{i+1}],
    \]
    and applying $\softTrunc^{\geq -1}\circ \nu^-$ to this complex then gives $[S_{i-1}\to P_{i+2}]\cong M_{i,i+2}$. Thus, the first claim is proven.

    Through combining the claims, one obtains the following sequence
    \[
    P_0 \xrsquigarrow{\tau_{[2]}^{-3}} S_{3} \xrsquigarrow{\tau_{[2]}^{-3}} M_{3,5}\xrsquigarrow{\tau_{[2]}^{-3}} S_{5}\xrsquigarrow{\tau_{[2]}^{-3}} M_{5,7}\xrsquigarrow{\tau_{[2]}^{-3}}\cdots 
    \]
    
    If $n$ is odd, the sequence will terminate in $S_{n-2}$ after $n-4$ steps, and then we calculate that 
    \[
    S_{n-2}\,\xrsquigarrow{\tau_{[2]}^-}\,[P_{n-2}\to P_{n-1}]\,\xrsquigarrow{\tau_{[2]}^-}\,\shift{S_{n-4}}\,\xrsquigarrow{\tau_{[2]}^-}\,\shift{S_{n-3}}\,\xrsquigarrow{\tau_{[2]}^-}\,\shift{S_{n-2}}\,\xrsquigarrow{\tau_{[2]}^-}\,\shift{S_{n-1}}
    \]
    Hence, we see that $\tau_{[2]}^{-3(n-3)-2}P_0=\shift{S_{n-1}}=\shift{I_{n-1}}$. 

    If $n$ is even, the sequence terminates in $M_{n-3,n-1}=I_{n-3}$ after $n-4$ steps. Now, $\tau_{[2]}^-(I_{n-3})=\shift{P_{n-3}}=\shift{I_{n-6}}$, and we see that $\tau_{[2]}^{-3(n-4)-1}P_0=\shift{I_{n-6}}$.
\end{proof}

We are left with showing that $\shift{I_{n-6}}$ is connected to $\shift{I_{n-1}}$, before we can conclude that $2\nmod\Lambda(n,4)$ has a component that is both postprojective and preinjective.
\begin{lemma}
    There is a finite path from $\shift{I_{n-6}}$ to $\shift{I_{n-1}}$ in $\AR({2\nmod\Lambda(n,4)})$.
\end{lemma}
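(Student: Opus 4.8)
The plan is to exhibit an explicit finite path in $\AR(2\nmod\Lambda(n,4))$ from the injective object $\shift{I_{n-6}}$ to the injective object $\shift{I_{n-1}}$, of length independent of $n$, obtained by alternating source morphisms out of injective objects (\Cref{lemma:radPsink}(2)) with single-step knitting via $\tau_{[2]}^-$. All objects that occur are supported on the last few vertices of the quiver, so the relevant computations are boundary-stable: they do not depend on $n$ once $n$ is even and sufficiently large (in fact $n\geq 8$).

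Concretely, I would assemble the path from the following pieces. Since $\shift{I_{n-6}}$ is injective and $I_{n-6}=M_{n-6,n-3}$, \Cref{lemma:radPsink}(2) gives the irreducible morphism $\shift{I_{n-6}}\to\shift{(I_{n-6}/\soc I_{n-6})}=\shift{M_{n-5,n-3}}$. Using the formula $\tau_{[2]}^-(Z^\bullet)=\sigma^{\geq -1}\bigl(\nu^-\shift{\injres_2 Z^\bullet}\bigr)$, the explicit injective resolution of $M_{n-5,n-3}$ recorded in this section, and the identities $\nu^- I_i=P_i$, one computes
\[
\tau_{[2]}^-\bigl(\shift{M_{n-5,n-3}}\bigr)\ \cong\ S_{n-1}\quad(\text{the stalk complex in degree }0),\qquad \tau_{[2]}^-\bigl(S_{n-1}\bigr)\ \cong\ \shift{I_{n-4}} .
\]
By \Cref{theorem:ZhouExistenceAR} these yield AR-triangles $\shift{M_{n-5,n-3}}\to Y_1\to S_{n-1}\to$ and $S_{n-1}\to Y_2\to\shift{I_{n-4}}\to$; in each case $Y_j\neq 0$ (a vanishing middle term would force an impossible isomorphism between the two outer terms, which live in incompatible degrees/dimensions), so for any indecomposable summand of $Y_j$ both connecting morphisms are irreducible and we obtain a length-two path through it. Finally, since each of $I_{n-4},I_{n-3},I_{n-2}$ has support ending at $n-1$ one has $I_j/\soc I_j=I_{j+1}$, so \Cref{lemma:radPsink}(2) supplies three further arrows $\shift{I_{n-4}}\to\shift{I_{n-3}}\to\shift{I_{n-2}}\to\shift{I_{n-1}}$. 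Concatenating everything gives the required finite path.

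The main obstacle is the middle computation $\tau_{[2]}^-(\shift{M_{n-5,n-3}})\cong S_{n-1}$: applying $\nu^-$ to the three-term injective complex $[I_{n-5}\to I_{n-2}\to I_{n-1}]$ produces a three-term complex of projectives $[P_{n-5}\to P_{n-2}\to P_{n-1}]$, and one must identify the images and kernels of its transported differentials carefully enough to see that the soft truncation $\sigma^{\geq -1}$ collapses this complex onto the stalk $S_{n-1}$; this is precisely the boundary behaviour at vertex $n-1$ that produced the parity split in the previous lemma, and \Cref{lemma:knittingDimensions} can be used as a consistency check on the intermediate cohomological dimension vectors. Everything else is a formal application of \Cref{lemma:radPsink}(2) and \Cref{theorem:ZhouExistenceAR}; once the path is in place, \Cref{lemma:connectedComponentBothPostProjAndPreInj} (together with the previous lemma, which places $\shift{I_{n-6}}$ in a postprojective component) yields that $\AR(2\nmod\Lambda(n,4))$ is finite.
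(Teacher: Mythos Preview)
Your proposal is correct and follows essentially the same route as the paper: an irreducible morphism $\shift{I_{n-6}}\to\shift{M_{n-5,n-3}}$ from \Cref{lemma:radPsink}(2), the computation $\tau_{[2]}^-(\shift{M_{n-5,n-3}})\cong S_{n-1}$ via the three-term complex $[P_{n-5}\to P_{n-2}\to P_{n-1}]$, then $\tau_{[2]}^-(S_{n-1})\cong\shift{P_{n-1}}=\shift{I_{n-4}}$, followed by the chain $\shift{I_{n-4}}\to\shift{I_{n-3}}\to\shift{I_{n-2}}\to\shift{I_{n-1}}$. You are in fact slightly more careful than the paper in arguing that the middle terms of the relevant AR-triangles are nonzero, so that the two $\tau_{[2]}^-$ steps really do produce length-two paths in the AR-quiver.
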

\begin{proof}
    Note that 
    \[
        \shift{I_i/\soc(I_i)}=\begin{cases}
            \shift{I_{i+1}}&\text{, if }n-5<i<n-1\\
            \shift{M_{i+1,i+3}}&\text{, if }i\leq n-5.
        \end{cases}
    \]
    and we have irreducible morphisms $\shift{I_i}\to\shift{I_i/\soc(I_i)}=\shift{M_{i+1,i+3}}$ for $i>1$. 
    Hence, we look at $\shift{I_{n-6}/\soc(I_{n-6})}=\shift{M_{n-5,n-3}}$.
    The truncated injective resolution of this complex is $\shift{\mathbf{i}_2(\shift{M_{n-5,n-3}})}=[I_{n-5}\to I_{n-2}\to I_{n-1}]$, and after applying $\softTrunc^{\geq-1}\circ\nu^-$ we get $[M_{n-4,n-2}\to P_{n-1}]\cong S_{n-1}=I_{n-1}$. 
    Finally, we get $\tau_{[2]}^-S_{n-1}=\shift{P_{n-1}}=\shift{I_{n-4}}$, which has a path of irreducible morphisms to $\shift{I_{n-1}}$ as noted above.
\end{proof}

\subsubsection{Case: \texorpdfstring{$2\nmod\Lambda(n,5)$}{2-mod Lambda(n,5)}}
As in the previous case, we begin by looking at a handful calculated examples, see the  AR-quivers of $2\nmod\Lambda(n,5)$ for $n=9,10,11$ in \Cref{fig:ARquiver2modLambdan5}. 
Based on these examples, we decide to take a closer look at the $\tau_{[2]}$-orbit of $P_{7}$, which give rise to a finite path from the simple projective to the simple injective. 
The calculations are similar to those in the previous case, hence we omit most of them.

Note that we still have $\tau_{[m]}^-(\nshift{I_i}{j})=\nshift{P_i}{j-1}$ if $j<m-1$, and now $P_k=I_{k-4}$ for $4\leq k\leq n-1$.

\begin{lemma}
    Let $n\geq 9$, then $\rad P_{7}=\tau^{-7}P_0$ and $\nshift{I_{n-8}/\soc I_{n-8}}{1}=\tau^7\nshift{I_{n-1}}{1}$. 
    That is, $P_{7}$ lie in the postprojective component of $P_0$ and dually $\nshift{I_{n-8}}{1}$ lie in the preinjective component of $\nshift{I_{n-1}}{1}$. 
\end{lemma}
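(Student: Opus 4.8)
The plan is to establish the first identity by computing the $\tau_{[2]}^-$-orbit of the simple projective $P_0=S_0=M_{0,0}$ one step at a time, and then to obtain the second identity for free from the self-duality of $\Lambda(n,l)$. For the computation I would iterate $\tau_{[2]}^-(Z^\bullet)=\softTrunc^{\geq -1}\!\big(\nu^-\shift{\injres_2(Z^\bullet)}\big)$ exactly as in the preceding lemma for $\Lambda(n,4)$: at each stage one writes down the truncated minimal injective resolution recorded in the previous subsection -- of an interval module $M_{a,b}$, or of one of the two-term complexes $[P_j\to P_{j+1}]$ produced along the way, whose resolution is obtained by splicing -- applies $\nu^-$ through $\nu^- I_i=P_i$, and carries out the soft truncation. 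For the first few steps the complex $\nu^-\shift{\injres_2(-)}$ has vanishing $(-1)$st cohomology, so the soft truncation collapses it and the orbit begins with the string of simples $S_0,S_1,\dots,S_4$; once the interval $P_4=M_{0,4}$ of length $l=5$ is reached the map $\nu^-\shift{\injres_2}$ acquires a kernel and the orbit starts producing genuine two-term complexes before returning to an interval module. Following this bookkeeping through seven steps yields $M_{3,6}$, and since $P_7=M_{3,7}$ and $\rad M_{3,7}=M_{3,6}$ this is exactly $\rad P_7=\tau_{[2]}^{-7}P_0$.

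For the second identity, observe that the $\K$-linear duality $D$ composed with the vertex relabelling $i\mapsto n-1-i$ identifies $\Lambda(n,l)^{\mathrm{op}}$ with $\Lambda(n,l)$; hence $D(-)[1]$ is a contravariant autoequivalence of $2\nmod\Lambda(n,l)$ interchanging $\tau_{[2]}$ and $\tau_{[2]}^-$ and sending $P_i$ to $\nshift{I_{n-1-i}}{1}$. Applying $D(-)[1]$ to the short exact sequence $0\to\rad P_7\to P_7\to S_7\to 0$ shows it sends $\rad P_7$ to $\nshift{I_{n-8}/\soc I_{n-8}}{1}$, so applying $D(-)[1]$ to the already-proved identity $\rad P_7=\tau_{[2]}^{-7}P_0$ gives $\nshift{I_{n-8}/\soc I_{n-8}}{1}=\tau_{[2]}^7\nshift{I_{n-1}}{1}$.

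Both consequences are then formal: since each $\rad P_i=M_{a,b-1}$ is indecomposable (or zero), \Cref{lemma:postprojective2} shows the whole $\tau_{[2]}^-$-orbit of $P_0$ lies in one postprojective component $\cP$, so $\rad P_7=\tau_{[2]}^{-7}P_0\in\cP$; as $\rad P_7\to P_7$ is a sink morphism by \Cref{lemma:radPsink}, $P_7$ is an immediate successor of $\rad P_7$ in $\AR(2\nmod\Lambda)$ and so also lies in $\cP$, and the preinjective statement follows dually from \Cref{lemma:preinjective1}. The step I expect to be the main obstacle is the bookkeeping in the first paragraph -- in particular computing the minimal injective resolutions of the intermediate two-term complexes $[P_j\to P_{j+1}]$ (the analogue of the identity $\shift{\injres_2(\shift{S_{i-2}})}=[I_{i-2}\to I_{i-1}\to I_{i+1}]$ used for $\Lambda(n,4)$) and verifying that the seven steps behave uniformly for every $n\geq 9$, in contrast to the $\Lambda(n,4)$ case where the outcome depended on the parity of $n$.
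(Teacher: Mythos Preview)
Your proposal is correct and matches the paper's approach: the paper records only that ``the computations are straight-forward, hence omitted'', and your plan is precisely to carry out that direct seven-step computation of the $\tau_{[2]}^-$-orbit of $P_0$ (which indeed passes through $S_0,\dots,S_4$, then $[P_4\to P_5]$, then $\shift{S_1}$, landing at $M_{3,6}=\rad P_7$). Your use of the self-duality of $\Lambda(n,l)$ via $D(-)[1]$ to obtain the second identity from the first is a clean shortcut the paper does not make explicit, but is fully in the spirit of the ``dually'' in the statement.
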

\begin{proof}
    The computations are straight-forward, hence omitted.
\end{proof}

\begin{lemma}
    Let $n\geq 9$, then $\tau^{-k}P_{7}=\nshift{I_{n-8}}{1}$ for $k=6(n-7)$. 
    Moreover, for $0\leq j\leq n-9$, we have $\rad P_{8+j}=\tau_{[2]}^{-(6j+5)}P_{7}$.

    Consequently, the components coincide and $2\nmod(n,5)$ is of finite type.
\end{lemma}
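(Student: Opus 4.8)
The strategy mirrors the treatment of $2\nmod\Lambda(n,4)$: we trace the $\tau_{[2]}^{-}$-orbit of $P_7$ step by step until it reaches $\nshift{I_{n-8}}{1}$, using at each step the formula $\tau_{[2]}^{-}(Z^\bullet)=\sigma^{\geq -1}\bigl(\nu^-\shift{\injres_2(Z^\bullet)}\bigr)$ together with two features of $\Lambda(n,l)$ for $l=5$: the Nakayama functor acts on projectives and injectives by an index shift of $l-1=4$ (so $\nu^- I_i=P_i=I_{i-4}$ for $4\le i\le n-1$), and the minimal injective resolution of any $M_{a,b}$, hence its brutal truncation $\injres_2$, is read off from the explicit resolutions recorded earlier in this section. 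The preceding lemma already places $P_7$ in the postprojective component $\mathcal P$ of $P_0$ (which exists by \Cref{lemma:postprojective2}, radicals of indecomposable projectives of $\Lambda(n,5)$ being indecomposable) and $\nshift{I_{n-8}}{1}$ in the preinjective component $\mathcal I$ of $\nshift{I_{n-1}}{1}$ (by \Cref{lemma:preinjective1}). It therefore suffices to produce a finite chain of $\tau_{[2]}^{-}$-steps from $P_7$ to $\nshift{I_{n-8}}{1}$, for then $\mathcal P=\mathcal I$.

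The core of the argument is a $6$-periodic pattern. One checks, by the same kind of short computation as the displayed ones for $\Lambda(n,4)$, that $\tau_{[2]}^{-6}$ carries $\rad P_{8+j}$ to $\rad P_{8+(j+1)}$ for $0\le j\le n-10$, passing through a fixed list of intermediate shapes — certain length-$4$ stalk modules $M_{a,b}$, two-term complexes of the form $[P_a\to P_{a+1}]$, and shifted simples $\shift{S_c}$ — whose indices all advance by $1$ over each period. Concretely: from $P_7$ one reaches $\tau_{[2]}^{-5}P_7=\rad P_8$ after five steps; applying the period $n-9$ times then gives $\tau_{[2]}^{-(6(n-9)+5)}P_7=\rad P_{n-1}$ and, en route, the asserted identities $\rad P_{8+j}=\tau_{[2]}^{-(6j+5)}P_7$ for $0\le j\le n-9$; finally a further short segment of $7$ steps carries $\rad P_{n-1}$ (a module at the right end of the quiver) through $S_{n-1}=I_{n-1}$ and down to $\nshift{I_{n-8}}{1}$. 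Tallying, $5+6(n-9)+7=6(n-7)=k$, as claimed. At every stage the relevant object is non-projective and non-injective in $2\nmod\Lambda(n,5)$ — in particular $P_7=I_3$ is injective in $\mod\Lambda$ but not in $2\nmod\Lambda$ — so each $\tau_{[2]}^{-}$ is defined and the output is indecomposable by \cite[Proposition 3.11]{Zho25}.

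I expect the main obstacle to be the boundary behavior rather than the generic periodic step. Away from the vertices $0$ and $n-1$ the $6$-cycle is uniform, but near vertex $n-1$ the shape $I_i/\soc I_i$ changes from $M_{i+1,i+3}$-type behavior to $I_{i+1}$-type behavior and the relevant injective resolutions shorten, so the terminal segment of the orbit has to be computed directly and checked to land on $\nshift{I_{n-8}}{1}$ exactly — and one must likewise confirm that the orbit does not stall at an injective before then. The first four steps from $P_7$ down to $\rad P_8$ sit at the left end and need separate bookkeeping as well. Once the chain from $P_7$ to $\nshift{I_{n-8}}{1}$ is in hand, the connected component containing $P_7$ is simultaneously postprojective and preinjective, so by \Cref{lemma:connectedComponentBothPostProjAndPreInj} it is finite and equals $\AR(2\nmod\Lambda(n,5))$; together with the small cases handled by the piecewise-hereditary observations, this completes the case $l=5$ of \Cref{Prop:WhenNakayamaFinite}.
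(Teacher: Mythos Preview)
Your proposal is essentially the paper's own argument. Both trace the $\tau_{[2]}^{-}$-orbit of $P_7$ and identify a $6$-periodic block; the paper phrases the period as $S_i\xrsquigarrow{\tau_{[2]}^{-3}}M_{i-1,i+2}\xrsquigarrow{\tau_{[2]}^{-3}}S_{i+1}$ (after an initial $\tau_{[2]}^{-2}P_7=S_5$ and a terminal $\tau_{[2]}^{-}I_{n-4}=\nshift{I_{n-8}}{1}$), while you phrase it as $\rad P_{8+j}\xrsquigarrow{\tau_{[2]}^{-6}}\rad P_{8+j+1}$. Since $\rad P_{8+j}=M_{4+j,7+j}$ these are literally the same landmarks, and your counts $5+6(n-9)+7=6(n-7)$ agree with the paper's $2+3(2(n-8)+1)+1$.

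One small inaccuracy: your terminal segment from $\rad P_{n-1}=M_{n-5,n-2}$ does not pass through $S_{n-1}$; by the paper's claims it runs $M_{n-5,n-2}\xrsquigarrow{\tau_{[2]}^{-3}}S_{n-3}\xrsquigarrow{\tau_{[2]}^{-3}}M_{n-4,n-1}=I_{n-4}\xrsquigarrow{\tau_{[2]}^{-}}\nshift{I_{n-8}}{1}$. This does not affect the argument, and you rightly flag the boundary steps as the place where the bookkeeping needs direct verification.
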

\begin{proof}[Sketch of proof]
    We claim that
    \begin{enumerate}
        \item $\tau^{-2}_{[2]}P_{7}=S_{5}$,
        \item $\tau^{-3}_{[2]}S_i=M_{i-1,i+2}$ for $5\leq i\leq n-3$,
        \item $\tau^{-3}_{[2]}M_{i,i+3}=S_{i+2}$ for $4\leq i\leq n-5$, and
        \item $\tau^{-}_{[2]}I_{n-4}=\shift{I_{n-8}}$
    \end{enumerate}
    This give rise to the sequence
    \[
    \begin{tikzpicture}
        \node (start) at (0,1.5) [] {$P_{7}$};
        \node (Step1) at (0,0) [] {$S_{5}$};
        \node (Step2) at (2,0) [] {$M_{4,7}$};
        \node (Step3) at (4,0) [] {$S_{6}$};
        \node (dots) at (6,0) [] {$\cdots$};
        \node (Stept-1) at (8,0) [] {$S_{n-3}$};
        \node (Stept) at (10,0) [] {$M_{n-4,n-1}$};
        \node (end) at (10,1.5) [] {$\nshift{I_{n-8}}{1}$};
        \draw[draw,->,decorate,decoration={zigzag,amplitude=0.7pt,segment length=1.2mm,pre=lineto,post length=4pt}] (start)--(Step1)node[midway,left]{$\tau_{[2]}^{-2}$};
        \draw[draw,->,decorate,decoration={zigzag,amplitude=0.7pt,segment length=1.2mm,pre=lineto,post length=4pt}] (Step1)--(Step2)node[midway,above]{$\tau_{[2]}^{-3}$};
        \draw[draw,->,decorate,decoration={zigzag,amplitude=0.7pt,segment length=1.2mm,pre=lineto,post length=4pt}] (Step2)--(Step3)node[midway,above]{$\tau_{[2]}^{-3}$};
        \draw[draw,->,decorate,decoration={zigzag,amplitude=0.7pt,segment length=1.2mm,pre=lineto,post length=4pt}] (Step3)--(dots)node[midway,above]{$\tau_{[2]}^{-3}$};
        \draw[draw,->,decorate,decoration={zigzag,amplitude=0.7pt,segment length=1.2mm,pre=lineto,post length=4pt}] (dots)--(Stept-1)node[midway,above]{$\tau_{[2]}^{-3}$};
        \draw[draw,->,decorate,decoration={zigzag,amplitude=0.7pt,segment length=1.2mm,pre=lineto,post length=4pt}] (Stept-1)--(Stept)node[midway,above]{$\tau_{[2]}^{-3}$};
        \draw[draw,->,decorate,decoration={zigzag,amplitude=0.7pt,segment length=1.2mm,pre=lineto,post length=4pt}] (Stept)--(end)node[midway,right]{$\tau_{[4]}^{-}$};
    \end{tikzpicture}
    \]
    wherein the bottom part is of length $2(n-8)+1$, hence $\tau^{-k}P_{7}=\shift{I_{n-8}}$ for $k=6(n-7)$.
\end{proof}

\subsubsection{Case: \texorpdfstring{$4\nmod\Lambda(n,3)$}{4-mod Lambda(n,3)}}
As in the preceding two cases we start by looking at a few calculated examples. 
See \Cref{fig:ARquiver4modLambdan3} for the AR-quivers of $4\nmod\Lambda(n,3)$ for $n=9,10,11$. 
These suggest that we should consider the $\tau_{[4]}$-orbit of $P_{7}$. 
The calculations for the following observations are of the same flavour as in the preceding two cases, hence we give only a sketch of the proofs.

Note that that if $j<m-1$, then $\tau_{[m]}^-(\nshift{I_i}{j})=\nshift{P_i}{j-1}$, and for $1\leq k\leq n-2$ we have $P_k=I_{k-2}$. 

\begin{lemma}
    Let $n\geq 9$, then $\tau_{[4]}^{-k}P_{7}=\nshift{I_{n-8}}{3}$ for $k=10(n-8)+12$. Moreover, for $0\leq j\leq n-9$, we have $\rad P_{8+j}=\tau_{[4]}^{-(9+10j)}P_{7}$.
\end{lemma}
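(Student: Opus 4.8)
The plan is to compute the $\tau_{[4]}$-orbit of $P_7$ explicitly, exactly as in the two preceding cases, using the formula $\tau_{[4]}^{-}(Z^\bullet)=\sigma^{\geq -3}\bigl(\nu^{-}\shift{\injres_4(Z^\bullet)}\bigr)$ together with the minimal injective resolutions of the modules $M_{a,b}$ recorded above and the identities $\nu^{-}I_j=P_j=I_{j-2}$. Since now $m=4$, the intermediate complexes may spread over up to four degrees, so the only point requiring genuine care is keeping track of which cohomologies survive the brutal truncation $\brutalTrunc_{\leq 1}$ and the soft truncation $\sigma^{\geq -3}$ at each step; apart from that, every individual step is the same mechanical calculation already used for $2\nmod\Lambda(n,4)$ and $2\nmod\Lambda(n,5)$.

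First I would pin down a short initial segment of the orbit by applying $\tau_{[4]}^{-}$ to $P_7$ a bounded number of times --- nine, according to the \emph{moreover} clause --- until it reaches $\rad P_8=M_{6,7}$; this is the place where one must be most attentive, because the supporting complexes are largest there. Next I would isolate the periodic behaviour by establishing a short list of one-step identities (identities of the form $\tau_{[4]}^{-}S_i=\dots$, $\tau_{[4]}^{-}M_{i,i+1}=\dots$, and so on, in the spirit of the claims used for $2\nmod\Lambda(n,5)$), each verified by the same computation, and then concatenating them into a zig-zag through $\AR(4\nmod\Lambda(n,3))$ along which every ten applications of $\tau_{[4]}^{-}$ advance the supporting interval by one to the right. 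This simultaneously produces $\rad P_{8+j}=M_{6+j,7+j}$ at step $9+10j$ for $0\leq j\leq n-9$, which is the second assertion, and fixes the length of the periodic part of the chain.

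Finally I would analyse the boundary. Once the supporting interval reaches the right end of the quiver (so $b=n-1$, and the relevant injectives become $I_i=M_{i,n-1}$ rather than $M_{i,i+2}$) the periodic pattern breaks, and I would run the remaining bounded number of steps directly, tracking how the complex climbs in cohomological degree until it becomes $\nshift{I_{n-8}}{3}$; note that the elementary rule expressing $\tau_{[4]}^{-}$ of a shifted injective as a shifted projective is only available in degrees below $m-1=3$, so the last steps, which produce the top degree, must be carried out by hand, and the orbit stops at $\nshift{I_{n-8}}{3}$ precisely because that object is injective in $4\nmod\Lambda(n,3)$. Summing the nine initial steps, the $10(n-9)$ steps of the periodic block, and a fixed terminal correction gives the total $10(n-8)+12$, as stated.

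The main obstacle I anticipate is exactly this boundary bookkeeping, together with getting the step count right: near $b=n-1$ the truncations $\brutalTrunc_{\leq 1}$ and $\sigma^{\geq -3}$ interact with the truncated (``wrapped'') injective resolutions in a way that changes the number of surviving terms, so one has to check both that the periodic block really has length $10$ uniformly for all $n\geq 9$ and that the terminal correction comes out to exactly the claimed value. The underlying one-step computations are otherwise routine and identical in spirit to those already carried out for $l=4$ and $l=5$.
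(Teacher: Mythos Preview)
Your proposal is correct and follows essentially the same approach as the paper: an initial bounded segment, a periodic block built from one-step identities that advance the support by one vertex every ten applications of $\tau_{[4]}^-$, and a bounded terminal correction at the right end of the quiver. The paper's sketch anchors the periodic block at $S_6$ via the four claims $\tau_{[4]}^{-4}P_7\cong S_6$, $\tau_{[4]}^{-5}S_i\cong M_{i,i+1}$, $\tau_{[4]}^{-5}M_{i,i+1}\cong S_{i+1}$, and $\tau_{[4]}^{-3}M_{n-2,n-1}\cong\nshift{I_{n-8}}{3}$, giving the count $4+5(2(n-8)+1)+3$, whereas you anchor it at $M_{6,7}$ and count $9+10(n-9)+13$; these are the same partition of the same orbit.
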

\begin{proof}[Sketch of proof]
    The proof is broken up into four claims:
    \begin{enumerate}
        \item $\tau_{[4]}^{-4} P_{7}\cong S_{6}$,
        \item For $6\leq i\leq n-2$, $\tau_{[4]}^{-5}S_i \cong M_{i,i+1}$,
        \item For $6\leq i\leq n-3$, $\tau_{[5]}^{-5}M_{i,i+1}\cong S_{i+1}$, and
        \item $\tau_{[4]}^{-3}I_{2}=\tau_{[4]}^{-3}M_{n-2,n-1}\cong \nshift{I_{n-8}}{3}$.
    \end{enumerate}
    After combining these four claims, we then obtain the sequence
    \[
    \begin{tikzpicture}
        \node (start) at (0,1.5) [] {$P_{7}$};
        \node (Step1) at (0,0) [] {$S_{6}$};
        \node (Step2) at (2,0) [] {$M_{6,7}$};
        \node (Step3) at (4,0) [] {$S_{7}$};
        \node (dots) at (6,0) [] {$\cdots$};
        \node (Stept-1) at (8,0) [] {$S_{n-2}$};
        \node (Stept) at (10,0) [] {$M_{n-2,n-1}$};
        \node (end) at (10,1.5) [] {$\nshift{I_{n-8}}{3}$};
        \draw[draw,->,decorate,decoration={zigzag,amplitude=0.7pt,segment length=1.2mm,pre=lineto,post length=4pt}] (start)--(Step1)node[midway,left]{$\tau_{[4]}^{-4}$};
        \draw[draw,->,decorate,decoration={zigzag,amplitude=0.7pt,segment length=1.2mm,pre=lineto,post length=4pt}] (Step1)--(Step2)node[midway,above]{$\tau_{[4]}^{-5}$};
        \draw[draw,->,decorate,decoration={zigzag,amplitude=0.7pt,segment length=1.2mm,pre=lineto,post length=4pt}] (Step2)--(Step3)node[midway,above]{$\tau_{[4]}^{-5}$};
        \draw[draw,->,decorate,decoration={zigzag,amplitude=0.7pt,segment length=1.2mm,pre=lineto,post length=4pt}] (Step3)--(dots)node[midway,above]{$\tau_{[4]}^{-5}$};
        \draw[draw,->,decorate,decoration={zigzag,amplitude=0.7pt,segment length=1.2mm,pre=lineto,post length=4pt}] (dots)--(Stept-1)node[midway,above]{$\tau_{[4]}^{-5}$};
        \draw[draw,->,decorate,decoration={zigzag,amplitude=0.7pt,segment length=1.2mm,pre=lineto,post length=4pt}] (Stept-1)--(Stept)node[midway,above]{$\tau_{[4]}^{-5}$};
        \draw[draw,->,decorate,decoration={zigzag,amplitude=0.7pt,segment length=1.2mm,pre=lineto,post length=4pt}] (Stept)--(end)node[midway,right]{$\tau_{[4]}^{-3}$};
    \end{tikzpicture}
    \]
    where the bottom part is of length $2(n-8)+1$. Hence, $\nshift{I_{n-8}}{3}=\tau_{[4]}^{-k}$ where $k=5\cdot(2(n-8)+1)+7=10(n-8)+12$.
\end{proof}

\begin{lemma}
    Let $n\geq 9$, then $P_{7}$ lie in the postprojective component of $P_0$ and dually $\nshift{I_{n-8}}{3}$ lie in the preinjective component of $\nshift{I_{n-1}}{3}$. 
    Consequently, the components coincide and $4\nmod\Lambda(n,3)$ is of finite type.
\end{lemma}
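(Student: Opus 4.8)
The plan is to produce a connected component of $\AR(4\nmod\Lambda(n,3))$ that is at the same time postprojective and preinjective, and then finish by \Cref{lemma:connectedComponentBothPostProjAndPreInj}. Since $\Lambda(n,3)$ is connected and not semi-simple, has indecomposable radicals of indecomposable projectives and satisfies that $I/\soc I$ is indecomposable or zero for every indecomposable injective $I$, and since $P_0=S_0$ is simple projective while $I_{n-1}=S_{n-1}$ is simple injective, \Cref{lemma:postprojective2} gives a postprojective component $\cP$ containing $P_0$ and \Cref{lemma:preinjective1} a preinjective component $\cI$ containing $\nshift{I_{n-1}}{3}$.

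First I would show $P_7\in\cP$. Continuing the knitting of $\cP$ from $P_0$ — the same type of $\tau_{[4]}$-computation as in the preceding lemmas, built from the projective and injective resolutions of the $M_{a,b}$ — one checks that $\rad P_7\cong M_{5,6}$ lies on the $\tau_{[4]}$-orbit of $P_0$, i.e. $\rad P_7=\tau_{[4]}^{-l}P_0$ for a suitable $l$ independent of $n$ (for $n\geq 9$); this is the $(m,l)=(4,3)$ analogue of the identity $\rad P_7=\tau^{-7}P_0$ used in the $2\nmod\Lambda(n,5)$ case. In particular $\rad P_7$ is postprojective, so $\rad P_7\in\cP$; and since $\rad P_7$ is indecomposable and nonzero, the inclusion $\rad P_7\hookrightarrow P_7$, which is a sink morphism by \Cref{lemma:radPsink}(1), is irreducible, giving an arrow $\rad P_7\to P_7$ in $\AR(4\nmod\Lambda(n,3))$ and hence $P_7\in\cP$. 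I would then argue dually: $\nshift{(I_{n-8}/\soc I_{n-8})}{3}\cong\nshift{M_{n-7,n-6}}{3}$ lies on the $\tau_{[4]}$-orbit of $\nshift{I_{n-1}}{3}$, hence in $\cI$, and since $I_{n-8}$ is not simple the projection $\nshift{I_{n-8}}{3}\to\nshift{(I_{n-8}/\soc I_{n-8})}{3}$, a source morphism by \Cref{lemma:radPsink}(2), is irreducible, so $\nshift{I_{n-8}}{3}\in\cI$.

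To conclude, recall from the preceding lemma that $\nshift{I_{n-8}}{3}=\tau_{[4]}^{-k}P_7$ with $k=10(n-8)+12\geq 0$; thus $P_7$ and $\nshift{I_{n-8}}{3}$ are joined by a finite chain of AR-triangles and lie in one connected component of $\AR(4\nmod\Lambda(n,3))$. As $P_7\in\cP$ and $\nshift{I_{n-8}}{3}\in\cI$, this forces $\cP=\cI$, a component that is both postprojective and preinjective; \Cref{lemma:connectedComponentBothPostProjAndPreInj} then shows it is finite and equal to $\AR(4\nmod\Lambda(n,3))$, so $4\nmod\Lambda(n,3)$ is of finite type. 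The main obstacle is the pair of $\tau_{[4]}$-orbit computations in the middle paragraph — placing $\rad P_7$ on the orbit of $P_0$ and, dually, $\nshift{(I_{n-8}/\soc I_{n-8})}{3}$ on the orbit of $\nshift{I_{n-1}}{3}$; these are entirely routine applications of the $\tau_{[4]}$-formulas but somewhat long, and are the direct analogues for $(m,l)=(4,3)$ of the corresponding computations already done for $(2,4)$ and $(2,5)$.
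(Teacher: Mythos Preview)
Your overall strategy matches the paper's exactly: use \Cref{lemma:postprojective2} and \Cref{lemma:preinjective1} to produce $\cP\ni P_0$ and $\cI\ni\nshift{I_{n-1}}{3}$, place $P_7$ in $\cP$ and $\nshift{I_{n-8}}{3}$ in $\cI$, bridge them with the preceding lemma's orbit $\tau_{[4]}^{-k}P_7=\nshift{I_{n-8}}{3}$, and conclude by \Cref{lemma:connectedComponentBothPostProjAndPreInj}. The final paragraph is fine.

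The one point of divergence is how $P_7$ is placed in $\cP$. You assert, by analogy with the $(m,l)=(2,5)$ case, that $\rad P_7=M_{5,6}$ lies \emph{directly} on the $\tau_{[4]}$-orbit of $P_0$. The paper does not claim this; instead it chains through intermediate projectives:
\[
\tau_{[4]}^{-5}P_0=\rad P_5,\qquad \tau_{[4]}^{-3}P_5=\rad P_6,\qquad \tau_{[4]}^{-4}P_6=\rad P_7,
\]
each step followed by the irreducible morphism $\rad P_i\hookrightarrow P_i$ from \Cref{lemma:radPsink}. So the path from $P_0$ to $P_7$ in $\AR(4\nmod\Lambda(n,3))$ traverses three distinct $\tau_{[4]}$-orbits joined by these sink morphisms, not a single orbit. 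The analogy you draw to $(2,5)$ is therefore not exact, and your unverified claim that $\rad P_7=\tau_{[4]}^{-l}P_0$ for some $l$ may well be false; the fact that the paper takes the longer zigzag route (when it used the direct route for $(2,5)$) is evidence that the single-orbit shortcut is not available here. The dual placement of $\nshift{I_{n-8}}{3}$ in $\cI$ has the same issue. The fix is simply to replace your single-orbit assertion with the three-step chain above; the rest of your argument goes through unchanged.
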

\begin{proof}[Sketch of proof]
    Through calculations, one can verify that $\tau_{[4]}^{-5}P_0=\rad P_{5}$, $\tau_{[4]}^{-3}P_{5}=\rad P_{6}$ and $\tau_{[4]}^{-4}P_{6}=P_{7}$. Similar calculations give a path from $\nshift{I_{n-8}}{3}$ to $\nshift{I_{n-1}}{3}$.
\end{proof}

\begin{landscape}
\begin{figure}
    \centering
    \begin{subfigure}{.44\linewidth}
        \centering
        \includegraphics[width=\linewidth]{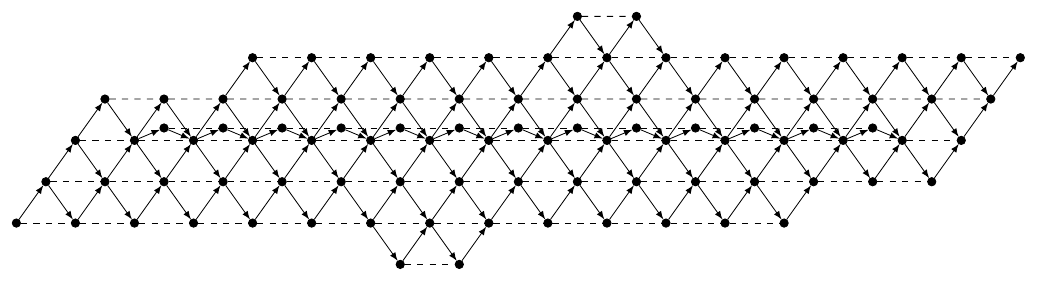}
        \caption{$\AR(2\nmod\Lambda(8,4))$}
        \label{fig:ARquiver2modLambda84}
    \end{subfigure}
    \begin{subfigure}{.55\linewidth}
        \centering
        \includegraphics[width=\linewidth]{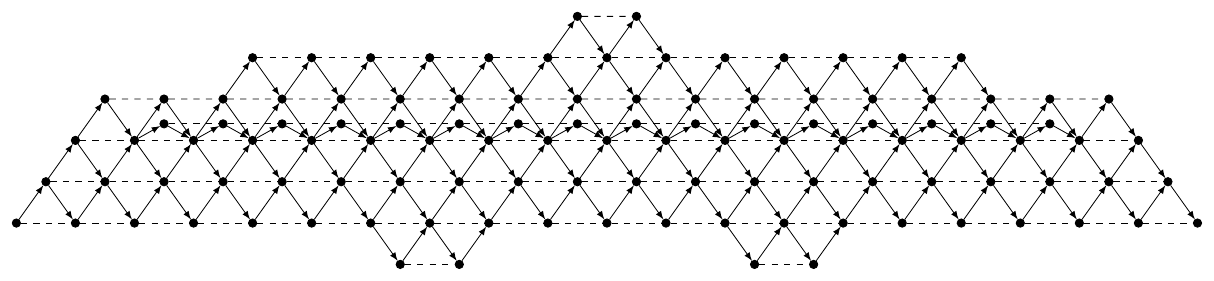}
        \caption{$\AR(2\nmod\Lambda(9,4))$}
        \label{fig:ARquiver2modLambda94}
    \end{subfigure}
    \begin{subfigure}{\linewidth}
        \centering
        \includegraphics[width=.6\linewidth]{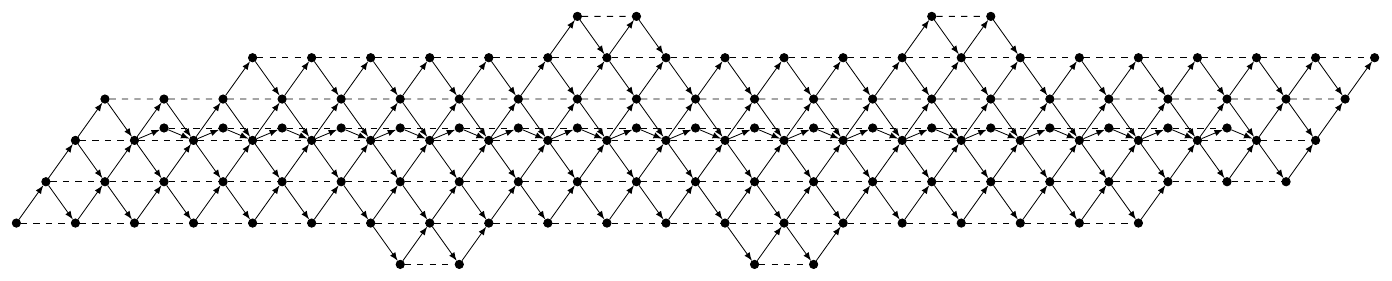}
        \caption{$\AR(2\nmod\Lambda(10,4))$}
        \label{fig:ARquiver2modLambda104}
    \end{subfigure}
    \begin{subfigure}{\linewidth}
        \centering
        \includegraphics[width=.75\linewidth]{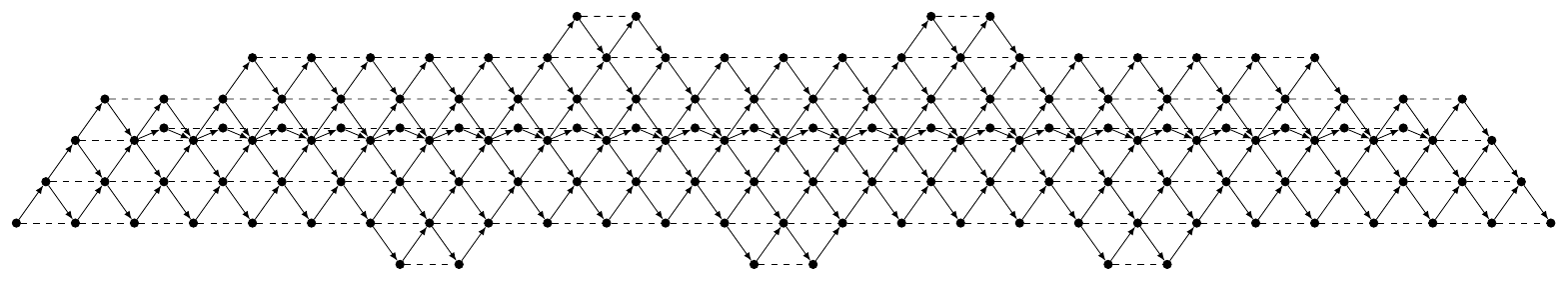}
        \caption{$\AR(2\nmod\Lambda(11,4))$}
        \label{fig:ARquiver2modLambda114}
    \end{subfigure}

    \caption{AR-quivers of $2\nmod\Lambda(n,4)$}
    \label{fig:ARquiver2modLambdan4}
\end{figure}
\end{landscape}

\begin{landscape}
\begin{figure}
    \centering
    \begin{subfigure}{\linewidth}
        \centering
        \includegraphics[width=0.6\linewidth]{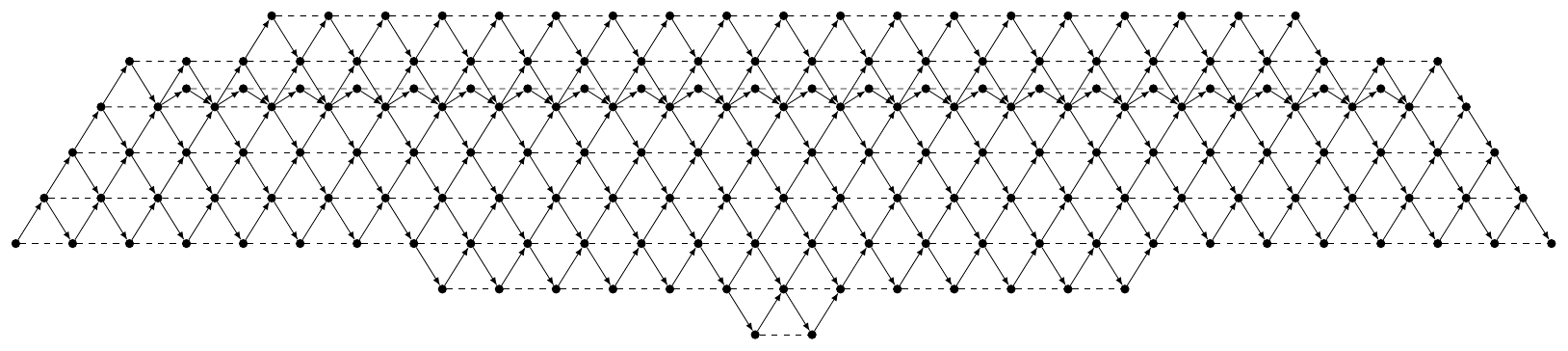}
        \caption{$\AR(2\nmod\Lambda(9,5))$}
        \label{fig:ARquiver2modLambda95}
    \end{subfigure}
    \begin{subfigure}{\linewidth}
        \centering
        \includegraphics[width=0.8\linewidth]{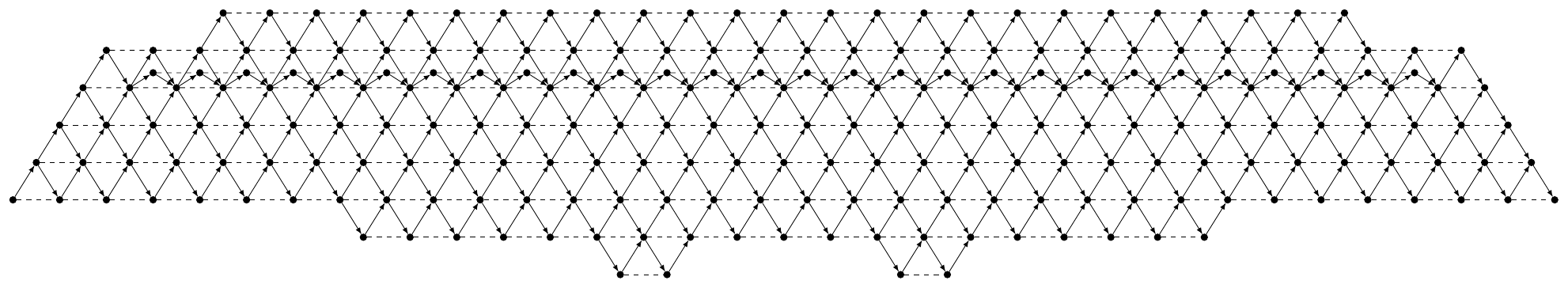}
        \caption{$\AR(2\nmod\Lambda(10,5))$}
        \label{fig:ARquiver2modLambda105}
    \end{subfigure}
    \begin{subfigure}{\linewidth}
        \centering
        \includegraphics[width=0.9\linewidth]{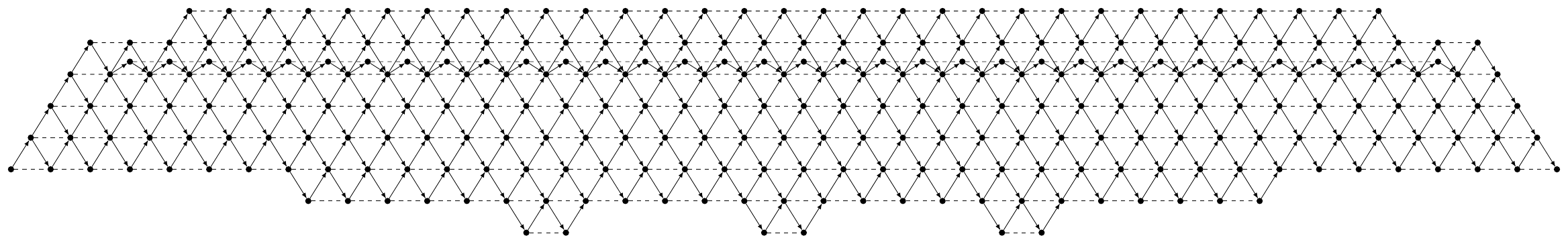}
        \caption{$\AR(5\nmod\Lambda(11,5))$}
        \label{fig:ARquiver2modLambda115}
    \end{subfigure}

    \caption{AR-quivers of $2\nmod\Lambda(n,5)$}
    \label{fig:ARquiver2modLambdan5}
\end{figure}
\end{landscape}

\begin{landscape}
\begin{figure}
    \centering
    \begin{subfigure}{\linewidth}
        \centering
        \includegraphics[width=0.8\linewidth]{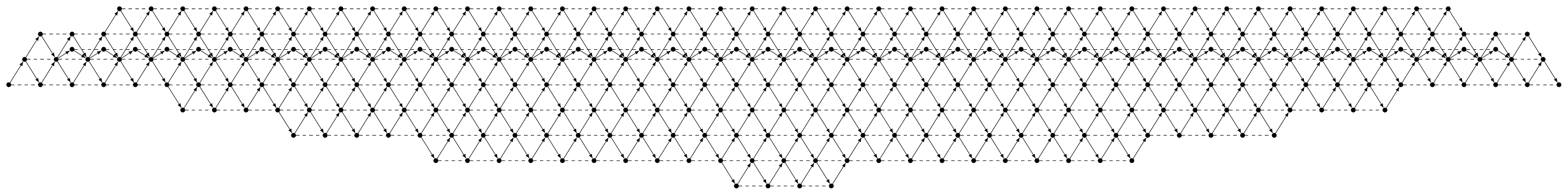}
        \caption{$\AR(4\nmod\Lambda(9,3))$}
        \label{fig:ARquiver4modLambda93}
    \end{subfigure}
    \begin{subfigure}{\linewidth}
        \centering
        \includegraphics[width=0.9\linewidth]{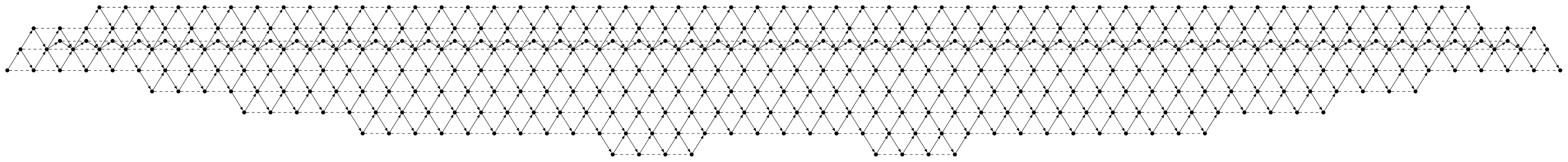}
        \caption{$\AR(4\nmod\Lambda(10,3))$}
        \label{fig:ARquiver4modLambda103}
    \end{subfigure}
    \begin{subfigure}{\linewidth}
        \centering
        \includegraphics[width=\linewidth]{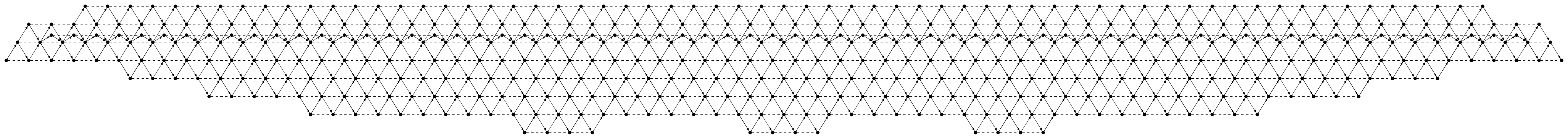}
        \caption{$\AR(4\nmod\Lambda(11,3))$}
        \label{fig:ARquiver4modLambda113}
    \end{subfigure}

    \caption{AR-quivers of $4\nmod\Lambda(n,3)$}
    \label{fig:ARquiver4modLambdan3}
\end{figure}
\end{landscape}

\subsection{The infinite cases}
We are now ready to prove that $\mmod\Lambda(n,l)$ is infinite when $(m,n,l)$ is not included in \Cref{tab:FiniteNakayama}. That is, we want to show:
\begin{proposition}\label{prop:linNakHomOfInfType}
    Let $2<l<n-1$. Then $\mmod\Lambda(n,l)$ is of infinite type when
    \begin{enumerate}
        \item $m>4$ and $n\geq 9$,
        \item $m>2$ and either $(n,l)=(8,4)$ or $n>8$ and $l>3$,
        \item $m>1$, $n>8$ and $l>5$.
    \end{enumerate}
\end{proposition}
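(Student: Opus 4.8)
\emph{Strategy.} The plan is to cut the statement down, by two monotonicity principles, to a short list of base cases plus one uniform family, and then to verify infinite type in each by exhibiting an explicit infinite set of indecomposables. Two observations carry the bookkeeping. First, from $\mod\Lambda\subseteq 2\nmod\Lambda\subseteq\cdots$ we get that if $m'\nmod\Lambda(n,l)$ is of infinite type and $m\geq m'$ then so is $m\nmod\Lambda(n,l)$; so in each clause of \Cref{prop:linNakHomOfInfType} it is enough to treat the smallest $m$, namely $m=5$, $m=3$, $m=2$ in (1), (2), (3). Second, for $l\leq n'\leq n$ the idempotent $e=e_0+e_1+\cdots+e_{n'-1}$ of $\Lambda(n,l)$ satisfies $e\Lambda(n,l)e\cong\Lambda(n',l)$: the sub-quiver on the first $n'$ vertices is again linearly oriented $A_{n'}$, and since the quiver is linear any path between two of its vertices stays inside them, so the length-$l$ relations restrict to the length-$l$ relations. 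By the contrapositive of \Cref{Corol:lemma:restrictionAdjointExtended} it therefore suffices to prove infinite type for the smallest admissible $n$. Finally, cross-referencing the clauses: every triple in (1) with $l\geq 4$ already occurs in (2), and every triple in (2) with $l\geq 6$ already occurs in (3). Thus, granting the later clauses, the only genuinely new content is: in (1), the algebra $\Lambda(9,3)$ at $m=5$; in (2), the algebras $\Lambda(8,4)$ and $\Lambda(9,5)$ at $m=3$ (the case $l=4$, $n\geq 8$ then following from $\Lambda(8,4)\cong e\Lambda(n,4)e$, and $l=5$, $n\geq 9$ from $\Lambda(9,5)\cong e\Lambda(n,5)e$); and in (3), the family $2\nmod\Lambda(n,l)$ with $n\geq 9$ and $6\leq l\leq n-2$, which by shrinking $n$ reduces to $2\nmod\Lambda(9,6)$ together with $2\nmod\Lambda(l+2,l)$ for every $l\geq 7$.

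\emph{The finitely many small cases.} For each of $5\nmod\Lambda(9,3)$, $3\nmod\Lambda(8,4)$, $3\nmod\Lambda(9,5)$ and $2\nmod\Lambda(9,6)$ one runs the knitting algorithm of \Cref{prop:AlgorithmForKnittingDimensionVectors} from the simple projective $P_0$. In each case the knitting enters a repeating pattern along which the cohomological dimension vectors strictly grow, so it never terminates and never becomes periodic; hence the postprojective component is infinite and the category is of infinite type. These are finite checks, carried out with the implementation already used for \Cref{tab:FiniteNakayama}.

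\emph{The uniform family.} It remains to show $2\nmod\Lambda(l+2,l)$ is of infinite type for every $l\geq 7$, by a single argument. The plan is to choose a well-chosen indecomposable $X^\bullet$ — I expect a simple stalk $\nshift{S_j}{1}$ in a suitable position, or a two-term complex $[P_a\to P_b]$ supported on the two longest paths — and to show, by induction using the explicit projective and injective resolutions of the $M_{a,b}$ recorded above and the cohomological-dimension bookkeeping of \Cref{lemma:knittingDimensions}, that the $\tau_{[2]}$-orbit $\{\tau_{[2]}^{-k}X^\bullet\}_{k\geq 0}$ never reaches an injective object (so it is an infinite sequence of indecomposables, by \Cref{theorem:ZhouExistenceAR}) and has total dimension $\sum_{i,v}\big(\DimVec(\tau_{[2]}^{-k}X^\bullet)_i\big)_v$ unbounded in $k$. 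These two facts force infinitely many pairwise non-isomorphic objects in $2\nmod\Lambda(l+2,l)$.

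\emph{Main obstacle.} Everything except the uniform family is a mechanical, if tedious, knitting computation. The crux is to make a \emph{single} choice of $X^\bullet$, and a \emph{single} closed description of $\tau_{[2]}^{-k}X^\bullet$, valid for all $l\geq 7$ at once, and to control the one delicate point in the induction: the change of the cohomological dimension vector across those AR-triangles whose left term is a shifted injective, which obeys the exceptional clause of \Cref{lemma:knittingDimensions} rather than the additive rule. Concretely, one must check that the orbit of $X^\bullet$ is not preinjective — it never hits one of the finitely many shifted injectives $\nshift{I}{1}$ — while simultaneously keeping the two cohomologies in the correct degrees, the object indecomposable at each step, and the total dimension strictly increasing. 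That is where the substance of the proof lies.
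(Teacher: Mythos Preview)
Your reduction via monotonicity in $m$ and in $n$ (for fixed $l$) matches the paper's, and you correctly isolate the base cases $5\nmod\Lambda(9,3)$, $3\nmod\Lambda(8,4)$, $3\nmod\Lambda(9,5)$ and $2\nmod\Lambda(9,6)$. The substantive difference is that the paper dissolves your ``uniform family'' $2\nmod\Lambda(l+2,l)$, $l\geq 7$, by a \emph{second} monotonicity step you have overlooked: if $l>n/2$ and $\mmod\Lambda(n,l)$ is of infinite type, then so is $\mmod\Lambda(n+1,l+1)$. This is again \Cref{Corol:lemma:restrictionAdjointExtended}, now with the idempotent $e=\sum_{i\neq k}e_i$ for $k=\lceil n/2\rceil$ in $\Lambda(n+1,l+1)$; the condition $l>n/2$ guarantees that deleting the middle vertex shortens every length-$(l+1)$ path to a length-$l$ path, so $e\Lambda(n+1,l+1)e\cong\Lambda(n,l)$. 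Starting from the single extra base case $2\nmod\Lambda(9,7)$ and iterating this diagonal step yields $2\nmod\Lambda(n,n-2)$ for all $n\geq 9$, and then your first monotonicity in $n$ handles the rest. So the ``main obstacle'' you identify simply does not arise: five explicit base cases suffice.

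For the base cases themselves, the paper's mechanism is also simpler than knitting until the dimensions visibly grow. One exhibits, in each of the five categories, a single $\tau_{[m]}$-\emph{periodic} indecomposable (e.g.\ $\nshift{P_5}{2}$ in $5\nmod\Lambda(9,3)$ has $\tau_{[5]}$-period $5$; $[I_0\to P_8]$ in $2\nmod\Lambda(9,7)$ has $\tau_{[2]}$-period $5$). A periodic object cannot lie in the postprojective component guaranteed by \Cref{lemma:postprojective2}, so the AR-quiver has at least two components; by \Cref{lemma:finiteComponentIsWholeAR} neither can be finite. This replaces an open-ended knitting run by a short, verifiable computation of a handful of $\tau_{[m]}$-translates.
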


Let us first note the following consequence of \Cref{Corol:lemma:restrictionAdjointExtended}.
\begin{lemma}\label{lemma:inheritInfiniteness}
    For $n>3$, let $\Lambda(n,l)$ be given such that $\mmod\Lambda(n,l)$ is of infinite type. Then
    \begin{enumerate}
        \item $\mmod\Lambda(n+1,l)$ is also of infinite type, and \label{lemma:inheritInfinitenessA}
        \item if $l>n/2$, $\mmod\Lambda(n+1,l+1)$ is also of infinite type. \label{lemma:inheritInfinitenessB}
    \end{enumerate}
\end{lemma}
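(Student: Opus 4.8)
The plan is to derive both statements from \Cref{Corol:lemma:restrictionAdjointExtended} read contrapositively: if $e$ is an idempotent of an algebra $\Gamma$ with $e\Gamma e\cong\Lambda(n,l)$, then $\mmod\Lambda(n,l)$ being of infinite type forces $\mmod\Gamma$ to be of infinite type. So for part \ref{lemma:inheritInfinitenessA} it suffices to realise $\Lambda(n,l)$ as $e\Lambda(n+1,l)e$ for a suitable idempotent $e$, and for part \ref{lemma:inheritInfinitenessB} as $e\Lambda(n+1,l+1)e$.

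For part \ref{lemma:inheritInfinitenessA} I would take $e=e_0+\dots+e_{n-1}$ in $\Lambda(n+1,l)$, i.e.\ delete the source vertex $n$. Since $n$ is a source, every path of $\Lambda(n+1,l)$ with both endpoints in $\{0,\dots,n-1\}$ already avoids it, so the quiver of $e\Lambda(n+1,l)e$ is again linear of type $A_n$, and a path from $i$ to $j$ with $n-1\ge i>j\ge 0$ vanishes precisely when $i-j\ge l$. Hence $e\Lambda(n+1,l)e\cong\Lambda(n,l)$, which gives the claim.

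For part \ref{lemma:inheritInfinitenessB} I would work in $\Lambda(n+1,l+1)$ (vertices $0,\dots,n$, relations the paths of length $l+1$) and delete a single \emph{interior} vertex $k$ with $1\le k\le n-1$; that is, $e=\sum_{i\ne k}e_i$. Contracting the gap, the composite $\alpha_k\alpha_{k+1}$ (nonzero since $l+1>2$) becomes a single arrow, so the quiver of $e\Lambda(n+1,l+1)e$ is once more linear of type $A_n$ on $n$ vertices. After the obvious relabelling, a path of combinatorial length $r$ in this contracted quiver lifts to a path of length $r$ in $\Lambda(n+1,l+1)$ if it does not run through the position of the deleted vertex, and to a path of length $r+1$ if it does; hence it vanishes iff $r\ge l+1$ in the first case and iff $r\ge l$ in the second. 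Thus $e\Lambda(n+1,l+1)e\cong\Lambda(n,l)$ as soon as every length-$l$ path of the contracted quiver passes through the position of $k$. An elementary index count shows this holds exactly when $n-l\le k\le l$, and such an integer $k$ exists precisely because $l>n/2$; for that choice of $k$ all length-$l$ paths vanish and no shorter path does, so $e\Lambda(n+1,l+1)e\cong\Lambda(n,l)$.

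The only genuine work is the bookkeeping in part \ref{lemma:inheritInfinitenessB}: one must check that after the contraction the relations of $e\Lambda(n+1,l+1)e$ are generated exactly by the length-$l$ paths, which reduces to the inequality $n-l\le k\le l$ and is exactly where the hypothesis $l>n/2$ is used. Everything else is routine.
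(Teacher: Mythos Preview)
Your proposal is correct and follows exactly the paper's approach: both parts are deduced from \Cref{Corol:lemma:restrictionAdjointExtended} by realising $\Lambda(n,l)$ as an idempotent subalgebra, deleting the source vertex for part \ref{lemma:inheritInfinitenessA} and an interior vertex for part \ref{lemma:inheritInfinitenessB}. The only difference is that the paper simply fixes $k=\lceil n/2\rceil$ and asserts $e\Lambda(n+1,l+1)e\cong\Lambda(n,l)$ without further comment, whereas you carry out the bookkeeping and derive the full admissible range $n-l\le k\le l$; the paper's choice lies in this range, so your argument in fact supplies the verification the paper omits.
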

\begin{proof}
    For the claim on $\Lambda_1=\Lambda(n+1,l+1)$ we let $\{e_0,e_1,\ldots,e_{n-1},e_{n}\}$ be a complete set of primitive idempotents, and let $k=\lceil n/2\rceil$. 
    The algebra $e\Lambda_1 e$ for $e=\sum_{i\neq k}e_i$ is isomorphic to $\Lambda(n,l)$, hence the claim follows from \cref{Corol:lemma:restrictionAdjointExtended}.

    The claim on $\Lambda_2=\Lambda(n+1,l)$ is shown similarly, instead using the idempotent $e=\sum_{i=0}^{n-1}e_i$.
\end{proof}

With this result in mind, the proof is reduced to a few base cases. 
For each case the existence of a $\tau_{[m]}$-periodic object is observed, which along with the existence of a postprojective component $\cP$ from \Cref{lemma:postprojective2} imply by \Cref{lemma:finiteComponentIsWholeAR} that $\cP$ can't be finite.

\begin{lemma} \label{lemma:infBaseCases}
We have that
\begin{enumerate}
    \item $3\nmod\Lambda(8,4)$, \label{lemma:3modLambda84Inf}
    \item $5\nmod\Lambda(9,3)$, \label{lemma:5modLambda93Inf}
    \item \label{lemma:3modLambda95Inf}
    $3\nmod\Lambda(9,5)$,
    \item \label{lemma:2modLambda96Inf}
    $2\nmod\Lambda(9,6)$, and
    \item \label{lemma:2modLambda97Inf}
    $2\nmod\Lambda(9,7)$
\end{enumerate}
 is of infinite type.
\end{lemma}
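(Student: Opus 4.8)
The plan is to reduce each of the five cases to exhibiting one $\tau_{[m]}$-periodic indecomposable, and then to apply the two structural lemmas above. Each algebra $\Lambda(n,l)$ occurring here is connected, is not semisimple, has $S_0=P_0$ as a simple projective, and has $\rad P_i$ indecomposable (it equals $M_{0,i-1}$ or $M_{i-l+1,i-1}$) or zero for every $i$; hence \Cref{lemma:postprojective2} provides a postprojective component $\cP\subseteq\AR(\mmod\Lambda(n,l))$ containing $P_0$. If $\mmod\Lambda(n,l)$ were of finite type, then $\cP$ would be a finite connected component, so by \Cref{lemma:finiteComponentIsWholeAR} we would get $\cP=\AR(\mmod\Lambda(n,l))$; in particular every indecomposable of $\mmod\Lambda(n,l)$ would be postprojective, i.e.\ would lie in the $\tau_{[m]}$-orbit of a projective. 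Thus it suffices to produce, in each of the five categories, a nonzero $X^\bullet$ with $\tau_{[m]}^{p}X^\bullet\cong X^\bullet$ for some $p\geq 1$; since $\tau_{[m]}$ is additive we may pass to an indecomposable summand and assume $X^\bullet$ indecomposable. Such an $X^\bullet$ is not postprojective: none of its translates $\tau_{[m]}^{j}X^\bullet$ is projective (a projective would be killed by one further application of $\tau_{[m]}$, contradicting $\tau_{[m]}^{j+p}X^\bullet\cong\tau_{[m]}^{j}X^\bullet\neq 0$), whereas a postprojective object has a projective among its translates. This yields the desired contradiction.

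To locate and verify such an $X^\bullet$ I would work from the combinatorial description of $\mod\Lambda(n,l)$ recalled above --- the modules $M_{a,b}$, their minimal projective resolutions, and the identities $\nu P_i=I_i$ (equal to $P_{i+l-1}$ when $0\le i\le n-l$) --- together with the definition $\tau_{[m]}(Z^\bullet)=\softTrunc^{\leq 0}\bigl(\nu\,\nshift{\projres_m(Z^\bullet)}{-1}\bigr)$, where $\projres_m(Z^\bullet)=\brutalTrunc_{\geq -m}\projres(Z^\bullet)$. Concretely, for a candidate complex one writes down its minimal projective resolution, brutally truncates at degree $-m$, applies $\nu$ termwise, shifts by $-1$, and then soft-truncates at degree $0$; iterating this produces the $\tau_{[m]}$-orbit. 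The candidates themselves are read off from the already computed AR-quivers and from QPA experiments: for the cases where $l$ is large relative to $n$ (namely $2\nmod\Lambda(9,6)$ and $2\nmod\Lambda(9,7)$) the periodic object should be a stalk complex of a suitable module, so the computation is essentially the classical module-level $\tau$ followed by one truncation, while for $3\nmod\Lambda(8,4)$, $5\nmod\Lambda(9,3)$ and $3\nmod\Lambda(9,5)$ a complex with several nonzero terms seems unavoidable. Since $\tau_{[m]}$ of an indecomposable non-projective is again indecomposable, once the orbit is seen to avoid projectives all its members are indecomposable, and periodicity is confirmed as soon as the computation returns to the starting complex.

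The real work, and the step I expect to be the main obstacle, is (a) pinning down the right periodic complex in each of the five algebras and (b) carrying out the iterated $\tau_{[m]}$-computation carefully: one must keep track of the brutal truncation $\brutalTrunc_{\geq -m}$ of the projective resolution and the soft truncation $\softTrunc^{\leq 0}$ at every step, and check that every object in the orbit genuinely lies in $\mmod\Lambda(n,l)$ (cohomology concentrated in degrees $[-(m-1),0]$) so that the formula for $\tau_{[m]}$ applies as computed. Since the relevant $(m,n,l)$ are all small, this is a finite, if somewhat intricate, verification; alternatively one could record the same five orbits through their cohomological dimension vectors using \Cref{lemma:knittingDimensions}, which makes the periodicity visually transparent once a periodic orbit has been exhibited.
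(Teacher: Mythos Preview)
Your approach is exactly the paper's: exhibit a $\tau_{[m]}$-periodic object and combine \Cref{lemma:postprojective2} with \Cref{lemma:finiteComponentIsWholeAR} to contradict finiteness. For the explicit orbits---which the paper simply writes out---note that, contrary to your heuristic, every one of the five cases admits a stalk-complex representative in the periodic orbit (e.g.\ $\shift{M_{4,5}}$, $\nshift{P_5}{2}$, $\shift{M_{3,5}}$, $M_{5,7}$, $M_{6,7}$ respectively), so the verifications are uniformly of the simpler type you anticipated only for cases (4) and (5).
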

\begin{proof}
    \begin{enumerate}
        \item Consider $\shift{M_{4,5}}$. We have
        \[
            \shift{M_{4,5}}\xleftarrow{\tau_{[3]}} [P_3\to P_5\to P_7] \xleftarrow{\tau_{[2]}} \shift{M_{4,5}}
        \]
        \item Consider $\nshift{P_5}{2}$. We have
        \[
            \nshift{P_5}{2}\xleftarrow{\tau_{[5]}}\nshift{P_3}{3}\xleftarrow{\tau_{[5]}}\nshift{P_1}{4}\xleftarrow{\tau_{[5]}}I_7\xleftarrow{\tau_{[5]}}\shift{I_5}\xleftarrow{\tau_{[5]}}\nshift{P_5}{2}
        \]
        \item Consider $\shift{M_{3,5}}$. We have
        \[
            \shift{M_{3,5}}\xleftarrow{\tau_{[3]}}[P_3\to P_6\to P_8]\xleftarrow{\tau_{[3]}}\shift{M_{3,5}}
        \]
        \item Consider $M^\bullet=[I_0\to P_8]$. We have
        \[
            M^\bullet \xleftarrow{\tau_{[2]}} \shift{M_{1,3}}\xleftarrow{\tau_{[2]}}M_{5,7}\xleftarrow{\tau_{[2]}}M^\bullet
        \]
        \item Consider $M^\bullet =[I_0\to P_8]$, We have
        \[
            M^\bullet\xleftarrow{\tau_{[2]}}\shift{M_{1,2}}\xleftarrow{\tau_{[2]}}I_4\xleftarrow{\tau_{[2]}}\shift{P_4}\xleftarrow{\tau_{[2]}}M_{6,7}\xleftarrow{\tau_{[2]}}M^\bullet
        \]
    \end{enumerate}
\end{proof}

We are now ready to prove the proposition.
\begin{proof}[Proof of \Cref{prop:linNakHomOfInfType}]
\begin{enumerate}
    \item[(3)] Observe by \Cref{lemma:infBaseCases}(\ref{lemma:2modLambda97Inf}) and inductively using \Cref{lemma:inheritInfiniteness}(\ref{lemma:inheritInfinitenessB}) we obtain that $2\nmod\Lambda(n,n-2)$ is of infinite type for each $n\geq 9$. Combining this with \Cref{lemma:infBaseCases}(\ref{lemma:2modLambda96Inf}) and \Cref{lemma:inheritInfiniteness}(\ref{lemma:inheritInfinitenessA}) we then see that $2\nmod(n,l)$ is of infinite type for $n\geq 9$ and $6\leq l <n-1$.

    \item[(2)] The claim for $n\geq9$ and $l\geq 6$ follows from (3). Now, the rest follows from  \Cref{lemma:infBaseCases}(\ref{lemma:3modLambda95Inf}) and \Cref{lemma:infBaseCases}(\ref{lemma:3modLambda84Inf}) together with \Cref{lemma:inheritInfiniteness}(\ref{lemma:inheritInfinitenessA}).

    \item[(1)] The claim for $l>3$ follows from (2). The claim for $l=3$ follows from \Cref{lemma:infBaseCases}(\ref{lemma:5modLambda93Inf}) together with \Cref{lemma:inheritInfiniteness}(\ref{lemma:inheritInfinitenessA}).
\end{enumerate}
\end{proof}

\begin{remark}
    In some sense we may look at the extended module categories of \Cref{lemma:infBaseCases} as being minimally infinite. 
    An interesting observation, albeit not used here, is that for these algebras all projectives lie in one connected postprojective component, see \Cref{fig:postprojectivecomponentsWholePage} for the beginning of their postprojective components. 
    Same as in the classical case \cite[Theorem 2]{HV83}. 

    For the extended module categories further right in Table \ref{tab:FiniteNakayama} we can observe that the same is not true. 
    We can, for an example, consider $3\nmod\Lambda(9,4)$. Here we have $\rad P_8=\tau_{[3]}^3(\rad P_8)$, which tells us there is a finite path of irreducible morphisms from $P_8$ to itself. The postprojective component is directed, hence $P_8$ can't appear in the postprojective component.
\end{remark}

    \begin{figure}
        \begin{subfigure}{\linewidth}
            \centering
            \scalebox{0.4}{\includegraphics{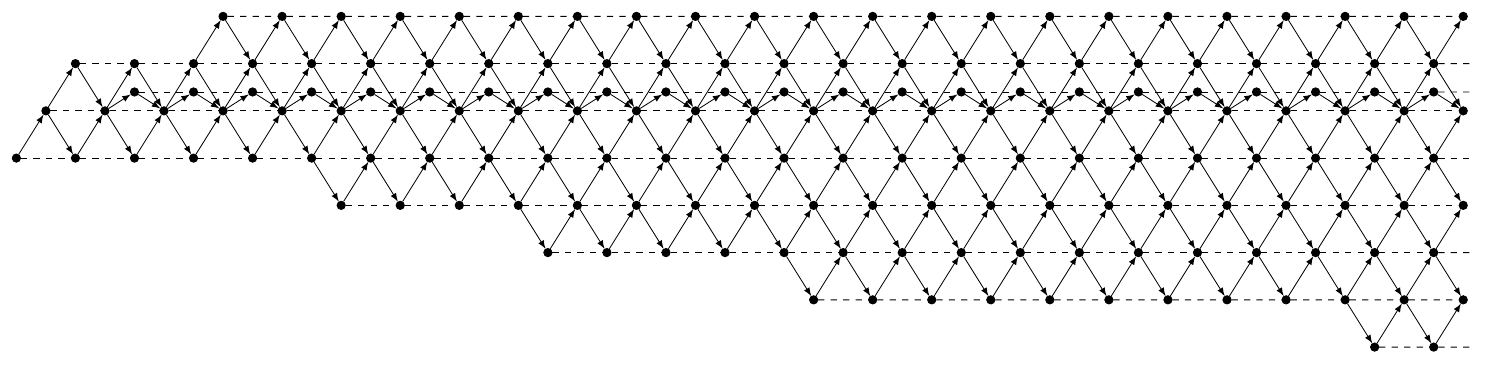}}
            \caption{$\AR(5\nmod\Lambda(9,3))$}
            \label{fig:beginningPostProjective5modLambda93}
        \end{subfigure}

        \begin{subfigure}{0.49\linewidth}
            \centering
            \scalebox{0.4}{\includegraphics{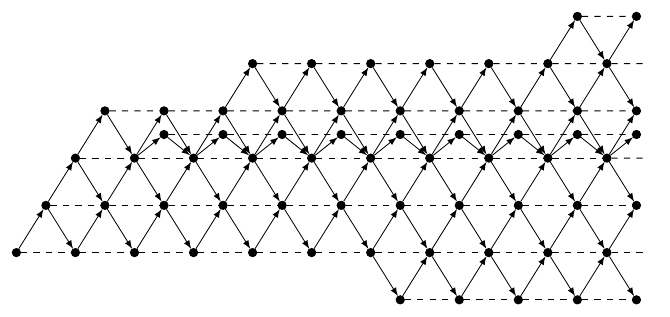}}
            \caption{$\AR(3\nmod\Lambda(8,4))$}
            \label{fig:beginningPostProjective3modLambda84}
        \end{subfigure}
        \begin{subfigure}{0.49\linewidth}
            \centering
            \scalebox{0.4}{\includegraphics{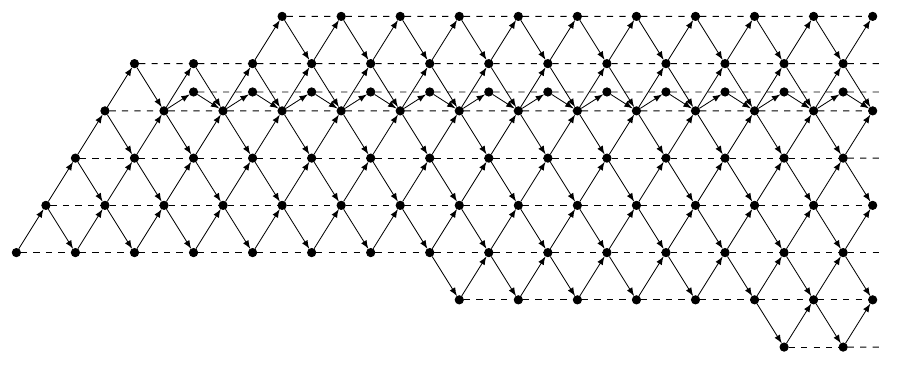}}
            \caption{$\AR(3\nmod\Lambda(9,5))$}
            \label{fig:beginningPostProjective3modLambda95}
        \end{subfigure}
        \begin{subfigure}{0.49\linewidth}
            \centering
            \scalebox{0.4}{\includegraphics{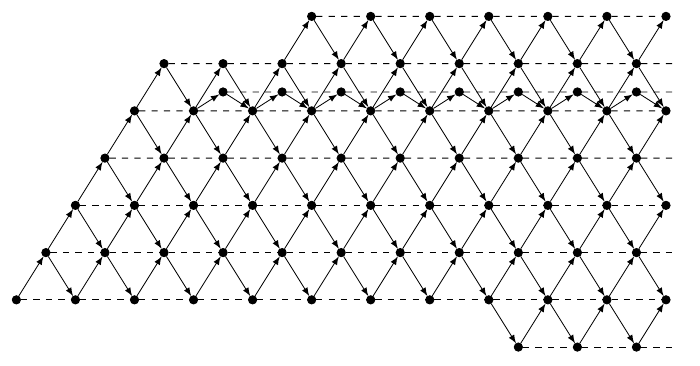}}
            \caption{$\AR(2\nmod\Lambda(9,6))$}
            \label{fig:beginningPostProjective2modLambda96}
        \end{subfigure}
        \begin{subfigure}{0.49\linewidth}
            \centering
            \scalebox{.4}{\includegraphics{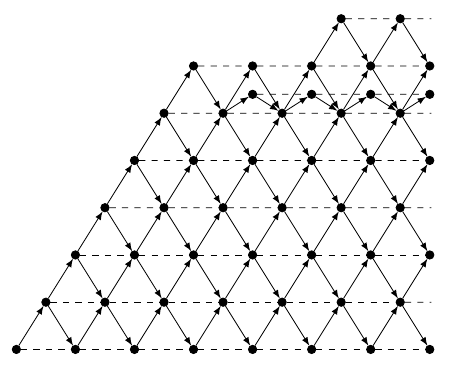}}
            \caption{$\AR(2\nmod\Lambda(9,7))$}
            \label{fig:beginningPostProjective2modLambda97}
        \end{subfigure}
    \caption{The start of the postprojective component in $\AR(\mmod\Lambda(n,l))$ for \emph{minimal} infinite type.}
    \label{fig:postprojectivecomponentsWholePage}
\end{figure}

\subsection{Remark on a certain bound of cohomology}\label{subsubsec:BoundOnCohomology}
Before we move on, let us remark a property of the finite cases which will come in handy later on. 
We can observe in the AR-quivers \Cref{fig:ARquiver2modLambdan4,fig:ARquiver2modLambdan5,fig:ARquiver4modLambdan3} that as $n$ grows, the new projectives lie on relatively short $\tau_{[m]}$-orbits. 
Specifically,
\begin{enumerate}
    \item in $2\nmod\Lambda(n,4)$ we have $\tau_{[2]}^{-}P_i=\nshift{I_{i-6}}{1}$ for $6\leq i\leq n-1$,
    \item in $2\nmod\Lambda(n,5)$ we have $\tau_{[2]}^-P_i=\nshift{I_{i-8}}{1}$ for $8\leq i \leq n-1$, and
    \item in $4\nmod\Lambda(n,3)$ we have $\tau_{[4]}^{-3}P_i=\nshift{I_{i-8}}{2}$ for $8\leq i \leq n-1$.
\end{enumerate}
Combining this with \Cref{lemma:pathFromProjToInj} we can deduce that there exists an upper bound on the distance $|v-w|$ for which both $(\DimVec(X^\bullet)_\ast)_v$ and $(\DimVec(X^\bullet)_\ast)_w$ can be non-zero. 
Moreover, this bound can be chosen independently of $n$.

In order to make this slightly more apparent, consider $2\nmod\Lambda(n,4)$ for $n$ sufficiently large. If we look at a part in the middle of the AR-quiver, we can see from the directedness of the quiver that we have a part which do not admit a path from $P_i$ and a part which do not admit a path to $\shift{I_{i-6}}$, and these two parts are overlapping in only a few nodes. 
\[
\includegraphics[width=.75\linewidth]{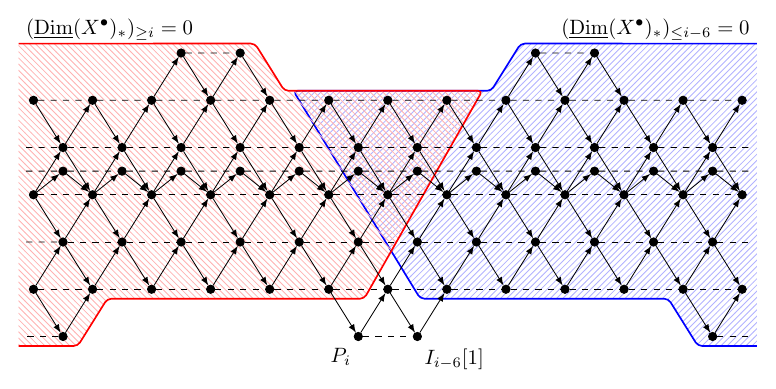}
\]

The same kind of feature holds for $3\nmod\Lambda(n,3)$ and $2\nmod\Lambda(n,2)$. In fact, we can easily calculate that
\begin{enumerate}
    \item for $3\nmod\Lambda(n,3)$, we have $\tau_{[3]}^{-2}P_i=\nshift{I_{i-6}}{2}$ for $6\leq i\leq n-1$, and
    \item for $2\nmod\Lambda(n,3)$, we have $\tau_{[2]}^-P_i=\shift{I_{i-4}}$ for $4\leq i\leq n-1$.
\end{enumerate}
And, of course, for $\mod\Lambda(n,l)$ we have $\tau^0 P_i=I_{i-l+1}$ for $l\leq i\leq n-1$. Let us therefore consider the remaining cases when $\mmod\Lambda$ is finite.

For $\mmod\Lambda(n,2)$ we can see that if $1\leq i\leq n-1$ we have $\tau_{[m]}^-\nshift{P_i}{j}=\nshift{P_{i-i}}{j+1}=\nshift{I_{i-2}}{j+1}$ for $-(m-2)\leq j\leq 0$.
Hence, the projectives $P_i$ for $m\leq i\leq n-1$ lie on a $\tau_{[m]}$-orbit of length $m$:
\[
    \begin{tikzpicture}
        \node (Step1) at (0,0) [] {$P_i$};
        \node (Step2) at (2,0) [] {$\shift{P_{i-1}}$};
        \node (dots) at (4,0) [] {$\cdots$};
        \node (Stept) at (7.8,0) [] {$\nshift{P_{i-(m-1)}}{m-1}=\nshift{I_{i-m}}{m-1}$};
        \draw[draw,->,decorate,decoration={zigzag,amplitude=0.7pt,segment length=1.2mm,pre=lineto,post length=4pt}] (Step1)--(Step2)node[midway,above]{$\tau_{[m]}^{-}$};
        \draw[draw,->,decorate,decoration={zigzag,amplitude=0.7pt,segment length=1.2mm,pre=lineto,post length=4pt}] (Step2)--(dots)node[midway,above]{$\tau_{[m]}^{-}$};
        \draw[draw,->,decorate,decoration={zigzag,amplitude=0.7pt,segment length=1.2mm,pre=lineto,post length=4pt}] (dots)--(Stept)node[midway,above]{$\tau_{[m]}^{-}$};
    \end{tikzpicture}
\]
We include the AR-quivers for a few $\mmod\Lambda(n,2)$ to illustrate, see \Cref{fig:ARquivermmodLambdan2}.

The last remaining case is that of $\mmod\Lambda(n,n-1)$ which is significantly different from the others as the relation length and number of vertices are not independent.
We will in fact always have a non-zero morphism from $I_0\to P_{n-1}$, and thus the bound would have to be $n$. 

We end this section by summarizing the last observations as follows.
\begin{lemma}\label{lemma:boundOnCohomology}
    Let
    \begin{enumerate}
        \item $l=2$ and $1\leq m<\infty$,
        \item $l=3$ and $1\leq m\leq 4$,
        \item $l=4$ and $1\leq m\leq 2$,
        \item $l=5$ and $1\leq m\leq 2$, or
        \item $l<n$ and $m=1$.
    \end{enumerate}
    For all $n$ and $X^\bullet\in \mmod\Lambda(n,l)$ we have 
    \[
    (\DimVec(X^\bullet)_i)_v\neq 0\quad \text{ and }\quad (\DimVec(X^\bullet)_j)_w\neq 0,
    \] 
    for some $i,j\in[-(m-1),0]$ only if $|v-w|\leq m\cdot(l-1)+1$. 

\end{lemma}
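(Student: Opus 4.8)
The plan is to combine \Cref{lemma:pathFromProjToInj} with the $\tau_{[m]}$-orbit computations for the projectives already carried out above. The case $m=1$ is immediate, since an indecomposable $\Lambda(n,l)$-module is some $M_{a,b}$ with $b-a\le l-1$, so its dimension vector is supported on the interval $[a,b]$, whose endpoints differ by $l-1\le m(l-1)+1$. So assume $m\ge 2$; then $(m,n,l)$ lies in one of the finite cases of \Cref{tab:FiniteNakayama}, so $\mmod\Lambda(n,l)$ is of finite type. In particular $\rad^\infty(\mmod\Lambda(n,l))=0$, and by \Cref{lemma:connectedComponentBothPostProjAndPreInj} together with \Cref{lemma:finiteComponentIsWholeAR} the quiver $\AR(\mmod\Lambda(n,l))$ is a single finite acyclic component that is both postprojective and preinjective. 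Hence every indecomposable $X^\bullet$ lies on a finite $\tau_{[m]}$-orbit segment $P_a,\ \tau_{[m]}^{-1}P_a,\ \dots,\ \nshift{I_b}{m-1}$ running from an indecomposable projective to an indecomposable injective, and it is enough to bound, along each of the $n$ such segments, the distance between the largest and smallest vertex carrying non-zero cohomology.

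This bound is exactly what the computations in the subsections on the finite cases, and in the present one, deliver. There is a constant $c=c(m,l)=m(l-1)$ and a bound $k_0=k_0(m,l)$, both independent of $n$, such that for every $i\ge c$ one has $\tau_{[m]}^{-k_0}P_i\cong\nshift{I_{i-c}}{m-1}$ — in the families $l=2$ (any $m$), $l=3$ with $m\le 3$, $2\nmod\Lambda(n,4)$, $2\nmod\Lambda(n,5)$ and $3\nmod\Lambda(n,3)$ — or, in $4\nmod\Lambda(n,3)$, $\tau_{[m]}^{-k_0}P_i$ is concentrated at vertex $i-c$ in degree $-(m-2)$ and reaches $\nshift{I_{i-c}}{m-1}$ a bounded number of steps later. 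Reading off cohomological dimension vectors along these \emph{short} segments shows that every entry on them has its cohomology concentrated in an interval of length $\le m(l-1)+1$. The remaining orbit segments are those of the finitely many projectives $P_0,\dots,P_{c-1}$; these are precisely the ``snakes'' computed explicitly in the finite-case subsections, alternating between simple modules, short uniserial modules (of length $\le l-1$), and two-term complexes, and it is along these that the finitely many remaining projectives $\rad P_j$ get attached. Going through these entries — the two-term complexes $[P_j\to P_{j+1}]$ being the widest — one again finds each supported in an interval of length $\le m(l-1)+1$. Finally, the finitely many indecomposables whose support meets a bounded neighbourhood of the ends $0$ or $n-1$, where the formulas $P_i=M_{i-l+1,i}$ and $I_i=M_{i,i+l-1}$ degenerate, are supported even more narrowly. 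Collating these cases proves the inequality.

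Alternatively, one can argue more conceptually from \Cref{lemma:pathFromProjToInj} and the directedness of $\AR(\mmod\Lambda(n,l))$, along the lines sketched in the discussion preceding the statement: if $v<w$ both carry non-zero cohomology of $X^\bullet$, then there is a directed path $P_w\rightsquigarrow X^\bullet\rightsquigarrow\nshift{I_v}{m-1}$, and, using $\nshift{I_{w-c}}{m-1}=\tau_{[m]}^{-k_0}P_w$ and the directed paths $P_j\rightsquigarrow P_{j+1}$ coming from the non-zero morphisms $P_j\twoheadrightarrow M_{j-l+2,j}\hookrightarrow P_{j+1}$, the short-orbit structure above confines $X^\bullet$ to the bounded stretch between $P_w$ and $\nshift{I_{w-c}}{m-1}$, which yields $w-c\le v$.

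The main obstacle is the second paragraph. The structural facts it uses — the short orbits of the projectives $P_i$ for $i\ge c$, the precise attachments of the projectives $\rad P_j$, and the length bounds along the snakes — are distributed over, and in several places only sketched within, the earlier subsections, which moreover treat only a few selected orbits; so the real work is to assemble them into a single statement uniform in $n$, supply the orbits that were not treated explicitly, and check that the resulting universal bound is exactly $m(l-1)+1$, including the degenerate behaviour at the boundary vertices.
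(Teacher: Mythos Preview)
Your approach is the paper's, only more explicit. The paper does not give a formal proof: the lemma is stated at the end of \S\ref{subsubsec:BoundOnCohomology} as a summary of the preceding observations, which consist of the short $\tau_{[m]}$-orbit formulas for the generic projectives in each family (e.g.\ $\tau_{[2]}^{-}P_i=\nshift{I_{i-6}}{1}$ in $2\nmod\Lambda(n,4)$, $\tau_{[m]}^{-(m-1)}P_i=\nshift{I_{i-m}}{m-1}$ for $l=2$, etc.), combined with \Cref{lemma:pathFromProjToInj} and the directedness of the AR-quiver, illustrated by a schematic picture for $2\nmod\Lambda(n,4)$. Your third-paragraph ``alternative'' is exactly that sketch, and your second paragraph goes further than the paper by attempting also to track the exceptional ``snake'' orbits explicitly. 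Your final paragraph is accurate: neither you nor the paper verifies the precise constant $m(l-1)+1$ orbit by orbit; the paper is content to record the bound as an observation extracted from the short-orbit formulas and the computed AR-quivers.

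One small slip: in $4\nmod\Lambda(n,3)$ the paper records $\tau_{[4]}^{-3}P_i=\nshift{I_{i-8}}{2}$, and for generic $i$ one has $I_{i-8}=M_{i-8,\,i-6}$, supported on the interval $[i-8,i-6]$, not ``concentrated at vertex $i-c$'' as you write.
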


\begin{figure}
    \centering
    \begin{subfigure}{.49\linewidth}
        \centering
        \includegraphics[width=0.5\linewidth]{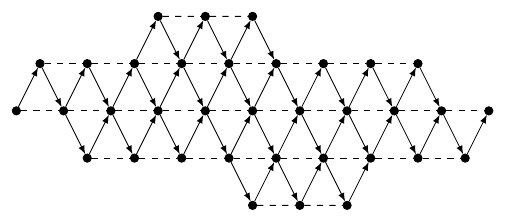}
        \caption{$\AR(3\nmod\Lambda(5,2))$}
        \label{fig:ARquiver3modLambda52}
    \end{subfigure}
    \begin{subfigure}{.49\linewidth}
        \centering
        \includegraphics[width=0.6\linewidth]{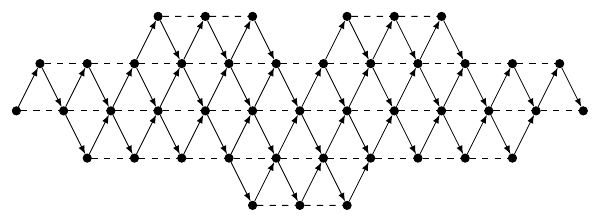}
        \caption{$\AR(3\nmod\Lambda(6,2))$}
        \label{fig:ARquiver3modLambda62}
    \end{subfigure}
    \begin{subfigure}{.49\linewidth}
        \centering
        \includegraphics[width=0.75\linewidth]{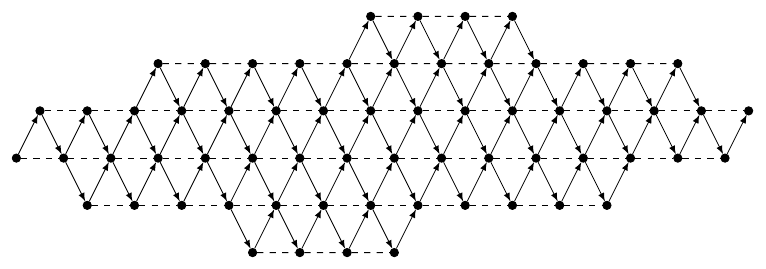}
        \caption{$\AR(4\nmod\Lambda(6,2))$}
        \label{fig:ARquiver4modLambda62}
    \end{subfigure}
    \begin{subfigure}{.49\linewidth}
        \centering
        \includegraphics[width=0.85\linewidth]{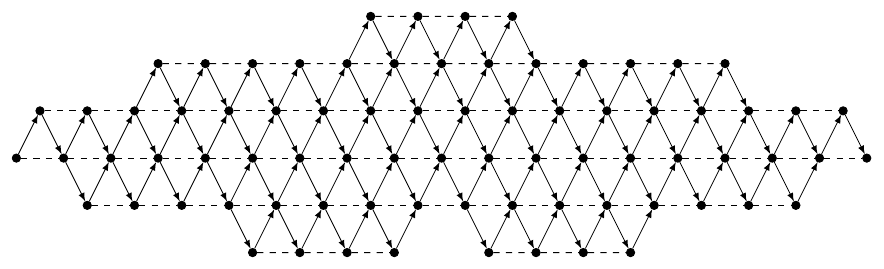}
        \caption{$\AR(4\nmod\Lambda(7,2))$}
        \label{fig:ARquiver4modLambda72}
    \end{subfigure}
    \caption{AR-quivers of $\mmod\Lambda(n,2)$}
    \label{fig:ARquivermmodLambdan2}
\end{figure}

\section{Transfer to the cyclic case}
Having developed an understanding of the extended modules of linear Nakayama algebras with homogeneous relations, we seek now to extend this to the cyclic case. 
This will be done through the methods of covering.

\subsubsection*{Idea of covering}
Covering theory has its roots in the world of topology and was introduced as a tool for representation theorists by \cite{Rie80}, \cite{BG81} and \cite{Gab81}.
The main idea is to find an often infinite (covering) quiver which maps nicely down onto the quiver we want to study. 

The constructions produce well-behaved functors which can be used to push known theory of the covering quiver down onto theory of the quiver in question. 
Classically, the Auslander-Reiten theory of modules is often what one wants to push down.

\subsubsection*{Overview of the section}
We start off by some preliminary theory about covering-constructions and known properties.
The infinite quiver \ref{eq:InfNakayamaQuiver} is introduced as the covering of interest for \ref{eq:CyclicNakayamaQuiver}. 
Considering $\bA_{d}$ as a subquiver of $\bA_{\infty}$, it is shown that for $d$ sufficiently large there are modules supported over $\mod\Lambda(d,l)$ that are sent to modules over a cyclic Nakayama algebra, $\Gamma$, in a manner compatible with the Nakayama functors, see \Cref{lemma:reduceMorphismToFinite,lemma:nakayamaCommuteWithPushDown}.

The bounds of \Cref{lemma:boundOnCohomology} are used to show that complexes suitably far into $\AR(\mod\Lambda(d,l))$ behaves the same regardless of whether they are seen as complexes over the covering algebra or $\Lambda(d,l)$, see \Cref{lemma:homOfComplexesCovering}. 
This is then used to show that AR-triangles with terms far enough into $\AR(\mmod\Lambda(n,l))$, are sent to AR-triangles of $\mmod\Gamma$, see \Cref{prop:ARtrianglesSentToARtriangles}. We can therefore conclude on the finiteness of $\AR(\mmod\Gamma)$, see \Cref{theorem:finitenessOfCyclicNakayama}.

\subsection{Preliminaries on covering}
We follow \cite{Gre05}. 
Let $Q$ be a quiver; a surjective quiver morphism $\hat{\pi}\colon \widehat{Q}\to Q$ is a \emph{regular covering} if there exists a group $G$ and a set-map $W\colon Q_1\to G$ such that\footnote{$W$ extends to paths through the group operation $-\ast -\colon G\times G\to G$; $W(\alpha_t\alpha_{t-1}\cdots \alpha_1)=W(\alpha_t)\ast W(\alpha_{t-1})\ast \cdots \ast W(\alpha_1)$.} 
\begin{itemize}
    \item $\widehat{Q}_0=Q_0\times G$ (denoting $(v,g)$ as $v_g$),
    \item $\widehat{Q}_1=Q_1\times G$ (denoting $(\alpha,g)$ as $\alpha_g$) where for $\alpha\colon v\to w\in Q_1$ we have $\alpha_g\colon v_g\to w_{[gW(\alpha)]}$, and
    \item $\hat{\pi}(v_g)=v$ and $\hat{\pi}(\alpha_g)=\alpha$.
\end{itemize}
Let $(Q,\rho)$ be a bound quiver and $q\in \rho$ a relation on $\K Q$. A relation $\hat{q}=\sum_{i=1}^m c_i \hat{p}_i$ on $\K \widehat{Q}$ is \emph{a lifting of} $q$ if $q=\sum_{i=1}^{m}c_i \pi(\hat{p}_i)$. 
Now, a surjective morphism of bound quivers $\pi\colon (\widehat{Q},\hat{\rho})\to (Q,\rho)$ is a \emph{covering}, if 
\begin{enumerate}
    \item ${\pi}\colon \widehat{Q}\to Q$ is a regular covering,
    \item for all relations $\sum_{i=1}^m c_i p_i$ on $\K Q$, we have $W(p_i)=W(p_j)$ for $1\leq i,j\leq m$, and
    \item $\hat{I}=\langle \hat{\rho}\rangle$ is generated by all the possible liftings of relations in $\rho$.
\end{enumerate}

\begin{remark}
A covering may also be realized through grading the path algebra $\Lambda=\K Q/\langle \rho\rangle$ using the set function $W\colon Q_1\to G$, see \cite{Gre83}. 
\end{remark}

Let us fix a covering $\pi\colon (\widehat{Q},\hat{\rho})\to (Q,\rho)$, and denote by $\Gamma=\K Q/\langle\rho\rangle$ and $\widehat{\Gamma}=\K \widehat{Q}(\langle\hat{\rho}\rangle)$ the associated path algebras. Moreover, we will denote $\rep(\widehat{Q},\hat{\rho})$ by $\mod\widehat{\Gamma}$. 

There is an exact additive functor $F_\pi\colon \mod\widehat{\Gamma}\to \mod\Gamma$, called the \emph{push-down functor} \cite[3.2]{BG81}, which is given on representations $M\in\mod\widehat{\Gamma}$ by
\[
F_\pi(M)_v=\bigoplus_{x\in\pi^{-1}(v)}M_{x}\ \text{ for }v\in Q_0
\]
Let $M$ be a representation in $\mod\widehat{\Gamma}$ and $h$ a group element of $G$. We can define another representation $\gGroupShift{M}{h} \in \mod\widehat{\Gamma}$, given by $\gGroupShift{M}{h}_{v_g}=M_{v_{h^{-1}g}}$, which we call the $h$\emph{-translate of $M$}. We can see that $\gGroupShift{-}{h}$ define an automorphism on $\mod\widehat{\Gamma}$. Observe that $F_\pi(M)=F_\pi(\gGroupShift{M}{h})$ for all $h\in G$.

The push-down functor is exact, hence we may extend it to a functor of the derived categories $F_\pi^{\D}\colon \D^b(\mod\widehat{\Gamma})\to\D^b(\mod\Gamma)$. Also, the translation $\gGroupShift{-}{h}$ extends naturally to an automorphism of the derived category. In the following proposition, we have collected results on coverings which will shortly come in handy. Note that both $F_\pi^{\D}$ and $\gGroupShift{-}{h}$ also restricts down to $\mmod$.
\begin{proposition}\label{prop:pushdownProperties}
    Let $\pi\colon (\widehat{Q},\hat{\rho})\to (Q,\rho)$ be a covering given by a torsion-free group $G$, and let $F_\pi\colon \mod\widehat{\Gamma}\to \mod\Gamma$ be the additive and exact push-down functor.
    \begin{enumerate}
        \item $F_\pi$ is faithful, and $M\in \mod\widehat{\Gamma}$ is indecomposable if and only if $F_\pi(M)\in \mod\Gamma$ is indecomposable. Moreover, if $M\in \mod\widehat{\Gamma}$ is indecomposable and $F_\pi(M)\cong F_\pi(N)$, then $N\cong\gGroupShift{M}{h}$ for some $h\in G$.  \label{prop:pushdownProperties1}
        \item $P\in \mod\widehat{\Gamma}$ is projective if and only if $F_\pi(P)\in\mod\Gamma$ is projective. \label{prop:pushdownProperties2}
        \item $I\in \mod\widehat{\Gamma}$ is injective if and only if $F_\pi(I)\in\mod\Gamma$ is injective.\label{prop:pushdownProperties3}
        \item For $M,N\in \mod\widehat{\Gamma}$ we have \label{prop:pushdownProperties4}
            \[
            \begin{split}
                \Hom_{\Gamma}(F_\pi(M),F_\pi(N))&\cong \bigoplus_{h\in G}\Hom_{\widehat{\Gamma}}(\gGroupShift{M}{i},N)\\
                &\cong\bigoplus_{h\in G}\Hom_{\widehat{\Gamma}}(M,\gGroupShift{N}{i})
            \end{split}
            \]
        \item $F^{\D}_\pi\colon \Db(\mod\widehat{\Gamma})\to \Db(\mod\Gamma)$ is faithful, and $X^\bullet\in\Db(\mod\widehat{\Gamma})$ is indecomposable if and only if $F^{\D}_\pi(X^\bullet)$ is indecomposable. Moreover, if $X^\bullet\in D^b(\mod\widehat{\Gamma})$ is indecomposable and $F_\pi^{\D}(X^\bullet)\cong F_\pi^{\D}(Y^\bullet)$, then $X^\bullet\cong \gGroupShift{Y^\bullet}{h}$ for some $h\in G$. \label{prop:pushdownProperties5}
        \item For $X^\bullet,Y^\bullet\in \Db(\mod\widehat{\Gamma})$ we have \label{prop:pushdownProperties6}
            \[
            \begin{split}
                \Hom_{\Db(\mod\Gamma)}(F_\pi(X^\bullet),F_\pi(Y^\bullet))&\cong \bigoplus_{h\in G}\Hom_{\Db(\mod\widehat{\Gamma})}(\gGroupShift{X^\bullet}{i},Y^\bullet)\\
                &\cong\bigoplus_{h\in G}\Hom_{\Db(\mod\widehat{\Gamma})}(X^\bullet,\gGroupShift{Y^\bullet}{i})
            \end{split}
            \]
    \end{enumerate}
\end{proposition}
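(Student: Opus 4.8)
The statements split into two groups, with \eqref{prop:pushdownProperties1}--\eqref{prop:pushdownProperties4} being classical and \eqref{prop:pushdownProperties5}--\eqref{prop:pushdownProperties6} their derived analogues. For the first group the plan is essentially to cite: because $G$ is torsion-free it acts freely on $\widehat{Q}_0=Q_0\times G$ by left translation, so by \cite[3.2]{BG81}, \cite{Gab81} and \cite[\S 3]{Gre05} the push-down $F_\pi$ is faithful, reflects and preserves indecomposability, has fibres the $G$-orbits $\{\gGroupShift{M}{h}\}_{h\in G}$, and satisfies the Hom-formula \eqref{prop:pushdownProperties4}. For \eqref{prop:pushdownProperties2} and \eqref{prop:pushdownProperties3} I would use that a covering of bound quivers is a local isomorphism, so that $F_\pi(P_x)\cong P_{\pi(x)}$ and $F_\pi(I_x)\cong I_{\pi(x)}$ for every vertex $x$ of $\widehat{Q}$; the converse implications then come from \eqref{prop:pushdownProperties1}, the surjectivity of $\pi$ on vertices, and the fact that each $\gGroupShift{-}{h}$ permutes the indecomposable projectives (resp. injectives). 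This leaves \eqref{prop:pushdownProperties5} and \eqref{prop:pushdownProperties6}.

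For \eqref{prop:pushdownProperties6} the plan is to lift the module-level formula degreewise through the derived category. Since $F_\pi$ is exact it induces compatible functors on $\complexC^b$, $\homotopy^b$ and $\D^b$, and by \eqref{prop:pushdownProperties2} it sends projectives to projectives. Given $X^\bullet,Y^\bullet\in\D^b(\mod\widehat{\Gamma})$, I would take a projective resolution $P^\bullet\to X^\bullet$ (possibly unbounded on the left) and represent $Y^\bullet$ by an honestly bounded complex; then $F_\pi P^\bullet\to F_\pi X^\bullet$ is again a projective resolution, so $\Hom_{\D^b(\mod\Gamma)}(F_\pi X^\bullet,F_\pi Y^\bullet)\cong\Hom_{\homotopy(\mod\Gamma)}(F_\pi P^\bullet,F_\pi Y^\bullet)$. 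Because $Y^\bullet$ lives in only finitely many degrees, a chain map $F_\pi P^\bullet\to F_\pi Y^\bullet$ and a nullhomotopy consist of finitely many maps between push-downs of modules; applying \eqref{prop:pushdownProperties4} in each such degree and bundling the $G$-grading — which is compatible with the differentials because $\gGroupShift{-}{h}$ is functorial and $F_\pi\circ\gGroupShift{-}{h}\cong F_\pi$ — produces a finite direct-sum decomposition $\bigoplus_{h\in G}\Hom_{\homotopy(\mod\widehat{\Gamma})}(\gGroupShift{P^\bullet}{h},Y^\bullet)\cong\bigoplus_{h\in G}\Hom_{\D^b(\mod\widehat{\Gamma})}(\gGroupShift{X^\bullet}{h},Y^\bullet)$, using that $\gGroupShift{P^\bullet}{h}\to\gGroupShift{X^\bullet}{h}$ is still a projective resolution. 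The second isomorphism in \eqref{prop:pushdownProperties6} is the symmetric argument with injective resolutions and \eqref{prop:pushdownProperties3}.

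Part \eqref{prop:pushdownProperties5} would then follow from \eqref{prop:pushdownProperties6} exactly as in the module case. Faithfulness of $F_\pi^{\D}$ is immediate, since $\Hom_{\D^b(\mod\widehat{\Gamma})}(X^\bullet,Y^\bullet)$ is the $h=e$ summand of $\Hom_{\D^b(\mod\Gamma)}(F_\pi^{\D}X^\bullet,F_\pi^{\D}Y^\bullet)$ and the inclusion of that summand is $F_\pi^{\D}$ itself. Torsion-freeness reenters here: every $h\neq e$ has infinite order, so $\gGroupShift{X^\bullet}{h}$ has support disjoint from that of $X^\bullet$ and therefore $\gGroupShift{X^\bullet}{h}\not\cong X^\bullet$; the standard Krull--Schmidt argument then shows $\End_{\D^b(\mod\Gamma)}(F_\pi^{\D}X^\bullet)\cong\bigoplus_h\Hom_{\D^b(\mod\widehat{\Gamma})}(\gGroupShift{X^\bullet}{h},X^\bullet)$ is local precisely when $\End_{\D^b(\mod\widehat{\Gamma})}(X^\bullet)$ is, yielding both preservation and reflection of indecomposability, and that an isomorphism $F_\pi^{\D}X^\bullet\cong F_\pi^{\D}Y^\bullet$ with $X^\bullet$ indecomposable must carry a single invertible graded component $\gGroupShift{X^\bullet}{h}\to Y^\bullet$, so $Y^\bullet\cong\gGroupShift{X^\bullet}{h}$.

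The hard part, or at least the part requiring the most care, will be the passage from $\homotopy$ to $\D^b$ in \eqref{prop:pushdownProperties6} in the regime actually used later, where $\widehat{\Gamma}$ is infinite-dimensional (the $\bA_\infty$-covering of a cyclic Nakayama algebra): one must verify that a left-unbounded projective resolution over $\widehat{\Gamma}$ pushes down to a genuine projective resolution over $\Gamma$, that the $G$-graded decomposition of morphisms is respected by homotopies and not only by differentials, and that the resulting sum over $G$ is finite even though the complexes have infinitely many terms. All three points rest on exactness of $F_\pi$ together with the finiteness of the support of each term, but they should be made explicit rather than taken for granted.
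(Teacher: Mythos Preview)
Your proposal is essentially correct and more detailed than what the paper actually does: the paper's proof consists entirely of citations to \cite{Gab81}, \cite{BG81}, \cite{GG82} and \cite{GH11}, with the single remark that the uniqueness-up-to-translate clause in \eqref{prop:pushdownProperties5} follows by adapting \cite[Lemma~3.5]{Gab81}. In particular, the paper does not rederive \eqref{prop:pushdownProperties5}--\eqref{prop:pushdownProperties6} from \eqref{prop:pushdownProperties1}--\eqref{prop:pushdownProperties4} as you propose, but instead invokes \cite{GH11} directly for the derived-category statements; that reference carries the covering formalism through to $\D^b$ at the required level of generality, which also disposes of the infinite-dimensional issues you flag in your final paragraph.

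Your route via projective resolutions and a degreewise application of \eqref{prop:pushdownProperties4} is a legitimate alternative and is essentially how \cite{GH11} proceeds, so the difference is one of packaging rather than substance. One small inaccuracy: in your argument for \eqref{prop:pushdownProperties5} you write that torsion-freeness forces $\gGroupShift{X^\bullet}{h}$ to have support \emph{disjoint} from that of $X^\bullet$ for $h\neq e$. This is false in general (take $G=\bZ$ acting on $\bA_\infty$ and an $X^\bullet$ whose support spans more than $n$ vertices). What torsion-freeness together with finite-dimensionality actually gives is that the supports are \emph{different}, hence $\gGroupShift{X^\bullet}{h}\not\cong X^\bullet$; the locality of $\End_{\D^b(\mod\Gamma)}(F_\pi^{\D}X^\bullet)$ then follows from the $G$-graded structure via the argument of \cite{GG82} (this is exactly the content of \Cref{lemma:DescriptionOfRadicalCover} later in the paper). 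With that correction your sketch goes through.
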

\begin{proof}
    See \cite[Lemmas 3.2 and 3.5]{Gab81}, \cite[Propositions 3.2]{BG81}, \cite[Theorem 3.2 and Proposition 3.4]{GG82} and \cite[Lemmas 3.7 and A.2, Theorem A.3]{GH11}. We note that the second part of (\ref{prop:pushdownProperties5}) follows by an adaption of \cite[Lemma 3.5]{Gab81}.
\end{proof}

\subsection{Cyclic Nakayama algebras}
Let us now consider cyclic Nakayama algebras with homogeneous relations. That is, the bound quivers on the form $($\ref{eq:CyclicNakayamaQuiver}$,\rho_{\Delta,l})$, where \ref{eq:CyclicNakayamaQuiver} is the quiver
\[
\begin{tikzpicture}[scale=.8,baseline=(current  bounding  box.center)]
    \node (n-1) at (90:1cm) [nodeDots] {};
    \node (n-1-label) at (90:1.1cm) [scale=.7,above]{$n-1$};
    
    \node (n-2) at (30:1cm) [nodeDots] {};
    \node (n-2-label) at (30:1.1cm) [scale=.7,right] {$n-2$};

    \node (cdot1) at ($(-30:1)+(0,.08)$) [nodeCDots,scale=.7] {};
    \node (cdot2) at ($(-30:1)+(-.04,0)$) [nodeCDots,scale=.7] {};
    \node (cdot3) at ($(-30:1)+(-.08,-.08)$) [nodeCDots,scale=.7] {};

    \node (2) at (-90:1cm) [nodeDots] {};
    \node (2-label) at (-90:1.1cm) [scale=.7,below] {$2$};

    \node (1) at (-150:1cm) [nodeDots] {};
    \node (1-label) at (-150:1.1cm) [scale=.7,left] {$1$};

    \node (0) at (150:1cm) [nodeDots] {};
    \node (0-label) at (150:1.1cm) [scale=.7,left] {$0$};

    \draw[-latex] ([xshift=2.5pt,yshift=-1.5pt]n-1.center)--([xshift=-2.5pt,yshift=1pt]n-2.center)node[midway,anchor=south west,scale=.7]{$\delta_{n-1}$};
    \draw[-latex] ([yshift=-1.5pt]n-2.center)--([yshift=1.5pt]cdot1.center)node[midway,right,scale=.7]{$\delta_{n-2}$};
    \draw[-latex] ([xshift=-2.5pt,yshift=-1.5pt]cdot3.center)--([xshift=2.5pt,yshift=1pt]2.center)node[midway,anchor=north west,scale=.7]{$\delta_{3}$};
    \draw[-latex] ([xshift=-2.5pt,yshift=1.5pt]2.center)--([xshift=2.5pt,yshift=-1pt]1.center)node[midway,anchor=north east,scale=.7]{$\delta_{2}$};
    \draw[-latex] ([yshift=3pt]1.center)--([yshift=-2.5pt]0.center)node[midway,anchor=east,scale=.7]{$\delta_{1}$};
    \draw[-latex] ([xshift=2.5pt,yshift=1.5pt]0.center)--([xshift=-2.5pt,yshift=-1pt]n-1.center)node[midway,anchor=south east,scale=.7]{$\delta_{0}$};
\end{tikzpicture}
\]
indexed by $\bZ/n\bZ$, and $\rho_{\Delta,l}$ are all paths of length $l$. Let $W\colon (\DCyc{n})_1\to \bZ$ be defined by $W(\delta_0)=1$ and $W(\delta_i)=0$ for $i\neq 0$. 
This gives a covering which can be identified with $($\ref{eq:InfNakayamaQuiver}$,\hat{\rho})$, where \ref{eq:InfNakayamaQuiver} is the infinite quiver
\begin{equation}\label{eq:InfNakayamaQuiver}\tag{$\bA_{\infty}$}
\begin{tikzpicture}[xscale=.7,tips=proper,baseline=(current  bounding  box.center)]
    \node (cdot1Left) at (-.2,0) [nodeCDots] {};
    \node (cdot2Left) at (0,0) [nodeCDots] {};
    \node (cdot3Left) at (.2,0) [nodeCDots] {};

    \node (2) at (2,0) [nodeDots] {};
    \node (2-label) at (2,-.1) [below,scale=.7] {$2$};
    \node (1) at (4,0) [nodeDots] {};
    \node (1-label) at (4,-.1) [below,scale=.7] {$1$};
    \node (0) at (6,0) [nodeDots] {};
    \node (0-label) at (6,-.1) [below,scale=.7] {$0$};
    \node (-1) at (8,0) [nodeDots] {};
    \node (-1-label) at (8,-.1) [below,scale=.7] {$-1$};
    \node (-2) at (10,0) [nodeDots] {};
    \node (-2-label) at (10,-.1) [below,scale=.7] {$-2$};

    \node (cdot1Right) at (11.8,0) [nodeCDots] {};
    \node (cdot2Right) at (12,0) [nodeCDots] {};
    \node (cdot3Right) at (12.2,0) [nodeCDots] {};

    \draw[-latex] ([xshift=.2cm]cdot3Left.center)edge([xshift=-.2cm]2.center);
    \draw[-latex] ([xshift=.2cm]2.center)edge([xshift=-.2cm]1.center);
    \draw[-latex] ([xshift=.2cm]1.center)edge([xshift=-.2cm]0.center);
    \draw[-latex] ([xshift=.2cm]0.center)edge([xshift=-.2cm]-1.center);
    \draw[-latex] ([xshift=.2cm]-1.center)edge([xshift=-.2cm]-2.center);
    \draw[-latex] ([xshift=.2cm]-2.center)edge([xshift=-.2cm]cdot1Right.center);
\end{tikzpicture}
\end{equation}
and $\hat{\rho}$ are all paths of length $l$. 
Under this identification, we see that the covering morphism is induced by the canonical projection $\bZ\to \bZ/n\bZ$. The translation $\gGroupShift{M}{i}$ is given by $\gGroupShift{M}{i}_j=M_{j-ni}$. 
We continue with the notation from the last subsection; let $\Gamma=\K \DCyc{n}/\langle\rho_{\Delta,l}\rangle$ and $\widehat{\Gamma}=\K \bA_\infty/\langle \hat{\rho}\rangle$.

Fix $n,d,l>0$. We can find an injection of the linear quiver $\bA_d$ into $\bA_\infty$ by $x\mapsto x$. This injection give rise to an embedding $\iota\colon \mod\Lambda(d,l)\hookrightarrow \mod\widehat{\Gamma}$ which in conjunction with the push-down functor $F_\pi$, plays an essential part in the following. Specifically, we will be considering the composition
\[
    \begin{tikzpicture}
        \node (A) at (0,0){$F_\Lambda\colon\vphantom{\widehat{\Gamma}_\Lambda}$};
        \node[right=.2cm of A](B) {$\mod\Lambda(d,l)\vphantom{\widehat{\Gamma}_\Lambda}$};
        \node[right=1cm of B] (C) {$\mod\widehat{\Gamma}\vphantom{\widehat{\Gamma}_\Lambda}$};
        \node[right=1cm of C] (D) {$\mod\Gamma\vphantom{\widehat{\Gamma}_\Lambda}$};
        \draw[>->] ([yshift=-1pt]B.east)--([yshift=-1pt]C.west)node[midway,above,scale=.8]{$\iota$};
        \draw[->] ([yshift=-1pt]C.east)--([yshift=-1pt]D.west)node[midway,above,scale=.8]{$F_\pi$};
    \end{tikzpicture}
\]
and the induced functor on the extended module category, $F_\Lambda^m\colon \mmod\Lambda(n,l)\to \mmod\Gamma$. Under certain condition we will see that this functor preserves AR-triangles. 

\subsubsection{AR-triangles} First, we need some notation and observations.
Let $i\in\bZ$, then we denote by $P_i^{\widehat{\Gamma}}$ ($I_i^{\widehat{\Gamma}},S_i^{\widehat{\Gamma}}$) the indecomposable projective (injective, simple) of $\mod\widehat{\Gamma}$ in vertex $i$. 
Similarly, for $i\in [0,d-1]$ ($j\in\bZ/n\bZ$) we define $P_i^\Lambda,I_i^\Lambda$ and $S_i^\Lambda$ ($P_j^\Gamma,I_j^\Gamma$ and $S_j^\Gamma)$. 
We have the following easy observation
\begin{observation}
    Let $i\in [0,d-1]$. Then
    \begin{enumerate}
        \item $\iota(S_i^\Lambda)=S_i^{\widehat{\Gamma}}$,
        \item $\iota(P_i^\Lambda)=P_i^{\widehat{\Gamma}}$ if and only if $i\in [l,d]$, and
        \item $\iota(I_i^\Lambda)=I_i^{\widehat{\Gamma}}$ if and only if $j\in [0,d-l]$.
    \end{enumerate}
\end{observation}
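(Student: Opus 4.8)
The plan is to prove all three statements by a direct comparison of supports, combining the explicit description of $\mod\Lambda(n,l)$ recalled at the start of this section with the analogous picture over $\widehat{\Gamma}$. First I would make $\iota$ explicit: for a $\Lambda(d,l)$-module $M$ one sets $\iota(M)_x = M_x$ for $x\in[0,d-1]$ and $\iota(M)_x=0$ otherwise, keeping the structure maps of $M$ on the arrows inside $[0,d-1]$ and taking the (forced) zero map on the two arrows of $\bA_\infty$ adjacent to a vertex outside $[0,d-1]$. This is a well-defined $\widehat{\Gamma}$-module because any path of length $l$ in $\bA_\infty$ either stays inside $[0,d-1]$, where $\iota(M)$ inherits the length-$l$ relation from $M$, or passes through a vertex outside $[0,d-1]$, where its action on $\iota(M)$ factors through a zero space. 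One checks easily that $\iota$ is exact and fully faithful, hence preserves indecomposability, and that it carries the indecomposable with support an interval $[a,b]\subseteq[0,d-1]$ to the indecomposable $\widehat{\Gamma}$-module with that same support. Consequently, for indecomposables supported in $[0,d-1]$ one has $\iota(X)\cong\iota(Y)$ precisely when $X$ and $Y$ have equal supports.

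Next I would record the shape of the projectives and injectives over $\widehat{\Gamma}$. Since $\bA_\infty$ has neither a source nor a sink, a direct look at $e_i\widehat{\Gamma}$ (equivalently, the $d\to\infty$ limit of the formulas for $\Lambda(d,l)$) gives that $P_i^{\widehat{\Gamma}}$ is the indecomposable with support $[i-l+1,\,i]$ and $I_i^{\widehat{\Gamma}}$ the one with support $[i,\,i+l-1]$, both of length exactly $l$ with structure maps the identity wherever possible. On the other hand, from the description recalled above, for $i\in[0,d-1]$ the module $P_i^\Lambda$ has support $[\max(0,\,i-l+1),\,i]$ and $I_i^\Lambda$ has support $[i,\,\min(i+l-1,\,d-1)]$.

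With these in hand the three claims drop out. For (1), $S_i^\Lambda=M_{i,i}$ has support $\{i\}\subseteq[0,d-1]$, so $\iota(S_i^\Lambda)$ is the indecomposable $\widehat{\Gamma}$-module with support $\{i\}$, which is $S_i^{\widehat{\Gamma}}$. For (2), $\iota(P_i^\Lambda)\cong P_i^{\widehat{\Gamma}}$ if and only if $[\max(0,i-l+1),\,i]=[i-l+1,\,i]$, that is $\max(0,i-l+1)=i-l+1$, i.e. $i\geq l-1$. For (3), dually, $\iota(I_i^\Lambda)\cong I_i^{\widehat{\Gamma}}$ if and only if $\min(i+l-1,\,d-1)=i+l-1$, i.e. $i\leq d-l$.

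I do not expect a genuine obstacle: the content is bookkeeping with supports. The two points that deserve a little care are that $\iota$ genuinely lands in $\mod\widehat{\Gamma}$ and transports the interval description of indecomposables correctly — this is exactly where one uses that $\bA_d\hookrightarrow\bA_\infty$ is a morphism of bound quivers compatible with the length-$l$ relations — and that one has the right closed form for the indecomposable projectives and injectives of $\widehat{\Gamma}$, namely that over the unbounded quiver every one of them has length exactly $l$, so the boundary truncations present in the $\Lambda(d,l)$-case no longer occur.
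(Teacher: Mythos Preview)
The paper records this as an Observation and gives no proof; it is treated as immediate. Your approach---making $\iota$ explicit as extension by zero and then comparing supports of interval modules on both sides---is exactly the natural verification and is correct.

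One point worth flagging: your computation for (2) yields $i\geq l-1$, whereas the statement asserts $i\in[l,d]$. Your bound is the right one. At $i=l-1$ the two formulas for $P_i^\Lambda$ coincide, giving $M_{0,\,l-1}$, which already has length $l$ and hence equals $P_{l-1}^{\widehat{\Gamma}}$; and of course $i$ ranges only up to $d-1$. So the statement as printed is off by one at both endpoints (and has the stray index $j$ in (3)), but this does not affect how the observation is used later, since the paper only ever invokes it for $i$ well inside $[l,\,d-l-1]$.
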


Our first step towards understanding how $F_\Lambda^m$ affects AR-triangles is to understand how $F_\Lambda$ behaves with respect to the Nakayama functor, and thus what it does to morphisms between projectives.
\begin{lemma}\label{lemma:reduceMorphismToFinite}
    Let $d>n+2l$ and $i\in[l,d-l-1]$. 
    \[
    \Hom_{\widehat{\Gamma}}(\iota(P_i^\Lambda),P_k^{\widehat{\Gamma}})=\begin{cases}
        \Hom_\Lambda(P_i^\Lambda,P_k^\Lambda)&,\text{ if }k\in [0,d-1]\\
        0&,\text{ otherwise.}
    \end{cases}
    \]
\end{lemma}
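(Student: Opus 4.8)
The plan is to reduce the statement to the elementary combinatorics of morphisms between the uniserial modules over the (infinite) linear Nakayama algebra $\widehat{\Gamma}$, together with one structural input: the embedding $\iota\colon\mod\Lambda(d,l)\hookrightarrow\mod\widehat{\Gamma}$ is fully faithful. Everything else is bookkeeping with intervals.

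First I would recall the shape of $\widehat{\Gamma}=\K\bA_\infty/\langle\hat{\rho}\rangle$. It is an (infinite) uniserial algebra whose indecomposables are again the interval modules $M_{a,b}$ with $a\le b$; its indecomposable projective at a vertex $k$ is $P_k^{\widehat{\Gamma}}=M_{k-l+1,\,k}$; and for interval modules one has the usual criterion $\Hom_{\widehat{\Gamma}}(M_{a,b},M_{c,d})\cong\K$ if $a\le c\le b\le d$ and $0$ otherwise. Since $i\ge l$, the preceding Observation gives $\iota(P_i^\Lambda)=P_i^{\widehat{\Gamma}}=M_{i-l+1,\,i}$, so the left-hand side is $\Hom_{\widehat{\Gamma}}(M_{i-l+1,i},M_{k-l+1,k})$, which by the criterion is $\K$ exactly when $k\in[i,\,i+l-1]$ and $0$ otherwise. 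Only the hypothesis $i\in[l,d-l-1]$ is used here (it also makes this range non-empty, as $d>n+2l$ forces $d\ge 2l+1$); the parameter $n$ plays no role in this lemma and is carried along only for later use.

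Next I would run the case analysis. Since $l\le i\le d-l-1$, any $k\in[i,i+l-1]$ satisfies $l\le k\le d-2$, hence $k\in[0,d-1]$; so for $k\notin[0,d-1]$ the $\Hom$-space vanishes, which is the second line of the claimed formula. For $k\in[0,d-1]$ I split further. If $k\ge l$, then $P_k^\Lambda=M_{k-l+1,k}$ and the Observation gives $P_k^{\widehat{\Gamma}}=\iota(P_k^\Lambda)$, so full faithfulness of $\iota$ yields $\Hom_{\widehat{\Gamma}}(\iota(P_i^\Lambda),P_k^{\widehat{\Gamma}})=\Hom_{\widehat{\Gamma}}(\iota(P_i^\Lambda),\iota(P_k^\Lambda))\cong\Hom_\Lambda(P_i^\Lambda,P_k^\Lambda)$. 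If instead $k\le l-1$, then $P_k^\Lambda=M_{0,k}$ and $P_k^{\widehat{\Gamma}}=M_{k-l+1,k}$, and both $\Hom_{\widehat{\Gamma}}(M_{i-l+1,i},M_{k-l+1,k})$ and $\Hom_\Lambda(M_{i-l+1,i},M_{0,k})$ vanish, since $i\ge l>k$ violates the condition $i\le k$ in the $\Hom$-criterion; so the two sides again agree (both are $0$).

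The substantive point, and where I expect the only real work, is the full faithfulness of $\iota$. This I would obtain from the observation that $\iota$ is restriction of scalars along the surjection of algebras $\widehat{\Gamma}\twoheadrightarrow\widehat{\Gamma}/\langle e_j : j\in\bZ\setminus\{0,\dots,d-1\}\rangle\cong\Lambda(d,l)$ — killing the vertices outside $\bA_d$ collapses $\widehat{\Gamma}$ to $\Lambda(d,l)$ precisely because $\bA_d$ is convex in $\bA_\infty$ — and restriction along a surjection is always fully faithful. Concretely, $\iota$ is extension by zero, and a morphism $\iota M\to\iota N$ is just a tuple $(f_j)_{j\in\bZ}$ with $f_j=0$ for $j\notin\{0,\dots,d-1\}$: the commutativity squares at the two boundary arrows of $\bA_\infty$ are vacuous, and no length-$l$ relation imposes a new condition (one leaving $\{0,\dots,d-1\}$ acts as $0$ on $\iota M$ automatically, one contained in $\bA_d$ is already a relation of $\Lambda(d,l)$). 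Assembling this with the interval combinatorics above proves the lemma.
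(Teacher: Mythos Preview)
Your proof is correct and follows essentially the same approach as the paper: both arguments reduce to the interval combinatorics of when $\Hom$ between projectives is nonzero, observe that for $i\in[l,d-l-1]$ any such nonzero $\Hom$ forces $k$ into the range where $P_k^{\widehat{\Gamma}}=\iota(P_k^\Lambda)$, and conclude. The paper phrases the criterion as ``$S_x$ is a composition factor of $M$'' rather than your interval condition $a\le c\le b\le d$, and leaves the full faithfulness of $\iota$ implicit; your version is more explicit on both points (in particular your justification of full faithfulness via restriction along the algebra surjection is cleaner than anything the paper writes down), but the underlying argument is the same.
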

\begin{proof}
    Recall that, for a path algebra, there is a non-zero morphism from a projective at vertex $x$ to a module $M$ if and only if the simple at $x$ is in the composition series of $M$.

    Thus, we have $\Hom_\Lambda(P_i^\Lambda,P_j^\Lambda)\neq 0$ if and only if $i\in [j-l+1,l]$. 
    Hence, if $i\in[l,d-l-1]$ then $j\geq i\geq l$, and $\iota(P_j^\Lambda)=P_j^{\widehat{\Gamma}}$.
    Likewise, if $\Hom_{\widehat{\Gamma}}(\iota(P_i^\Lambda),P_k^{\widehat{\Gamma}})\neq 0$, then $i\in [k-l+1,k]$. Therefore, $k\leq i+l-1\leq d$ and $P_k^{\widehat{\Gamma}}=\iota(P_k^\Lambda)$.
    This finishes the proof.
\end{proof}

Let $j\in \bZ$ and $Q=\bigoplus_{i=j}^{n+j-1}P_i^{\widehat{\Gamma}}$, then we can observe that $F_\pi(Q)=\Gamma$. Also, for each $i\in[l,d-l-1]$, $\iota(P_i^\Lambda)$ appears as a direct summand of $\gGroupShift{Q}{k}$ for some $k\in\bZ$.
Combining this with our previous lemma and \Cref{prop:pushdownProperties}(\ref{prop:pushdownProperties4}), gives us for $i\in[l,d-l-1]$ that
\[
\begin{split}
    \Hom_{\Gamma}(F_\Lambda(P_i^\Lambda),\Gamma)&\cong\bigoplus_{k\in \bZ}\Hom_{\widehat{\Gamma}}(\iota(P_i^\Lambda),\gGroupShift{Q}{k})\\
    &\cong \bigoplus_{j\in\bZ}\Hom_{\widehat{\Gamma}}(\iota(P_i^\Lambda),P_j^{\widehat{\Gamma}})\\
    &\cong \bigoplus_{j\in[0,d-1]}\Hom_\Lambda(P_i^\Lambda,P_j^\Lambda)\\
    &\cong \Hom_\Lambda(P_i^\Lambda,\Lambda)
\end{split}
\]
The right-hand side of this isomorphism is a module over $\Lambda^{\mathrm{op}}$, 
so we may apply the corresponding covering functor $F_\Lambda^{\mathrm{op}}\colon \mod\Lambda^{\mathrm{op}}\to \mod\Gamma^{\mathrm{op}}$ to it. 
We see that $F_\Lambda^{op}\Hom_\Lambda(P_i^\Lambda,\Lambda)\cong \Hom_\Gamma(F_\Lambda(P_i^\Lambda),\Gamma)$ as modules over $\Gamma^{op}$.
Now, observe also that $F_\Lambda^{op}\circ D\simeq D\circ F\colon \mod\Lambda\to\mod\Gamma^{op}$ and $F_\Lambda\circ D\simeq D\circ F^{op}\colon \mod\Lambda^{op}\to\mod\Gamma$.
We summize that
\begin{lemma}\label{lemma:nakayamaCommuteWithPushDown}
    Let $d>n+2l$ and let $\mathcal{Q}\coloneqq\add\{P_i^\Lambda\,|\,i\in [l,d-l-1]\}$. Then we have a natural isomorphism
    $F_\Lambda\nu_\Lambda \cong \nu_\Gamma F_\Lambda$
    on $\mathcal{Q}$, where $\nu_\Lambda$ and $\nu_\Gamma$ are the Nakayama functors of the respective algebras. 

    Dually, let $\mathcal{I}\coloneqq\add\{I_i^\Lambda\,|\,i\in [l,d-l-1]\}$. Then we have a natural isomorphism
    $F_\Lambda\nu_\Lambda^-\cong \nu_\Gamma^-F_\Lambda$
    on $\mathcal{I}$, where $\nu_\Lambda^-$ and $\nu_\Gamma^-$ are the inverse nakayama functors.
\end{lemma}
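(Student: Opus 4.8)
The plan is to unravel the definition $\nu_\Lambda = D\Hom_\Lambda(-,\Lambda)$ and feed in the Hom-isomorphism established in the discussion preceding the statement. For $P\in\mathcal{Q}$ one has, by definition, $\nu_\Gamma F_\Lambda(P)=D\Hom_\Gamma(F_\Lambda(P),\Gamma)$, and the chain of isomorphisms displayed just above identifies $\Hom_\Gamma(F_\Lambda(P),\Gamma)$ with $F_\Lambda^{\mathrm{op}}\Hom_\Lambda(P,\Lambda)$ in $\mod\Gamma^{\mathrm{op}}$. Applying the duality $D$ and invoking the natural isomorphism $F_\Lambda\circ D\simeq D\circ F_\Lambda^{\mathrm{op}}$ then produces
\[
\nu_\Gamma F_\Lambda(P)=D\Hom_\Gamma(F_\Lambda(P),\Gamma)\cong D F_\Lambda^{\mathrm{op}}\Hom_\Lambda(P,\Lambda)\cong F_\Lambda D\Hom_\Lambda(P,\Lambda)=F_\Lambda\nu_\Lambda(P),
\]
which is the asserted isomorphism on objects; it then remains to upgrade it to a natural isomorphism of functors $\mathcal{Q}\to\mod\Gamma$.

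First I would check naturality of each link in this chain. The isomorphism of \Cref{prop:pushdownProperties}(\ref{prop:pushdownProperties4}) is natural in both arguments by construction, and so is the commutation $F_\Lambda\circ D\simeq D\circ F_\Lambda^{\mathrm{op}}$; hence the only step needing care is the passage through \Cref{lemma:reduceMorphismToFinite}. But that step merely discards the summands $\Hom_{\widehat{\Gamma}}(\iota(P_i^\Lambda),P_k^{\widehat{\Gamma}})$ with $k\notin[0,d-1]$ (which vanish) and identifies each remaining summand with $\Hom_\Lambda(P_i^\Lambda,P_k^\Lambda)$ through the fully faithful embedding $\iota$, using that $\iota(P_k^\Lambda)=P_k^{\widehat{\Gamma}}$ whenever that Hom-space is nonzero (i.e. for $k\in[l,d-1]$). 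Since these identifications are the tautological ones, naturality in $P\in\mathcal{Q}$ is immediate from functoriality of composition, and the composite is therefore a natural isomorphism.

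For the dual statement I would avoid repeating the computation and instead exploit that $\nu_\Lambda$ and $\nu_\Lambda^-$ restrict to mutually quasi-inverse equivalences $\mathcal{Q}\leftrightarrow\mathcal{I}$ (as $\nu_\Lambda P_i^\Lambda=I_i^\Lambda$ for $i\in[l,d-l-1]$ gives $\nu_\Lambda(\mathcal{Q})=\mathcal{I}$), and likewise $\nu_\Gamma^-\nu_\Gamma\cong\mathrm{id}$ on $\proj\Gamma$. Given $Y\in\mathcal{I}$ write $Y=\nu_\Lambda(X)$ with $X=\nu_\Lambda^-(Y)\in\mathcal{Q}$; then $F_\Lambda\nu_\Lambda^-(Y)=F_\Lambda(X)$, while $\nu_\Gamma^-F_\Lambda(Y)=\nu_\Gamma^-F_\Lambda\nu_\Lambda(X)\cong\nu_\Gamma^-\nu_\Gamma F_\Lambda(X)\cong F_\Lambda(X)$, where the middle isomorphism is the first part of the lemma and the last one uses that $F_\Lambda(X)$ is projective in $\mod\Gamma$ by \Cref{prop:pushdownProperties}(\ref{prop:pushdownProperties2}). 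Transporting these isomorphisms along the equivalences shows they are natural in $Y$.

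The only genuinely delicate point is the naturality bookkeeping in the second paragraph; everything else is formal manipulation of the already-established Hom-isomorphism, so I expect no real obstacle here. Alternatively, the dual could be proved symmetrically by replacing $\Lambda(d,l)$, $\widehat{\Gamma}$ and $\Gamma$ by their opposite algebras — which are again linear, infinite-linear, and cyclic Nakayama algebras with homogeneous relations, compatibly with the covering — but the quasi-inverse argument above is shorter.
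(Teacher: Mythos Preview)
Your proposal is correct and follows essentially the same approach as the paper: the displayed chain of isomorphisms immediately preceding the lemma is exactly the identification $\Hom_\Gamma(F_\Lambda(P),\Gamma)\cong F_\Lambda^{\mathrm{op}}\Hom_\Lambda(P,\Lambda)$ as $\Gamma^{\mathrm{op}}$-modules, after which both you and the paper apply $D$ together with $F_\Lambda\circ D\simeq D\circ F_\Lambda^{\mathrm{op}}$. Your treatment is in fact more thorough than the paper's, since you check naturality explicitly and derive the dual statement via the quasi-inverse property of $\nu_\Lambda,\nu_\Lambda^-$ rather than leaving it implicit.
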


The next step towards considering AR-triangles under $F_\Lambda^m$, is to see how the functor acts on AR-translations. 
Let $\mathcal{N}$ be the subcategory of objects $Z^\bullet$ in $\mmod\Lambda$ whose projective presentation $\projres_m Z^\bullet$ have terms consisting only of projectives in $\mathcal{Q}$. 
Then we obtain by $F_\Lambda$ being exact, that
\begin{equation}\label{eq:ARtranslateCommuteWithPushdown}
\begin{split}
    F_\Lambda^{m}\circ \tau_{[m]}(Z^\bullet)&= F_\Lambda^{m} (\softTrunc^{\leq 0}_\Lambda(\nu_\Lambda\shift{\projres_m^\Lambda(Z^\bullet)}))\\
    &\cong \softTrunc_\Gamma^{\leq 0}(F_\Lambda^{m}(\nu_\Lambda\shift{\projres_m^\Lambda(Z^\bullet)}))\\
    &\cong\softTrunc_\Gamma^{\leq 0}(\nu_\Gamma\shift{F_\Lambda^{m}(\projres_m^\Lambda(Z^\bullet))})\\
    &\cong\softTrunc_\Gamma^{\leq 0}(\nu_\Gamma\shift{(\projres_m^\Gamma(F_\Lambda^{m}Z^\bullet))})\\
    &=\tau_{[m]}^\Gamma\circ F_\Lambda^{m}(Z^\bullet).
\end{split}
\end{equation}
for all $Z^\bullet\in \mathcal{N}$. We are almost at a point where we can look at the actual AR-triangles. 
However, we still lack one necessary insight on morphisms in the extended module category.

Assume from now that $m$ and $l$ are chosen according to the hypothesis of \Cref{lemma:boundOnCohomology}. Thus, $\mmod\Lambda(d,l)$ is of finite type, and for each $X^\bullet\in\mmod\Lambda(d,l)$, we have
\[
    (\DimVec(X^\bullet)_i)_v\neq 0\quad \text{ and }\quad (\DimVec(X^\bullet)_j)_w\neq 0,
\] 
only if $B=m(l-1)+1\geq |v-w|$.
Assume also that $d\gg0$ is sufficiently large and let $\mathcal{L}$ be the full convex subquiver of $\AR(\mod\Lambda(d,l))$ given by indecomposables which do not admit a sequence of non-zero morphisms to $\nshift{I_{B+ml-1}}{m-1}$ and no sequence of non-zero morphisms from $P_{d-B-ml}$.
Hence, for $X^\bullet\in\mathcal{L}$ we have $(\DimVec(X^\bullet)_\ast)_v=0$ for $v\notin [B+ml,d-B-ml-1]$.
\[
\begin{tikzpicture}
    \fill[pattern=north west lines,pattern color=support] (3,1.5)--(1,1.5)--(2.5,0)--(3,0);
    \fill[pattern=north west lines,pattern color=support,path fading=east] (3,1.5)rectangle(4.3,0);
    \draw (0,0)--(1,0)--(1.2,-.2)--(2.3,-.2)--(2.5,0)--(3,0);
    \draw (0,1.5)--(3,1.5);
    \draw[path fading=east](3,1.5)--(4.3,1.5);
    \draw[path fading=east](3,0)--(4.3,0);
    \draw[dashed] (1,1.5)--(2.5,0);
    \draw[rigid,fill] (2.3,-.2) circle (1pt);
    \node at (2.3,-.3)[anchor=north west,scale=.7]{$\nshift{I_{B+ml-1}}{m-1}$};

    \node at (4.4,.75) [nodeCDots]{};
    \node at (4.5,.75) [nodeCDots]{};
    \node at (4.6,.75) [nodeCDots]{};
    \node at (4.5,1.2)[fill=white]{$\mathcal{L}$};
    \begin{scope}[shift={(5.5,0)}]
        \fill[pattern=north west lines,pattern color=support] (.5,1.5)--(2.5,1.5)--(1,0)--(0.5,0);
        \fill[pattern=north west lines,pattern color=support,path fading=west] (-.8,0)rectangle(.5,1.5);
        \draw (0.5,0)--(1,0)--(1.2,-.2)--(2.3,-.2)--(2.5,0)--(3.5,0);
        \draw (0.5,1.5)--(3.5,1.5);
        \draw[path fading=west] (-.8,0)--(0.5,0);
        \draw[path fading=west] (-.8,1.5)--(0.5,1.5);
        \draw[dashed] (1,0)--(2.5,1.5);
        \draw[rigid,fill] (1.2,-.2) circle (1pt);
        \node at (1.2,-.3)[anchor=north east,scale=.7]{$P_{d-B-ml}$};
    \end{scope}
\end{tikzpicture}
\]

We then want to show the following.
\begin{proposition}\label{lemma:homOfComplexesCovering}
    Let $X^\bullet\in\mathcal{L}$ and $Z^\bullet\in\mmod\Lambda(d,l)$. Then
    \begin{enumerate}
        \item $\Hom_{m\mhyphen\widehat{\Gamma}}(\iota(X^\bullet),\gGroupShift{\iota(Z^\bullet)}{h})\neq 0$, or dually
        \item $\Hom_{m\mhyphen\widehat{\Gamma}}(\gGroupShift{\iota(Z^\bullet)}{h},\iota(X^\bullet))\neq 0$ 
    \end{enumerate}
    only if $\gGroupShift{\iota(Z^\bullet)}{h}\cong \iota(\overline{Z}^\bullet)$ for some $\overline{Z}^\bullet\in\mmod\Lambda(d,l)$
\end{proposition}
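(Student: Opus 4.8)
The plan is to analyze the support of the relevant complexes inside $\bA_\infty$ and show that the Hom-conditions force any nonzero group-translate of $\iota(Z^\bullet)$ to remain inside the copy of $\mod\Lambda(d,l)$. First I would reduce everything to cohomology. Since $\iota$ and the truncations are exact and compatible, one has $\DimVec(\iota(X^\bullet)) = \iota(\DimVec(X^\bullet))$ in the obvious sense: the $i$-th cohomology of $\iota(X^\bullet)$ is $\iota(H^i(X^\bullet))$, viewed as a representation of $\bA_\infty$ supported on $[0,d-1]$. In particular, by the definition of $\mathcal{L}$ together with \Cref{lemma:boundOnCohomology}, for $X^\bullet\in\mathcal{L}$ the cohomology is supported inside the vertex-interval $[B+ml,\,d-B-ml-1]$, which sits strictly inside $[0,d-1]$ with a margin of at least $B+ml$ on each side. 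For $\gGroupShift{\iota(Z^\bullet)}{h}$, the translate shifts the support of $\iota(Z^\bullet)$ (contained in $[0,d-1]$) by $nh$, so its cohomology is supported in $[nh,\,d-1+nh]$.

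Next I would use \Cref{lemma:pathFromProjToInj}, applied in $\mmod\widehat{\Gamma}$ (or rather its consequences via the long exact sequences), to turn ``$\Hom\neq 0$'' into a statement about overlapping supports. If $\Hom_{m\mhyphen\widehat{\Gamma}}(\iota(X^\bullet),\gGroupShift{\iota(Z^\bullet)}{h})\neq 0$ then there is a sequence of nonzero morphisms from $\iota(X^\bullet)$ to $\gGroupShift{\iota(Z^\bullet)}{h}$, hence — composing with the projective-to-$X^\bullet$ and $Z^\bullet$-to-injective sequences of \Cref{lemma:pathFromProjToInj} — a nonzero composite of morphisms linking a projective $P_v^{\widehat{\Gamma}}$ with $v$ in the support of $H^\ast(\iota(X^\bullet))$ to an injective $\nshift{I_w^{\widehat{\Gamma}}}{m-1}$ with $w$ in the support of $H^\ast(\gGroupShift{\iota(Z^\bullet)}{h})$. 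Over the hereditary-like Nakayama algebra $\widehat{\Gamma}$ such a nonzero chain of morphisms between stalks of indecomposables forces $|v-w|$ to be bounded by a quantity of order $m l$ (this is exactly the phenomenon behind \Cref{lemma:boundOnCohomology}, and over $\bA_\infty$ it is even cleaner since there is no wraparound). Since $v\in[B+ml,\,d-B-ml-1]$ and $w\in[nh,\,d-1+nh]$ and $|v-w|$ is at most $B = m(l-1)+1 \le B+ml$, this pins $w$ inside $[0,d-1]$: indeed $w\ge v-B \ge ml \ge 0$ and $w\le v+B\le d-1$. Hence every vertex in the support of $H^\ast(\gGroupShift{\iota(Z^\bullet)}{h})$ lies in $[0,d-1]$, so this cohomology — being a representation of $\bA_\infty$ supported on a subinterval of $[0,d-1]$ on which the relations $\hat\rho$ restrict to the relations of $\Lambda(d,l)$ — is in the essential image of $\iota$. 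Using soft truncations to choose a representative of $\gGroupShift{\iota(Z^\bullet)}{h}$ with terms concentrated in degrees $[-(m-1),0]$ and supported on $[nh,d-1+nh]$, the same support argument applied degree-by-degree (or to a projective/injective presentation) gives $\gGroupShift{\iota(Z^\bullet)}{h}\cong\iota(\overline{Z}^\bullet)$ for some $\overline{Z}^\bullet\in\mmod\Lambda(d,l)$. The dual statement (2) is handled symmetrically, replacing projectives by injectives and $P_{d-B-ml}$ by $\nshift{I_{B+ml-1}}{m-1}$ in the definition of $\mathcal{L}$.

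The main obstacle I expect is making the support-margin bookkeeping airtight: one must be careful that the bound of \Cref{lemma:boundOnCohomology} is stated for a single complex, whereas here we chain together a morphism $\iota(X^\bullet)\to\gGroupShift{\iota(Z^\bullet)}{h}$ with the projective and injective ``witness'' sequences of \Cref{lemma:pathFromProjToInj}; I would need to check that the composite stays within $\mmod\widehat{\Gamma}$ and that the resulting constraint $|v-w|\le B$ really has the form claimed, with the constant $B+ml$ in the definition of $\mathcal{L}$ chosen generously enough to absorb the extra $ml$ coming from the length of the projective/injective resolutions over $\widehat{\Gamma}$. Everything else — exactness of $\iota$, compatibility with truncation and cohomology, and the fact that a representation of $\bA_\infty$ supported on a short-enough interval comes from $\Lambda(d,l)$ — is routine.
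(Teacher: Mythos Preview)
Your plan has a genuine gap at exactly the step you flag as ``bookkeeping''. The assertion that a chain of nonzero morphisms
\[
P_v^{\widehat{\Gamma}} \to \cdots \to \iota(X^\bullet) \to \gGroupShift{\iota(Z^\bullet)}{h} \to \cdots \to \nshift{I_w^{\widehat{\Gamma}}}{m-1}
\]
forces $|v-w|$ to be bounded is not justified and is not mere arithmetic. If ``nonzero composite'' is meant literally, note that for $m>1$ we have $\Hom_{\D^b}(P_v,\nshift{I_w}{m-1})=\Ext^{m-1}(P_v,I_w)=0$, so the full composite is always zero. If instead you mean a chain of individually nonzero morphisms, then bounding $|v-w|$ from such a chain amounts to knowing the AR-quiver structure of $\mmod\widehat{\Gamma}$ (directedness, short $\tau$-orbits of projectives, etc.), which is precisely the sort of thing the covering machinery is meant to establish, not assume. \Cref{lemma:boundOnCohomology} bounds the support width of a \emph{single} object of $\mmod\Lambda(d,l)$; it does not control the support displacement across a nonzero morphism between two different objects, and your plan does not explain how to bridge that.

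The paper avoids this entirely by working on the projective side. One replaces $\iota(X^\bullet)$ by the brutal truncation $\brutalTrunc_{\geq -(m-1)}\projres X^\bullet$ and proves directly that every projective summand $P_v^\Lambda$ occurring in it has $v\in[B,\,d-B-ml-1]$ (this is where the margin $ml$ is actually consumed, by walking down the resolution). Then one uses the elementary fact that $\Hom_{\homotopy^b(\proj\widehat{\Gamma})}(P^\bullet,V^\bullet)\neq 0$ forces the set of indices of projectives in $P^\bullet$ to meet the cohomological support of $V^\bullet$; this is immediate from $\Hom(P_v,M)\neq 0 \Leftrightarrow S_v$ is a composition factor of $M$, together with a soft-truncation d\'evissage on $V^\bullet$. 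This gives directly that the support of $\gGroupShift{\iota(Z^\bullet)}{h}$ meets $[B,\,d-B-ml-1]$, and then your own width bound $B$ on $Z^\bullet$ finishes. No appeal to paths or AR-structure in $\mmod\widehat{\Gamma}$ is needed.
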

We will only prove the first statement, the second is proven similarly. To help us prove this, we need the following observations and claims.
\begin{observation}\label{Observation:MorphismsCovering}
        Let $M\in \mod\widehat{\Gamma}$ and $P^\bullet\in \homotopy^b(\proj\widehat{\Gamma})$. Consider the sets
        \[
        S(M)=\{
            v\in \bZ\,|\, S_v^{\widehat{\Gamma}} \text{ is a composition factor of }M
        \}
        \]
        and 
        \[
        P(P^\bullet)=\{
            v\in \bZ\, |\, P_v^{\widehat{\Gamma}}\text{ is a direct summand of }P^j\text{ for some }j
        \}.
        \]
        If $S(M)\cap P(P^\bullet)=\varnothing$, then $\Hom_{\homotopy^b(\proj\widehat{\Gamma})}(P^\bullet,\nshift{M}{i})=0$ for all $i\in \bZ$.
\end{observation}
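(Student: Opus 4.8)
The plan is a direct computation with chain maps, exploiting that $\widehat{\Gamma}=\K\bA_\infty/\langle\hat{\rho}\rangle$ is (essentially) a path algebra, so that $\Hom$-spaces out of indecomposable projectives are detected by composition factors. First I would recall the standard identification: for any $M\in\mod\widehat{\Gamma}=\rep(\bA_\infty,\hat{\rho})$ and any vertex $v\in\bZ$ one has $\Hom_{\widehat{\Gamma}}(P_v^{\widehat{\Gamma}},M)\cong e_vM=M_v$, which is non-zero precisely when $S_v^{\widehat{\Gamma}}$ occurs as a composition factor of $M$, i.e.\ when $v\in S(M)$. Here I use that each $P_v^{\widehat{\Gamma}}$ is finitely generated, hence finite-dimensional.

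Next I would unwind the shift convention fixed in the Remark after the definition of $\complexC(\cA)$: the stalk complex $\nshift{M}{i}$ has $M$ concentrated in cohomological degree $-i$ and is zero in all other degrees. Hence any chain map $f\colon P^\bullet\to\nshift{M}{i}$ of complexes of $\widehat{\Gamma}$-modules has all components zero except possibly $f^{-i}\colon P^{-i}\to M$. Since $P^\bullet$ is a bounded complex of finitely generated projectives, $P^{-i}$ is a finite direct sum of indecomposables $P_v^{\widehat{\Gamma}}$, and by the very definition of $P(P^\bullet)$ every such $v$ lies in $P(P^\bullet)$. The hypothesis $S(M)\cap P(P^\bullet)=\varnothing$ then gives $v\notin S(M)$, so $\Hom_{\widehat{\Gamma}}(P_v^{\widehat{\Gamma}},M)=0$ for each summand, whence $f^{-i}=0$ and $f=0$ already at the level of chain maps. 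In particular its homotopy class vanishes, so $\Hom_{\homotopy^b(\proj\widehat{\Gamma})}(P^\bullet,\nshift{M}{i})=0$ for every $i\in\bZ$.

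There is no genuine obstacle in this argument; the only points requiring a little care are the bookkeeping of the shift (so that $\nshift{M}{i}$ really is a stalk concentrated in a single degree) and the use of the convention that $\proj\widehat{\Gamma}$ denotes \emph{finitely generated} projectives, which is what makes $P^{-i}$ a finite direct sum of $P_v^{\widehat{\Gamma}}$'s with all indices inside $P(P^\bullet)$. No homotopy-theoretic input is needed — the relevant chain map is already the zero map — and reading the same computation in $\mod\widehat{\Gamma}^{\mathrm{op}}$ yields the evident dual statement.
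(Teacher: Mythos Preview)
Your argument is correct and is exactly the routine verification the paper has in mind; the paper states this as an \emph{Observation} without proof, and your chain-map computation using $\Hom_{\widehat{\Gamma}}(P_v^{\widehat{\Gamma}},M)\cong M_v$ is the intended justification.
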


\begin{claim}\label{claim:MorphismsCovering1}
    Let $Y^\bullet,V^\bullet$ be objects of $\mmod\widehat{\Gamma}$. If $\Hom_{\D^b(\mod\widehat{\Gamma})}(Y^\bullet,\nshift{(H^iV^\bullet)}{j})=0$ for all $i,j\in\bZ$, then $\Hom_{m\mhyphen\widehat{\Gamma}}(Y^\bullet,V^\bullet)=0$.
    \begin{proof}
        Consider the following soft truncation triangle (see \eqref{eq:softTruncation}) of $\softTrunc^{\leq p}V^\bullet$.
        \[
        \begin{tikzpicture}
            \node (A) at (0,0) []{$\softTrunc^{\leq p-1}\softTrunc^{\leq p}V^\bullet$};
            \node[right=1.2cm of A] (B) {$\softTrunc^{\leq p}V^\bullet$};
            \node[right=1.2cm of B] (C) {$\softTrunc^{\geq p}\softTrunc^{\leq p} V^\bullet$};
            \node[right=1.2cm of C] (D) {$\shift{\softTrunc^{\leq p-1}\softTrunc^{\leq p}V^\bullet}$};

            \draw[->] (A)--(B);
            \draw[->] (B)--(C);
            \draw[->] (C)--(D);
        \end{tikzpicture}
        \]
        We know that $\softTrunc^{\leq p-1}\softTrunc^{\leq p}V^\bullet=\softTrunc^{\leq p-1}V^\bullet$ and $\nshift{H^p(V^\bullet)}{p}=\softTrunc^{\geq p}\softTrunc^{\leq p} V^\bullet$. 
        Thus, from the long exact sequence induced by $\Hom_{\D^b(\widehat{\Gamma})}(Y^\bullet,-)$ we get that
        \[
        \Hom_{\D^b(\mod\widehat{\Gamma})}(Y^\bullet,\softTrunc^{\leq p-1}V^\bullet)\cong \Hom_{\D^b(\mod\widehat{\Gamma})}(Y^\bullet,\softTrunc^{\leq p}V^\bullet).
        \]
        It follows from $V^\bullet\in\mmod\widehat{\Gamma}$ that $\softTrunc^{\leq 0}V^\bullet\cong V^\bullet$ and $\softTrunc^{\leq -m}V^\bullet\cong 0$. Thus, we are done.
    \end{proof}
    \end{claim}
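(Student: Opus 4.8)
The plan is to prove the vanishing by a dévissage along the canonical $t$-structure, peeling off one cohomology of $V^\bullet$ at a time. Since $V^\bullet$ lies in $\mmod\widehat{\Gamma}$, its cohomology is concentrated in degrees $[-(m-1),0]$, so that $\softTrunc^{\leq 0}V^\bullet\cong V^\bullet$ while $\softTrunc^{\leq -m}V^\bullet\cong 0$. Thus it suffices to walk down the chain of soft truncations $\softTrunc^{\leq 0}V^\bullet,\softTrunc^{\leq -1}V^\bullet,\ldots,\softTrunc^{\leq -m}V^\bullet$ and show at each step that the Hom-group out of $Y^\bullet$ is unchanged; then the first equals $\Hom_{m\mhyphen\widehat{\Gamma}}(Y^\bullet,V^\bullet)$ and the last is zero.

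For the inductive step I would fix $p$ and apply $\Hom_{\D^b(\mod\widehat{\Gamma})}(Y^\bullet,-)$ to the soft truncation triangle \eqref{eq:softTruncation} relating $\softTrunc^{\leq p-1}(\softTrunc^{\leq p}V^\bullet)$, $\softTrunc^{\leq p}V^\bullet$, and $\softTrunc^{\geq p}(\softTrunc^{\leq p}V^\bullet)$. Here $\softTrunc^{\leq p-1}\softTrunc^{\leq p}V^\bullet=\softTrunc^{\leq p-1}V^\bullet$, and by the identity $H^p(V^\bullet)\cong\softTrunc^{\geq p}\softTrunc^{\leq p}V^\bullet$ (placed in degree $p$) the third term is the shifted cohomology stalk $\nshift{H^p(V^\bullet)}{-p}$. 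In the induced long exact sequence the two terms flanking the map $\Hom(Y^\bullet,\softTrunc^{\leq p-1}V^\bullet)\to\Hom(Y^\bullet,\softTrunc^{\leq p}V^\bullet)$ are $\Hom_{\D^b(\mod\widehat{\Gamma})}(Y^\bullet,\nshift{H^p(V^\bullet)}{-p-1})$ and $\Hom_{\D^b(\mod\widehat{\Gamma})}(Y^\bullet,\nshift{H^p(V^\bullet)}{-p})$, both instances of $\Hom(Y^\bullet,\nshift{H^iV^\bullet}{j})$ with $i=p$, hence both zero by hypothesis. Exactness then forces
\[
\Hom_{\D^b(\mod\widehat{\Gamma})}(Y^\bullet,\softTrunc^{\leq p-1}V^\bullet)\cong \Hom_{\D^b(\mod\widehat{\Gamma})}(Y^\bullet,\softTrunc^{\leq p}V^\bullet).
\]

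Chaining these isomorphisms for $p=0,-1,\ldots,-(m-1)$ then yields
\[
\Hom_{m\mhyphen\widehat{\Gamma}}(Y^\bullet,V^\bullet)=\Hom_{\D^b(\mod\widehat{\Gamma})}(Y^\bullet,\softTrunc^{\leq 0}V^\bullet)\cong\Hom_{\D^b(\mod\widehat{\Gamma})}(Y^\bullet,\softTrunc^{\leq -m}V^\bullet)=0.
\]
The only real care is bookkeeping of the shift conventions: one must verify that both the third term of the triangle and its $[-1]$-rotation are genuinely shifts of the single cohomology module $H^p(V^\bullet)$, so that the hypothesis (quantified over all $i,j$) applies to each, and that the degree bounds defining $\mmod\widehat{\Gamma}$ really give $\softTrunc^{\leq -m}V^\bullet\cong 0$. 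Neither is difficult, but an off-by-one in the indices is the only place the argument could go wrong.
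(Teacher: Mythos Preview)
Your proposal is correct and follows essentially the same approach as the paper: both use the soft truncation triangle to peel off one cohomology at a time, identify the third term as a shifted stalk of $H^p(V^\bullet)$, apply the hypothesis to kill the flanking terms in the long exact sequence, and then chain the resulting isomorphisms from $\softTrunc^{\leq 0}V^\bullet\cong V^\bullet$ down to $\softTrunc^{\leq -m}V^\bullet\cong 0$. Your write-up is in fact slightly more careful than the paper's in naming both flanking terms explicitly; note also that the paper writes the stalk as $\nshift{H^p(V^\bullet)}{p}$ rather than your $\nshift{H^p(V^\bullet)}{-p}$, but since the hypothesis is quantified over all $j$ this sign convention is immaterial to the argument.
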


    \begin{claim}\label{claim:MorphismsCovering2}
        Let $X^\bullet\in\mathcal{L}$ and consider the brutal truncation $\brutalTrunc_{\geq -(m-1)}\projres X^\bullet$. If $P_v^\Lambda$ is a direct summand in a term of $\brutalTrunc_{\geq -(m-1)}\projres X^\bullet$ then $v\in[B,d-B-ml-1]$.
    \begin{proof}
        Let $k\geq 0$ be such that $H^{-k}(X^\bullet)\neq 0$. For a direct summand $M$ of $H^{-k}(X^\bullet)$, let $Q^\bullet$ denote the minimal projective resolution $\projres M$.
        \[
        \begin{tikzpicture}
            \node (leftDots1) at (-.1,0)[nodeCDots]{};
            \node[right=.1cm of leftDots1,nodeCDots] (leftDots2){};
            \node[right=.1cm of leftDots2,nodeCDots] (leftDots3){};
            \node[right=.8cm of leftDots3] (A) {$Q^{-t-1}$};
            \node[right=.8cm of A] (B) {$Q^{-t}$};
            \node[right=.8cm of B,nodeCDots] (middleDots1){};
            \node[right=.1cm of middleDots1,nodeCDots] (middleDots2){};
            \node[right=.1cm of middleDots2,nodeCDots] (middleDots3){};
            \node[right=.8cm of middleDots3] (C) {$Q^{-1}$};
            \node[right=.8cm of C] (D) {$Q^0$};
            \node[right=.8cm of D] (E) {$0$};
            \node[right=.8cm of E,nodeCDots] (rightDots1) {};
            \node[right=.1cm of rightDots1,nodeCDots] (rightDots2){};
            \node[right=.1cm of rightDots2,nodeCDots] (rightDots3){};
            \node[below right =.4cm of D] (M) {$M$};
            \draw[->] ([xshift=3pt]leftDots3.center)--(A);
            \draw[->] (A)--(B);
            \draw[->] (B)--([xshift=-3pt]middleDots1.center);
            \draw[->] ([xshift=3pt]middleDots3.center)--(C);
            \draw[->] (C)--(D);
            \draw[->] (D)--(E);
            \draw[->] (E)--([xshift=-3pt]rightDots1.center);
            \draw[->>] (D)--(M);
        \end{tikzpicture}
        \]
        If $P_v^\Lambda$ is a direct summand of $Q^j$ and $Q^{j-1}=P_w^\Lambda$, then $|w-v|\leq l$. 
        From the assumption of $X^\bullet\in\mathcal{L}$, we know that if $\Top M=S_u^\Lambda$ then $u\in[B+ml,d-B-ml-1]$. Thus, if $Q^j=P_v^\Lambda$ then $v\geq B+ml-jl$.
        The minimal projective resolution $\projres H^{-k}(X^\bullet)$ is the direct sum of the minimal projective resolutions of its summands. Hence, if $P_v^\Lambda$ is a direct summand of $(\projres H^{-k}(X^\bullet))^j$, then $v\geq B+ml-jl$.

        Consider now $X^\bullet_i\coloneqq \softTrunc^{\geq -i}X^\bullet$, and the triangle
        \[
        \begin{tikzpicture}
            \node (A) at (0,0)[]{$\softTrunc^{\leq -i-1}X^\bullet_i$};
            \node[right=1.5cm of A] (B) {$X_i^\bullet$};
            \node[right=1.5cm of B] (C) {$\softTrunc^{\geq -i}X_i^\bullet$};
            \node[right=1.5cm of C] (D) {$\shift{\softTrunc^{\leq -i-1}X^\bullet_i}$};

            \draw[->] (A)--(B);
            \draw[->] (B)--(C);
            \draw[->] (C)--(D);
        \end{tikzpicture}
        \]
        where $\softTrunc^{\geq -i}X^\bullet_i\cong \nshift{H^i(X^\bullet)}{i}$ and $\softTrunc^{\leq -i-1}X^\bullet_i\cong X_{i+1}^\bullet$. The triangle equivalence $\homotopy^{-,b}(\proj\Lambda)\xrightarrow{\simeq} \D^b(\mod\Lambda)$, then give a triangle of minimal projective resolutions,
        \[
        \begin{tikzpicture}
            \node (A) at (0,0)[]{$\projres X^\bullet_{i+1}$};
            \node[right=1.5cm of A] (B) {$\projres X_i^\bullet$};
            \node[right=1.5cm of B] (C) {$\projres\softTrunc^{\geq -i}X_i^\bullet$};
            \node[right=1.5cm of C] (D) {$\shift{\projres\softTrunc^{\leq -i-1}X^\bullet_i}.$};

            \draw[->] (A)--(B);
            \draw[->] (B)--(C);
            \draw[->] (C)--(D);
        \end{tikzpicture}
        \]
        Hence, the direct summands of $(\projres X^\bullet_i)^j$ are direct summands of $(\projres \softTrunc^{\leq -i-1}X^\bullet_i)^j$ or $(\projres X^\bullet_{i+1})^j$, and the claim follows from induction on the sequence
        \[
        X^\bullet\cong X_0^\bullet,\,X_1^\bullet,\,\ldots,\,X^\bullet_{m-1}\cong H^{-m-1}(X^\bullet).
        \]
    \end{proof}
    \end{claim}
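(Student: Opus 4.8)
The plan is to reduce everything to a single arithmetic fact about linear Nakayama algebras: in a minimal projective resolution, passing to the next syzygy moves the top of the module down by at most $l$ vertices. Concretely, using the explicit formula $\Omega M_{a,b}\cong M_{b-l+1,\,a-1}$ recorded above, if an indecomposable $M$ has $\Top M = S_u^\Lambda$ then its $t$-th syzygy projective is $P_v^\Lambda$ with $u - tl \le v \le u$. I would first isolate this estimate as the engine of the whole argument, so that the remaining work is purely the bookkeeping needed to extract it for the complex $X^\bullet$ rather than for a single module.

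First I would treat one cohomology in isolation. Fix $0 \le i \le m-1$ with $H^{-i}(X^\bullet)\neq 0$ and a direct summand $M$ of it. Since $X^\bullet \in \mathcal{L}$, every composition factor of $H^{-i}(X^\bullet)$ is some $S_u^\Lambda$ with $u \in [B+ml,\, d-B-ml-1]$; in particular $\Top M = S_u^\Lambda$ for such a $u$. The minimal projective resolution of the stalk $M$, once shifted into $\nshift{H^{-i}(X^\bullet)}{i}$ (sitting in degree $-i$), has its degree-$j$ term equal to the $t$-th syzygy projective with $t = -(j+i)$. Imposing $j \ge -(m-1)$ forces $t \le m-1-i$, so by the syzygy estimate any projective summand $P_v^\Lambda$ appearing in degrees $\ge -(m-1)$ satisfies $v \ge u - (m-1-i)l \ge (B+ml) - (m-1)l = B+l \ge B$ and $v \le u \le d-B-ml-1$. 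This already pins the contribution of each individual cohomology into the required range.

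Next I would assemble these into the resolution of the whole complex by induction along the soft-truncation tower $X^\bullet \cong X_0^\bullet,\, X_1^\bullet,\, \ldots,\, X_{m-1}^\bullet$, where $X_i^\bullet = \softTrunc^{\geq -i}X^\bullet$. Each step fits into a triangle $X_{i+1}^\bullet \to X_i^\bullet \to \nshift{H^{-i}(X^\bullet)}{i} \to \shift{X_{i+1}^\bullet}$, and applying the triangle equivalence $\homotopy^{-,b}(\proj\Lambda)\xrightarrow{\simeq}\Db(\mod\Lambda)$ turns this into a triangle of minimal projective resolutions. Consequently every projective summand of a term of $\projres X_i^\bullet$ is a summand of the corresponding term of either $\projres X_{i+1}^\bullet$ or $\projres \nshift{H^{-i}(X^\bullet)}{i}$. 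Unwinding the induction, every projective summand of $\brutalTrunc_{\geq -(m-1)}\projres X^\bullet$ is, in its given degree $j \in [-(m-1),0]$, a summand coming from the resolution of one of the cohomologies $H^{-i}(X^\bullet)$, to which the previous paragraph applies verbatim, giving $v \in [B,\, d-B-ml-1]$.

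The main obstacle I anticipate is not the syzygy estimate itself but the degree bookkeeping that couples the shift $\nshift{-}{i}$ with the truncation bound $-(m-1)$: one must check carefully that a cohomology placed in degree $-i$ can feed syzygy projectives at most $m-1-i$ steps deep before the brutal truncation discards them, and that this is exactly what keeps the lower bound uniform across all $i$. A secondary point requiring care is the justification that the soft-truncation triangles lift to honest triangles of \emph{minimal} projective resolutions, so that ``summand of a term'' is preserved under the induction; this is precisely where the equivalence $\homotopy^{-,b}(\proj\Lambda)\simeq \Db(\mod\Lambda)$ does the essential work, and I would phrase the induction in terms of direct summands of terms rather than isomorphisms of complexes in order to sidestep minimality subtleties.
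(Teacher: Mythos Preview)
Your proposal is correct and follows essentially the same route as the paper: bound the vertex shift in the minimal projective resolution of each cohomology module by the syzygy formula, then assemble $X^\bullet$ from its cohomologies via the soft-truncation tower and use the resulting triangles of projective resolutions to control the summands appearing in each degree. Your treatment is in fact a bit more explicit than the paper's in two places --- you track both the upper and lower bounds on $v$ (the paper only writes out the lower one), and you spell out the degree bookkeeping that a cohomology sitting in degree $-i$ contributes at most $m-1-i$ syzygy steps before the brutal truncation cuts it off --- but these are refinements of the same argument, not a different one. (Note that, as in the paper, your definition $X_i^\bullet = \softTrunc^{\geq -i}X^\bullet$ should read $\softTrunc^{\leq -i}X^\bullet$ to make $X_0^\bullet \cong X^\bullet$ and the triangle $X_{i+1}^\bullet \to X_i^\bullet \to \nshift{H^{-i}(X^\bullet)}{i}$ come out correctly.)
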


For $Y^\bullet\in\mmod\widehat{\Gamma}$, denote the convex hull of its support by $\mathcal{I}_{Y^\bullet}=[a_{Y^\bullet},\,b_{Y^\bullet}]\subseteq \bZ$, that is
\[
    \begin{split}
        a_{Y^\bullet}&=\min\{v\in \ALinOrInf\,|\, \exists\,i\in \bZ\text{ s.t. }(\DimVec(X^\bullet)_i)_v\neq 0 \}\\
        b_{Y^\bullet}&=\max\{v\in \ALinOrInf\,|\, \exists\,i\in \bZ\text{ s.t. }(\DimVec(X^\bullet)_i)_v\neq 0 \}\\
    \end{split}
\]
Observe that $\mathcal{I}_{\gGroupShift{Y^\bullet}{h}}=[a_{Y^\bullet}-nh,\, b_{Y^\bullet}-nh]$.
\begin{claim}\label{claim:MorphismsCovering3}
    Let $Z^\bullet \in \mmod\Lambda$ and $h\in\bZ$ such that $\mathcal{I}_{\gGroupShift{\iota(Z^\bullet)}{h}}\subseteq[0,d-1]$. 
    Then $\gGroupShift{\iota(Z^\bullet)}{h}$ has a preimage in $\mmod\Lambda$ under $\iota$.
    \begin{proof}
        If $h=0$ it is trivially true, as $Z^\bullet$ is a preimage of $\gGroupShift{\iota(Z^\bullet)}{0}=\iota(Z^\bullet)$.

        Every term $\iota(Z^i)$ consists of terms $\iota(M_{v,w})$. 
    If $h>0$, we can assume that $Z^\bullet=\softTrunc^{\leq 0}\injres Z^\bullet$ and therefore $v,w\geq a_{\iota(Z^\bullet)}$, and $v-hn,\,w-hn\geq a_{\iota(Z^\bullet)}-hn\geq 0$. Hence, $(\gGroupShift{\iota( Z^\bullet)}{h})^i$ consists of terms $\iota(M_{v-hn,\, w-hn})$, and $\gGroupShift{\iota( Z^\bullet)}{h}$ has a preimage $\overline{Z}^\bullet$ in $\mmod\Lambda(d,l)$. If $h<0$, we assume $Z^\bullet=\softTrunc^{\geq -(m-1)}\projres Z^\bullet$ and similarly obtain a preimage. This concludes the proof.
    \end{proof}
\end{claim}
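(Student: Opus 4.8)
The plan is to reduce the claim to a condition on the \emph{terms} of a suitably chosen representative. Observe first that the essential image of $\iota$, both on modules and on complexes, is exactly the full subcategory of $\mod\widehat{\Gamma}$ supported on the interval $[0,d-1]$: a $\widehat{\Gamma}$-module vanishing outside $[0,d-1]$ is automatically killed by the arrow leaving $0$ and the arrow entering $d-1$, so it restricts to a $\Lambda(d,l)$-module, and $\iota$ is fully faithful onto this subcategory. Hence $\gGroupShift{\iota(Z^\bullet)}{h}$ lies in the image of $\iota$ as soon as we exhibit a representative all of whose indecomposable term-summands are supported inside $[0,d-1]$. The only obstacle is that the hypothesis $\mathcal{I}_{\gGroupShift{\iota(Z^\bullet)}{h}}\subseteq[0,d-1]$ controls merely the \emph{cohomological} support, whereas a summand $M_{v,w}$ of a term of an arbitrary representative may protrude past this interval.

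The case $h=0$ is immediate, since $Z^\bullet$ is then its own preimage. Suppose $h>0$, so that by the recorded identity $\mathcal{I}_{\gGroupShift{Y^\bullet}{h}}=[a_{Y^\bullet}-nh,\,b_{Y^\bullet}-nh]$ the support is shifted to the left and the only danger is a term dropping below vertex $0$. Here I would fix the injective representative $Z^\bullet\cong\softTrunc^{\leq 0}\injres Z^\bullet$. Because $\soc M_{a,b}=S_a$, the injective envelope of each summand is $I_a$, which is supported on vertices $\geq a$, and all later cosyzygies only move further to the right; thus every indecomposable summand $M_{v,w}$ occurring in a term satisfies $v\geq a_{\iota(Z^\bullet)}$. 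The hypothesis gives $a_{\iota(Z^\bullet)}-nh\geq 0$, so after translating each such summand to $M_{v-nh,\,w-nh}$ its left endpoint is $v-nh\geq 0$, while its right endpoint is $w-nh\leq (d-1)-nh\leq d-1$ because $Z^\bullet$ already lives over $\Lambda(d,l)$. Every term of $\gGroupShift{\iota(Z^\bullet)}{h}$ is therefore supported in $[0,d-1]$, and the complex equals $\iota(\overline{Z}^\bullet)$ for the evident $\overline{Z}^\bullet$ with terms $M_{v-nh,\,w-nh}$.

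The case $h<0$ is dual, and I would treat it by symmetry: the support now shifts to the right, the danger is a term exceeding vertex $d-1$, and one fixes the projective representative $Z^\bullet\cong\softTrunc^{\geq-(m-1)}\projres Z^\bullet$. Since $\Top M_{a,b}=S_b$, the projective cover of each summand is $P_b$, supported on vertices $\leq b$, and syzygies only move to the left, so every term-summand $M_{v,w}$ satisfies $w\leq b_{\iota(Z^\bullet)}$; combined with $b_{\iota(Z^\bullet)}-nh\leq d-1$ and $-nh>0$ this again places every translated term inside $[0,d-1]$, yielding a preimage under $\iota$.

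The main point requiring care is the support bound on the resolutions, namely that the minimal injective (resp.\ projective) resolution of the \emph{complex} $Z^\bullet$ introduces no indecomposable injective $I_j$ with $j<a_{\iota(Z^\bullet)}$ (resp.\ no projective $P_j$ with $j>b_{\iota(Z^\bullet)}$). For a single interval module this is read off directly from the explicit resolutions of the $M_{a,b}$ recorded earlier; for a complex I would obtain it exactly as in the proof of \Cref{claim:MorphismsCovering2}, inducting along the soft-truncation triangles relating $\softTrunc^{\geq-i}Z^\bullet$ to the stalks $\nshift{H^{-i}(Z^\bullet)}{i}$ and using minimality to prevent any socle (resp.\ top) from appearing outside the cohomological support. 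This is the only step where the uniserial structure of $\Lambda(d,l)$ is genuinely exploited.
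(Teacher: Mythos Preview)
Your proof is correct and follows essentially the same approach as the paper's: case on the sign of $h$, choose the injective representative $\softTrunc^{\leq 0}\injres Z^\bullet$ for $h>0$ and the projective representative for $h<0$, and verify that every term-summand $M_{v,w}$ lands in $[0,d-1]$ after translation. You supply more justification than the paper does---in particular, the explanation of why injective resolutions only involve modules with socle $\geq a_{\iota(Z^\bullet)}$, and the remark that extending this from modules to complexes requires an induction in the spirit of \Cref{claim:MorphismsCovering2}---whereas the paper simply asserts $v,w\geq a_{\iota(Z^\bullet)}$ without comment.
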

\begin{proof}[Proof of \Cref{lemma:homOfComplexesCovering}]
    We only prove the first statement, the second is proven similarly. 

    Consider $\Hom_{m\mhyphen\widehat{\Gamma}}(\iota(X^\bullet),\gGroupShift{(\iota Z^\bullet)}{h})$. We have
    \[
    \begin{split}
        \Hom_{m\mhyphen \widehat{\Gamma}}(\iota(X^\bullet),\gGroupShift{(\iota Z^\bullet)}{h})&\cong \Hom_{\homotopy^b(\mod\widehat{\Gamma})}(\projres \iota(X^\bullet),\gGroupShift{(\iota Z^\bullet)}{h})\\
        &\cong\Hom_{\homotopy^b(\mod\widehat{\Gamma})}(\brutalTrunc_{\geq -(m-1)}\projres \iota(X^\bullet),\gGroupShift{(\iota Z^\bullet)}{h})\\
        &\cong\Hom_{\homotopy^b(\mod\widehat{\Gamma})}(\iota(\brutalTrunc_{\geq -(m-1)}\projres X^\bullet),\gGroupShift{(\iota Z^\bullet)}{h})
    \end{split}
    \]
    where the last isomorphism follows from \Cref{claim:MorphismsCovering2}.
    For this to be non-zero, it follows from \Cref{Observation:MorphismsCovering}, \Cref{claim:MorphismsCovering1} and \ref{claim:MorphismsCovering2} that $\mathcal{I}_{\gGroupShift{\iota Z^\bullet}{h}}\cap [B,d-B-ml-1]\neq \varnothing$. 
    The bound $B$ for objects in $\mmod\Lambda(d,l)$ gives $B\geq b_{\iota Z^\bullet}-a_{\iota Z^\bullet}=b_{\gGroupShift{\iota Z^\bullet}{h}}-a_{\gGroupShift{\iota Z^\bullet}{h}}$. Hence, $\mathcal{I}_{\gGroupShift{(\iota Z^\bullet)}{h}}\subseteq [0,d-1]$ and the proof follows from \Cref{claim:MorphismsCovering3}.
\end{proof}

\begin{lemma}\label{lemma:DescriptionOfRadicalCover}
    Let $X^\bullet$ be indecomposable in $\mod\widehat{\Gamma}$. Then $\rad\End_{\D^b(\mod\Gamma)}(F_\pi^{\D}(X^\bullet))$ is given by
    \[
        \rad\End_{\D^b(\mod\widehat{\Gamma})}(X^\bullet)\oplus \bigoplus_{i\in \bZ\setminus \{0\}}\Hom_{\D^b(\mod\widehat{\Gamma})}(\gGroupShift{X^\bullet}{h},X^\bullet).
    \]
\end{lemma}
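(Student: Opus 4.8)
The plan is to realise $A \coloneqq \End_{\D^b(\mod\Gamma)}(F_\pi^{\D}(X^\bullet))$ as a $\bZ$-graded finite-dimensional local algebra and then read off its Jacobson radical one graded piece at a time.

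First I would apply \Cref{prop:pushdownProperties}(\ref{prop:pushdownProperties6}) with $Y^\bullet = X^\bullet$ to obtain an isomorphism of $\K$-vector spaces
\[
\Phi\colon \bigoplus_{h\in\bZ}\Hom_{\D^b(\mod\widehat{\Gamma})}(\gGroupShift{X^\bullet}{h},X^\bullet)\ \xrightarrow{\ \sim\ }\ A ,
\]
and then upgrade it to an isomorphism of $\bZ$-graded algebras in which the $h$-th summand is the degree-$h$ part. The map $\Phi$ sends $f\colon \gGroupShift{X^\bullet}{h}\to X^\bullet$ to $F_\pi^{\D}(f)$ under the canonical identification $F_\pi^{\D}(\gGroupShift{X^\bullet}{h}) = F_\pi^{\D}(X^\bullet)$. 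Since the push-down is invariant under translates, $F_\pi^{\D}(g) = F_\pi^{\D}(\gGroupShift{g}{h})$ for every morphism $g$, and from $\gGroupShift{\gGroupShift{X^\bullet}{h'}}{h}=\gGroupShift{X^\bullet}{h+h'}$ one computes, for $g\colon \gGroupShift{X^\bullet}{h'}\to X^\bullet$,
\[
\Phi(f)\circ\Phi(g)=F_\pi^{\D}(f)\circ F_\pi^{\D}(\gGroupShift{g}{h})=F_\pi^{\D}\bigl(f\circ\gGroupShift{g}{h}\bigr)=\Phi\bigl(f\circ\gGroupShift{g}{h}\bigr),
\]
and $f\circ\gGroupShift{g}{h}$ lies in $\Hom_{\D^b(\mod\widehat{\Gamma})}(\gGroupShift{X^\bullet}{h+h'},X^\bullet)$; thus $A_hA_{h'}\subseteq A_{h+h'}$, the element $\id_{X^\bullet}$ of the degree-$0$ summand is sent to $\id_{F_\pi^{\D}(X^\bullet)}$, and $\Phi$ restricts to a ring isomorphism $\End_{\D^b(\mod\widehat{\Gamma})}(X^\bullet)\cong A_0$. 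Finally, $\Gamma$ is finite-dimensional, so $\D^b(\mod\Gamma)$ is $\Hom$-finite and $A$ is finite-dimensional; hence $A_h=0$ for all but finitely many $h$.

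Next I would observe that $A$ is local: $X^\bullet$ is indecomposable, so by \Cref{prop:pushdownProperties}(\ref{prop:pushdownProperties5}) its image $F_\pi^{\D}(X^\bullet)$ is indecomposable in the Krull--Schmidt category $\D^b(\mod\Gamma)$, whence $A$ is local and $\rad A$ is its set of non-units. Because the grading group $\bZ$ is torsion-free and only finitely many $A_h$ are nonzero, any homogeneous $a\in A_h$ with $h\neq 0$ has $a^k\in A_{kh}=0$ for $k\gg 0$, so $a$ is nilpotent, hence not a unit, hence $a\in\rad A$; as $\rad A$ is additively closed this gives $N\coloneqq\bigoplus_{h\neq 0}A_h\subseteq\rad A$. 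Therefore $\rad A=(\rad A\cap A_0)\oplus N$ as vector spaces, and since $A=A_0+\rad A$ the composite $A_0\hookrightarrow A\twoheadrightarrow A/\rad A$ is onto the division ring $A/\rad A$ with kernel $\rad A\cap A_0$, a nilpotent two-sided ideal of $A_0$; hence $\rad A\cap A_0=\rad A_0$. Transporting this description through $\Phi$ gives exactly
\[
\rad A = \rad\End_{\D^b(\mod\widehat{\Gamma})}(X^\bullet)\ \oplus\ \bigoplus_{h\in\bZ\setminus\{0\}}\Hom_{\D^b(\mod\widehat{\Gamma})}(\gGroupShift{X^\bullet}{h},X^\bullet).
\]

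I expect the only real work to be the first step, namely checking that the vector-space isomorphism of \Cref{prop:pushdownProperties}(\ref{prop:pushdownProperties6}) is compatible with composition --- i.e.\ that it is a graded-algebra isomorphism with degree-$0$ part the endomorphism ring of $X^\bullet$ --- which amounts to unwinding the definition of the push-down functor and of the translates. After that, the rest is formal: locality is Krull--Schmidt, and the degreewise identification of the radical is the standard fact that in a finite-dimensional $\bZ$-graded algebra all components of nonzero degree lie in the radical while the degree-$0$ component of the radical is the radical of $A_0$.
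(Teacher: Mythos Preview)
Your proposal is correct and follows essentially the same line as the paper's own proof: both identify $\End_{\D^b(\mod\Gamma)}(F_\pi^{\D}(X^\bullet))$ as a $\bZ$-graded algebra via \Cref{prop:pushdownProperties}(\ref{prop:pushdownProperties6}) and then read off the radical degree by degree. The only difference is that the paper defers the radical computation to \cite[Theorem~3.1]{GG82}, whereas you spell it out directly using locality (from Krull--Schmidt indecomposability of $F_\pi^{\D}(X^\bullet)$) and nilpotence of the nonzero-degree pieces; this is exactly the content of the cited result specialised to the local case.
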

\begin{proof}
    The proof follows from the proof of \cite[Theorem 3.1]{GG82}, by observing that $\End_{\D^b(\mod\Gamma)}(F_\pi^{\D}(X^\bullet))$ is a $\bZ$-graded algebra:
    \[
    \End_{\D^b(\mod\Gamma)}(F_\pi^{\D}(X^\bullet))\cong \bigoplus_{i\in \bZ}\Hom_{\D^b(\mod\widehat{\Gamma})}(\gGroupShift{X^\bullet}{i},X^\bullet).
    \] 
\end{proof}

The tools are now assembled, and we are ready to look at AR-triangles. 
\begin{proposition}\label{prop:ARtrianglesSentToARtriangles}
    Let $Z^\bullet$ and $\tau_{[m]}^\Lambda Z^\bullet$ be in $\mathcal{L}$, then the AR-triangle
    \begin{equation}\label{eq:PropCoverARinLambda}
    \begin{tikzpicture}[baseline]
        \node (A) at (0,0)  {$\tau_{[m]}^\Lambda Z^\bullet\vphantom{[1]}$};
        \node[right=1.5cm of A] (B)  {$E^\bullet\vphantom{[1]^\Lambda_{[m]}}$};
        \node[right=1.5cm of B] (C) {$Z^\bullet\vphantom{[1]^\Lambda_{[m]}}$};
        \node[right=1.5cm of C] (Ashift){$\shift{\tau_{[m]}^\Lambda Z^\bullet}$,};
        \draw[->] ([yshift=2pt]A.east)--([yshift=2pt]B.west)node[midway,above,scale=.8]{$\alpha$};
        \draw[->] ([yshift=2pt]B.east)--([yshift=2pt]C.west)node[midway,above,scale=.8]{$\beta$};
        \draw[->] ([yshift=2pt]C.east)--([yshift=2pt]Ashift.west)node[midway,above,scale=.8]{$\delta$};
    \end{tikzpicture}
    \end{equation}
    in $\mmod\Lambda$, is sent to an AR-triangle
    \begin{equation}\label{eq:PropCoverARinGamma}
    \begin{tikzpicture}[baseline]
        \node (A) at (0,0) [] {$\tau_{[m]}^\Gamma F_\Lambda^m Z^\bullet\vphantom{[1]_{[m]}^\gamma}$};
        \node[right=1.5cm of A] (B) {$F_\Lambda^m E^\bullet\vphantom{[1]_{[m]}^\gamma}$};
        \node[right=1.5cm of B] (C) {$F_\Lambda^m Z^\bullet\vphantom{[1]_{[m]}^\gamma}$};
        \node[right=1.5cm of C] (Ashift) {$\shift{\tau_{[m]}^\Gamma F_\Lambda^m Z^\bullet}\vphantom{[1]_{[m]}^\gamma}$,};
        \draw[->] ([yshift=2pt]A.east)--([yshift=2pt]B.west)node[midway,above,scale=.8]{$F_\Lambda^m\alpha$};
        \draw[->] ([yshift=2pt]B.east)--([yshift=2pt]C.west)node[midway,above,scale=.8]{$F_\Lambda^m\beta$};
        \draw[->] ([yshift=2pt]C.east)--([yshift=2pt]Ashift.west)node[midway,above,scale=.8]{$F_\Lambda^m\delta$};
    \end{tikzpicture}
    \end{equation}
    in $\mmod\Gamma$.
\end{proposition}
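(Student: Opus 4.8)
The plan is to verify directly that \eqref{eq:PropCoverARinGamma} is a distinguished triangle all of whose terms lie in $\mmod\Gamma$, and that it satisfies \hyperlink{ARcondition1}{(AR1)}, \hyperlink{ARcondition2marked}{(AR2')} and \hyperlink{ARcondition3}{(AR3)}. Since $F_\pi$ is exact, so is the derived push-down $F_\pi^{\D}$, and since $\iota$ is exact as well, $F_\Lambda^m=F_\pi^{\D}\circ\iota$ carries the AR-triangle \eqref{eq:PropCoverARinLambda} to a distinguished triangle in $\D^b(\mod\Gamma)$; as $F_\pi^{\D}$ commutes with cohomology, the three terms have cohomology concentrated in $[-(m-1),0]$ and hence lie in $\mmod\Gamma$. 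It remains to identify the first term: by \Cref{claim:MorphismsCovering2} (together with the standing assumption $d\gg 0$) every object of $\mathcal L$ has its $\projres_m$ supported among the projectives $P_i^\Lambda$ with $i\in[l,d-l-1]$, i.e.\ lies in $\mathcal N$, so the isomorphism \eqref{eq:ARtranslateCommuteWithPushdown} applies to $Z^\bullet$ and yields $F_\Lambda^m(\tau_{[m]}^\Lambda Z^\bullet)\cong\tau_{[m]}^\Gamma(F_\Lambda^m Z^\bullet)$, which is the identification appearing in \eqref{eq:PropCoverARinGamma}.

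For \hyperlink{ARcondition2marked}{(AR2')}: $Z^\bullet$ and $\tau_{[m]}^\Lambda Z^\bullet$ are indecomposable, $\iota$ is fully faithful, and $F_\pi^{\D}$ preserves indecomposability by \Cref{prop:pushdownProperties}(\ref{prop:pushdownProperties5}), so $F_\Lambda^m Z^\bullet$ and $\tau_{[m]}^\Gamma F_\Lambda^m Z^\bullet$ are indecomposable. For \hyperlink{ARcondition1}{(AR1)}: $F_\Lambda^m$ is additive and faithful, being a composite of the faithful functors $\iota$ and $F_\pi^{\D}$ (\Cref{prop:pushdownProperties}(\ref{prop:pushdownProperties5})), so the nonzero $\delta$ is sent to the nonzero morphism $F_\Lambda^m\delta$.

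The heart of the argument is \hyperlink{ARcondition3}{(AR3)}. Let $Y^\bullet\in\mmod\Gamma$ be indecomposable and $g\colon Y^\bullet\to F_\Lambda^m Z^\bullet$ a non-retraction; we may assume $g\neq 0$. By covering theory $Y^\bullet\cong F_\pi^{\D}(V^\bullet)$ for an indecomposable $V^\bullet\in\mmod\widehat\Gamma$, and the uniform width bound of \Cref{lemma:boundOnCohomology}, applied with varying vertex number, shows that the cohomology of any indecomposable of $\mmod\widehat\Gamma$ is supported in an interval whose length is bounded independently of $d$; hence, as $d\gg 0$, $V^\bullet\cong\gGroupShift{\iota(U^\bullet)}{h_1}$ for some $U^\bullet\in\mmod\Lambda(d,l)$ and $h_1\in\bZ$. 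Under \Cref{prop:pushdownProperties}(\ref{prop:pushdownProperties6}), $g$ corresponds to a tuple $(g_h)$ with $g_h\colon\gGroupShift{\iota(U^\bullet)}{h+h_1}\to\iota(Z^\bullet)$, not all zero, and — the source and target of each $g_h$ being indecomposable while $g$ is not a split epimorphism — no $g_h$ is a split epimorphism. Since $Z^\bullet\in\mathcal L$, \Cref{lemma:homOfComplexesCovering} now forces $\gGroupShift{\iota(U^\bullet)}{h+h_1}\cong\iota(W_h^\bullet)$ for every $h$ with $g_h\neq 0$, and by full faithfulness of $\iota$ each such $g_h$ equals $\iota(\bar g_h)$ for a non-retraction $\bar g_h\colon W_h^\bullet\to Z^\bullet$ in $\mmod\Lambda(d,l)$. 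Applying \hyperlink{ARcondition3}{(AR3)} to the AR-triangle \eqref{eq:PropCoverARinLambda} gives factorisations $\bar g_h=\beta\circ\bar s_h$; pushing these down and reassembling the tuple via \Cref{prop:pushdownProperties}(\ref{prop:pushdownProperties6}) exhibits $g$ as a composite through $F_\Lambda^m\beta$. Thus \eqref{eq:PropCoverARinGamma} satisfies \hyperlink{ARcondition1}{(AR1)}, \hyperlink{ARcondition2marked}{(AR2')} and \hyperlink{ARcondition3}{(AR3)}, hence is an AR-triangle in $\mmod\Gamma$.

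The one delicate step — and the place where the hypotheses on $m$, $l$ and the size of $d$ are genuinely used — is the passage back to $\mmod\Lambda(d,l)$ inside the analysis of \hyperlink{ARcondition3}{(AR3)}: a priori the translates $\gGroupShift{V^\bullet}{h}$ are merely objects of $\mmod\widehat\Gamma$, and it is \Cref{lemma:homOfComplexesCovering} (resting in turn on \Cref{claim:MorphismsCovering2}, \Cref{claim:MorphismsCovering3} and the bound \Cref{lemma:boundOnCohomology}) that guarantees that any such object carrying a nonzero morphism to the ``deep interior'' object $\iota(Z^\bullet)$ is in fact supported in $[0,d-1]$ and so lies in the image of $\iota$. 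A second, routine but necessary, point is that ``$g$ is not a retraction'' passes to each homogeneous component $g_h$ of the decomposition in \Cref{prop:pushdownProperties}(\ref{prop:pushdownProperties6}): a split epimorphism of $\Gamma$-complexes obtained by push-down would have one of its homogeneous components a split epimorphism in $\mmod\widehat\Gamma$, which is impossible between the indecomposables at hand.
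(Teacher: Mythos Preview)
Your proof and the paper's share the essential ingredient—using \Cref{lemma:homOfComplexesCovering} to pull morphisms back from $\mmod\widehat\Gamma$ to $\mmod\Lambda(d,l)$—but the overall strategies differ, and yours leans on a step that is not justified by the tools available.

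You verify \hyperlink{ARcondition3}{(AR3)} directly against an arbitrary test object $Y^\bullet\in\mmod\Gamma$, and your first move is to write $Y^\bullet\cong F_\pi^{\D}(V^\bullet)$ ``by covering theory''. This is a density statement for the derived push-down on indecomposables, and nothing in the paper's toolkit (in particular \Cref{prop:pushdownProperties}) provides it. Density of $F_\pi^{\D}$ is known in various settings but requires its own argument; without it your verification of \hyperlink{ARcondition3}{(AR3)} does not start, since you cannot decompose $g$ via \Cref{prop:pushdownProperties}(\ref{prop:pushdownProperties6}) unless both source and target are already images under $F_\pi^{\D}$.

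The paper sidesteps this entirely. Rather than testing \hyperlink{ARcondition3}{(AR3)} against arbitrary $Y^\bullet$, it invokes \Cref{theorem:ZhouExistenceAR} to produce the genuine AR-triangle ending in $F_\Lambda^m Z^\bullet$ and builds a morphism of triangles from it to \eqref{eq:PropCoverARinGamma}. The resulting comparison map is an \emph{endomorphism} $g$ of $F_\Lambda^m Z^\bullet$; if $g$ were not an isomorphism it would lie in $\rad\End(F_\Lambda^m Z^\bullet)$, and then \Cref{lemma:DescriptionOfRadicalCover} combined with \Cref{lemma:homOfComplexesCovering} lifts $g$ to a non-retraction $\bar g$ in $\mmod\Lambda(d,l)$. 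Factoring $\bar g$ through $\beta$ forces the connecting morphism $\delta'$ of the genuine AR-triangle to vanish, a contradiction. The key point is that the only object whose preimage is ever needed is $F_\Lambda^m Z^\bullet$ itself, which lies in the image of $F_\pi^{\D}\circ\iota$ by construction. This is why the paper never needs, and never claims, density.

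A minor remark: your justification that no homogeneous component $g_h$ is a split epimorphism is slightly off as written (one component being an isomorphism does not obviously make the assembled $g$ split). The clean route is via the radical description in \Cref{lemma:DescriptionOfRadicalCover}, which is precisely how the paper handles the corresponding step.
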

\begin{proof}
    We know by \eqref{eq:ARtranslateCommuteWithPushdown} that \eqref{eq:PropCoverARinGamma} is the image of \eqref{eq:PropCoverARinLambda}. 
    Moreover, by \Cref{prop:pushdownProperties}(\ref{prop:pushdownProperties5}) both $\tau_{[m]}^\Gamma F_\Lambda^m Z^\bullet$ and $F_\Lambda^m Z^\bullet$ are indecomposable, and $F_\Lambda^m\delta\neq 0$.
    There exists some AR-triangle $\tau_{[m]}^\Gamma F_\Lambda^m Z^\bullet\xrightarrow{\alpha'} Y^\bullet\xrightarrow{\beta'} F_\Lambda Z^\bullet \xrightarrow{\delta'} \shift{\tau_{[m]}^\Gamma F_\Lambda^m Z^\bullet}$ in $\mmod\Gamma$, and we can construct the commutative diagram
    \[
    \begin{tikzpicture}[baseline]
        \node (A) at (0,0) [] {$\tau_{[m]}^\Gamma F_\Lambda^m Z^\bullet\vphantom{[1]_{[m]}^\gamma}$};
        \node[right=1.5cm of A] (B) {$Y^\bullet\vphantom{[1]_{[m]}^\gamma}$};
        \node[right=1.5cm of B] (C) {$F_\Lambda^m Z^\bullet\vphantom{[1]_{[m]}^\gamma}$};
        \node[right=1.5cm of C](Ashift) {$\shift{\tau_{[m]}^\Gamma F_\Lambda^m Z^\bullet}\vphantom{[1]_{[m]}^\gamma}$,};
        \draw[->] ([yshift=2pt]A.east)--([yshift=2pt]B.west)node[midway,above,scale=.8]{$\alpha'$};
        \draw[->] ([yshift=2pt]B.east)--([yshift=2pt]C.west)node[midway,above,scale=.8]{$\beta'$};
        \draw[->] ([yshift=2pt]C.east)--([yshift=2pt]Ashift.west)node[midway,above,scale=.8]{$\delta'$};
        
        \node[below=1.5cm of A,anchor=south] (A2) {$\tau_{[m]}^\Gamma F_\Lambda^m Z^\bullet\vphantom{[1]_{[m]}^\gamma}$};
        \node[below=1.5cm of B,anchor=south] (B2) {$F_\Lambda^m E^\bullet\vphantom{[1]_{[m]}^\gamma}$};
        \node[below=1.5cm of C,anchor=south] (C2) {$F_\Lambda^m Z^\bullet\vphantom{[1]_{[m]}^\gamma}$};
        \node[below=1.5cm of Ashift,anchor=south](Ashift2) {$\shift{\tau_{[m]}^\Gamma F_\Lambda^m Z^\bullet}\vphantom{[1]_{[m]}^\gamma}$,};
        \draw[->] ([yshift=2pt]A2.east)--([yshift=2pt]B2.west)node[midway,above,scale=.8]{$F_\Lambda^m\alpha$};
        \draw[->] ([yshift=2pt]B2.east)--([yshift=2pt]C2.west)node[midway,above,scale=.8]{$F_\Lambda^m\beta$};
        \draw[->] ([yshift=2pt]C2.east)--([yshift=2pt]Ashift2.west)node[midway,above,scale=.8]{$F_\Lambda^m\delta$};
        \draw[double equal sign distance] (A)--(A2);
        \draw[double equal sign distance] (Ashift)--(Ashift2);
        \draw[->] (B)--(B2)node[midway,left,scale=.8]{$f$};
        \draw[dashed,->] (C)--(C2)node[midway,right,scale=.8]{$g$};
    \end{tikzpicture}
    \]
    where $f$ exists since $F_\Lambda^m\alpha$ is not a section. If $g$ is an isomorphism, we are done. 

    Assume therefore that $g$ is not an isomorphism, and thus lie in $\rad\End_{\D^b(\mod\Gamma)}(Z^\bullet)$. 
    From \Cref{lemma:DescriptionOfRadicalCover} and \Cref{lemma:homOfComplexesCovering}, we then find a non-retraction $\overline{g}\colon \bigoplus Z_i^\bullet\to Z^\bullet\in\mmod\Lambda$ such that $F_\Lambda^m(\overline{g})=g$. 
    The fact that \eqref{eq:PropCoverARinLambda} is an AR-triangle in $\mmod\Lambda$ then tells us that $\overline{g}$ factors through $\beta$.
    That is, we have $h\colon \bigoplus Z_i^\bullet\to E^\bullet$ such that $\overline{g}=\beta\circ h$, and consequently $g=F_\Lambda^m(\beta)\circ F_\Lambda^m(h)$. Observe now that
    \[
    \begin{split}
        \delta'&=F_\Lambda^m(\delta)\circ g\\
        &=F_\Lambda^m(\delta)\circ F_\Lambda^m(\beta)\circ F_\Lambda^m(h)\\
        &=0\circ F_\Lambda^m(h)=0
    \end{split}
    \]
    which is a contradiction, hence the proof is done.
\end{proof}

\subsection{Components of the AR-quiver}
Assume now that $d$ is sufficiently large and $m,l$ remain chosen as to satisfy \Cref{lemma:boundOnCohomology}. 
We will at some point in our knitting procedure of $\AR(\mmod\Lambda(d,l))$ reach the subquiver $\mathcal{L}$, and thus start seeing AR-triangles which are also AR-triangles after being sent to $\mmod\Gamma$. 
We have found a component of $\AR(\mmod\Gamma)$ if we start seeing repetitions under $F_\Lambda^m$ when moving even further ahead in $\mathcal{L}$. We will now make this more explicit.

Consider a projective $P_i=M_{i-l+1,\,i}$ which lie in $\mathcal{L}$. We know that the AR-triangle starting in $P_i$ is on the form:
\begin{equation*}
    \begin{tikzpicture}[baseline]
        \node (A) at (0,0) [] {$P_i$};
        \node[right=1.5cm of A] (B)  {$[P_{i-l+1}\to P_i]$};
        \node[right=1.5cm of B] (C)  {$P_{i-l+1}$};
        \node[right=1.5cm of C] (D)  {$\shift{P_i}$};
        \draw[->] ([yshift=1pt]A.east)--([yshift=1pt]B.west);
        \draw[->] ([yshift=1pt]B.east)--([yshift=1pt]C.west);
        \draw[->] ([yshift=1pt]C.east)--([yshift=1pt]D.west);
    \end{tikzpicture}
\end{equation*}
This triangle is obviously sent by $F_\Lambda^m$ to the same triangle as the one beginning in $P_{i+n}$. 
Moving diagonally 'upwards' from $P_i$ in the AR-quiver, we can see that the AR-triangle beginning in $[P_{i-l+1}\to P_i]$ is sent to the same AR-triangle as the one beginning in $[P_{i+n-l+1}\to P_{i+n}]$. Iteratively, all the AR-triangles beginning on the diagonal from $P_i$ are sent to the same triangles as those on the diagonal from $P_{i+n}$.
\[
\begin{tikzpicture}
    \fill[pattern=north west lines,pattern color=support] (3,1.5)--(2.8,1.5)--(1.2,-.2)--(2.3,-.2)--(2.5,0)--(3,0);
    \fill[pattern=north west lines,pattern color=support,path fading=east] (3,1.5)rectangle(4.3,0);
    \draw (0,0)--(1,0)--(1.2,-.2)--(2.3,-.2)--(2.5,0)--(3,0);
    \draw (0,1.5)--(3,1.5);
    \draw[path fading=east](3,1.5)--(4.3,1.5);
    \draw[path fading=east](3,0)--(4.3,0);
    \draw[dashed] (1.2,-.2)--(2.8,1.5);
    \draw[rigid,fill] (1.2,-.2) circle (1pt);
    \node at (1.2,-.3)[below,scale=.7]{$P_i\vphantom{_{i+n}}$};

    \node at (4.4,.75) [nodeCDots]{};
    \node at (4.5,.75) [nodeCDots]{};
    \node at (4.6,.75) [nodeCDots]{};
    \begin{scope}[shift={(5.5,0)}]
        \fill[pattern=north west lines,pattern color=support] (0.5,1.5)--(2.8,1.5)--(1.2,-.2)--(1,0)--(0.5,0);
        \fill[pattern=north west lines,pattern color=support,path fading=west] (-.8,0)rectangle(0.5,1.5);
        \draw (0.5,0)--(1,0)--(1.2,-.2)--(2.3,-.2)--(2.5,0)--(3.5,0);
        \draw (0.5,1.5)--(3.5,1.5);
        \draw[path fading=west] (-.8,0)--(0.5,0);
        \draw[path fading=west] (-.8,1.5)--(0.5,1.5);
        \draw[dashed] (1.2,-.2)--(2.8,1.5);
        \draw[rigid,fill] (1.2,-.2) circle (1pt);
        \node at (1.2,-.3)[below,scale=.7]{$P_{i+n}$};
        \node at (-.5,1.1)[fill=white,scale=.7]{$\AR(\mmod\Gamma)$};
    \end{scope}
\end{tikzpicture}
\]
We can therefore see that we obtain a repetition under $F_\Lambda^m$, and everything in between the repetition is sent to AR-triangles in $\mmod\Gamma$. 
Hence, by identifying the repetition we obtain a component of $\AR(\mmod\Gamma)$, which by \Cref{lemma:finiteComponentIsWholeAR} is the whole of $\AR(\mmod\Gamma)$. One direction of the following theorem is then proven.

\begin{theorem}\label{theorem:finitenessOfCyclicNakayama}
    Let $\Gamma$ be a cyclic nakayama algebra with homogeneous relations of length $l$. $\mmod\Gamma$ is finite if and only if
    \begin{enumerate}
        \item $l=2$,
        \item $m\leq4$ and $l=3$,
        \item $m\leq 2$ and either $l=4$ or $l=5$, or
        \item $m=1$
    \end{enumerate}
\end{theorem}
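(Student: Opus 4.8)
The theorem packages the two halves of the work above, and I would treat them in turn. For the \emph{if} direction, let $\Gamma=\Gamma(n,l)$ be a cyclic Nakayama algebra whose relation length $l$ is as in (1)--(4); the case $m=1$ is immediate (then $1\nmod\Gamma=\mod\Gamma$ is a Nakayama algebra, hence representation-finite), so assume $m\ge 2$, which puts $(m,l)$ exactly under the hypotheses of \Cref{lemma:boundOnCohomology}. The plan is: fix $d\gg 0$, knit the AR-quiver $\AR(\mmod\Lambda(d,l))$ — finite in these cases, hence equal to its postprojective component — and follow the functor $F_\Lambda^m$ through the knitting. After finitely many steps the knitting enters the convex subquiver $\mathcal L$, where by \Cref{prop:ARtrianglesSentToARtriangles} each AR-triangle is sent by $F_\Lambda^m$ to an AR-triangle of $\mmod\Gamma$; once $d$ is large the projectives $P_i$ and $P_{i+n}$, and with them the full diagonals they begin, all lie inside $\mathcal L$, and $F_\Lambda^m$ (which factors through the $G=\bZ$-action) identifies these two diagonals. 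The finite region of $\AR(\mmod\Lambda(d,l))$ caught between such a repetition then maps onto a \emph{finite} connected component of $\AR(\mmod\Gamma)$, and \Cref{lemma:finiteComponentIsWholeAR} forces this component to be all of $\AR(\mmod\Gamma)$; hence $\mmod\Gamma$ is of finite type. The delicate point is that this region closes up under $F_\Lambda^m$ into a genuine component — no arrows in or out are gained or lost along the seam — which is precisely what \Cref{lemma:homOfComplexesCovering} together with the faithfulness and the Hom-decomposition in \Cref{prop:pushdownProperties}(\ref{prop:pushdownProperties5},\ref{prop:pushdownProperties6}) provide.

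For the \emph{only if} direction I would prove the contrapositive: if $l\ge 3$, $m\ge 2$ and $(m,l)$ is not in (1)--(4) — i.e.\ $l=3$ and $m\ge5$, or $l\in\{4,5\}$ and $m\ge3$, or $l\ge6$ and $m\ge2$ — then $\mmod\Gamma$ is of infinite type for \emph{every} $n$. By \Cref{prop:linNakHomOfInfType} there is $d_0$ with $\mmod\Lambda(d_0,l)$ of infinite type, and by \Cref{lemma:inheritInfiniteness}(\ref{lemma:inheritInfinitenessA}) also for every $d\ge d_0$. The key step is first to observe that $\mmod\widehat\Gamma$, over the covering $\widehat\Gamma=\K\bA_\infty/\langle\hat\rho\rangle$, is of infinite type: the embedding $\iota\colon\mod\Lambda(d_0,l)\hookrightarrow\mod\widehat\Gamma$ takes the infinitely many indecomposables of $\mmod\Lambda(d_0,l)$ to pairwise non-isomorphic indecomposables of $\mmod\widehat\Gamma$, all with cohomology supported in the fixed window $[0,d_0-1]$; since any single $G=\bZ$-orbit $\{\gGroupShift{X^\bullet}{h}\}$ meets that window in at most $\lceil 2(d_0-1)/n\rceil+1$ objects, these indecomposables occupy infinitely many distinct $G$-orbits. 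Now \Cref{prop:pushdownProperties}(\ref{prop:pushdownProperties5}) — $F_\pi^{\D}$ is faithful, preserves and reflects indecomposability, and $F_\pi^{\D}(X^\bullet)\cong F_\pi^{\D}(Y^\bullet)$ only when $X^\bullet\cong\gGroupShift{Y^\bullet}{h}$ — shows that $F_\pi^{\D}$ carries one representative from each orbit to infinitely many pairwise non-isomorphic indecomposables of $\mmod\Gamma$, so $\mmod\Gamma$ is of infinite type. (When $n\ge d_0+l-1$ this also follows more cheaply: choosing a block of $d$ consecutive vertices with $d_0\le d\le n-l+1$ yields an idempotent $e$ with $e\Gamma e\cong\Lambda(d,l)$, because a path of $\DCyc n$ leaving and re-entering the block has length $\ge n-d+1\ge l$ and is zero, so \Cref{Corol:lemma:restrictionAdjointExtended} applies; but the covering argument handles all $n$ uniformly.)

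The main obstacle lies in the bookkeeping of this last argument: one must verify that $\iota$ is compatible enough with the extended structure that it does preserve indecomposability and detect non-isomorphism on $\mmod$, and that its image meets infinitely many $G$-orbits. The robust route is to restrict attention to indecomposables of $\mmod\Lambda(d,l)$ lying in $\mathcal L$, where — as used in \Cref{claim:MorphismsCovering2} — the truncated projective resolutions $\brutalTrunc_{\ge-(m-1)}\projres X^\bullet$ involve only projectives on which $\iota$ agrees with $\widehat\Gamma$-projectives, so $\iota$ commutes with forming those resolutions and hence computes $\Hom_{m\mhyphen\widehat\Gamma}$ faithfully; one then checks that $\mathcal L$ still carries infinitely many indecomposables for $d\gg0$, which follows from $\mmod\Lambda(d,l)$ being of infinite type together with the uniform width bound of \Cref{lemma:boundOnCohomology}. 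The symmetric obstacle on the \emph{if} side is exactly the claim that the diagonal repetition in $\AR(\mmod\Lambda(d,l))$ closes into a full component of $\AR(\mmod\Gamma)$, and here \Cref{lemma:homOfComplexesCovering} is again the crucial input.
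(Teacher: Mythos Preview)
Your \emph{if} direction is the paper's argument: push the knitting of $\AR(\mmod\Lambda(d,l))$ through $F_\Lambda^m$, use \Cref{prop:ARtrianglesSentToARtriangles} on the region $\mathcal L$, and let the $n$-periodicity of the diagonals close a finite component, which \Cref{lemma:finiteComponentIsWholeAR} then forces to be all of $\AR(\mmod\Gamma)$.

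Your \emph{only if} direction also follows the paper's strategy --- produce infinitely many indecomposables in $\mmod\Lambda(d,l)$, push them down, and count $G$-orbits via \Cref{prop:pushdownProperties}(\ref{prop:pushdownProperties5}) --- but your resolution of the obstacle you flag has a genuine gap. You propose to restrict to $\mathcal L$ and then invoke \Cref{lemma:boundOnCohomology} to argue that $\mathcal L$ still contains infinitely many indecomposables. But the hypotheses of \Cref{lemma:boundOnCohomology} are \emph{exactly} the finite-type conditions on $(m,l)$; in the case at hand $(m,l)$ is outside that list, the lemma is unavailable, and indeed the subquiver $\mathcal L$ is only defined in the paper under those finite-type hypotheses. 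So the proposed route collapses precisely where you need it.

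The fix is simpler than the detour through $\mathcal L$. The embedding $\iota\colon\mod\Lambda(d,l)\hookrightarrow\mod\widehat\Gamma$ is exact and has the (exact) restriction functor as a right adjoint with unit $\id\xrightarrow{\sim}\res\circ\iota$; hence $\D^b(\iota)$ is fully faithful, and therefore preserves indecomposability on \emph{all} of $\mmod\Lambda(d,l)$, not just on some carefully chosen subregion. With that in hand your orbit count goes through verbatim: the images $\iota(M)$ are indecomposable, all supported in the window $[0,d_0-1]$, so each $\bZ$-orbit meets them in boundedly many objects, and \Cref{prop:pushdownProperties}(\ref{prop:pushdownProperties5}) finishes. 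This is essentially what the paper's two-line proof is relying on when it asserts that indecomposables are sent to indecomposables and that each $N\in\mmod\Gamma$ has only finitely many preimages.
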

\begin{proof}
    The 'if' direction follows from the discussion preceding the theorem.
    
    For the 'only if' direction, assume that $\Gamma$ is a cyclic Nakayama algebra which do not satisfy the condition, then by \Cref{thm:WhenIsLinearNakayamaFinite} we see that $\mmod\Lambda(d,l)$ is of infinite type if we let $d=\max\{9,\,l+2\}$. 
    Further, from \Cref{prop:pushdownProperties}(\ref{prop:pushdownProperties5}) it follows that given an indecomposable $N\in\mmod\Gamma$, there are only finitely many indecomposables $M\in \mmod\Lambda(d,l)$ such that $F_\Lambda^m(M)\cong N$. Moreover, each indecomposable of $\mmod\Lambda(d,l)$ is sent to an indecomposable of $\mmod\Gamma$, so we conclude that $\mmod\Gamma$ is of infinite type.
\end{proof}

\subsection{Examples}
Note that in practice we may not need to work explicitly with the convex subquiver $\mathcal{L}$. If we identify a repetition beginning earlier than $\mathcal{L}$ in the postprojective component, it will carry on until we reach the subquiver.
\begin{example}
    Consider the algebra $\Gamma$ given by $(\Delta_{1},\rho_{\Delta,2})$, i.e.
    \[
    \begin{tikzpicture}
        \node (A) at (0,0) [nodeDots]{};
        \node at (-.1,-.1) [below,scale=.8]{$0$};
        \draw[-latex] (0,.1) ..controls (.1,.6) and (.6,.1) .. (.1,0)node[midway,above]{$\delta$};
    \end{tikzpicture}
    \]
    with relations generated by $\delta^2$. $\Gamma$ has two indecomposable modules, the projective $\Gamma$ and a simple $S$.

    Let $d\gg1$ and consider $2\nmod\Lambda=2\nmod\Lambda(d,2)$. We can start knitting the AR-quiver of this as follows
    \[
        \includegraphics{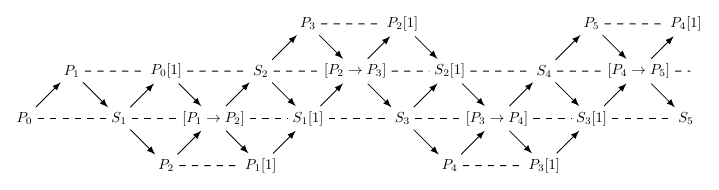}
    \]
    If we now use the push-down functor $F_\Lambda^2$, we obtain a repeating pattern which will continue until we eventually reach $\mathcal{L}$.
    \[
        \includegraphics{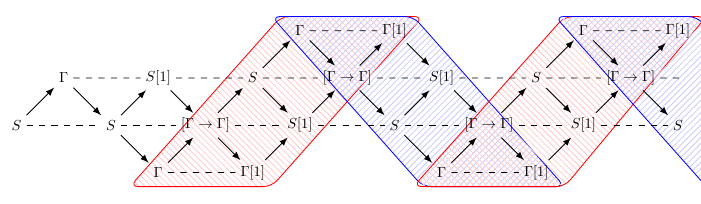}
    \]
    Hence, we have obtained a finite component of $\AR(2\nmod\Gamma)$,
    \[
        \includegraphics{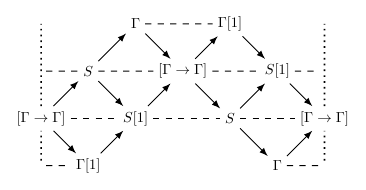}
    \]
    which by \Cref{lemma:finiteComponentIsWholeAR} is then the whole AR-quiver. The same procedure for $3\nmod\Gamma$ gives us
    \[
        \includegraphics[width=.7\linewidth]{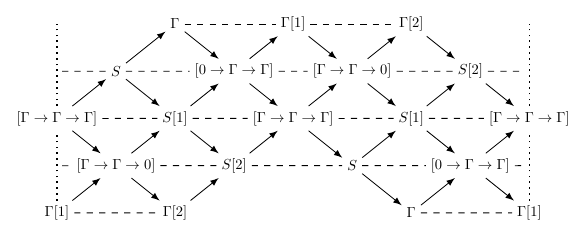}
    \]
\end{example}

\begin{example}
    Let $\Gamma$ be given by 
    \[
    \begin{tikzpicture}[tips=proper,scale=.7]
        \node (1) at (1,-1) [nodeDots] {};
        \node (1-label) at (1,-1.1)[anchor=west,scale=.8]{$1$};
        \node (0) at (-1,-1) [nodeDots] {};
        \node (0-label) at (-1.1,-1)[anchor=east,scale=.8]{$0$};

        \draw[-latex] ([xshift=-5pt,yshift=-2.5pt]1.center)--([xshift=5pt,yshift=-2.5pt]0.center)node[midway,below,scale=.8]{$\delta_1$};
        \draw[-latex] ([xshift=5pt,yshift=2.5pt]0.center)--([xshift=-5pt,yshift=2.5pt]1.center)node[midway,above,scale=.8]{$\delta_0$};
    \end{tikzpicture}
    \]
    with relations generated by all paths of length $3$, and denote the modules
    \[
    \begin{tikzpicture}[tips=proper,scale=.7,baseline={([yshift=-3pt]current bounding box.center)}]
        \node (1) at (1,-1) [] {$\K$};
        
        \node (0) at (-1,-1) [] {$\K$};

        \draw[-latex] ([xshift=-5pt,yshift=-2.5pt]1.center)--([xshift=5pt,yshift=-2.5pt]0.center)node[midway,below,scale=.8]{$0$};
        \draw[-latex] ([xshift=5pt,yshift=2.5pt]0.center)--([xshift=-5pt,yshift=2.5pt]1.center)node[midway,above,scale=.8]{$1$};
    \end{tikzpicture}
    \quad\text{and}\quad 
    \begin{tikzpicture}[tips=proper,scale=.7,baseline={([yshift=-3pt]current bounding box.center)}]
        \node (1) at (1,-1) [] {$\K$};
        
        \node (0) at (-1,-1) [] {$\K$};

        \draw[-latex] ([xshift=-5pt,yshift=-2.5pt]1.center)--([xshift=5pt,yshift=-2.5pt]0.center)node[midway,below,scale=.8]{$1$};
        \draw[-latex] ([xshift=5pt,yshift=2.5pt]0.center)--([xshift=-5pt,yshift=2.5pt]1.center)node[midway,above,scale=.8]{$0$};
    \end{tikzpicture}
    \]
    by $U$ and $L$ respectively.

    We let $d\gg 1$, and start knitting $\AR(2\nmod\Lambda(d,3))$, see \Cref{fig:StartOfAR2modLambdad3}. 
    After applying $F_\Lambda^2$, a repeating pattern emerges, see \Cref{fig:StartOfAR2modLambdad3Repetition}. We therefore obtain the following AR-quiver of $2\nmod\Gamma$, where the vertices marked with lines over and/or under are identified.
    \[
        \includegraphics[width=.9\linewidth]{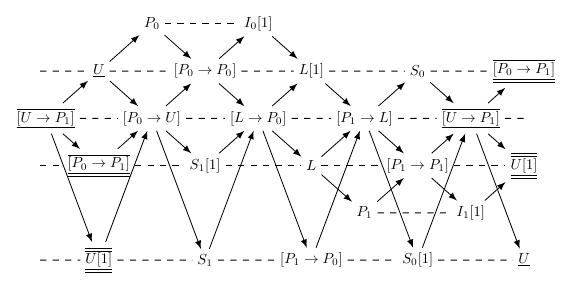}
    \]
\end{example}

\begin{example}
    Let $\Gamma$ be given by 
    \[
    \begin{tikzpicture}[tips=proper,scale=.7]
        \node (3) at (0,0) [nodeDots] {};
        \node (3-label) (0,.1)[above,scale=.8] {$3$};
        \node (2) at (1,-1) [nodeDots] {};
        \node (2-label) at (1.1,-1)[right,scale=.8]{$2$};
        \node (1) at (0,-2) [nodeDots] {};
        \node (1-label) at (0,-2.1)[below,scale=.8]{$1$};
        \node (0) at (-1,-1) [nodeDots] {};
        \node (0-label) at (-1.1,-1)[left,scale=.8]{$0$};

        \draw[-latex] ([xshift=3pt,yshift=-3pt]3.center)--([xshift=-3pt,yshift=3pt]2.center)node[midway,anchor=south west,scale=.8]{$\delta_3$};
        \draw[-latex] ([xshift=-3pt,yshift=-3pt]2.center)--([xshift=3pt,yshift=3pt]1.center)node[midway,anchor=north west,scale=.8]{$\delta_2$};
        \draw[-latex] ([xshift=-3pt,yshift=3pt]1.center)--([xshift=3pt,yshift=-3pt]0.center)node[midway,anchor=north east,scale=.8]{$\delta_1$};
        \draw[-latex] ([xshift=3pt,yshift=3pt]0.center)--([xshift=-3pt,yshift=-3pt]3.center)node[midway,anchor=south east,scale=.8]{$\delta_0$};
    \end{tikzpicture}
    \]
    with relations generated by all paths of length $3$. Then the AR-quiver of $3\nmod\Gamma$ can be found through the same procedure as above, and it has the following form
    \[
    \includegraphics[width=.7\linewidth]{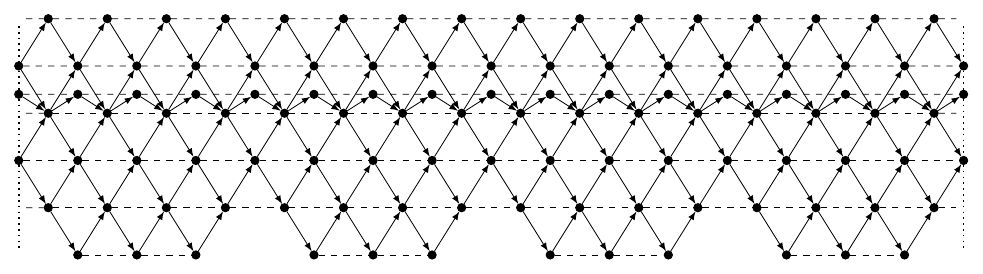}
    \]
\end{example}

\begin{landscape}
    \begin{figure}
        \centering
        \includegraphics[width=.9\linewidth]{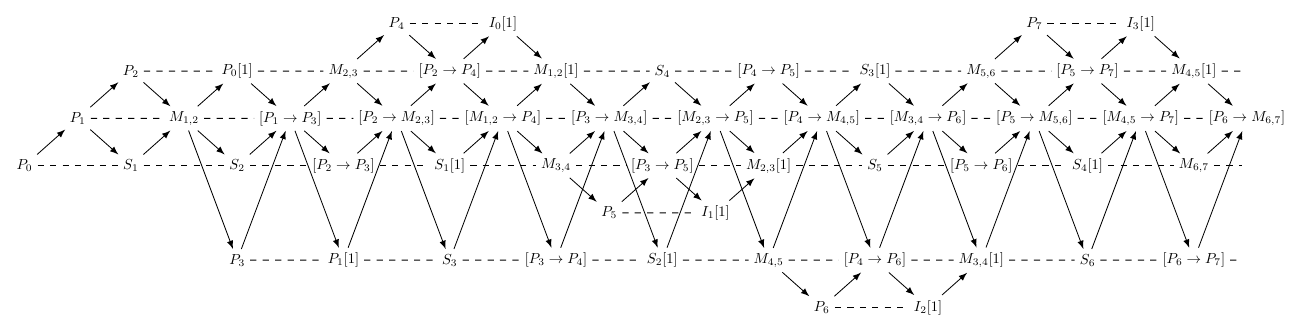}
        \caption{The beginning of $\AR(2\nmod\Lambda(d,3))$}
        \label{fig:StartOfAR2modLambdad3}
    \end{figure}
    \begin{figure}
        \centering
        \includegraphics[width=.9\linewidth]{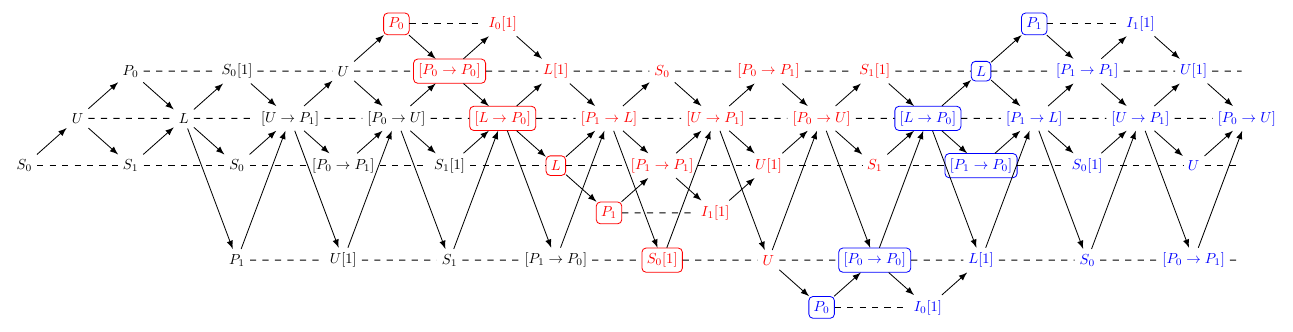}
        \caption{Image of $F_\Lambda^2$ with repetition marked.}
        \label{fig:StartOfAR2modLambdad3Repetition}
    \end{figure}
\end{landscape}

\printbibliography

\end{document}